\newtheorem{thm}{Théorème}[section]
\newtheorem{prop}[thm]{Proposition}
\newtheorem{lem}[thm]{Lemme}
\theoremstyle{definition}
\newtheorem{defn}[thm]{Définition}
\theoremstyle{remark}
\newtheorem{rmk}[thm]{Remarque}
\newtheoremstyle{citing}
	{3pt}
	{3pt}
	{\itshape}
	{}
	{\bfseries}
	{.}
	{.5em}
	{\thmnote{#3}}
\theoremstyle{citing}
\newtheorem*{introthm}{}
\newtheorem*{introdefn}{}
\date{Le \today}
\author{\textsc{Jullian} Yann}
\title{Construction du c{\oe}ur compact d'un arbre réel par substitution d'arbre}
\begin{document}
\pagenumbering{arabic}
\begin{center}
\vbox{
\vspace*{4.7cm}
{\LARGE Construction du c{\oe}ur compact d'un arbre réel par substitution d'arbre}

\vspace*{0.5cm}

{\large Yann {\scshape Jullian}}

\vspace*{0.8cm}

\begin{abstract}
Etant donné un automorphisme de groupe libre $\sigma$ et un représentant topologique
train-track de son inverse, on peut construire un arbre réel $T$ appelé
arbre répulsif de $\sigma$. Le groupe libre agit sur $T$ par isométries.
La dynamique engendrée par $\sigma$ peut être représentée par l'action
du groupe libre restreinte à un sous-ensemble compact bien choisi
du complété métrique de $T$. Cet article construit ce sous-ensemble
sur une classe d'exemples en introduisant des opérations appelées substitutions
d'arbre ; on mettra en évidence les relations entre la construction
par substitution d'arbre et la dynamique symbolique sous-jacente.
\end{abstract}

}
\end{center}

\setcounter{tocdepth}{2} 
\tableofcontents{}

\section*{Introduction}

On distingue deux grandes classes de systèmes symboliques ; les décalages de type fini (\cite{LM}), et les systèmes substitutifs (\cite{Que}).
Les derniers sont souvent de bons candidats pour décrire des systèmes auto-similaires, c'est-à-dire des systèmes
dont la dynamique globale se retrouve localement par l'application premier retour (\cite{AI}). On se pose ici le problème
inverse ; étant donné un système dynamique substitutif, peut-on interpréter géométriquement sa dynamique ?
Cette question a été largement étudiée et abordée sous des angles multiples, et chaque méthode
vient avec son propre jeu de restrictions. On citera par exemple
les fractals de Rauzy (\cite{Rau}, \cite{AI}, \cite{CanSie2}) ou les échanges d'intervalles (\cite{Kea}, \cite{Vee}, \cite{Ra}), dont la dynamique symbolique est engendrée par
un échange de domaines. Une condition suffisante pour l'existence d'un fractal de Rauzy (et d'un échange de
domaines défini à ensembles de mesures nulles près sur ce fractal) est donnée dans \cite{AI} et \cite{CanSie2} ;
si la matrice d'incidence de la substitution est unimodulaire Pisot et si la substitution vérifie la condition
de forte coïncidence, alors on peut définir un fractal de Rauzy qui supporte la dynamique de la substitution.
Dans le cas des échanges d'intervalles, il est par exemple nécessaire que la matrice d'incidence de la substitution soit symplectique (\cite{Via}).
En règle générale, la question de l'existence d'un fractal de Rauzy (et d'un échange de
domaines bien défini) ou d'un échange d'intervalles permettant de représenter la dynamique d'un système substitutif est
encore largement ouverte. On note que ces représentations sont basées sur la minimalité des
systèmes dynamiques symboliques engendrés par une classe de substitutions ; les substitutions primitives.
On rappelera les propriétés les plus importantes de ces systèmes en section \ref{sectioncombmots}.

Toute substitution sur un alphabet $A$ peut s'étendre en un endomorphisme du groupe libre de base $A$.
Tandis que la théorie générale des endomorphismes de groupe libre est encore incertaine, celle des
automorphismes est beaucoup plus développée (\cite{CV}, \cite{BH}, \cite{BFH}), et c'est dans ce cadre que nous nous placerons. En général,
la dynamique engendrée par un automorphisme est plus compliquée que celle d'une substitution ;
des annulations peuvent se produire. Dans \cite{BH}, M. Bestvina et M. Handel donnent des méthodes
pour contrôler ces annulations, et définissent notamment les représentants topologiques
(applications $f:G\to G$, où $G$ est un graphe topologique) train-track
des automorphismes de groupe libre. Pour une métrique bien choisie sur $G$, une application
train-track étend uniformément les arcs de $G$ (en multipliant leurs longueurs par un facteur constant).

La dynamique des automorphismes de groupe libre est souvent représentée par des actions de groupe libre
sur des arbres réels. Se servant des résultats de \cite{BH}, D. Gaboriau, A. Jaeger, G. Levitt et M. Lustig
associent dans \cite{GJLL} un arbre réel $T_\alpha$
à tout automorphisme $\alpha$ de groupe libre. L'arbre $T_\alpha$ est muni d'une action
(non-triviale, minimale et avec des stabilisateurs d'arcs triviaux) du groupe libre par isométries,
et l'action de l'automorphisme $\alpha$ est représentée par une homothétie sur $T_\alpha$. Lorsque le
train-track représentant $\alpha$ est strictement dilatant,
l'action est à orbites denses. Rejoignant les travaux de G. Levitt et M. Lustig dans \cite{LL03},
on peut alors construire une application équivariante surjective $Q$ de $\partial F$ (le bord de Gromov du groupe libre $F$)
dans $\overline{T_\alpha}\cup \partial T_\alpha$ (où $\overline{T_\alpha}$ est le complété métrique de $T_\alpha$
et $\partial T_\alpha$ est son bord de Gromov). Cette application traduit l'action du groupe libre sur son bord
en termes d'isométries sur l'arbre. D'autres définitions de $Q$
peuvent être trouvées dans \cite{CHLII} et \cite{CHL09}. Dans \cite{CHL09}, elle est
utilisée pour définir un compact (l'ensemble limite ou c{\oe}ur) (inclus dans $\overline{T}$) associé à tout
arbre $T$ défini avec une action (par isométries) du groupe libre très petite, minimale, et à orbites denses.
On s'intéresse aux dynamiques induites par l'action du groupe libre sur ces compacts.\\

La section \ref{sectionsubarsimp} définit une nouvelle notion : les substitutions d'arbre.
Combinatoirement, les substitutions d'arbre peuvent être vues comme des généralisations des substitutions sur les mots ;
notamment, toute substitution induit naturellement une substitution d'arbre.
Dans \cite[chapitre 4, 5]{THE}, on étudie les propriétés de ces objets,
et on explique comment elles permettent d'obtenir des compacts invariants par constructions
graphe-dirigées (au sens de \cite{MauWil}).
Dans le cadre de cet article, elles sont utilisées comme un moyen simple de construire
des arbres réels compacts auto-similaires. On associera un arbre réel à tout arbre simplicial ;
la section \ref{sectionsubarel} décrit un espace adapté à ces réalisations. La substitution d'arbre produira ainsi
une suite convergente d'arbre réel et on s'intéressera à l'arbre limite.

Etant donnée une substitution, l'objectif est de construire une substitution d'arbre, et
d'obtenir grâce à elle un arbre réel auto-similaire et une partition de celui-ci ; cette
partition nous permettra de définir un échange de domaines conjugué au
système dynamique engendré par la substitution initiale.
L'objet principal de cet article est de proposer une telle construction pour
la famille particulière d'exemples définie au paragraphe suivant.
On insistera sur le fait que la substitution d'arbre permet de mettre en évidence
les propriétés géométriques et dynamiques de l'arbre limite. On prouvera notamment que les
points de branchement (points de valence au moins trois) de cet arbre sont exactement les points dont l'orbite est codée
par un décalé du point fixe de la substitution initiale.
On effectuera également une étude détaillée de la combinatoire du système symbolique,
en particulier des facteurs bispéciaux de son langage ; on montrera comment ces derniers interviennent
dans la construction des points de branchement et permettent d'expliquer  
en quoi la substitution d'arbre \og reflète \fg~ la dynamique engendrée par la substitution.
Ce travail est effectué en section \ref{sectionclasse}.

Chaque substitution $\sigma$ de la famille considérée est primitive inversible, et on la considère
comme un automorphisme de groupe libre. Dans la section \ref{sectionrepulsif},
on définira l'arbre $T$ de \cite{GJLL} associé à l'automorphisme
$\sigma^{-1}$, inverse de $\sigma$, et on montrera que le compact obtenu
en section \ref{sectionclasse} peut être vu comme une partie de $\overline{T}$, le complété métrique de $T$.
Cette partie est à rapprocher de l'ensemble limite décrit dans \cite{CHL09}.

\subsection*{Enoncé des résultats}
Soit $d\ge 3$. On note $A$ l'alphabet $A=\{1, 2,\dots, d\}$ et $A^*$ l'ensemble des mots finis à lettres dans $A$ ;
le mot vide est noté $\epsilon$. La substitution (morphisme du monoïde $A^*$) primitive $\sigma$ est définie par :
\begin{center}
	\begin{tabular}{cccclc}
	$\sigma$ & : & $1$ & $\mapsto$ & $12$ & \\
	&& $k$ & $\mapsto$ & $(k+1)$ & pour $2\le k\le d-1$\\
	&& $d$ & $\mapsto$ & $1$ & \\
	\end{tabular}
\end{center}
L'application décalage est l'application $S$ de $A^{\mathds{N}}$ dans $A^{\mathds{N}}$ qui
à un mot $V = (V_i)_{i\in \mathds{N}}$ associe $S(V) = (V_{i+1})_{i\in \mathds{N}}$.
Soit $\omega$ le mot de $A^{\mathds{N}}$ défini par ~$\omega = \lim\limits_{n\to +\infty}\sigma^n(1)$ ;
on note $\Omega^+$ l'adhérence de l'orbite de $\omega$ sous l'action de $S$.
Le système dynamique symbolique $(\Omega^+, S)$ engendré par $\sigma$ est minimal et uniquement ergodique (cf. \cite{Que}).\\

Dans la section \ref{sectionsubarsimp}, nous introduirons la notion de substitution d'arbre.
Un arbre simplicial (graphe connexe sans cycle) est la donnée d'un couple $(\mathcal{V}, \mathcal{E})$, où $\mathcal{V}$ est un ensemble de sommets
(pris dans un ensemble non dénombrable quelconque) et $\mathcal{E}$ est
une partie de $\mathcal{V}\times \mathcal{V}\times A_\tau$ (où $A_\tau$ est un alphabet) ; les arêtes sont orientées et colorées par les éléments de $A_\tau$.
L'ensemble des arbres finis (au sens du nombre d'arêtes) ainsi définis est noté $\mathscr{S}_0(A_\tau)$, et
on note $\mathscr{S}_E(A_\tau)$ l'ensemble des arbres de $\mathscr{S}_0(A_\tau)$ constitués d'une unique arête.

\begin{introdefn}[Définition \ref{defn:subarb}]
Une \textbf{substitution d'arbre} est une application $\tau$ de $\mathscr{S}_E(A_\tau)$ dans $\mathscr{S}_0(A_\tau)$ telle que :
\begin{itemize}
	\item pour tout $X\in \mathscr{S}_E(A_\tau)$, les sommets de $X$ sont des sommets de $\tau(X)$,
	\item les images par $\tau$ de deux arbres de $\mathscr{S}_E(A_\tau)$ de même couleur sont égales à renommage des sommets près.
\end{itemize}
\end{introdefn}
L'application $\tau$ s'étend naturellement en une application de $\mathscr{S}_0(A_\tau)$ dans $\mathscr{S}_0(A_\tau)$ en prenant l'union des images des arêtes.
On pourra par exemple se reporter aux figures \ref{fig:introfig} et \ref{fig:emmasubdebut}.\\

Nous construirons une substitution d'arbre associée à $\sigma$. On note $A_{\tau} = \{1, \dots, d, (d+1), \dots, (2d-2)\}$ ;
l'alphabet $A_{\tau}$ contient $A$, ainsi que $d-2$ lettres supplémentaires.
Pour tout $i\in A_{\tau}$, l'arbre $X_i = (\{x, y\}, \{(x, y, i)\})$ est un élément de $\mathscr{S}_E(A_{\tau})$, et on définit
$\tau$ par :
	\begin{itemize}
	\item $\tau (X_1) = X_d$,
	\item l'image de $X_2$ est représentée sur la figure \ref{fig:introfig},
	\item $\tau (X_i) = X_{i-1}$ si $3\le i\le d$,
	\item $\tau (X_{d+1}) = X_1$,
	\item $\tau (X_i) = X_{i-1}$ si $d+2\le i\le 2d-2$.
	\end{itemize}
\begin{figure}[h!]
\begin{center}
	\begin{psfrags}
	\psfrag{1}{\LARGE{$1$}}
	\psfrag{d}{\LARGE{$d$}}
	\psfrag{d1}{\LARGE{$(d+1)$}}
	\psfrag{d2}{\LARGE{$(d+2)$}}
	\psfrag{k}{\LARGE{$k$}}
	\psfrag{kb}{\Large{$d+3\le k\le 2d-3$}}
	\psfrag{2d}{\LARGE{$(2d-2)$}}
	\psfrag{x}{\LARGE{$x$}}
	\psfrag{y}{\LARGE{$y$}}
	\psfrag{Y}{\huge{$\tau(X_2)$}}
	\scalebox{0.65}{\includegraphics{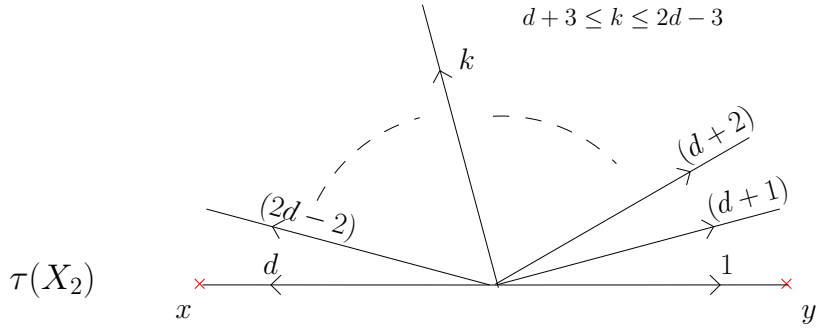}}
	\end{psfrags}
\end{center}
\vspace{-4mm}
\caption{Substitution d'arbre associée à $\sigma$.}
\label{fig:introfig}
\end{figure}
On appelle $T_0^s$ (l'exposant $s$ indique qu'il s'agit d'un arbre simplicial) l'arbre constitué d'un sommet $x_0$
et de $d$ arêtes colorées $1, 2, \dots, d$ sortant de $x_0$. Pour tout $n\in \mathds{N}$, on définit $T_n^s = \tau^n(T_0^s)$,
et on note $\mathcal{V}_n^s$ l'ensemble des sommets de $T_n^s$.
La figure \ref{fig:emmasubdebut} illustre l'action de $\tau$ sur $T_0^s$ (dans le cas $d=3$).
\begin{figure}[h!]
\begin{center}
	\begin{psfrags}
	\psfrag{a}{\LARGE{\textbf{$1$}}}
	\psfrag{b}{\LARGE{\textbf{$2$}}}
	\psfrag{c}{\LARGE{\textbf{$3$}}}
	\psfrag{d}{\LARGE{\textbf{$4$}}}
	\psfrag{x0}{\Large{$x_0$}}
	\psfrag{x1}{\Large{$x_1$}}
	\psfrag{x2}{\Large{$x_2$}}
	\psfrag{x3}{\Large{$x_3$}}
	\psfrag{y1}{\Large{$y_1$}}
	\psfrag{y2}{\Large{$y_2$}}
	\psfrag{z1}{\Large{$z_1$}}
	\psfrag{z2}{\Large{$z_2$}}
	\psfrag{L0}{\huge{$T_0^s$}}
	\psfrag{L1}{\huge{$T_1^s$}}
	\psfrag{L2}{\huge{$T_2^s$}}
	\psfrag{L3}{\huge{$T_3^s$}}
	\scalebox{0.6}{\includegraphics{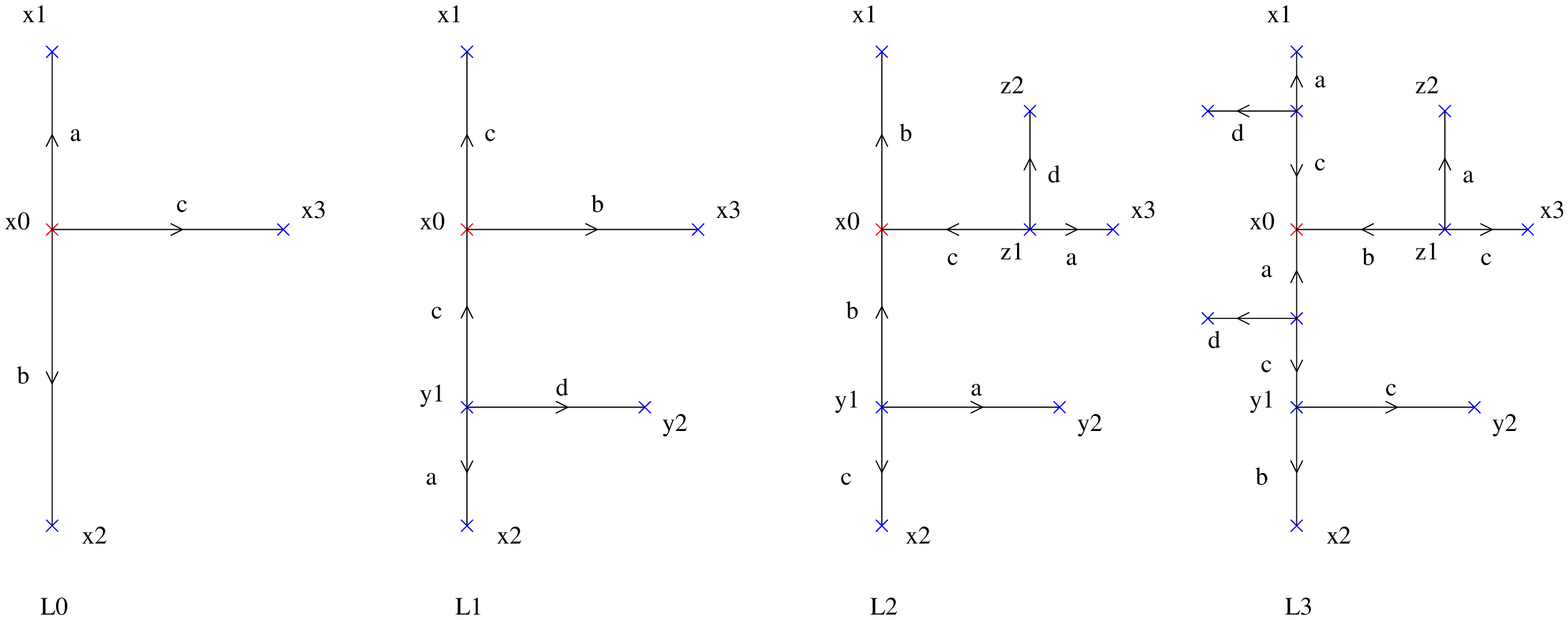}}
	\end{psfrags}
\end{center}
\vspace{-4mm}
\caption{Représentation de $T_0^s, T_1^s, T_2^s, T_3^s$ dans le cas $d=3$.}
\label{fig:emmasubdebut}
\end{figure}

Un arbre réel $T$ est un espace métrique géodésique et $0$-hyperbolique ; le degré d'un point $x$ de $T$ est le nombre de composantes connexes de $T\setminus\{x\}$.
L'espace $\mathscr{R}^d$ (décrit dans la section \ref{sectionsubarel}) est un arbre réel qui contient tous les arbres réels
possédant un nombre fini de points de branchement (points de degré $>2$), et dont les points sont de degré maximal $(2d)$.
Réaliser l'arbre simplicial $T_n^s$ consiste à considérer chaque arête comme un segment de $\mathds{R}$ ; on obtient ainsi une partie de $\mathscr{R}^d$.
Par définition des substitutions d'arbre, $\mathcal{V}_n^s\subset \mathcal{V}_{n+1}^s$ pour tout $n\in \mathds{N}$. Si on impose en plus que
deux arêtes de $T_n^s$ de même couleur sont envoyées sur deux segments de même longueur, alors la suite $(T_n)_n$ de réalisations
est définie de manière unique à homothétie près (\cite[section 5.5]{THE}). La suite $(T_n)_n$ ainsi obtenue est une suite
de Cauchy (pour la distance de Hausdorff) et converge vers un arbre réel compact $T_{\tau}$ de $\overline{\mathscr{R}^d}$, le
complété métrique de $\mathscr{R}^d$.
On définira, en utilisant les propriétés combinatoires de la substitution d'arbre, une application surjective $f_Q : \Omega^+\to T_{\tau}$,
et on verra que $f_Q$ réalise une conjugaison entre le système symbolique et un système d'isométries partielles sur $T_{\tau}$.
La construction de cette application passe par une étude détaillée de la combinatoire du système symbolique et des points de branchement de la substitution d'arbre. On montrera
en particulier que $f_Q$ est une bijection de l'ensemble des décalés du point fixe de la substitution $\sigma$ dans l'ensemble des points
de branchement de $T_{\tau}$, et on mettra en évidence une relation entre ces points de branchement et les facteurs bispéciaux du langage.\\

Une des ambitions de cet article est de poser les bases d'une théorie plus générale. Etant donnée une substitution inversible,
on voudrait pouvoir se servir des propriétés du système dynamique engendré que l'on connaît bien pour construire de
manière systématique une substitution d'arbre associée. A cet effet, on met en évidence, sur l'exemple proposé,
une relation fondamentale entre le système dynamique $(\Omega^+, S)$ et la substitution d'arbre.

Un arc $[s, t]$ (segment géodésique reliant $s$ à $t$) d'un arbre $T_n$ est dit \textbf{simple} si $s$ et $t$ sont les seuls points de $[s, t]$ de degré $\ne 2$ (dans $T_n$).
Tout arbre $T_n$ se décompose naturellement en arcs simples, et cette décomposition se traduit (par application de la substitution d'arbre)
en une partition (modulo un ensemble fini) de $T_{\tau}$. La partition de $T_{\tau}$ ainsi obtenue se relève (par $f_Q^{-1}$) en une partition de $\Omega^+$
qui est dite \textbf{déterminée} par $T_n$.

On note $\mathfrak{L}(\Omega^+)$ l'ensemble des facteurs (sous-mots finis) des éléments de $\Omega^+$.
Pour tout mot $u$ de $\mathfrak{L}(\Omega^+)$, on note $P_u$ l'ensemble des mots $V$ de $\Omega^+$ tels que $uV$ appartient encore à $\Omega^+$
et pour tout $m\in \mathds{N}^*$, on note $\mathscr{P}_m = \{P_u ; |u| = m\}$.

On peut maintenant énoncer les résultats principaux.
Soit $d\ge 3$, soit $\sigma$ la substitution définie par $\sigma(1)=12$, $\sigma(k)=(k+1)$ (pour $2\le k\le d-1$)
et $\sigma(d)=1$, et soit $(\Omega^+, S)$ le système dynamique engendré par $\sigma$. On définit une
substitution d'arbre $\tau$ associée à $\sigma$ et on décrit, grâce aux arbres $T_n^s=\tau^n(T_0^s)$,
une suite $(T_n)_n$ d'arbres réels convergente vers un arbre réel $T_{\tau}$.
\begin{introthm}[Théorème \ref{thm:emmagenarpar}]
Tout arbre $T_n$ détermine une partition $\mathscr{P}_m$
(pour un certain $m$ défini explicitement en fonction de $n$) de $\Omega^+$.
\end{introthm}
On étudie les partitions $\mathscr{P}_m$ atteintes (toutes ne le sont pas). On note $\mu$ l'unique mesure
de probabilité du système symbolique et on s'intéresse au cardinal de l'ensemble $\mathscr{P}_m / \sim$ où $P_u \sim P_v$ lorsque $\mu(P_u) = \mu(P_v)$.
Ce cardinal prend la valeur $(d)$ si $m=1$, la valeur $(2d-2)$ si et seulement si $\mathfrak{L}(\Omega^+)$ contient un mot $u\ne \epsilon$ bispécial
(il existe au moins $2$ prolongements à droite et au moins $2$ prolongements à gauche de $u$ dans $\mathfrak{L}(\Omega^+)$)
de longueur $m-1$, et la valeur $(2d-1)$ sinon. On montre alors le théorème suivant.
\begin{introthm}[Théorème \ref{thm:determination}]
$T_0$ détermine $\mathscr{P}_1$. La partition $\mathscr{P}_m$, $m > 1$, est déterminée par un arbre $T_n$ si et seulement si $\# (\mathscr{P}_m / \sim)=2d-2$.
\end{introthm}\ 

La substitution $\sigma$ considérée est inversible, et on note encore $\sigma$ l'automorphisme
du groupe libre de base $A$ (désormais noté $F_d$) engendré par la substitution, et $\sigma^{-1}$ son inverse.

On note $T_{\Phi^{-1}}$ l'arbre invariant de $\sigma^{-1}$ (comme défini dans \cite{GJLL}).
On utilise les résultats énoncés dans \cite{LL03} pour construire une application $Q$ surjective de $\partial F_d$ dans $\overline{T}_{\Phi^{-1}}\cup \partial T_{\Phi^{-1}}$
(où $\overline{T}_{\Phi^{-1}}$ est le complété métrique de $T_{\Phi^{-1}}$ et $\partial T_{\Phi^{-1}}$ est son bord de Gromov).
On considère $\Omega^+$ comme une partie de $\partial F_d$.
Si $V=aV^\prime\in \Omega^+$ avec $a\in A$, la propriété d'équivariance vérifiée par $Q$ assure notamment que
$Q(V^\prime) = a^{-1}Q(V)$ (ici $a^{-1}$ est la translation associé à l'action du groupe libre sur $T_{\Phi^{-1}}$),
ce qui nous permet à nouveau de représenter géométriquement l'action du décalage sur $\Omega^+$.
\begin{introthm}[Théorème \ref{thm:bijiso}]
Il existe une bijection isométrique de $T_{\tau}$ dans $Q(\Omega^+)$.
\end{introthm}
La démonstration repose sur le fait que l'application $f_Q$ définie précédemment grâce à la substitution d'arbre \og copie \fg~ l'application $Q$ :
si $d_\infty$ est la distance sur $Q(\Omega^+)$ et $d_{T_{\tau}}$ la distance sur $T_{\tau}$, on a
\begin{center}
	$\forall~ V_1, V_2\in \Omega^+,~d_\infty(Q(V_1), Q(V_2)) = d_{T_{\tau}}(f_Q(V_1), f_Q(V_2))$.
\end{center}

On conclut (section \ref{sec:generalisation}) par une discussion sur la généralisation des résultats à d'autres automorphismes.

\section{Combinatoire des mots}\label{sectioncombmots}

	\subsection{Système dynamique symbolique}\label{sec:systdyn}
	On rappelle ici des notions classiques de dynamique symbolique ; pour plus d'informations sur le sujet, on pourra par exemple consulter \cite{Que} ou \cite{Fog}.\\
		On considère un alphabet fini $A$, et on note
		$A^*$ l'ensemble des mots finis sur $A$ (le mot vide est noté $\epsilon$),
		$A^{-\mathds{N}^*}$ l'ensemble des mots infinis à gauche,
		$A^{\mathds{N}}$ l'ensemble des mots infinis à droite, et
		$A^{\mathds{Z}}$ l'ensemble des mots bi-infinis.
		Ces ensembles sont munis de la topologie produit.
		On écrira un mot $W$ de $A^{\mathds{Z}}$ en le pointant entre $W_{-1}$ et $W_0$ ;~~ $W = \dots W_{-2} W_{-1} .W_0 W_1 W_2\dots$.

		Soit $w = w_0w_1\dots w_p$ un mot de $A^*$. La longueur de $w$ est $|w| = p+1$.
		Le mot $w$ est \textbf{préfixe} d'un mot $v\in A^*$ (resp. $V\in A^{\mathds{N}}$) s'il existe $v^\prime\in A^*$ (resp. $V^\prime\in A^{\mathds{N}}$)
		tel que $v = wv^\prime$ (resp. $V = wV^\prime$).
		Le mot $w$ est \textbf{suffixe} d'un mot $u\in A^*$ (resp. $U\in A^{-\mathds{N}^*}$) s'il existe $u^\prime\in A^*$ (resp. $U^\prime\in A^{-\mathds{N}^*}$)
		tel que $u = u^\prime w$ (resp. $U = U^\prime w$).

		Le \textbf{langage} $\mathfrak{L}(W)$ d'un mot $W$ est l'ensemble de tous les mots finis qui apparaissent dans
		$W$. Un élément de $\mathfrak{L}(W)$ est appelé \textbf{facteur} de $W$.
		On dit qu'un facteur $u$ de $\mathfrak{L}(W)$ est \textbf{spécial à gauche} (resp. \textbf{à droite}) s'il existe deux éléments distincts $a$ et $b$ de $A$ tels que
		les mots $au$ et $bu$ (resp. $ua$ et $ub$) sont encore dans $\mathfrak{L}(W)$. Un facteur est \textbf{bispécial} s'il est à la fois spécial à gauche et
		à droite.

		On note $S$ l'application \textbf{décalage} sur $A^\mathds{N}$ (ou $A^\mathds{Z}$) qui à tout mot
		$W = (W_i)_{i}$ (pour $i\in\mathds{N}$ ou $\mathds{Z}$) associe le mot $S(W) = (W_{i+1})_{i}$.
		Le \textbf{système dynamique bilatère} engendré par un mot $W$ est le couple $(\Omega (W), S)$,
		où $\Omega(W) = \{W^\prime \in A^\mathds{Z} ; \mathfrak{L}(W^\prime) \subset \mathfrak{L}(W) \}$. Notons que l'ensemble $\Omega (W)$ est
		l'adhérence dans $A^\mathds{Z}$ de l'orbite
		de $W$ sous l'action de $S$ ; il est compact pour la topologie induite par celle de $A^\mathds{Z}$ et la restriction de
		$S$ à $\Omega (W)$, encore notée $S$, est un homéomorphisme.

		On définit de même un \textbf{système dynamique unilatère} $\Omega^+ (W)$ en se plaçant dans $A^\mathds{N}$ (dans ce cas $S$ n'est pas une bijection).
		Un mot $V$ de $\Omega^+ (W)$ sera dit \textbf{spécial à gauche} s'il existe deux éléments distincts $a$ et $b$ de $A$ tels que
		les mots $aV$ et $bV$ sont encore dans $\Omega^+ (W)$.\\

		Une \textbf{substitution} est un morphisme $\sigma$ pour la concaténation du monoïde libre $A^*$, qui envoie
		$A$ sur $A^*\setminus \{\epsilon\}$, et tel qu'il existe une lettre $a$ de $A$ pour laquelle $\lim\limits_{n\to +\infty}|\sigma^n(a)| = +\infty$.
		La substitution se prolonge de manière naturelle aux ensemble $A^{-\mathds{N}^*}$, $A^{\mathds{N}}$ et $A^\mathds{Z}$ par concaténation.

			Si $\sigma$ est une substitution sur $\{1, 2, \dots, d\}$, on note $M_\sigma$ la \textbf{matrice d'incidence} de $\sigma$, dont
			le coefficient $(i, j)$ est le nombre d'occurrences de la lettre $i$ dans $\sigma(j)$.

			Une matrice est dite \textbf{primitive} s'il existe une puissance de cette matrice dont les coefficients sont tous strictement positifs.
			Le théorème de Perron-Frobenius implique que $M_\sigma$ admet alors une valeur propre dominante simple,
			qui est réelle positive et un vecteur propre associé
			à cette valeur propre à coefficients strictement positifs.
			Une substitution est \textbf{primitive} si et seulement si sa matrice d'incidence est primitive.

			Toute substitution primitive $\sigma$ possède un mot périodique bi-infini (mot $W\in A^{\mathds{Z}}$ tel que $\sigma^k(W) = W$ pour un certain $k\in \mathds{N}^*$).
			Soit $W$ un mot périodique par $\sigma$. Par primitivité, l'ensemble  $\Omega(W)$ ne dépend pas de $W$ ; on le note $\Omega$.
			Le couple $(\Omega, S)$ est le \textbf{système dynamique symbolique} engendré par $\sigma$. De même, $\mathfrak{L}(W)$ ne dépend pas
			de $W$ et on le note $\mathfrak{L}(\Omega)$. Si la substitution est primitive, le système $(\Omega, S)$ est minimal (il ne
			possède pas de fermé invariant par $S$ non trivial) et uniquement ergodique (il existe une unique mesure de probabilité invariante
			par $S$) (voir \cite{Que}).

			Le \textbf{système dynamique symbolique unilatère} possède des propriétés similaires, si ce n'est que le
			décalage $S$ n'est pas une bijection sur cet ensemble. On note que la projection canonique du système bilatère sur le
			système unilatère est injective sauf sur un ensemble dénombrable où elle est fini-à-un (\cite{Que}).

		\subsection{Automate et développement en préfixes-suffixes}\label{sec:adps}

			On pourra se référer à \cite[chapitre 2]{Sie} et \cite{CanSie} pour plus de détails sur cette section.\\
			Soit $\sigma$ une substitution primitive non $S$-périodique (si $W\in \Omega$ vérifie $\sigma^k(W) = W$ pour un certain $k\in \mathds{N}^*$,
			alors pour tout $h\in \mathds{N}^*$, $S^h(W)\ne W$)
			sur un alphabet de cardinal fini $A$ et soit $(\Omega, S)$, $\Omega\subset A^{\mathds{Z}}$, le système symbolique
			engendré par $\sigma$. On définit l'automate $A_\sigma$ associé à la substitution $\sigma$ par :
				\begin{itemize}
				\item $A$ est l'ensemble des états ; tous les états sont initiaux,
				\item $P = \{(p, a, s)\in A^*\times A\times A^* ; \exists b\in A ; \sigma(b) = pas\}$ est l'ensemble des couleurs,
				\item il existe une flèche entre les états $a$ et $b$ colorée par $e=(p, a, s)$ si $\sigma(b) = pas$.
				\end{itemize}
			Cet automate est fortement connexe si $\sigma$ est primitive. Un exemple est donné figure \ref{fig:apsex}.

			\begin{figure}[h!]
				\begin{center}
				\begin{psfrags}
					\psfrag{1}{\LARGE{$1$}}
					\psfrag{2}{\LARGE{$2$}}
					\psfrag{3}{\LARGE{$3$}}
					\psfrag{e1e}{\Large{$(\epsilon, 1, \epsilon)$}}
					\psfrag{e3e}{\Large{$(\epsilon, 3, \epsilon)$}}
					\psfrag{12e}{\Large{$(1, 2, \epsilon)$}}
					\psfrag{e12}{\Large{$(\epsilon, 1, 2)$}}
					\scalebox{0.6}{\includegraphics{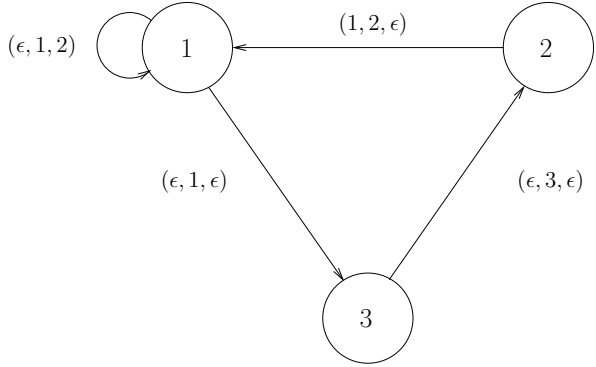}}
				\end{psfrags}
				\end{center}
				\vspace{-4mm}
				\caption{Automate des préfixes-suffixes associé à $\sigma : 1\mapsto 12, 2\mapsto 3, 3\mapsto 1$.}
				\label{fig:apsex}
			\end{figure}

			Un élément $(e_i)_{i\ge 0}\in P^\mathds{N}$ est dit \textbf{admissible} s'il s'agit d'un mot infini reconnu par l'automate des
			préfixes-suffixes. On note $D$ l'ensemble des éléments admissibles de $P^\mathds{N}$.
			\begin{prop}\label{prop:coolprefsuff}
			Si $(p_i, a_i, s_i)_{i\ge 0}$ est un élément de $D$, alors pour tout
			$i\ge 0$, $\sigma(a_{i+1}) = p_ia_is_i$ et pour tout $k\in \mathds{N}$,
			$\sigma^{k}(a_k) = \sigma^{k-1}(p_{k-1})\dots \sigma(p_1)p_0a_0s_0\sigma(s_1)\dots \sigma^{k-1}(s_{k-1})$.
			Notamment, $s_0\sigma(s_1)\dots \sigma^{k-1}(s_{k-1})$ est un suffixe strict de $\sigma^{k}(a_k)$ et
			$\sigma^{k-1}(p_{k-1})\dots \sigma(p_1)p_0$ est un préfixe strict de $\sigma^{k}(a_k)$.
			\end{prop}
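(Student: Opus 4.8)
Je procéderais par récurrence, en établissant d'abord la relation de base puis la formule générale. Le point de départ est la définition même de l'automate $A_\sigma$ : par construction, une flèche colorée par $e_i = (p_i, a_i, s_i)$ allant de l'état $a_i$ vers l'état $a_{i+1}$ existe si et seulement si $\sigma(a_{i+1}) = p_i a_i s_i$. Puisque $(e_i)_{i\ge 0}$ est admissible, c'est un chemin infini dans l'automate ; la compatibilité des flèches consécutives impose exactement que pour tout $i\ge 0$ le couple $(e_i, e_{i+1})$ s'enchaîne, c'est-à-dire que l'état d'arrivée de $e_i$ (qui est $a_{i+1}$) coïncide avec l'état de départ de $e_{i+1}$. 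Cela donne immédiatement la première égalité $\sigma(a_{i+1}) = p_i a_i s_i$ pour tout $i\ge 0$, sans autre travail.

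Pour la formule générale, je raisonnerais par récurrence sur $k$. Le cas $k=0$ est trivial : $\sigma^0(a_0) = a_0$, et les produits de préfixes et de suffixes sont vides. Pour le cas $k=1$, la relation de base donne $\sigma(a_1) = p_0 a_0 s_0$, ce qui est bien la formule annoncée. Pour l'hérédité, supposons la formule vraie au rang $k$, appliquée à la suite décalée $(e_{i+1})_{i\ge 0}$ (qui est encore admissible, partant de l'état $a_1$) : on obtient
\[
\sigma^{k}(a_{k+1}) = \sigma^{k-1}(p_{k})\cdots \sigma(p_2)p_1 a_1 s_1 \sigma(s_2)\cdots \sigma^{k-1}(s_{k}).
\]
Il suffit alors d'appliquer $\sigma$ à cette identité et d'utiliser $\sigma(a_1) = p_0 a_0 s_0$ pour remplacer le terme central $\sigma(a_1)$ ; en redistribuant $\sigma$ sur chaque facteur, les puissances montent d'un cran et l'on retrouve exactement l'expression de $\sigma^{k+1}(a_{k+1})$ voulue. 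On prendra garde à l'ordre des facteurs : les préfixes apparaissent de gauche à droite dans l'ordre décroissant des puissances de $\sigma$ (d'où le $\sigma^{k-1}(p_{k-1})\cdots p_0$), tandis que les suffixes apparaissent dans l'ordre croissant ($s_0 \sigma(s_1)\cdots$), et la commutation avec $\sigma$ préserve cet ordre.

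Les deux dernières assertions (caractère \emph{strict} du préfixe et du suffixe) découlent du fait que le facteur central $a_0$ n'est pas vide — c'est une lettre — donc $s_0\sigma(s_1)\cdots\sigma^{k-1}(s_{k-1})$ ne peut pas être tout $\sigma^k(a_k)$, et de même pour le préfixe. Je ne vois pas d'obstacle réel dans cette démonstration : la seule subtilité est la gestion soigneuse des indices et de l'ordre des facteurs lors de l'application de $\sigma$ à l'hypothèse de récurrence décalée, ce qui est une vérification purement formelle. Le cœur conceptuel est entièrement contenu dans la définition de l'automate ; la proposition n'est qu'un dépliage de cette définition le long d'un chemin admissible.
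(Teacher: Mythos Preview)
Your argument is correct and is exactly the natural way to establish this proposition. Note however that the paper does not actually give a proof of this statement: Proposition~\ref{prop:coolprefsuff} is stated without demonstration (the surrounding section refers to \cite{CanSie} and \cite{Sie} for background on the prefix-suffix automaton, and the result is a direct unfolding of the automaton's definition). Your induction via the shifted admissible sequence $(e_{i+1})_{i\ge 0}$, followed by an application of $\sigma$ and the substitution $\sigma(a_1)=p_0 a_0 s_0$, is the standard proof and there is nothing to add.
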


			Dans \cite{CanSie}, V. Canterini et A. Siegel explicitent une application $\Gamma: \Omega\to D$ qui met en évidence
			la structure auto-similaire de $\Omega$. Si $W\in \Omega$, $W = U.V$ (où $U\in A^{-\mathds{N}^*}$ et $V\in A^{\mathds{N}}$)
			et $\Gamma(W) = (p_i, a_i, s_i)_{i\in \mathds{N}}$,
			\begin{itemize}
				\item si $(s_i)_{i\in \mathds{N}}$ n'est pas ultimement constante égale à $\epsilon$, alors
				$V = \lim\limits_{n\to +\infty} a_0s_0\sigma(s_1)\dots \sigma^n(s_n)$,
				\item si $(p_i)_{i\in \mathds{N}}$ n'est pas ultimement constante égale à $\epsilon$, alors
				$U = \lim\limits_{n\to +\infty} \sigma^n(p_n)\dots \sigma(p_1)p_0$.
			\end{itemize}
			Les différents cas sont abordés au paragraphe suivant.
			\begin{thm}[\cite{CanSie}]\label{thm:gamps}
			Si $\sigma$ est primitive et non $S$-périodique, l'application $\Gamma$ est continue surjective de $\Omega$ sur $D$.
			Elle est injective sur $\Omega\setminus \bigcup\limits_{n\in \mathds{Z}} S^n(\Omega_{per})$ (où $\Omega_{per}\subset \Omega$
			est l'ensemble des mots périodiques par $\sigma$) et est donc injective en mesure.
			De plus, pour tout $d$ de $D$, $\#(\Gamma^{-1}(\{d\}))\le \#(\Omega_{per})$.
			\end{thm}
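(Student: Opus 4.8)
The plan is to construct $\Gamma$ by hand through iterated desubstitution, the one nontrivial external input being Mossé's recognizability theorem (a primitive $S$-aperiodic substitution is bilaterally recognizable); continuity, surjectivity and the injectivity/fibre statements then follow from the combinatorics already recorded in Proposition~\ref{prop:coolprefsuff} and from finiteness of $\Omega_{per}$. Concretely, recognizability gives an $L\ge 0$ such that each $W\in\Omega$ has a unique $\sigma$-preimage $W^{(1)}\in\Omega$ and a unique offset $0\le k_0<|\sigma(W^{(1)}_0)|$ with $W=S^{k_0}\sigma(W^{(1)})$, with $k_0$ and $W^{(1)}_0$ depending only on $W_{[-L,L]}$. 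Iterating, set $W^{(0)}=W$, extract $W^{(i+1)}$ and $k_i$ at each step, and put $a_i=W^{(i)}_0$, with $p_i,s_i$ the length-$k_i$ prefix and the complementary suffix of $\sigma(a_{i+1})=\sigma(W^{(i+1)}_0)=p_ia_is_i$. Then $(p_i,a_i,s_i)\in P$, the edge $(p_i,a_i,s_i)$ ends at the state $a_{i+1}$ at which $(p_{i+1},a_{i+1},s_{i+1})$ starts, so $\Gamma(W):=(p_i,a_i,s_i)_{i\ge0}$ is a path, i.e. an element of $D$; unwinding $W^{(i)}=S^{k_i}\sigma(W^{(i+1)})$ reproduces the identities of Proposition~\ref{prop:coolprefsuff}, hence the limit formulas for $V$ and $U$ whenever the relevant sequence is not eventually $\epsilon$.

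Continuity follows from the locality of recognizability: if $W,W'$ agree on a large enough central window then $k_0,a_0$ agree, their $\sigma$-preimages again agree on a large central window, and so on, forcing $\Gamma(W)$ and $\Gamma(W')$ to agree on arbitrarily many coordinates. For surjectivity, take $(p_i,a_i,s_i)_i\in D$; Proposition~\ref{prop:coolprefsuff} gives $L_k=\sigma^{k-1}(p_{k-1})\cdots p_0$ and $R_k=s_0\sigma(s_1)\cdots\sigma^{k-1}(s_{k-1})$ with $\sigma^k(a_k)=L_k\,a_0\,R_k$, and these stabilise coordinatewise as $k\to\infty$. A short argument shows $(p_i)$ and $(s_i)$ cannot both be eventually $\epsilon$: otherwise $\sigma(a_{i+1})=a_i$ for large $i$, forcing a cycle among letters $b$ with $|\sigma(b)|=1$ and hence $\sigma^m(b)=b$, which contradicts $|\sigma^n(b)|\to\infty$ (a consequence of primitivity). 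So at least one of $\lim_k L_k$, $\lim_k R_k$ is a genuine infinite word; completing the remaining side with the infinite, infinitely $\sigma$-divisible word carried by the tail of the path gives a bi-infinite $W$ all of whose finite factors occur in some $\sigma^k(a_k)\in\mathfrak{L}(\Omega)$, so $W\in\Omega$, and uniqueness of desubstitution yields $\Gamma(W)=(p_i,a_i,s_i)_i$. Together with compactness of $\Omega$, continuity also re-proves that $D=\Gamma(\Omega)$ is closed.

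It remains to control the fibres. Any $W\in\Gamma^{-1}(d)$, $d=(p_i,a_i,s_i)_i$, is determined by the tower $(W^{(i)})_i$ with $W^{(i)}_0=a_i$ and $W^{(i)}=S^{|p_i|}\sigma(W^{(i+1)})$; moreover $W_{[-n,n]}$ equals a prescribed window of $L_k\,a_0\,R_k$ as soon as the position of $a_0$ in $\sigma^k(a_k)$ and the length of $R_k$ both exceed $n$. Hence if neither $(p_i)$ nor $(s_i)$ is eventually $\epsilon$, the fibre is a singleton. If $(s_i)$ is eventually $\epsilon$, from index $N$ say, then $R_k$ stabilises and the right half of $W^{(N)}$ is only required to be a right-infinite word of $\Omega^+$ that is infinitely $\sigma$-factorisable on the right with prescribed successive initial letters $a_N,a_{N+1},\dots$; such a word is the right half of a point whose suitable shift is fixed by a power of $\sigma$, so $W\in\bigcup_{n\in\mathds{Z}}S^n(\Omega_{per})$, and there are at most $\#\Omega_{per}$ choices — and symmetrically when $(p_i)$ is eventually $\epsilon$. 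This gives $\#\Gamma^{-1}(d)\le\#\Omega_{per}$ and injectivity of $\Gamma$ off $\bigcup_{n\in\mathds{Z}}S^n(\Omega_{per})$; as $\sigma$ is primitive, $\Omega_{per}$ is finite, that set is countable, hence $\mu$-null, and $\Gamma$ is injective in measure. The step I expect to be the main obstacle is precisely this last case analysis — describing the fibre exactly when one of the two sequences is eventually $\epsilon$ and matching the leftover ambiguity with the (finite) set of $\sigma$-periodic points; Mossé's recognizability and Perron–Frobenius control of periodic points are both used there.
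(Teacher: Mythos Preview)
The paper does not prove this theorem: it is stated with a citation to \cite{CanSie} (Canterini--Siegel) and no argument is given. So there is no ``paper's own proof'' to compare against; what you have written is in fact a faithful sketch of the Canterini--Siegel proof itself --- iterated desubstitution via Moss\'e's bilateral recognizability, continuity from the locality of the recognizability radius, surjectivity by reconstructing $W$ from the telescoping identity of Proposition~\ref{prop:coolprefsuff}, and the fibre analysis on $D_\epsilon$.

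One place your outline is thin is the surjectivity and fibre steps in the degenerate case. When, say, $(p_i)_i$ is eventually $\epsilon$ from index $N$, you say one completes the missing left side ``with the infinite, infinitely $\sigma$-divisible word carried by the tail of the path''. Concretely: for $i\ge N$ the relation $\sigma(a_{i+1})=a_i s_i$ with $p_i=\epsilon$ says $a_i$ is the first letter of $\sigma(a_{i+1})$; finiteness of $A$ forces $(a_i)_{i\ge N}$ to be eventually periodic, and the $\sigma$-periodic bi-infinite word attached to that cycle supplies a compatible left tail for $W^{(N)}$. Making this explicit is what actually produces a preimage, and simultaneously shows that the only freedom in the fibre is the choice among these periodic left (or right) tails --- hence the bound $\#\Gamma^{-1}(d)\le\#\Omega_{per}$ and the inclusion $\Gamma^{-1}(D_\epsilon)\subset\bigcup_{n\in\mathds{Z}}S^n(\Omega_{per})$ recorded in Theorem~\ref{thm:prgamma}. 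Your sketch has the right shape; this is the one spot where a reader would want the mechanism spelled out.
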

			La suite $\Gamma(W)$ est le \textbf{développement en préfixes-suffixes} de $W$.\\

			On note $D_{min}, D_{max}$ et $D_\epsilon$ les sous-ensembles de $D$ définis par :
			\begin{itemize}
				\item $D_{min} = \{(p_i, a_i, s_i)_{i\ge 0} \in D ; \forall i\in \mathds{N}, p_i = \epsilon\}$
				\item $D_{max} = \{(p_i, a_i, s_i)_{i\ge 0} \in D ; \forall i\in \mathds{N}, s_i = \epsilon\}$
				\item $D_\epsilon = \{(p_i, a_i, s_i)_{i\ge 0} \in D ; (\exists i_0\in \mathds{N} ; \forall i\ge i_0, p_i=\epsilon)$
				ou $(\exists i_0\in \mathds{N} ; \forall i\ge i_0, s_i=\epsilon)$\}.
			\end{itemize}\vspace{1mm}
			L'ensemble des mots bi-infinis périodiques par $\sigma$ est encore noté $\Omega_{per}$.
			\begin{thm}[\cite{CanSie}]\label{thm:prgamma}
			Les égalités suivantes sont vérifiées.\\
			\begin{itemize}
			\item $\Gamma(\Omega_{per}) = D_{min}$ et $\Gamma^{-1}(D_{min}) = \Omega_{per}$\\
			\item $\Gamma(S^{-1}(\Omega_{per})) = D_{max}$ et $\Gamma^{-1}(D_{max}) = S^{-1}(\Omega_{per})$\\
			\item $\Gamma(\bigcup\limits_{n\in \mathds{Z}} S^n(\Omega_{per})) = D_\epsilon$ et $\Gamma^{-1}(D_\epsilon) = \bigcup\limits_{n\in \mathds{Z}} S^n(\Omega_{per})$.
			\end{itemize}
			\end{thm}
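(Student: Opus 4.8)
Le plan est le suivant. Puisque $\Gamma$ est surjective de $\Omega$ sur $D$ (th\'eor\`eme~\ref{thm:gamps}) et que $D_{min}$, $D_{max}$, $D_\epsilon$ sont contenus dans $D$, toute \'egalit\'e de la forme $\Gamma^{-1}(\mathcal B)=\mathcal A$ entra\^{\i}ne $\Gamma(\mathcal A)=\Gamma(\Gamma^{-1}(\mathcal B))=\mathcal B$~; il suffit donc de prouver les trois \'egalit\'es portant sur les images r\'eciproques. L'ingr\'edient central est la structure auto-similaire~: par reconnaissabilit\'e, tout $W\in\Omega$ s'\'ecrit de mani\`ere unique $W=S^{|p_0|}(\sigma(W'))$ avec $0\le|p_0|<|\sigma(W'_0)|$ et $W'\in\Omega$, ce qui d\'efinit une \og d\'esubstitution \fg~$E\colon\Omega\to\Omega$, $E(W)=W'$, telle que $\Gamma\circ E=\mathcal S\circ\Gamma$, o\`u $\mathcal S$ est le d\'ecalage sur $P^{\mathds{N}}$~; on a de plus $E\circ\sigma=\mathrm{id}_\Omega$ (donc $\sigma$ est injective sur $\Omega$) et $\sigma\circ E(W)=S^{-|p_0|}(W)$. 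On utilisera enfin la lecture g\'eom\'etrique du d\'eveloppement~: au niveau $n$, le \og point \fg~de $W$ (situ\'e entre $W_{-1}$ et $W_0$) se trouve dans le bloc $\sigma^n(a_n)$, \`a la position $|\sigma^{n-1}(p_{n-1})\cdots p_0|$ de son bord gauche (cf. proposition~\ref{prop:coolprefsuff}). Ainsi $\Gamma(W)\in D_{min}$ \'equivaut \`a ce que ce point soit au bord gauche de tous les blocs embo\^{\i}t\'es, $\Gamma(W)\in D_{max}$ \`a ce qu'il soit juste avant la derni\`ere lettre de chacun d'eux, et $\Gamma(W)\in D_\epsilon$ \`a ce que sa position (mesur\'ee depuis le bord gauche, ou depuis le bord droit) soit constante \`a partir d'un certain niveau.

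Le c{\oe}ur de la preuve est le cas de $D_{min}$. L'inclusion $\Omega_{per}\subseteq\Gamma^{-1}(D_{min})$ est imm\'ediate~: si $\sigma^k(W)=W$, le point de $W$ tombe sur un bord de bloc de $\sigma^k$, donc de $\sigma$, d'o\`u $p_0=\epsilon$ et $E(W)=\sigma^{k-1}(W)$, qui est encore $\sigma$-p\'eriodique~; une r\'ecurrence donne $p_i=\epsilon$ pour tout $i$. R\'eciproquement, si $\Gamma(W)\in D_{min}$, alors $W=\sigma^n(E^n(W))$ pour tout $n$ (la position y est nulle \`a chaque \'etape), et la premi\`ere lettre $a_n$ de $E^n(W)$ v\'erifie $a_n=\varphi(a_{n+1})$, o\`u $\varphi(a)$ est la premi\`ere lettre de $\sigma(a)$~; donc $a_n$ appartient \`a l'image \'eventuelle $A_\infty=\bigcap_m\varphi^m(A)$ pour tout $n$, et comme $\varphi$ induit une permutation de l'ensemble fini $A_\infty$, la suite $(a_n)_n$ est p\'eriodique, de p\'eriode $q$. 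Dans $D_{min}$, la suite $(a_n)$ d\'etermine $(s_n)$ (par $\sigma(a_{n+1})=a_ns_n$), donc $\Gamma(W)$ est $q$-p\'eriodique et $\Gamma(E^q(W))=\mathcal S^q(\Gamma(W))=\Gamma(W)$. La fibre $\Gamma^{-1}(\{\Gamma(W)\})$ \'etant finie (th\'eor\`eme~\ref{thm:gamps}, $\Omega_{per}$ \'etant fini) et stable par $E^q$, l'orbite de $W$ sous $E^q$ est ultimement p\'eriodique~: il existe $k_0\ge0$ et $p\ge1$ tels que $Z:=E^{k_0q}(W)$ v\'erifie $E^{pq}(Z)=Z$. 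Comme $\sigma\circ E=\mathrm{id}$ sur $\Gamma^{-1}(D_{min})$ (stable par $E$), on a $\sigma^{pq}\circ E^{pq}=\mathrm{id}$ sur cet ensemble, d'o\`u $\sigma^{pq}(Z)=\sigma^{pq}(E^{pq}(Z))=Z$~; enfin $W=\sigma^{k_0q}(Z)$ ($E$ et $\sigma$ \'etant inverses l'une de l'autre sur $\Gamma^{-1}(D_{min})$), donc $\sigma^{pq}(W)=W$ et $W\in\Omega_{per}$. On obtient ainsi $\Gamma^{-1}(D_{min})=\Omega_{per}$, puis $\Gamma(\Omega_{per})=D_{min}$.

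Le cas de $D_{max}$ se ram\`ene au pr\'ec\'edent par un argument miroir~: d'apr\`es la lecture g\'eom\'etrique, le point de $W$ est juste avant la derni\`ere lettre de chaque bloc embo\^{\i}t\'e si et seulement si celui de $S(W)$ est au bord gauche de chaque bloc embo\^{\i}t\'e (la minimalit\'e de $(\Omega,S)$ assurant l'existence du bloc suivant), autrement dit $\Gamma(W)\in D_{max}$ si et seulement si $\Gamma(S(W))\in D_{min}$~; on en d\'eduit $\Gamma^{-1}(D_{max})=S^{-1}(\Gamma^{-1}(D_{min}))=S^{-1}(\Omega_{per})$, puis l'\'egalit\'e des images. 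Enfin, pour $D_\epsilon$~: si $\Gamma(W)$ v\'erifie $p_i=\epsilon$ pour $i\ge i_0$, alors $\Gamma(E^{i_0}(W))=\mathcal S^{i_0}(\Gamma(W))\in D_{min}$, donc $E^{i_0}(W)\in\Omega_{per}$, et $W=S^{t}(\sigma^{i_0}(E^{i_0}(W)))$ pour un $t\ge0$ (cf. proposition~\ref{prop:coolprefsuff}), d'o\`u $W\in\bigcup_nS^n(\Omega_{per})$ car $\sigma^{i_0}(\Omega_{per})\subseteq\Omega_{per}$~; le cas $s_i=\epsilon$ pour $i\ge i_0$ est sym\'etrique, via $D_{max}$. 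R\'eciproquement, si $Z\in\Omega_{per}$ et $n>0$, le point de $S^n(Z)$ est \`a la position $n$ du bord gauche, et comme les blocs embo\^{\i}t\'es de $Z$ sont tous align\'es \`a gauche et de longueur tendant vers l'infini, cette position stationne \`a $n$ aux niveaux \'elev\'es, d'o\`u $p_i=\epsilon$ pour $i$ grand et $\Gamma(S^n(Z))\in D_\epsilon$~; le cas $n<0$ est sym\'etrique (position constante depuis le bord droit, $s_i=\epsilon$ \`a partir d'un rang), et $n=0$ rel\`eve de $D_{min}\subseteq D_\epsilon$. On conclut que $\Gamma^{-1}(D_\epsilon)=\bigcup_{n\in\mathds{Z}}S^n(\Omega_{per})$, puis l'\'egalit\'e des images.

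La principale difficult\'e est, dans le cas de $D_{min}$, de passer de la p\'eriodicit\'e du d\'eveloppement $\Gamma(W)$ \`a celle de $W$ lui-m\^eme~: $\Gamma$ n'\'etant pas injective sur les orbites p\'eriodiques, c'est la finitude des fibres de $\Gamma$, jointe \`a leur stabilit\'e par $E^q$, qui permet de conclure. Le reste de la preuve consiste en une comptabilit\'e soigneuse des longueurs de blocs (pour les traductions g\'eom\'etriques et pour l'effet de $S$), qui repose sur la reconnaissabilit\'e des substitutions primitives.
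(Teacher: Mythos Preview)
Le papier ne d\'emontre pas ce th\'eor\`eme~: il s'agit d'un r\'esultat cit\'e de \cite{CanSie} (Canterini--Siegel), \'enonc\'e sans preuve. Il n'y a donc pas de \og preuve du papier \fg{} \`a laquelle comparer la v\^otre.

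Votre preuve est essentiellement correcte et suit la d\'emarche naturelle (celle de l'article original de Canterini--Siegel)~: reconnaissabilit\'e de Moss\'e pour d\'efinir la d\'esubstitution $E$, relation d'entrelacement $\Gamma\circ E=\mathcal S\circ\Gamma$, puis exploitation de la finitude des fibres de $\Gamma$ (th\'eor\`eme~\ref{thm:gamps}) pour passer de la p\'eriodicit\'e du d\'eveloppement \`a celle du mot lui-m\^eme. Les trois cas sont trait\'es proprement et la r\'eduction de $D_{max}$ et $D_\epsilon$ \`a $D_{min}$ via la lecture g\'eom\'etrique des blocs embo\^{\i}t\'es est correcte.

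Deux remarques de d\'etail. D'une part, la reconnaissabilit\'e que vous invoquez est le th\'eor\`eme de Moss\'e pour les substitutions primitives ap\'eriodiques~; il serait bon de le citer explicitement, puisque c'est l'hypoth\`ese \og non $S$-p\'eriodique \fg{} du cadre (section~\ref{sec:adps}) qui la rend applicable. D'autre part, dans l'argument pour $D_{min}$, la stabilit\'e de la fibre $\Gamma^{-1}(\{\Gamma(W)\})$ par $E^q$ repose sur l'identit\'e $\Gamma\circ E=\mathcal S\circ\Gamma$ appliqu\'ee \`a tout point $X$ de la fibre, ce que vous sous-entendez mais qu'il vaut mieux \'ecrire une fois~: $\Gamma(E^q X)=\mathcal S^q\Gamma(X)=\mathcal S^q\Gamma(W)=\Gamma(W)$. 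La mention de la minimalit\'e dans le cas $D_{max}$ est superflue (dans un mot bi-infini la d\'ecomposition en $\sigma^n$-blocs fournit toujours un bloc suivant), mais cela n'affecte pas la validit\'e.
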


	\subsection{Le groupe libre et son bord}
	On pourra se reporter à \cite{LS} pour une introduction aux groupes libres, et à \cite{BeKa} pour des définitions et résultats concernant
	les bords des groupes hyperboliques.

	On note $F_n$ le groupe libre de rang $n\ge 2$. Une base $A$ étant fixé, on peut voir $F_n = F(A)$ comme l'ensemble des mots (finis)
	$u=u_0\dots u_k$, $(u_i\in A\cup A^{-1})$ réduits ($u_i\ne u_{i+1}^{-1}$) ; l'élément neutre est noté $\epsilon$. La loi du groupe
	est la concaténation-réduction.

	Le bord (de Gromov) $\partial F_n$ de $F_n$ peut être vu comme l'ensemble $\partial F(A)$ des mots $V = V_0\dots V_k\dots$ infinis à droite,
	réduits sur l'alphabet $A\cup A^{-1}$. On munit $A\cup A^{-1}$ de la topologie discrète et $(A\cup A^{-1})^{\mathds{N}}$ de la topologie
	produit. Le bord $\partial F_n\subset (A\cup A^{-1})^{\mathds{N}}$ hérite de la topologie induite : c'est un ensemble de Cantor.

	Un mot $w=w_0w_1\dots w_p\in F(A)$ est \textbf{préfixe} d'un mot $v=v_0v_1\dots v_q\in F(A)$ (resp. $V = (V_i)_{i\in \mathds{N}}\in \partial F(A)$)
	si pour tout $j$, $(0\le j\le p)$, $w_j = v_j$ (resp. $w_j = V_j$).
	Le mot $w$ est \textbf{suffixe} d'un mot $u=u_0u_1\dots u_r\in F(A)$ si pour tout $j$, $(r-p\le j\le r)$, $w_j = u_j$.

	Le groupe
	libre agit (continûment) sur son bord par translations à gauche : si $u=u_0\dots u_k \in F(A)$ et $V = V_0\dots V_k\dots \ \in \partial F(A)$, alors
	$uV = u_0\dots u_{k-i-1}V_{i+1}\dots V_k\dots \ \in \partial F(A)$ où $V_0\dots V_i = u_k^{-1}\dots u_{k-i}^{-1}$ est le plus long préfixe
	commun à $u^{-1}$ et $V$.

\section{Substitutions d'arbre et arbres réels}\label{sectionsubarsimp}

	Le but de cette section est d'introduire des opérations de substitution sur des arbres simpliciaux ; on peut voir celles-ci comme
	des extensions des substitutions usuelles (morphismes de monoïdes). Elles sont étudiées plus en détails dans \cite[chapitre 4]{THE} ; ici,
	on s'en servira pour construire des suites d'arbres simplicaux.

	\subsection{Arbres simpliciaux}\label{subsec:arbsimp}
	Un \textbf{graphe orienté} $G$ se compose d'un ensemble $\mathcal{V}$ de sommets et d'un ensemble $\mathcal{E}$ d'arêtes, $\mathcal{E}$ étant une partie
	de $\mathcal{V}\times \mathcal{V}$. Le \textbf{degré} d'un sommet $x$ est le nombre d'arêtes adjacentes à $x$.
	\begin{itemize}
	\item Un \textbf{chemin} de $G$ est une liste $p=(x_0, x_1, \dots, x_k)$ de sommets de $\mathcal{V}$ telle que
	pour tout $0\le i \le k-1$, on a $((x_i, x_{i+1})\in \mathcal{E}$ ou $(x_{i+1}, x_i)\in \mathcal{E})$.
	\item On appelle \textbf{cycle} un chemin $p=(x_0, x_1, \dots, x_k)$ tel que $x_0=x_k$.
	\item On dit qu'un graphe orienté est \textbf{connexe} si et seulement si il existe un chemin entre chaque paire de sommets.
	\end{itemize}
	Un \textbf{arbre simplicial (orienté)} est un \textbf{graphe (orienté) connexe sans cycle}. Si $x$ et $x'$ sont deux sommets quelconques d'un arbre
	simplicial, il existe un unique chemin minimal (au sens du nombre de sommets) $(x=x_0, x_1, \dots, x_k=x')$ reliant $x$ à $x'$ ; l'entier $k$
	est la \textbf{distance} entre $x$ et $x'$.\\

		On considère un alphabet $A$ de cardinal fini.
		Etant donné un ensemble $\mathscr{V}$ infini non dénombrable quelconque, on appelle \textbf{graphe coloré par $A$ à sommets dans $\mathscr{V}$}
		un couple $(\mathcal{V}, \mathcal{E})$ tel que $\mathcal{V}$ est une partie de $\mathscr{V}$, et $\mathcal{E}$ est une partie de $\mathcal{V}\times \mathcal{V}\times A$.

		On appelle $\mathscr{S}_0(A)$ l'ensemble des arbres finis (au sens du nombre d'arêtes), orientés, colorés par $A$ et à sommets dans $\mathscr{V}$.
		On note $\mathscr{S}_E(A)$ l'ensemble des arbres de $\mathscr{S}_0(A)$ ne possédant qu'une seule arête.

		Dans la suite de cette section, pour tout élément $X$ de $\mathscr{S}_0(A)$, l'ensemble des sommets de $X$ sera noté $\mathcal{V}_X$ et l'ensemble des arêtes colorées
		de $X$ sera noté $\mathcal{E}_X$.\\

		Si $A=\{a_0,\dots a_d\}$, on note $\overline{A} = \{\overline{a_0},\dots \overline{a_d}\}$.
		A tout élément $X$ de $\mathscr{S}_0(A)$, on associe une fonction
		\[
			\gamma_X : \mathcal{V}_X\times \mathcal{V}_X \to (A\cup\overline{A})^*
		\]
		où $(A\cup\overline{A})^*$ est le monoïde libre engendré par $A\cup\overline{A}$.
		Si $(x, x^{\prime}, a)$ est une arête de $X$, alors $\gamma_X(x, x^{\prime}) = a$ et $\gamma_X(x^{\prime}, x) = \overline{a}$.
		Si $(x, x^{\prime})$ est une paire de sommets quelconques de $X$ et $(x, x_1, x_2, \dots, x_{k-2}, x_{k-1}, x_k=x^{\prime})$ est la liste des sommets formant le chemin le plus
		court de $x$ à $x^{\prime}$, alors on définit
		\begin{center}
			$\gamma_X(x, x^{\prime}) = \gamma_X(x, x_1)\gamma_X(x_1, x_2)\dots \gamma_X(x_{k-2}, x_{k-1})\gamma_X(x_{k-1}, x^{\prime})$.
		\end{center}
		La fonction $\gamma_X$ est appelée \textbf{fonction chemin} de $X$.\\

		On introduit la notion de discernement, qui sera essentielle par la suite. On considère un arbre $X$ de $\mathscr{S}_0(A)$,
		et on choisit arbitrairement un sommet $x_0$ de $X$. La propriété de discernement a pour but d'assurer que tout sommet $x$ de $X$ est déterminé de manière unique par le
		mot $\gamma_X(x_0, x)$.
		\begin{defn}
		Si pour tout couple $x, x^\prime$ de sommets d'un arbre $X$ de $\mathscr{S}_0(A)$, le mot $\gamma_{X} (x, x^\prime)$
		de $(A\cup \overline{A})^*$ ne contient aucune paire $\overline{\alpha}\alpha$,
		pour $\alpha\in A$, alors on dit de $X$ que c'est un arbre \textbf{discerné}.
		\end{defn}

        \subsection{Substitutions d'arbre}
	Nous introduisons maintenant l'objet principal de cet article : les substitutions d'arbre.
	\begin{defn}\label{defn:subarb}
	Une \textbf{substitution d'arbre} est une application $\tau$ de $\mathscr{S}_E(A)$ dans $\mathscr{S}_0(A)$ vérifiant les propriétés suivantes.
	\begin{itemize}
		\item Si $X=(\{x_1, x_2\}, \{(x_1, x_2, a)\})$ est un élément de $\mathscr{S}_E(A)$, d'image $\tau(X)$, alors
		$x_1, x_2\in \mathcal{V}_{\tau(X)}$.
		\item Pour toute lettre $a$ de $A$, si $X=(\{x_1, x_2\}, \{(x_1, x_2, a)\})$ et $Y=(\{y_1, y_2\}, \{(y_1, y_2, a)\})$ sont deux éléments de $\mathscr{S}_E(A)$, alors
		il existe une bijection $f$ de $\mathcal{V}_{\tau(X)}$ dans $\mathcal{V}_{\tau(Y)}$ vérifiant
		\begin{itemize}
			\item $f(x_1) = y_1$ et $f(x_2) = y_2$,
			\item pour tout couple $(x, x^{\prime})$ de $\mathcal{V}_{\tau(X)}$, $\gamma_{\tau(X)}(x, x^{\prime}) = \gamma_{\tau(Y)}(f(x), f(x^{\prime}))$.
		\end{itemize}
		\item Soient $X$ et $Y$ deux éléments quelconques distincts de $\mathscr{S}_E(A)$, $\tau(X)$ et $\tau(Y)$ étant leurs images
		respectives par l'application $\tau$. Alors
		\begin{itemize}
			\item $(\mathcal{V}_{\tau(X)}\setminus \mathcal{V}_X) \cap \mathcal{V}_{\tau(Y)} = \emptyset$, et
			\item $(\mathcal{V}_{\tau(Y)}\setminus \mathcal{V}_Y) \cap \mathcal{V}_{\tau(X)} = \emptyset$.
		\end{itemize}
		\item Pour tout $a\in A$, on note $X_{a}=(\{x_1, x_2\}, \{(x_1, x_2, a)\})$. S'il existe une liste
		$(a_1, \dots, a_{k-1}, a_k=a_1)$ ($k\le \# A +1$) d'éléments de $A$ telle que pour tout $1\le j\le k-1$,
		$(x_1, x_2, a_{j+1})$ ou (exclusif) $(x_2, x_1, a_{j+1})$ est une arête de $\tau (X_{a_j})$, alors
		les degrés de $x_1$ et $x_2$ dans $\tau (X_{a_j})$ sont égaux à $1$ quel que soit $1\le j\le k-1$.
	\end{itemize}
	\end{defn}
	On étend maintenant la définition de substitution d'arbre : la nouvelle application, également
	notée $\tau$, est une application de $\mathscr{S}_0(A)$ dans $\mathscr{S}_0(A)$.
	Soit $X$ un élément de $\mathscr{S}_0(A)$. On note $t$ l'application qui à tout élément $(x_1, x_2, a)$ de $\mathcal{E}_X$ associe l'élément
	$(\{x_1, x_2\}, \{(x_1, x_2, a)\})$ de $\mathscr{S}_E(A)$. L'image de $X$ par $\tau$ est alors :
	\begin{center}
	$\tau(X) = (\bigcup\limits_{p\in \mathcal{E}_X} \mathcal{V}_{\tau(t(p))}, \bigcup\limits_{p\in \mathcal{E}_X} \mathcal{E}_{\tau(t(p))})$.
	\end{center}
	Un exemple est donné figure \ref{fig:subex}.\\

	\begin{figure}[h!]
	\begin{center}
	\begin{psfrags}
		\psfrag{Xa}{\huge{$X_a$}}
		\psfrag{Xb}{\huge{$X_b$}}
		\psfrag{Xc}{\huge{$X_c$}}
		\psfrag{Xd}{\huge{$X_d$}}
		\psfrag{Ya}{\huge{$\tau(X_a)$}}
		\psfrag{Yb}{\huge{$\tau(X_b)$}}
		\psfrag{Yc}{\huge{$\tau(X_c)$}}
		\psfrag{Yd}{\huge{$\tau(X_d)$}}
		\psfrag{Zb}{\huge{$\tau^2(X_b)$}}
		\psfrag{ZZb}{\huge{$\tau^3(X_b)$}}
		\psfrag{x}{\LARGE{$x_1$}}
		\psfrag{y}{\LARGE{$x_2$}}
		\psfrag{z1}{\LARGE{$z_1$}}
		\psfrag{z2}{\LARGE{$z_2$}}
		\psfrag{a}{\huge{$a$}}
		\psfrag{b}{\huge{$b$}}
		\psfrag{c}{\huge{$c$}}
		\psfrag{d}{\huge{$d$}}
		\scalebox{0.64}{\includegraphics{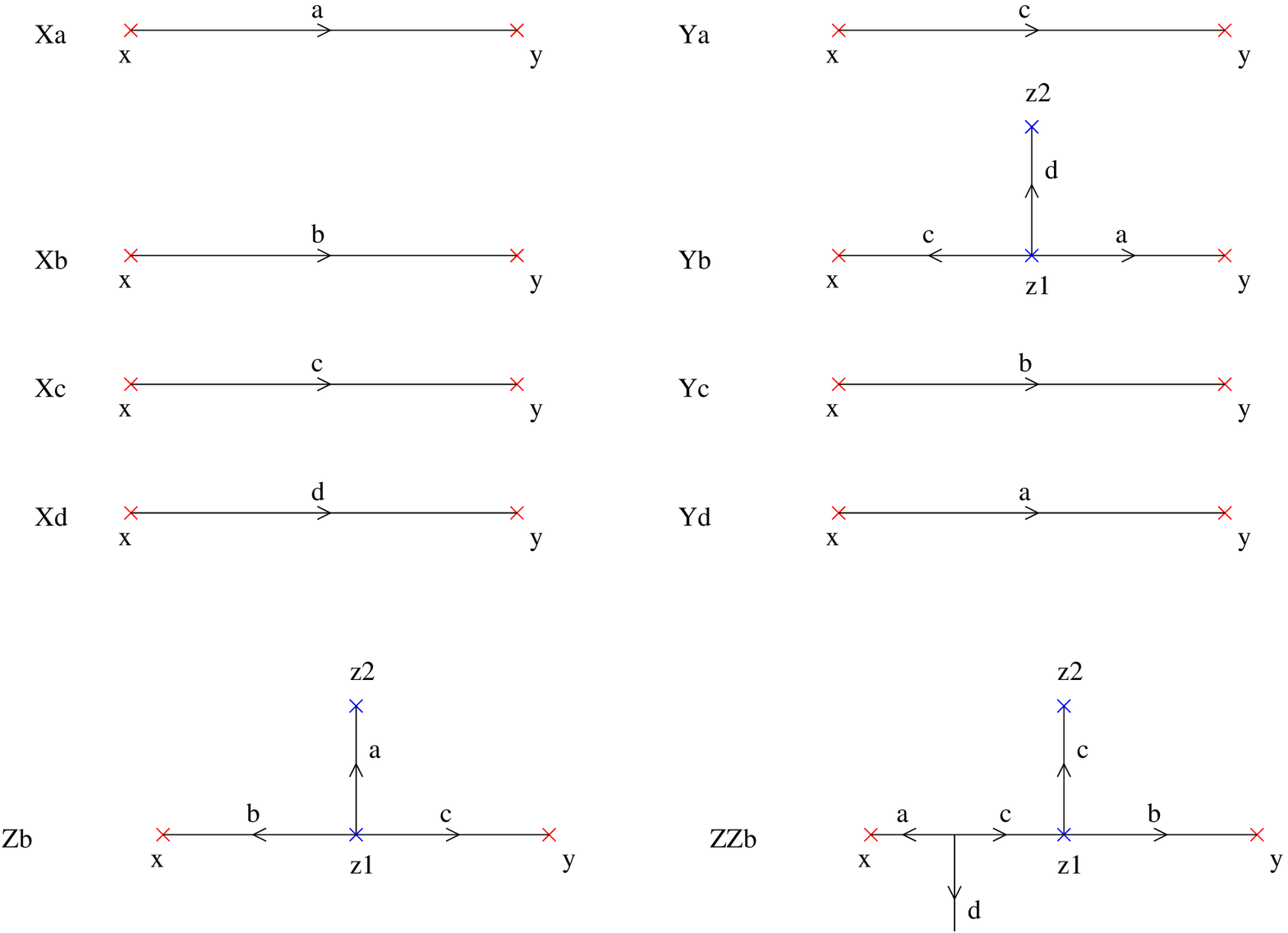}}
	\end{psfrags}
	\end{center}
	\vspace{-4mm}
	\caption{Un exemple de substitution d'arbre.}
	\label{fig:subex}
	\end{figure}

	\begin{defn}
	Soit $A = \{1, \dots, d\}$ un alphabet, $\tau$ une substitution d'arbre sur $\mathscr{S}_0(A)$ et $X_a=(\{x_1, x_2\}, \{(x_1, x_2, a)\})$ un arbre de $\mathscr{S}_E(A)$.
	On appelle \textbf{tronc} de $\tau(X_a)$ le sous-arbre uniquement constitué des arêtes du chemin de $x_1$ à $x_2$. La
	\textbf{matrice tronc} $M_t$ est la matrice $d\times d$ telle que $M_t(i, j)$ est le nombre
	d'arêtes de couleur $i$ dans le tronc de $\tau(j)$.
	\end{defn}
	La matrice tronc de l'exemple donné en figure \ref{fig:subex} est
	$$
	\begin{bmatrix}
		0 & 1 & 0 & 1\\
		0 & 0 & 1 & 0\\
		1 & 1 & 0 & 0\\
		0 & 0 & 0 & 0
	\end{bmatrix}
	$$

	\subsection{Arbres réels}\label{sectionsubarel}

	Un \textbf{arbre réel} $T$ est un espace métrique où deux points quelconques sont joints par un unique arc,
	et cet arc est isométrique à un segment de $\mathds{R}$. On notera $[s, t]$ l'arc joignant $s$ à $t$.
	Le \textbf{degré} d'un point $x$ dans un arbre $T$ est le nombre de composantes connexes de $T\setminus \{x\}$.
	On dit que $x$ est un \textbf{point de branchement} s'il est de degré supérieur ou égal à $3$ ; c'est un \textbf{point terminal}
	s'il est de degré $1$.

	Dans cette section, on présente un espace métrique $\mathscr{R}^k$ adapté aux constructions d'arbres réels au moyen de substitutions d'arbre.
	L'ensemble $\mathscr{R}^k$ est un arbre réel qui contient notamment tous les arbres réels possédant un nombre fini de points de branchement,
	et dont les points sont de degré maximal $(2k)$.
	Par la suite, on se servira des substitutions d'arbre pour définir des suites d'arbres simpliciaux et on
	réalisera ces suites en leurs associant des suites d'arbres réels de $\mathscr{R}^k$. On verra dans la section
	\ref{sectionclasse} que ces suites peuvent être convergentes (pour la distance de Hausdorff) vers des arbres réels compacts de $\overline{\mathscr{R}^k}$,
	le complété métrique de $\mathscr{R}^k$.\\

		Pour tout $k\in \mathds{N}^*$, le \textbf{produit libre} $\mathds{R}*\dots *\mathds{R}$ de $k$ copies de $\mathds{R}$ est noté $\mathscr{R}^k$.
		L'ensemble $\mathscr{R}^k$ est un groupe dans lequel tout élément $t$ (différent de l'origine) a une unique écriture réduite (finie)
		$x_0^{t_0}x_1^{t_1}\dots x_q^{t_q}$, avec
		\begin{itemize}
			\item $x_i\in \{0,\dots, k-1\}$ et $t_i\in \mathds{R}^*$ pour tout $0\le i\le q$,
			\item $x_i\ne x_{i+1}$ pour tout $0\le i\le q-1$.
		\end{itemize}
		Cette notation distingue implicitement les copies de $\mathds{R}$, bien qu'elles jouent le même rôle ;
		la copie $j$ sera notée $\mathds{R}_j$. L'origine est notée $O$.

		On définit la distance $d$ invariante par translation à gauche sur $\mathscr{R}^k$ telle que tout point
		$x_0^{t_0}x_1^{t_1}\dots x_q^{t_q}$ (en écriture réduite) est à distance $|t_0|+|t_1|+\dots +|t_q|$ de l'origine.
		L'élément inverse de $t=x_0^{t_0}x_1^{t_1}\dots x_q^{t_q}$ est $t^{-1}=x_q^{-t_q}\dots x_1^{-t_1}x_0^{-t_0}$.
		Si $s$ et $t$ sont deux éléments
		de $\mathscr{R}^k$, on obtient de l'invariance par translation à gauche que $d(s, t)=d(O, s^{-1}t)$ ; l'écriture
		réduite de $s^{-1}t$ permet alors d'obtenir la distance de $s$ à $t$. L'ensemble $\mathscr{R}^k$ muni de cette distance est un
		arbre réel.

		On notera $\overline{\mathscr{R}^k}$ le complété métrique de $\mathscr{R}^k$.
		En plus de $\mathscr{R}^k$, l'ensemble $\overline{\mathscr{R}^k}$ contient donc tous les points $t$ dont l'écriture réduite
		$x_0^{t_0}x_1^{t_1}\dots x_n^{t_n}\dots$ est infinie à droite et vérifie
		\begin{itemize}
			\item $x_i\in \{0,\dots, k-1\}$, $t_i\in \mathds{R}^*$ et $x_i\ne x_{i+1}$ pour tout $i\in \mathds{N}$,
			\item $\sum\limits_{n\ge 0} |t_n|$ est finie.
		\end{itemize}
		L'espace métrique $(\overline{\mathscr{R}^k}, d)$ est également un arbre réel.

		On munit l'ensemble des parties de $\overline{\mathscr{R}^k}$
		de la distance de Hausdorff associée à $d$. Cette distance est notée $\delta$.
		L'espace métrique $(\overline{\mathscr{R}^k}, d)$ étant complet, l'ensemble des
		compacts non vide de $\overline{\mathscr{R}^k}$ est complet pour $\delta$.

		On notera enfin $\mathscr{T}^k$ l'ensemble des compacts connexes non vides de $\overline{\mathscr{R}^k}$.
		Les éléments de $\mathscr{T}^k$ sont des arbres réels.

		\begin{prop}\label{completree}
		Quel que soit $k\in \mathds{N}^*$, $(\mathscr{T}^k, \delta)$ est complet.
		\end{prop}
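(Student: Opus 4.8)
Le plan est de r\'ealiser $\mathscr{T}^k$ comme un ferm\'e de l'espace $\mathscr{K}^k$ de tous les compacts non vides de $\overline{\mathscr{R}^k}$ muni de $\delta$. On a d\'ej\`a rappel\'e ci-dessus que $(\mathscr{K}^k,\delta)$ est complet (puisque $(\overline{\mathscr{R}^k},d)$ l'est) ; un sous-ensemble ferm\'e d'un espace m\'etrique complet \'etant lui-m\^eme complet, il suffira de montrer que $\mathscr{T}^k$ est ferm\'e dans $\mathscr{K}^k$, autrement dit que la limite pour $\delta$ d'une suite de compacts connexes non vides est encore connexe.

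Pour \'etablir ce lemme, je proc\'ederais par l'absurde. Soit $(K_n)_n$ une suite d'\'el\'ements de $\mathscr{T}^k$ convergeant vers $K$, qui est un compact non vide car $(\mathscr{K}^k,\delta)$ est complet. Pour une partie $E$ de $\overline{\mathscr{R}^k}$ et un r\'eel $r>0$, notons $N_r(E)=\{x\in\overline{\mathscr{R}^k} : d(x,E)<r\}$. Supposons $K=A\sqcup B$ avec $A,B$ ferm\'es non vides disjoints. Comme $A$ et $B$ sont compacts et disjoints, $\eta:=\inf\{d(a,b) : a\in A,\ b\in B\}>0$, et les ouverts $N_{\eta/3}(A)$ et $N_{\eta/3}(B)$ sont disjoints. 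Pour $n$ assez grand, $\delta(K_n,K)<\eta/3$, ce qui entra\^ine d'une part $K_n\subset N_{\eta/3}(K)=N_{\eta/3}(A)\cup N_{\eta/3}(B)$, et d'autre part, $K$ \'etant contenu dans le $\eta/3$-voisinage de $K_n$, que $A$ et $B$ contiennent chacun un point situ\'e \`a distance $<\eta/3$ d'un point de $K_n$ ; donc $K_n$ rencontre \`a la fois $N_{\eta/3}(A)$ et $N_{\eta/3}(B)$. Ainsi $K_n$ serait r\'eunion de deux ouverts non vides disjoints, ce qui contredit sa connexit\'e. Par cons\'equent $K$ est connexe, donc $K\in\mathscr{T}^k$, et $\mathscr{T}^k$ est ferm\'e dans $\mathscr{K}^k$.

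La conclusion est alors formelle : une suite de Cauchy de $(\mathscr{T}^k,\delta)$ est de Cauchy dans $(\mathscr{K}^k,\delta)$, elle y converge vers un compact $K$, lequel est connexe et non vide d'apr\`es ce qui pr\'ec\`ede et appartient donc \`a $\mathscr{T}^k$ ; la suite converge bien dans $\mathscr{T}^k$. Le seul point v\'eritablement substantiel est le lemme de connexit\'e de la limite de Hausdorff ; la compl\'etude de l'ensemble de tous les compacts et la stabilit\'e de la compl\'etude par passage \`a un sous-ensemble ferm\'e sont standard et d\'ej\`a acquises. On pourra au besoin remarquer, mais ce n'est pas n\'ecessaire pour l'\'enonc\'e, qu'un tel $K$ est automatiquement un arbre r\'eel : \'etant un ferm\'e connexe de l'arbre r\'eel $\overline{\mathscr{R}^k}$, il contient le segment g\'eod\'esique joignant deux quelconques de ses points.
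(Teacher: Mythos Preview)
Votre d\'emonstration est correcte et suit essentiellement la m\^eme approche que celle du papier : compl\'etude de l'espace des compacts non vides, puis argument par l'absurde montrant que la limite est connexe via un recouvrement par deux voisinages ouverts disjoints. Votre version est m\^eme un peu plus soign\'ee, puisque vous explicitez le choix du rayon $\eta/3$ et v\'erifiez que $K_n$ rencontre bien les deux voisinages, l\`a o\`u le papier se contente d'un ``ce qui est impossible''.
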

		\begin{proof}
		Soit $(T_n)_n$ une suite de Cauchy d'élements de $\mathscr{T}^k$. L'ensemble des compacts non vides de
		$\overline{\mathscr{R}^k}$ étant complet pour la distance de Hausdorff, il existe un compact $T$ limite de la suite $(T_n)_n$.

		Il s'agit maintenant de vérifier que $T$ est connexe. On suppose qu'il ne l'est pas ; il existe $P_1, P_2$ deux
		compacts de $\overline{\mathscr{R}^k}$ et $\epsilon > 0$ tels que $T=P_1\cup P_2$ et
		$\delta ([P_1]_\epsilon, [P_2]_\epsilon) > 0$. Il existe $n_0$ tel que pour tout $n\ge n_0$,
		$\delta (T_n, T) < \epsilon$. On déduit de la connexité des $T_n$ que pour tout $n\ge n_0$,
		$T_n\subset [P_1]_\epsilon$ ou (exclusif) $T_n\subset [P_2]_\epsilon$, ce qui est impossible. 
		\end{proof}

\section{Construction de c{\oe}urs compacts par substitutions d'arbre}\label{sectionclasse}

	Soit $d$ un entier supérieur ou égal à $3$ et $A=\{1, 2, \dots, d\}$ un alphabet. On note $\sigma$ la substitution
	définie sur $A$ par
	\begin{center}
		\begin{tabular}{cccclc}
		$\sigma$ & : & $1$ & $\mapsto$ & $12$ & \\
		&& $k$ & $\mapsto$ & $(k+1)$ & pour $2\le k\le d-1$\\
		&& $d$ & $\mapsto$ & $1$ & \\
		\end{tabular}
	\end{center}
	La matrice d'incidence $M_\sigma$ de $\sigma$ est définie par
	\begin{figure}[h!]
	\begin{center}
		\begin{psfrags}
		\psfrag{0}{\huge{$0$}}
		\psfrag{1}{\huge{$1$}}
		\psfrag{M}{\huge{$M_\sigma \ =$}}
		\scalebox{0.6}{\includegraphics{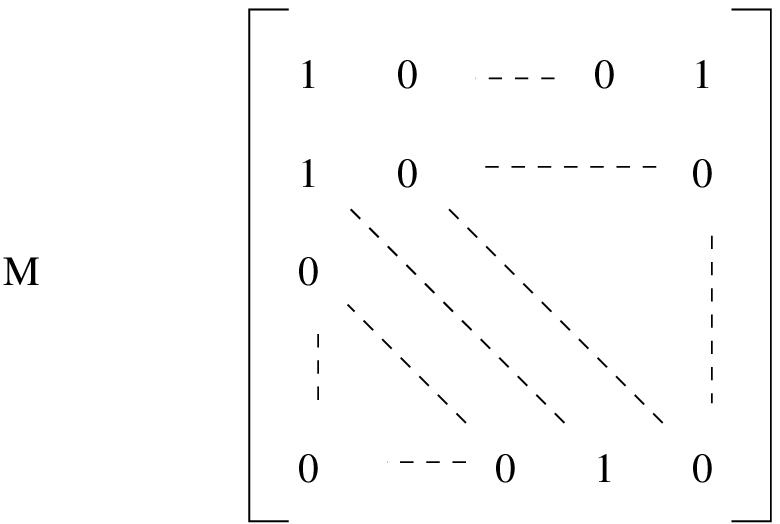}}
		\end{psfrags}
	\end{center}
	\vspace{-4mm}
	\label{fig:incmat}
	\end{figure}\ \\
	Cette matrice est primitive et admet donc une valeur propre réelle dominante $\lambda$ ($>1$) qui vérifie $\lambda^d = \lambda^{d-1}+1$.

	La substitution $\sigma$ est primitive quel que soit $d$ ; on note $(\Omega, S)$ (resp. $(\Omega^+, S)$) le système dynamique symbolique
	bilatère (resp. unilatère) engendré par $\sigma$ et $\mathfrak{L}(\Omega)$ son langage.

	On verra également $\sigma$ comme un automorphisme de $F_d = F(A)$, le groupe libre de base $A$.
	L'application inverse à $\sigma$ est définie par
	\begin{center}
		\begin{tabular}{ccccl}
		$\sigma^{-1}$ & : & $1$ & $\mapsto$ & $d$\\
		&& $2$ & $\mapsto$ & $d^{-1}1$\\
		&& $k$ & $\mapsto$ & $(k-1)$ pour $3\le k\le d$.
		\end{tabular}
	\end{center}
	La coordonnée $(i, j)$ de la matrice d'incidence $M_{\sigma^{-1}}$ de $\sigma^{-1}$ est la somme des nombres
	d'occurrences de $i$ et $i^{-1}$ dans $\sigma^{-1}(j)$ (elle est donc différente de $M_{\sigma}^{-1}$).
	\begin{figure}[h!]
	\begin{center}
		\begin{psfrags}
		\psfrag{0}{\huge{$0$}}
		\psfrag{1}{\huge{$1$}}
		\psfrag{M}{\huge{$M_{\sigma^{-1}} \ =$}}
		\scalebox{0.6}{\includegraphics{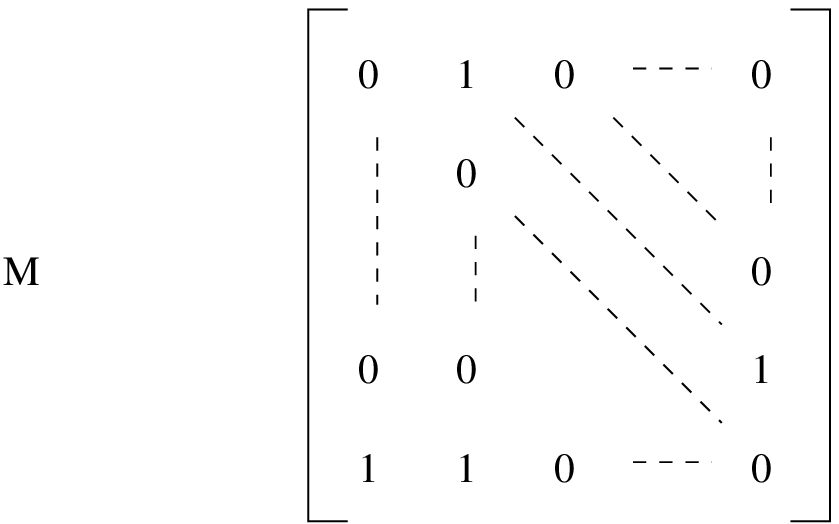}}
		\end{psfrags}
	\end{center}
	\vspace{-4mm}
	\label{fig:incmatinv}
	\end{figure}\ \\
	Cette matrice est primitive et admet une valeur propre réelle dominante $\eta$ ($>1$) qui vérifie $\eta^d = \eta+1$.
	Le vecteur $\mathbf{V_{\sigma^{-1}}}=[1\ \eta^{d-1}\ \eta^{d-2}\ \dots\ \eta^2\ \eta]$ est un vecteur propre à gauche associé à $\eta$.

		\subsection{Substitution d'arbre associée à $\sigma$}
		On définit une substitution d'arbre $\tau$ et un arbre initial $T_0^s$. On va associer à chaque arbre
		$T_n^s = \tau(T_0^s)$, un arbre réel $T_n$ de $\mathscr{R}^d$. La suite $(T_n)_n$ obtenue
		sera convergente vers un arbre réel compact $T_{\tau}$ de $\mathscr{T}^d$ sur lequel on définira un système
		d'isométries partielles qui représentera l'action du décalage sur $\Omega^+$.

			\subsubsection{Substitution d'arbre simplicial}
			On considère l'alphabet $A_\tau=\{1, \dots, d, d+1, \dots, 2d-2\}$ ; $A_\tau$ contient $A$ et $d-2$ lettres supplémentaires
			dont le sens combinatoire sera explicité plus loin (par l'application $p_*$).
			Si $X_i$ est un élément de $\mathscr{S}_E(A_\tau)$, $X_i = (\{x, y\}, \{(x, y, i)\})$, où $i\in A_\tau$,
			$\tau$ est la substitution d'arbre de $\mathscr{S}_0(A_\tau)$ définie par :
				\begin{itemize}
				\item $\tau (X_1) = X_d$,
				\item l'image de $X_2$ est représentée figure \ref{fig:emmasimpsub},
				\item $\tau (X_i) = X_{i-1}$ si $3\le i\le d$,
				\item $\tau (X_{d+1}) = X_1$,
				\item $\tau (X_i) = X_{i-1}$ si $d+2\le i\le 2d-2$.
				\end{itemize}
			\begin{figure}[h!]
			\begin{center}
				\begin{psfrags}
				\psfrag{1}{\LARGE{$1$}}
				\psfrag{d}{\LARGE{$d$}}
				\psfrag{d1}{\LARGE{$(d+1)$}}
				\psfrag{d2}{\LARGE{$(d+2)$}}
				\psfrag{k}{\LARGE{$k$}}
				\psfrag{kb}{\Large{$d+3\le k\le 2d-3$}}
				\psfrag{2d}{\LARGE{$(2d-2)$}}
				\psfrag{x}{\LARGE{$x$}}
				\psfrag{y}{\LARGE{$y$}}
				\psfrag{Y}{\huge{$\tau(X_2)$}}
				\scalebox{0.65}{\includegraphics{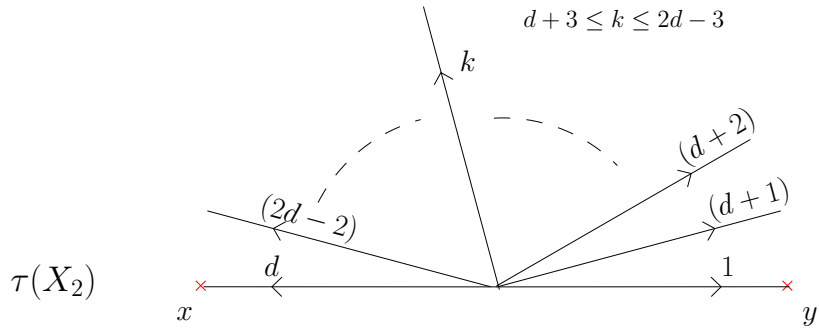}}
				\end{psfrags}
			\end{center}
			\vspace{-4mm}
			\caption{Substitution d'arbre associée à $\sigma$.}
			\label{fig:emmasimpsub}
			\end{figure}
			\begin{rmk}
			On peut définir la matrice d'incidence $M_\tau$ associée à $\tau$ ; $M_\tau$ est une matrice
			$(2d-2)\times (2d-2)$ et $M_\tau(i, j)$ est le nombre d'arêtes colorées par $i$ dans $\tau(X_j)$.
			Le spectre de $M_\tau$ est constitué du spectre de $M_\sigma$ et de $d-2$ valeurs propres de module $1$.
			\end{rmk}
			La matrice tronc $M_t$ est définie par
			\begin{itemize}
				\item $M_t (d, 1) = M_t (d, 2) = 1$,
				\item $M_t (1, d+1) = 1$,
				\item pour tout $2\le i\le 2d-2$ où $i\ne d+1$, $M_t(i-1, i) = 1$,
				\item $M_t$ est nulle partout ailleurs.
			\end{itemize}
			\begin{figure}[h!]
			\begin{center}
				\begin{psfrags}
				\psfrag{0}{\huge{$0$}}
				\psfrag{1}{\huge{$1$}}
				\psfrag{M}{\huge{$M_t \ =$}}
				\scalebox{0.6}{\includegraphics{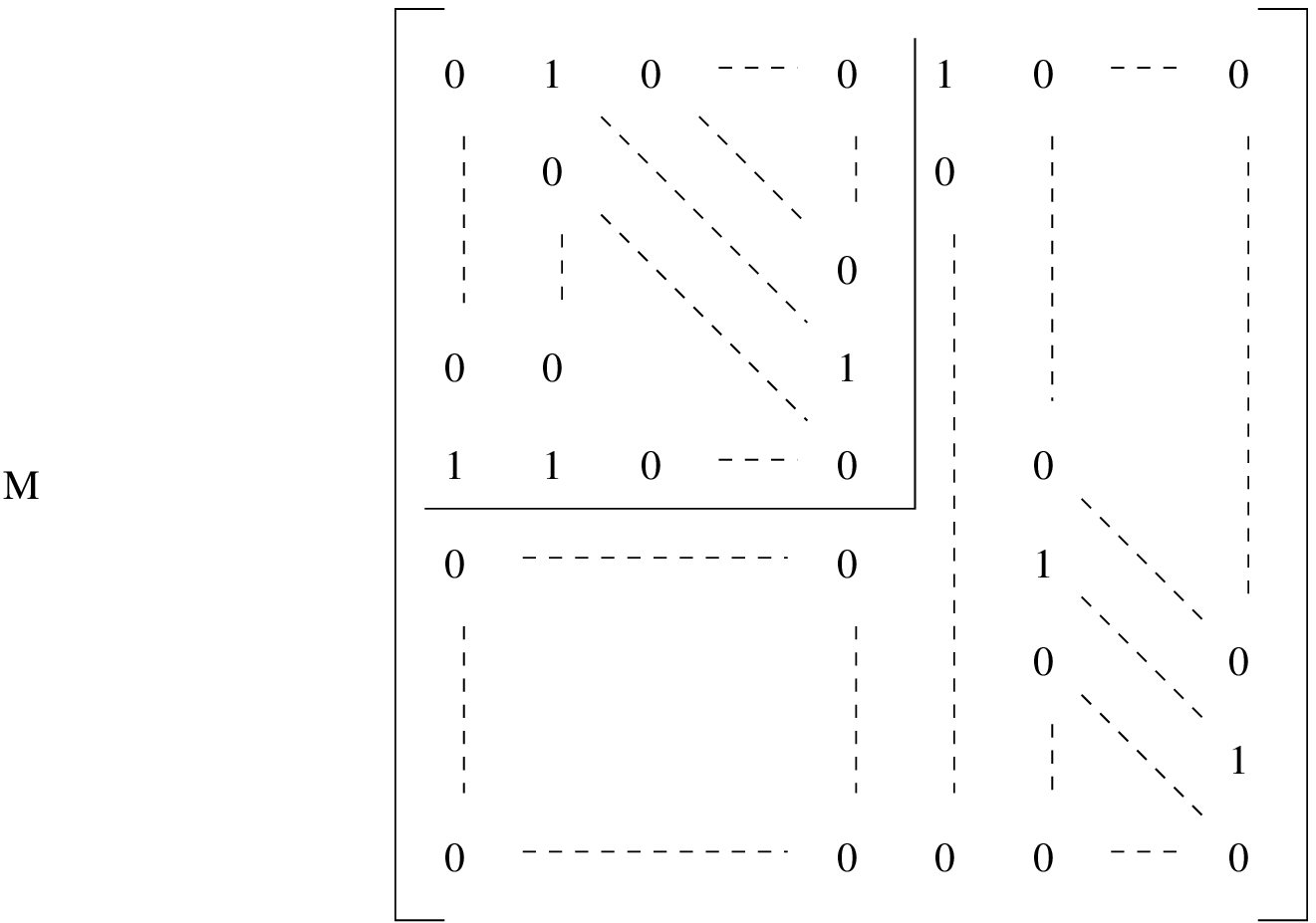}}
				\end{psfrags}
			\end{center}
			\vspace{-4mm}
			\label{fig:troncmat}
			\end{figure}\ \\
			$M_t$ est une matrice $(2d-2)\times (2d-2)$ égale à $M_{\sigma^{-1}}$ sur $\{1\dots d\}\times \{1\dots d\}$.
			Le spectre de $M_t$ contient le spectre de $M_{\sigma^{-1}}$ et $0$ (d'ordre $d-2$). Les deux matrices ont donc
			la même valeur propre dominante $\eta$. Le vecteur
			\begin{center}
				$\mathbf{V_t} = [1\ \ \eta^{d-1}\ \ \eta^{d-2}\ \ \dots\ \ \eta^2\ \ \eta\ \ \eta^{-1}\ \ \eta^{-2}\ \ \dots\ \ \eta^{-(d-3)}\ \ \eta^{-(d-2)}]$
			\end{center}
			est un vecteur propre à gauche associé à $\eta$ ; on se servira de $\mathbf{V_t}$ pour définir les longueurs des arêtes lors de la réalisation.\\

			On choisit un élément $X_2$ de $\mathscr{S}_E(A_\tau)$ d'arête colorée $2$ et on définit $T_0^s = \tau^{d-1}(X_2)$.
			\begin{prop}\label{prop:emmaL0s}
			$T_0^s$ est constitué des $d$ arêtes $(x_0, x_j, j)$ pour $1\le j\le d$ (voir figure \ref{fig:emmaboule1}).
			Par la suite, le sommet $x_0$ sera appelé \textbf{racine} de $T_0^s$.
			\end{prop}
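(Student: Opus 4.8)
The statement is purely combinatorial: one must compute $\tau^{d-1}(X_2)$ and check that it is the tree with root $x_0$ and $d$ edges coloured $1,\dots,d$ emanating from $x_0$. The natural approach is to iterate $\tau$ step by step, using the explicit definition of $\tau$ on the generators $X_i$. First I would record the behaviour of $\tau$ on the colours: $\tau$ acts on the index set $A_\tau$ almost like a cyclic shift, since $\tau(X_i)=X_{i-1}$ for $3\le i\le d$ and for $d+2\le i\le 2d-2$, while $\tau(X_1)=X_d$, $\tau(X_{d+1})=X_1$, and only $X_2$ has an image that is not a single edge (the tree of Figure~\ref{fig:emmasimpsub}, whose trunk is an edge coloured $1$ together with, branching off its endpoints, edges coloured $d,\,(d+1),\,(d+2),\dots,(2d-2)$, as encoded by the trunk matrix $M_t$).

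The key step is to follow the orbit of the edge coloured $2$. Applying $\tau$ once to $X_2$ produces the tree $\tau(X_2)$ of Figure~\ref{fig:emmasimpsub}: its trunk from $x$ to $y$ is a single edge coloured $1$, and at one endpoint sit the extra edges coloured $d,(d+1),\dots,(2d-2)$. Now I would apply $\tau$ to this tree edge by edge, using the extension of $\tau$ to $\mathscr{S}_0(A_\tau)$ as the union of the images of the edges, keeping track of how the shared vertices glue the pieces together (this is exactly where the hypotheses of Definition~\ref{defn:subarb} — that endpoints of an edge are preserved and that images of distinct edges meet only along shared original vertices — are used to guarantee the gluing is well defined and no unwanted identifications occur). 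Under one more application of $\tau$, the trunk edge coloured $1$ becomes an edge coloured $d$; the edge coloured $d$ becomes one coloured $d-1$; the edge coloured $d+1$ becomes the edge coloured $1$; and each edge coloured $j$ with $d+2\le j\le 2d-2$ becomes one coloured $j-1$. Iterating, after $k$ further steps the configuration is a ``star'' whose edges carry a window of consecutive colours that slides and shrinks on the ``extra'' side while the image of the trunk cycles through $1\mapsto d\mapsto d-1\mapsto\cdots$. The bookkeeping shows that after exactly $d-2$ applications of $\tau$ to $\tau(X_2)$ — that is, after $d-1$ applications of $\tau$ to $X_2$ — all the extra colours $d+1,\dots,2d-2$ have been consumed and converted (the colour $d+1$ having turned into a genuine edge of colour $1$ along the way, and each $d+j$ into the colour $j+1$ after the appropriate number of steps), so that precisely the $d$ colours $1,2,\dots,d$ survive, all incident to the single vertex that was the branch point, which we name $x_0$.

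The main obstacle is purely organizational: one must set up the right invariant describing $\tau^k(\tau(X_2))$ for $0\le k\le d-2$ — recording, for each $k$, the colour of the image of the original trunk edge, the multiset of colours on the edges hanging off, and the combinatorial type of the tree (a star centred at the image of the original branch vertex) — and then verify that this invariant is preserved under one step of $\tau$ and that at $k=d-2$ it specializes to the claimed tree. I would do this by a short induction on $k$, the inductive step being a direct application of the definition of $\tau$ on each generator together with the gluing rule from the extension of $\tau$ to $\mathscr{S}_0(A_\tau)$; checking that no two surviving edges acquire the same colour prematurely, and that the final valence at $x_0$ is exactly $d$ (no collapsing, no extra edges), uses the last bullet of Definition~\ref{defn:subarb} to control the degrees of the endpoints along the colour cycles. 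Once the induction closes, the identification of $T_0^s=\tau^{d-1}(X_2)$ with the star of Figure~\ref{fig:emmaboule1} is immediate, and the vertex $x_0$ is the one at which all $d$ edges meet, justifying the terminology ``racine''.
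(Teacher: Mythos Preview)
The paper does not supply an explicit proof of this proposition; it simply refers to the figure and treats the claim as a direct computation. Your plan---iterate $\tau$ on $X_2$ and track the colours through the shift rules $\tau(X_1)=X_d$, $\tau(X_{d+1})=X_1$, $\tau(X_i)=X_{i-1}$ otherwise---is exactly the right one, and the inductive bookkeeping you describe (showing that no edge acquires colour $2$ before the final step, so that each application of $\tau$ is a mere relabelling of the star) is precisely what makes the computation close at step $d-1$.

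There is one inaccuracy in your description of $\tau(X_2)$ that you should correct before writing the induction. The trunk of $\tau(X_2)$ is \emph{not} a single edge coloured $1$. By the trunk matrix $M_t$ (columns $2$ has nonzero entries in rows $1$ and $d$), or equivalently by Proposition~\ref{prop:troncagrees} (which gives $p_*(\gamma_{\tau(X_2)}(x,y))=\sigma^{-1}(2)=d^{-1}1$), the path from $x$ to $y$ in $\tau(X_2)$ has length $2$: it reads $\overline{d}\,1$, so there is a \emph{new} middle vertex $z$ with edges $(z,x,d)$ and $(z,y,1)$, and the $d-2$ extra edges coloured $d+1,\dots,2d-2$ also emanate from $z$. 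Thus $\tau(X_2)$ is already a star of valence $d$ centred at $z$, with \emph{all} edges oriented outward from $z$; the subsequent $d-2$ applications of $\tau$ simply cycle the labels until they become exactly $1,2,\dots,d$. This matters for two reasons: it identifies $x_0$ correctly as the new branch vertex $z$ (not $x$ or $y$), and it guarantees that all $d$ edges of $T_0^s$ are of the form $(x_0,x_j,j)$ with the orientation stated in the proposition---something your version (with one edge coming \emph{into} the branch point along the trunk) would not yield.
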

			On définit également $T_n^s = \tau^n(T_0^s)$. On dira encore que $x_0$ est la racine de $T_n^s$ (quel que soit $n\in \mathds{N}$).
			\begin{rmk}
			L'exposant $s$ est là pour signifier que $T_n^s$ est un arbre simplicial.
			\end{rmk}
			Pour tout $k,n\in \mathds{N}$, on notera $B_k(T_n^s)$ la \textbf{boule} de rayon $k$ autour de $x_0$ de $T_n^s$, c'est-à-dire le sous-arbre
			de $T_n^s$ constitué des sommets à distance $\le k$ de $x_0$.

			\begin{prop}\label{prop:emmadiscer}
			Pour tout $n\in \mathds{N}$, $T_n^s$ est un arbre discerné (voir section \ref{subsec:arbsimp}).
			\end{prop}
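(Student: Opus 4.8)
The claim is that every $T_n^s = \tau^n(T_0^s)$ is a discerned tree, i.e. for every pair of vertices $x,x'$ the path word $\gamma_{T_n^s}(x,x')$ contains no cancelling pair $\overline{\alpha}\alpha$ with $\alpha\in A_\tau$. The natural approach is induction on $n$, but the cleaner route is to prove a stronger structural statement by induction and then deduce discernment. Since $T_0^s = \tau^{d-1}(X_2)$ and $T_{n}^s = \tau(T_{n-1}^s)$, it suffices to understand how $\tau$ acts on a discerned tree. I would proceed in two stages: first establish that $\tau$ takes a discerned tree to a discerned tree (a "stability" lemma), then note $T_0^s$ is discerned by direct inspection (it is the star of Proposition \ref{prop:emmaL0s}, whose path words are of the form $\overline{j}\,j'$ with $j\ne j'$, hence reduced), and conclude.

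The core of the stability lemma is the following. Let $X$ be discerned and write $\tau(X)$ as the union, over edges $p=(x_1,x_2,a)$ of $X$, of the pieces $\tau(t(p))$, glued along the shared vertices $x_1,x_2$ (which by Definition \ref{defn:subarb}, first bullet, are vertices of $\tau(t(p))$, and by the third bullet the new internal vertices of distinct pieces are disjoint). A path between two vertices of $\tau(X)$ decomposes into a concatenation of sub-paths, each lying entirely in one piece $\tau(t(p))$, the transitions happening at the old vertices $x_1,x_2$. Inside a single piece $\tau(X_a)$, one checks from the explicit description of $\tau$ (Figure \ref{fig:emmasimpsub} for $a=2$; a single edge $X_{a-1}$, $X_d$, or $X_1$ in all other cases) that $\tau(X_a)$ is itself discerned — indeed for $a\ne 2$ the piece is a single edge and there is nothing to check, and for $a=2$ one reads off the path words from the figure and verifies no $\overline{\alpha}\alpha$ occurs. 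The only way a cancellation $\overline{\alpha}\alpha$ could appear in a path of $\tau(X)$ that is absent from each piece is at a transition vertex $x_i$: the last letter of $\gamma$ along the incoming piece would have to be $\overline{\alpha}$ and the first letter along the outgoing piece would have to be $\alpha$. I would rule this out using the fourth bullet of Definition \ref{defn:subarb} — the "colour-cycle / degree one" condition — together with the fact that in $X$ (being discerned) the two edges at $x_i$ along the path already form a reduced pair $\overline{b}\,b'$ with $b\ne b'$; tracking which colour-edge of $\tau(X_b)$ resp. $\tau(X_{b'})$ is incident to $x_i$ and in which orientation, the degree-one hypothesis forces these incident edges to be distinct edges carrying the relevant boundary colours, so the concatenated word stays reduced.

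The step I expect to be the main obstacle is precisely this transition analysis at the old vertices: it requires bookkeeping of the orientations (the bar vs. no bar) of the boundary edges of each piece $\tau(X_a)$ at $x_1$ and at $x_2$, and matching this against the edge of $X$ that continues the path on the other side. Concretely one must show that the colour appearing (with its orientation) as the first letter of $\gamma_{\tau(X_b)}$ leaving $x_i$ is never the formal inverse of the colour appearing as the last letter of $\gamma_{\tau(X_{b'})}$ arriving at $x_i$ — and here the hypothesis that these configurations never form a non-trivial colour cycle in the sense of the fourth bullet is exactly what prevents a "folding". Once the transition case is dispatched, the induction closes immediately: $T_0^s$ is discerned, and if $T_{n-1}^s$ is discerned then so is $T_n^s=\tau(T_{n-1}^s)$, which is the assertion of the proposition for all $n\in\mathds{N}$.
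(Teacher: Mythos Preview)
Your overall strategy---prove that $\tau$ takes discerned trees to discerned trees and then induct from $T_0^s$---is sound and can be made to work, but it is not the paper's route and your justification of the transition step has a genuine gap.

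The paper argues differently. It observes that a failure of discernment is a local phenomenon: a pattern $\overline{\alpha}\alpha$ corresponds to two edges of colour $\alpha$ both leaving a single vertex, hence is visible in the radius-$1$ ball around some branch point. Every branch point of every $T_n^s$ is created by the same mechanism (applying $\tau$ to a $2$-coloured edge) and then evolves by the same colour shifts, so its radius-$1$ ball coincides, up to renaming vertices, with $B_1(T_k^s)$ for some $0\le k\le d$; moreover $B_1(T_k^s)$ agrees with $B_1(T_d^s)$ for all $k\ge d$. The proof is then a direct inspection of these finitely many balls (Figure~\ref{fig:emmaboule1}).

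The gap in your argument is the appeal to the fourth bullet of Definition~\ref{defn:subarb}: for this particular $\tau$ that condition is \emph{vacuous}. Its hypothesis asks for a colour cycle $a_1\to a_2\to\cdots\to a_1$ in which each $\tau(X_{a_j})$ contains a \emph{single} edge between its two boundary vertices carrying colour $a_{j+1}$; but following the single-edge trunks one gets the chain $2d-2\to\cdots\to d+1\to 1\to d\to\cdots\to 3\to 2$, and $\tau(X_2)$ has trunk of length two, so no cycle ever closes. The fourth bullet therefore carries no information here and cannot be what rules out cancellations at transition vertices.

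What actually makes your stability lemma true is a direct tabulation specific to this $\tau$: for each colour $b$ and each boundary side of $\tau(X_b)$, record the colour and orientation of the incident edge. One finds that on the $y$-side the edge always arrives, and on the $x$-side it leaves (except for $b=2$, where it arrives) with a colour that depends \emph{injectively} on $b$. Hence at an old vertex $v$ of a discerned $X$, the outgoing edges in $\tau(X)$ come from pairwise distinct colours in $X$ and therefore carry pairwise distinct colours in $\tau(X)$; no $\overline{\alpha}\alpha$ can form. This check is essentially the same enumeration encoded in Figure~\ref{fig:emmaboule1}, so once the gap is filled the two approaches converge on the same concrete verification---the paper packages it as ``look at the finitely many balls,'' while yours would package it as ``the boundary-colour map on outgoing edges is injective.''
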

			\begin{proof}
			Il faut vérifier qu'aucun chemin de la forme $\overline{k}k$ ou $k\overline{k}$, $1\le k\le 2d-2$ n'apparaît dans les chemins de $T_n^s$.
			On se reporte à la figure \ref{fig:emmaboule1} qui représente les boules de rayon $1$ (autour de la racine) des arbres $T_n^s$ ($n\le d$)
			et on note que pour tout $k\ge d$, $B_1(T_k^s)$ et $B_1(T_d^s)$ sont égaux à renommage des sommets près. Les chemins de longueur $2$
			représentés sur les arbres de la figure \ref{fig:emmaboule1} sont les seuls possibles, et on en conclut que
			pour tout $n\in \mathds{N}$, $T_n^s$ est un arbre discerné. 
			\end{proof}
			\begin{figure}[h!]
			\begin{center}
				\begin{psfrags}
				\psfrag{1}{\huge{$1$}}
				\psfrag{2}{\huge{$2$}}
				\psfrag{k}{\huge{$k$}}
				\psfrag{d}{\huge{$d$}}
				\psfrag{d-1}{\Large{$(d-1)$}}
				\psfrag{d-h}{\Large{$(d-h)$}}
				\psfrag{d-h1}{\Large{$(d-(h+1))$}}
				\psfrag{d-l}{\Large{$(d-l)$}}
				\psfrag{L0s}{\huge{$T_0^s$}}
				\psfrag{L1s}{\huge{$B_1(T_1^s)$}}
				\psfrag{Lh+1s}{\huge{$B_1(T_{h+1}^s)$}}
				\psfrag{Ld-1s}{\huge{$B_1(T_{d-1}^s)$}}
				\psfrag{Lds}{\huge{$B_1(T_{d}^s)$}}
				\psfrag{kb1}{\large{$(3\le k\le d-1)$}}
				\psfrag{kb2}{\large{$(3\le k\le d-2)$}}
				\psfrag{kbh+1}{\large{$(3\le k\le d-(h+2))$}}
				\psfrag{lbh+1}{\large{$(1\le l\le h-1)$}}
				\psfrag{kbd-1}{\large{$(3\le k\le d-1)$}}
				\psfrag{kbd}{\large{$(2\le k\le d-2)$}}
				\scalebox{0.6}{\includegraphics{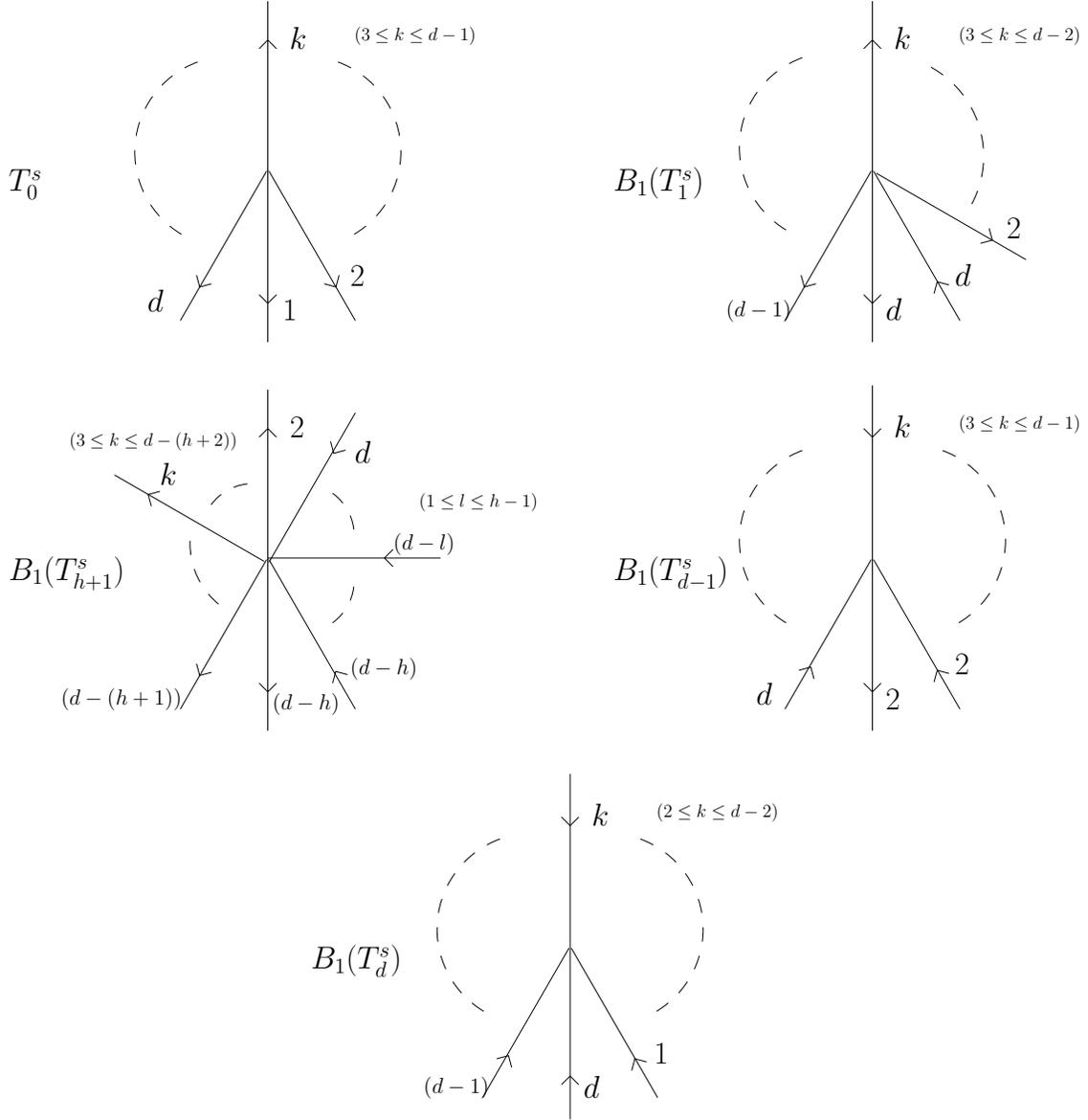}}
				\end{psfrags}
			\end{center}
			\vspace{-4mm}
			\caption{Application de $\tau$ sur une boule de rayon $1$.}
			\label{fig:emmaboule1}
			\end{figure}

			Par la suite, on voudra voir les chemins des arbres simpliciaux comme des éléments de $F_d$ (au lieu
			de $(A_\tau\cup \overline{A_\tau})^*$). Le morphisme $p_* : (A_\tau\cup \overline{A_\tau})^*\to F_d$
			permet de passer de la structure de monoïde à celle de groupe, et interprète combinatoirement
			les lettres $d+1,\dots, 2d-2$ en fonction des lettres de $A$ ; il est défini par :
			\begin{itemize}
				\item $p_*(k) = k$ pour $k\in \{1, \dots, d\}$,
				\item $p_*((d+k)) = \sigma^k(1)$ pour $k\in \{1, \dots, d-2\}$,
				\item $p_*(\overline{k}) = p_*(k)^{-1}$ pour $k\in \{1, \dots, 2d-2\}$.
			\end{itemize}
			\begin{prop}\label{prop:troncagrees}
			Pour tout $1\le i\le d$, si $X_i = (\{x, y\}, \{(x, y, i)\})$ et $\gamma_{\tau(X_i)}$ est la fonction chemin (cf. section \ref{subsec:arbsimp}) de $\tau(X_i)$, alors on a
			\begin{center}
				$p_* (\gamma_{\tau(X_i)} (x, y)) = \sigma^{-1}(i)$.
			\end{center}
			Le tronc de $\tau(X_i)$ (pour $1\le i\le d$) est complètement déterminé par $\sigma^{-1}$.
			\end{prop}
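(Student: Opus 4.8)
The statement concerns the "tronc" (trunk) of $\tau(X_i)$ for $1 \le i \le d$, and claims two things: first, that the path function applied to $p_*$ recovers $\sigma^{-1}(i)$; second, that the trunk is entirely determined by $\sigma^{-1}$. The proof strategy is a straightforward case analysis on $i$, matching the explicit description of $\tau$ on the generators $X_i$ (given just before the statement) against the formula for $\sigma^{-1}$ (given at the start of the section). The key observation linking the two is the definition of $p_*$: it fixes the letters $\{1,\dots,d\}$ and sends the auxiliary letter $d+k$ to $\sigma^k(1)$.

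First I would treat the easy cases $i \ge 3$: here $\tau(X_i) = X_{i-1}$, so the trunk is the single edge $(x,y,i-1)$, hence $\gamma_{\tau(X_i)}(x,y) = (i-1)$ and $p_*((i-1)) = (i-1) = \sigma^{-1}(i)$, as desired. The case $i = 1$ is similar: $\tau(X_1) = X_d$, so $\gamma_{\tau(X_1)}(x,y) = d$ and $p_*(d) = d = \sigma^{-1}(1)$. The only case requiring genuine attention is $i = 2$, where $\tau(X_2)$ is the tree drawn in Figure \ref{fig:emmasimpsub}. Here I would read off from the figure that the trunk — the path from $x$ to $y$ — consists of two edges, the first colored $d$ traversed backwards (i.e.\ contributing $\overline{d}$) and the second colored $d+1$ (contributing $(d+1)$), so that $\gamma_{\tau(X_2)}(x,y) = \overline{d}\,(d+1)$. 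Applying $p_*$ gives $p_*(\overline{d})\,p_*((d+1)) = d^{-1}\,\sigma(1) = d^{-1}\cdot 12$, which is exactly $\sigma^{-1}(2)$ in reduced form in $F_d$. (The precise orientation of the two trunk edges in the figure is what one must check carefully; this is the one place where the argument depends on the picture rather than on the bulleted definition of $\tau$.)

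For the second assertion, I would note that the matrix tronc $M_t$ restricted to $\{1,\dots,d\}\times\{1,\dots,d\}$ equals $M_{\sigma^{-1}}$ (this is already recorded in the text), so the \emph{colors and multiplicities} of the trunk edges of $\tau(X_i)$ are dictated by $\sigma^{-1}$; combined with the first part, which pins down the \emph{word} (hence the order and orientation) traced out along the trunk, the trunk is determined up to renaming of vertices, which is the only freedom allowed by Definition \ref{defn:subarb}. One should also remark that the edge-lengths in the realization are fixed by the left eigenvector $\mathbf{V_t}$, whose restriction to the first $d$ coordinates coincides with $\mathbf{V_{\sigma^{-1}}}$, so even the metric trunk is determined by $\sigma^{-1}$.

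The main (and essentially only) obstacle is bookkeeping the orientations of the edges in Figure \ref{fig:emmasimpsub} correctly, since the reduced word $\sigma^{-1}(2) = d^{-1}1$ forces the $d$-edge to be traversed against its orientation; getting the signs right in the $i=2$ case, and checking that no cancellation $\overline{\alpha}\alpha$ occurs (consistent with Proposition \ref{prop:emmadiscer}), is where the care is needed. Everything else is immediate from the definitions.
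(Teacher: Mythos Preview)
Your overall strategy (case-by-case verification against the explicit definition of $\tau$) is exactly right, and the paper in fact states this proposition without proof, treating it as an immediate check. The cases $i=1$ and $3\le i\le d$ are handled correctly.

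However, your treatment of the case $i=2$ contains a genuine error. You claim the trunk of $\tau(X_2)$ is traced out by $\overline{d}\,(d+1)$ and then compute $p_*(\overline{d}\,(d+1)) = d^{-1}\sigma(1) = d^{-1}12$, asserting this equals $\sigma^{-1}(2)$. But $\sigma^{-1}(2) = d^{-1}1$, not $d^{-1}12$; there is no cancellation that turns $d^{-1}12$ into $d^{-1}1$ in $F_d$. The mistake is in reading the figure: the trunk of $\tau(X_2)$ (the path from $x$ to $y$) consists of an edge coloured $d$ and an edge coloured $1$, \emph{not} an edge coloured $d+1$. The edges coloured $d+1,\dots,2d-2$ are the branches hanging off the new intermediate vertex, not part of the trunk. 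This is confirmed by the matrix tronc $M_t$: its second column has nonzero entries only in rows $d$ and $1$ (matching column $2$ of $M_{\sigma^{-1}}$, since $\sigma^{-1}(2)=d^{-1}1$ contributes one $d^{\pm1}$ and one $1^{\pm1}$). With the correct reading one gets $\gamma_{\tau(X_2)}(x,y)=\overline{d}\,1$, hence $p_*(\overline{d}\,1)=d^{-1}1=\sigma^{-1}(2)$, and the proposition follows.
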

			On remarque que pour tout $n\in \mathds{N}$, les sommets de $T_n^s$ sont de degré soit $1$, soit $d$.
			\begin{defn}
			Pour tout entier $n$, on appelle
			\begin{itemize}
				\item $\mathcal{V}_n^s$ l'ensemble des sommets de $T_n^s$,
				\item $\mathcal{W}_n^s$ l'ensemble des points de branchement de $T_n^s$ (les sommets de degré $d$),
				\item $\gamma_n:\mathcal{V}_n^s\times \mathcal{V}_n^s\to (A_\tau\cup \overline{A_\tau})^*$ la fonction chemin de $T_n^s$
				(cf. section \ref{subsec:arbsimp}).
			\end{itemize}
			\end{defn}
			Si $x_0$ est la racine de $T_n^s$, pour tout élément $x$ de $\mathcal{V}_n^s$, il existe un chemin minimal de $x_0$ à $x$ et un mot $\gamma_n(x_0, x)$ de
			$(A_\tau\cup \overline{A_\tau})^*$ associé.
			\begin{prop}\label{prop:emmaprinj}
			Si $x$ et $y$ sont deux sommets distincts de $T_n^s$, alors $p_*(\gamma_n(x_0, x))\ne p_*(\gamma_n(x_0, y))$.
			\end{prop}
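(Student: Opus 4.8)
The plan is to recast the statement as a non‑triviality assertion and then induct on $n$. Since $p_*$ is a morphism into the \emph{group} $F_d$, it factors through free reduction; and $\gamma_n$ is multiplicative along concatenation of paths, so for any vertices $x,y$ of $T_n^s$ one has $p_*(\gamma_n(x_0,x))^{-1}p_*(\gamma_n(x_0,y))=p_*(\gamma_n(x,y))$. Thus it suffices to prove that $p_*(\gamma_n(x,y))\neq\epsilon$ whenever $x\neq y$ are vertices of $T_n^s$.

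The key tool is a commutation lemma: if $x,y\in\mathcal V_m^s$ (hence also in $\mathcal V_{m+1}^s$), then $p_*(\gamma_{m+1}(x,y))=\sigma^{-1}\big(p_*(\gamma_m(x,y))\big)$, where $\sigma^{-1}$ is the automorphism of $F_d$. To see this, write the $x$–$y$ geodesic of $T_m^s$ as an edge path; applying $\tau$ replaces each traversed edge $X_i$ (in the relevant orientation) by the trunk of $\tau(X_i)$, and concatenating these trunks yields a walk from $x$ to $y$ in $T_{m+1}^s$ whose free reduction is the $x$–$y$ geodesic there. Applying $p_*$ and invoking Proposition~\ref{prop:troncagrees} for the colours $1,\dots,d$, together with the direct check $p_*(\text{trunk of }\tau(X_i))=\sigma^{-1}(p_*(i))$ for $i\in\{d+1,\dots,2d-2\}$ (each such $\tau(X_i)$ being a single edge), and using that $p_*$ is a morphism into a group, gives the identity. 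Iterating, $p_*(\gamma_n(x,y))=(\sigma^{-1})^{n-m}\big(p_*(\gamma_m(x,y))\big)$ whenever $x,y\in\mathcal V_m^s$; as $(\sigma^{-1})^{n-m}$ is injective, $p_*(\gamma_n(x,y))\neq\epsilon$ is equivalent to $p_*(\gamma_m(x,y))\neq\epsilon$.

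Now induct on $n$. For $n=0$, $T_0^s$ has root $x_0$ with edges $(x_0,x_j,j)$ for $1\le j\le d$ (Proposition~\ref{prop:emmaL0s}): $p_*(\gamma_0(x_0,x_j))=j\neq\epsilon$ and $p_*(\gamma_0(x_i,x_j))=i^{-1}j\neq\epsilon$ for $i\neq j$. For the inductive step, let $x\neq y$ be vertices of $T_n^s$ and put $m=\max\{m(x),m(y)\}$, where $m(v)$ is the least index with $v\in\mathcal V_{m(v)}^s$; by the commutation lemma it is enough to show $p_*(\gamma_m(x,y))\neq\epsilon$. If $m=0$ this is the base case. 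Otherwise the vertex realising the maximum, say $y$, is "born at step $m$", i.e.\ it is an interior vertex of a copy $C$ of $\tau(X_2)$ created when substituting a $2$‑coloured edge of $T_{m-1}^s$ — here one uses the decisive observation that among $\tau(X_1),\dots,\tau(X_{2d-2})$ only $\tau(X_2)$ has interior vertices, all the others being single edges. Such a $C$ meets the rest of $T_m^s$ only at its two attachment vertices $a,b\in\mathcal V_{m-1}^s$, and since $T_m^s$ is a tree, deleting the interior of $C$ separates it into a component $R_a\ni a$ and a component $R_b\ni b$. Hence the $x$–$y$ geodesic enters $C$ through the attachment vertex $c\in\{a,b\}$ nearest to $x$, and $p_*(\gamma_m(x,y))=p_*(\gamma_m(x,c))\,p_*(\gamma_C(c,y))$, the first factor being controlled, via the commutation lemma and the inductive hypothesis applied to $x,c\in\mathcal V_{m-1}^s$, whenever $x\neq c$.

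The remaining point — and the main obstacle — is to exclude cancellation in the product $p_*(\gamma_m(x,c))\,p_*(\gamma_C(c,y))$. This is settled by a finite verification on the explicit tree $\tau(X_2)$ of Figure~\ref{fig:emmasimpsub}: one checks that $p_*$ is injective on the vertices of $\tau(X_2)$ regarded as endpoints of paths issued from $x$ or from $y$, and one records the first (resp.\ last) letters of all the words $p_*(\gamma_{\tau(X_2)}(x,\cdot))$ and $p_*(\gamma_{\tau(X_2)}(y,\cdot))$; combining this with the discerned property of $T_m^s$ (Proposition~\ref{prop:emmadiscer}) and with the local structure of $T_m^s$ around $c$ (read off from Figure~\ref{fig:emmaboule1}) shows that no reduction occurs at the junction $c$, neither at the level of words over $A_\tau\cup\overline{A_\tau}$ nor after applying $p_*$. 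The cases where $x$ lies inside $C$, or inside a second copy of $\tau(X_2)$ when $m(x)=m$, are handled by the same finite verification applied inside $C$, respectively by iterating the junction argument across the two copies. I expect the bookkeeping of first/last letters at the gluing vertex to be the only genuinely delicate part; everything else is either the commutation lemma or an application of the inductive hypothesis through the automorphism $\sigma^{-1}$.
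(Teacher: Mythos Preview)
Your plan is sound and can be made to work, but it takes a genuinely different route from the paper's argument, and a considerably longer one.

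The paper's proof is a short forward-looking argument. Discernment of $T_n^s$ already gives $\gamma_n(x_0,x)\neq\gamma_n(x_0,y)$. If neither word involves the extra colours $d+1,\dots,2d-2$, then $p_*$ is just the passage from $A\cup\overline A$ to $F_d$, and the local patterns recorded in Figure~\ref{fig:emmaboule1} show no cancellation occurs, so the images are distinct. If one of the words contains a letter $(d+k)$, then since $p_*(d+k)=\sigma^k(1)$, equality of $p_*$-images forces the \emph{other} word to contain the pair $12$ or a pair $(d+h)(h+2)$ with $h<k$; but then one checks directly that applying $\tau$ once (resp.\ $h+1$ times) to such a pair produces a $\overline\alpha\alpha$ pattern, contradicting the discernment of $T_{n+1}^s$ (resp.\ $T_{n+h+1}^s$). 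In other words, the paper exploits that \emph{all} iterates $T_{n+j}^s$ are discerned (Proposition~\ref{prop:emmadiscer}), not just $T_n^s$.

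Your approach instead goes backward: you use the commutation $p_*(\gamma_{m+1}(x,y))=\sigma^{-1}(p_*(\gamma_m(x,y)))$ to reduce to the step where a vertex is born, and then analyse the junction with a copy of $\tau(X_2)$. This is legitimate --- the commutation identity is in fact used later in the paper, just after the definition of $f_0$ --- and the inductive structure is correct. What it buys you is a more explicit handle on how $p_*\circ\gamma_n$ evolves under $\tau$, which is useful elsewhere; what it costs you is precisely the junction analysis you flag as delicate. That verification is genuinely finite (the edges incident to an attachment vertex $c$ are read off from Figure~\ref{fig:emmaboule1}, and the paths inside $\tau(X_2)$ from Figure~\ref{fig:emmasimpsub}), but note that cancellations under $p_*$ \emph{do} occur along paths through the new branch point of $\tau(X_2)$ (e.g.\ $\overline{1}\,(d+h)$ maps to $1^{-1}\sigma^h(1)=1^{-1}12\cdots$), so the bookkeeping must track first and last letters of the $p_*$-images, not of the $\gamma$-words themselves. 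Once you accept that case check, your argument goes through; the paper's argument avoids it entirely by pushing the contradiction forward to a future discernment violation rather than pulling the computation back to step $0$.
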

			\begin{proof}
			L'arbre $T_n^s$ est discerné ; on en déduit directement que $\gamma_n(x_0, x)\ne \gamma_n(x_0, y)$. Le résultat est alors immédiat si aucun des deux mots
			ne contient de lettres de $\{d+1, \dots, 2d-2\}$ (en se rappelant que puisque $T_n^s$ est discerné,
			il n'y aura pas d'annulations). Si $\gamma_n(x_0, x)$ contient la lettre $(d+k)\in \{d+1, \dots, 2d-2\}$ et que
			$p_* (\gamma_n(x_0, x)) = p_* (\gamma_n(x_0, y))$, alors $\gamma_n(x_0, y)$ contient
			soit la paire $12$ (dans ce cas $T_{n+1}^s$ n'est pas discerné), soit une paire $(d+h)(h+2)$ pour
			un certain $h<k$ (dans ce cas $T_{n+h+1}^s$ n'est pas discerné) ; les deux cas
			contredisent la proposition \ref{prop:emmadiscer}. 
			\end{proof}

			\subsubsection{Réalisation}\label{subsubsec:real}
			On se permettra de confondre un point de $\mathscr{R}^d$ avec ses écritures (réduites ou non). Le sommet
			$x_0$ est toujours la racine de $T_0^s$ ; pour tout $1\le j\le d$, il existe un sommet $x_j$ tel que $(x_0, x_j, j)$ est
			une arête de $T_0^s$. On note $\nu_0$ l'application définie par
			\begin{center}
				\begin{tabular}{cccclc}
				$\nu_0$ & : & $\mathcal{V}_0^s$ & $\to$ & $\mathscr{R}^d$ & \\
				&& $x_0$ & $\mapsto$ & $O$ & \\
				&& $x_1$ & $\mapsto$ & $0^1$ & \\
				&& $x_j$ & $\mapsto$ & $(j-1)^{\eta^{d-j+1}}$ & $(2\le j\le d)$.
				\end{tabular}
			\end{center}

			On va construire par récurrence une suite $(\nu_n)_n$ d'applications avec, pour tout $n\in \mathds{N}$, $\nu_n : \mathcal{V}_n^s \to \mathscr{R}^d$.
			On suppose $\nu_{n-1}$ construite de telle sorte que si $(y_1, y_2, k)$ est une arête de $T_{n-1}^s$, alors 
			$\nu_{n-1}(y_1)^{-1}\nu_{n-1}(y_2) = j^p$ pour un certain $0\le j\le d-1$ et $|p| = \eta^{-(n-1)}\mathbf{V_t}(k)$.
			Tout sommet $x$ de $T_{n-1}^s$
			est également un sommet de $T_n^s$ par définition de la substitution d'arbre. Pour ces points, on définit
			\begin{center}
			$\nu_n(x) = \nu_{n-1}(x)$.
			\end{center}
			Si $y$ est un point de branchement de
			$\mathcal{V}_n^s\setminus \mathcal{V}_{n-1}^s$, alors il existe deux sommets $y_1$ et $y_2$ de $\mathcal{V}_{n-1}^s$ tels que $(y, y_1, 1)$ et $(y, y_2, d)$
			sont des arêtes de $T_n^s$ et $\nu_{n-1}(y_1)^{-1}\nu_{n-1}(y_2) = j^p$ pour un certain $0\le j\le d-1$ et $|p| = \eta^{-(n-1)+(d-1)}$.
			Pour $\alpha = \frac{p}{|p|}$, on définit alors
			\begin{center}
			$\nu_n(y) = \nu_{n-1}(y_1)j^{\alpha\eta^{-n}}$.
			\end{center}
			Pour ce même $y$, il existe $d-2$ sommets $z_1, \dots, z_{d-2}$ de
			$\mathcal{V}_n^s\setminus \mathcal{V}_{n-1}^s$ tels que pour tout $1\le h\le d-2$, $(y, z_h, (d+h))$ est une arête de $T_n^s$. On définit alors
			\begin{center}
			$\nu_n(z_h) = \nu_{n-1}(y_1)j^{\alpha\eta^{-n}}k^{\eta^{-n-h}}$
			\end{center}
			où $k = j+h [d]$.

			\begin{prop}\label{prop:emmaletlen}
			Pour tout $n\in \mathds{N}$, si $(x_i, x_j, k)$ est une arête de $T_n^s$, alors la longueur
			du segment $[\nu_n(x_i), \nu_n(x_j)]$ est $\mathbf{V_t}(k)\eta^{-n}$.
			\end{prop}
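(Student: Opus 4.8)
The plan is to prove, by induction on $n$, a statement slightly stronger than the proposition, namely

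\medskip
$(H_n)$: \emph{for every edge $(y_1,y_2,k)$ of $T_n^s$, the element $\nu_n(y_1)^{-1}\nu_n(y_2)$ of $\mathscr{R}^d$ is a single reduced syllable $j^p$, with $j\in\{0,\dots,d-1\}$, $p\ne 0$ and $|p|=\mathbf{V_t}(k)\,\eta^{-n}$.}
\medskip

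Since the left-invariant distance on $\mathscr{R}^d$ satisfies $d(\nu_n(y_1),\nu_n(y_2))=d(O,\nu_n(y_1)^{-1}\nu_n(y_2))=|p|$, the proposition is an immediate consequence of $(H_n)$. Observe that $(H_n)$ is precisely the invariant under which the recursive definition of $\nu_n$ was set up, so the content of the argument is to check that the recursion preserves it.

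First, the base case: reading $\nu_0$ off its definition, one gets $\nu_0(x_0)^{-1}\nu_0(x_1)=0^1$ and $\nu_0(x_0)^{-1}\nu_0(x_j)=(j-1)^{\eta^{\,d-j+1}}$ for $2\le j\le d$, which are single syllables of norms $1=\mathbf{V_t}(1)$ and $\eta^{\,d-j+1}=\mathbf{V_t}(j)$ respectively, using the explicit form of $\mathbf{V_t}$. For the inductive step, assume $(H_{n-1})$; each edge $(a,b,c)$ of $T_{n-1}^s$ is replaced in $T_n^s=\tau(T_{n-1}^s)$ by $\tau(X_c)$, and one runs through the cases of the definition of $\tau$. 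If $c\ne 2$, then $\tau(X_c)$ is a single edge on the same pair of vertices, of color $c'$ (with $c'=d$ if $c=1$, $c'=1$ if $c=d+1$, and $c'=c-1$ otherwise); since $\nu_n$ agrees with $\nu_{n-1}$ on old vertices, $(H_{n-1})$ gives $\nu_n(a)^{-1}\nu_n(b)=j^p$ with $|p|=\mathbf{V_t}(c)\,\eta^{-(n-1)}$, and the geometric-progression shape of $\mathbf{V_t}$ yields $\mathbf{V_t}(c)\,\eta=\mathbf{V_t}(c')$ in each subcase, i.e. $|p|=\mathbf{V_t}(c')\,\eta^{-n}$. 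This is $(H_n)$ for this edge.

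The essential case is $c=2$. By Proposition \ref{prop:troncagrees} the trunk of $\tau(X_2)$ spells $\overline{d}\,1$ (since $p_*(\sigma^{-1}(2))=d^{-1}1$): the image of a $2$-edge $\{y_1,y_2\}$ of $T_{n-1}^s$ is a new vertex $y\in\mathcal{V}_n^s\setminus\mathcal{V}_{n-1}^s$ carrying edges $(y,y_1,1)$ and $(y,y_2,d)$, together with $d-2$ new leaves $z_1,\dots,z_{d-2}$ and edges $(y,z_h,d+h)$. Applying $(H_{n-1})$ to the $2$-edge of $T_{n-1}^s$ gives $\nu_{n-1}(y_1)^{-1}\nu_{n-1}(y_2)=j^p$ with $|p|=\mathbf{V_t}(2)\,\eta^{-(n-1)}=\eta^{\,d-1}\eta^{-(n-1)}=\eta^{\,-(n-1)+(d-1)}$, which is exactly the hypothesis under which $\nu_n(y)=\nu_{n-1}(y_1)\,j^{\alpha\eta^{-n}}$ (with $\alpha=p/|p|$) and $\nu_n(z_h)=\nu_{n-1}(y_1)\,j^{\alpha\eta^{-n}}k^{\eta^{-n-h}}$ ($k=j+h\ [d]$) were defined. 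One then computes $\nu_n(y)^{-1}\nu_n(y_1)=j^{-\alpha\eta^{-n}}$ and $\nu_n(y)^{-1}\nu_n(z_h)=k^{\eta^{-n-h}}$, single syllables of norms $\eta^{-n}=\mathbf{V_t}(1)\,\eta^{-n}$ and $\eta^{-n-h}=\mathbf{V_t}(d+h)\,\eta^{-n}$, so $(H_n)$ holds for these edges. There remains the edge $(y,y_2,d)$, for which
\[
\nu_n(y)^{-1}\nu_n(y_2)=j^{-\alpha\eta^{-n}}\,j^{p}=j^{\,p-\alpha\eta^{-n}},
\]
and since $\alpha=p/|p|$ and $|p|=\eta^{\,d-n}>\eta^{-n}$, this is a single syllable of norm $|p|-\eta^{-n}=\eta^{\,d-n}-\eta^{-n}$. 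The whole argument hinges on the identity $\eta^{\,d-n}-\eta^{-n}=\mathbf{V_t}(d)\,\eta^{-n}=\eta^{\,-n+1}$, which after dividing by $\eta^{-n}$ reads $\eta^{\,d}=\eta+1$ — exactly the relation satisfied by the dominant eigenvalue $\eta$ of $M_{\sigma^{-1}}$ (and of $M_t$). This establishes $(H_n)$ for all edges of $T_n^s$, hence the proposition. The only genuine difficulty is this last step: it is where the choice of the eigenvector $\mathbf{V_t}$ and the defining equation of $\eta$ must fit together so that splicing the short syllable $j^{\alpha\eta^{-n}}$ into the trunk produces an edge of exactly the prescribed length $\mathbf{V_t}(d)\,\eta^{-n}$; everything else is a finite, mechanical check over the handful of cases in the definition of $\tau$ together with the geometric shape of $\mathbf{V_t}$.
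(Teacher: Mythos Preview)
Your proof is correct and is precisely the verification the paper leaves implicit: the proposition is stated there without proof, because the recursive definition of $\nu_n$ was set up under the very hypothesis $(H_{n-1})$ you isolate, so the content is exactly the inductive check you carry out. Your identification of the key point --- that the edge $(y,y_2,d)$ gets the right length if and only if $\eta^d=\eta+1$ --- is the reason $\mathbf{V_t}$ was chosen as a left eigenvector of $M_t$; everything else is, as you say, a mechanical case check.
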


			Pour tout $n\in \mathds{N}$, on définit l'arbre $T_n$ de $\mathscr{T}^d$ comme l'enveloppe convexe des points de $\nu_n(\mathcal{V}_n^s)$.
			De manière évidente, $T_{n-1}\subset T_n$ quel que soit $n\in \mathds{N}^*$. De plus, tout point de $T_n$ est à une distance inférieure
			ou égale à $\eta^{-1-n}$ de $T_{n-1}$, ce qui fait de la suite $(T_n)_n$ une suite de Cauchy de $\mathscr{T}^d$ qui est complet.

			La suite de cet article est consacrée à une étude détaillée des propriétés géométriques et dynamiques de l'arbre limite $T_{\tau}$ défini par
			\begin{eqnarray}\label{eq:Ttau}
				T_{\tau} & = & \lim\limits_{n\to +\infty} T_n.
			\end{eqnarray}

			On notera $d_{T_{\tau}}$ la distance sur $T_{\tau}$.
			On remarque que pour tout $n\in \mathds{N}$, les points de $T_n$ sont de degré $1$, $2$, ou $d$, et qu'il en est de même pour les points de $T_{\tau}$.
			\begin{defn}
			On notera $\mathcal{V}_n = \nu_n(\mathcal{V}_n^s)$ l'union des points de branchement (points de degré $d$)
			et des points terminaux (points de degré $1$) de $T_n$ et $\mathcal{W}_n$ l'ensemble de ses points de branchement.
			\end{defn}
			On note que pour tout $n\in \mathds{N}$, $\nu_n$ est une bijection de $\mathcal{V}_n^s$ dans $\mathcal{V}_n$, et une bijection
			de $\mathcal{W}_n^s$ dans $\mathcal{W}_n$.

		\subsection{Etude combinatoire}
		Dans la section \ref{subsec:f_Q}, on associera un point de l'arbre limite $T_{\tau}$ (\ref{eq:Ttau}) à tout mot
		du système symbolique $\Omega^+$. On se servira notamment de la substitution d'arbre et des facteurs bispéciaux du langage
		afin d'établir une bijection entre les points de branchement de $T_{\tau}$ et les décalés du point fixe $\omega$.
		Ces résultats nécessitent une étude combinatoire détaillée du système symbolique.

		L'automate des préfixes-suffixes de $\sigma$ est décrit en figure \ref{fig:emmautom}.
		\begin{figure}[h!]
		\begin{center}
			\begin{psfrags}
			\psfrag{1}{\LARGE{$1$}}
			\psfrag{2}{\LARGE{$2$}}
			\psfrag{3}{\LARGE{$3$}}
			\psfrag{d}{\LARGE{$d$}}
			\psfrag{k}{\LARGE{$k$}}
			\psfrag{k-1}{\LARGE{$k-1$}}
			\psfrag{e12}{\Large{$(\epsilon, 1, 2)$}}
			\psfrag{12e}{\Large{$(1, 2, \epsilon)$}}
			\psfrag{e1e}{\Large{$(\epsilon, 1, \epsilon)$}}
			\psfrag{e3e}{\Large{$(\epsilon, 3, \epsilon)$}}
			\psfrag{eke}{\Large{$(\epsilon, k, \epsilon)$}}
			\scalebox{0.6}{\includegraphics{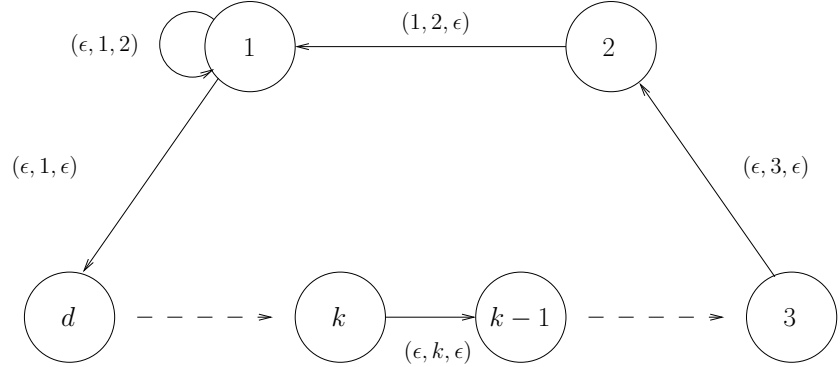}}
			\end{psfrags}
		\end{center}
		\vspace{-4mm}
		\caption{Automate des préfixes-suffixes de $\sigma$.}
		\label{fig:emmautom}
		\end{figure}
		On se reporte à la section \ref{sec:adps} concernant l'automate des préfixes-suffixes pour les notations. On note
		\begin{center}
			$\Omega_{per} = \{\lim\limits_{n\to +\infty} \sigma^{dn}(k.1), 1\le k\le d\}$
		\end{center}
		l'ensemble des mots de $\Omega$ périodiques par $\sigma$ et on définit par
		\begin{center}
			$\omega = \lim\limits_{n\to +\infty}\sigma^n(1)$
		\end{center}
		le point de $\Omega^+$ fixe par $\sigma$.

		Il est évident que si un mot est préfixe de $\omega$, alors il est spécial à gauche. On met ici en évidence les mots bispéciaux du langage,
		et on en déduit que les seuls mots spéciaux à gauche sont les préfixes de $\omega$.

		\begin{lem}
		Un mot $u$ ($|u| > 1$) de $\mathfrak{L}(\Omega)$ est bispécial si et seulement si il existe un mot $v$ de $\mathfrak{L}(\Omega)$ bispécial
		tel que $u = \sigma(v)p$ avec $p = 1$ si la dernière lettre de $v$ est $(d-1)$ et $p=\epsilon$ sinon.
		\end{lem}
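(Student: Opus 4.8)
The plan is to characterize bispecial factors of $\mathfrak{L}(\Omega)$ by studying how left- and right-special-ness behave under the substitution $\sigma$. The substitution here has a very simple structure: $\sigma(1)=12$, $\sigma(k)=k+1$ for $2\le k\le d-1$, $\sigma(d)=1$. The key observations I would establish first are: (i) the only letter with $|\sigma(\cdot)|=2$ is $1$, all others have image of length $1$; (ii) consequently, in any word $\sigma(v)$, the only place a "boundary between images" can hide is right after an occurrence of the letter $1$ (where the image $12$ contributes two letters); (iii) every factor $w$ of $\Omega$ of length $\ge 2$ has a unique "desubstitution" — there is a unique factor $v$ and a bounded prefix/suffix correction so that $w$ sits inside $\sigma(v)$ in essentially one way. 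This last point is the standard recognizability statement for primitive aperiodic substitutions, and I would invoke it (it follows from the prefix-suffix automaton material in Section~\ref{sec:adps}, or can be checked directly here since $\sigma$ is so explicit).

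**Next I would** translate left-special and right-special into conditions on the desubstituted word. For right-specialness: $u$ is right-special iff there are $a\ne b$ with $ua,ub\in\mathfrak{L}(\Omega)$. Writing $u$ as (a tail of) $\sigma(v)$, the letters that can follow $u$ are governed by the letters that can follow $v$, EXCEPT we must account for the fact that $\sigma(1)=12$: if $v$ can be followed by $1$, then $\sigma(v)$ can be followed by $1$ then $2$; if $v$ can be followed by some $k\ge 2$, then $\sigma(v)$ is followed by $k+1$ (or by $1$ if $k=d$). So the number of distinct continuation letters of $\sigma(v)$ equals the number of distinct continuation letters of $v$ — UNLESS the last letter produced is ambiguous. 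The subtle case is exactly when $v$ ends in $d-1$: then $\sigma(v)$ ends in $d$, and $\sigma(v)d$... wait — one must look at whether appending one extra letter $p$ to $\sigma(v)$ is forced. Concretely: $u=\sigma(v)$ is right-special iff $v$ is right-special and $v$ does NOT end in $d-1$; whereas if $v$ ends in $d-1$, one needs $u=\sigma(v)1$ to recover the right-special factor (because the letter following $d-1$ in any word is determined up to the $12$-split: $d-1$ is always followed by $d$ in $\sigma(\cdot)$ unless... ). I'd nail this down by a short case analysis on the last letter of $v$, using that $\sigma(d-1)=d$ has length one so no split occurs there, but $\sigma(1)=12$ does split. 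The correction term $p\in\{\epsilon,1\}$ is precisely this bookkeeping: $p=1$ when the last letter of $v$ is $d-1$, $p=\epsilon$ otherwise.

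**For the left-special side**, the argument is symmetric but cleaner: since $\sigma(v)$ always starts at a letter-image boundary (the image of the first letter of $v$ begins the word), the letters that can precede $\sigma(v)$ in $\Omega$ are the last letters of $\sigma(a)$ over all $a$ that can precede $v$; since each $\sigma(a)$ ends in a fixed letter ($\sigma(1)$ ends in $2$, $\sigma(k)$ ends in $k+1$, $\sigma(d)$ ends in $1$) and these last letters are distinct for distinct $a$, left-specialness of $\sigma(v)p$ is equivalent to left-specialness of $v$ — the prefix $p$ at the far right does not interfere. So: $u=\sigma(v)p$ is left-special iff $v$ is left-special, and $u$ is right-special iff $v$ is right-special (with $p$ chosen as stated). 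Combining, $u$ is bispecial iff $v$ is bispecial. Conversely, given a bispecial $u$ with $|u|>1$, recognizability produces the unique $v$ and the forced correction $p$, and one checks $v$ is bispecial and $|v|\ge 1$; the length constraint $|u|>1$ is exactly what guarantees the desubstitution is non-degenerate.

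**The main obstacle** I anticipate is handling the right-special case with full rigor: precisely pinning down when the "extra letter" $p=1$ is needed and verifying it is genuinely forced (i.e., that $\sigma(v)$ alone is not already right-special when $v$ ends in $d-1$, and that $\sigma(v)1$ then is). This requires knowing exactly which letters can follow $d-1$ and $d$ in $\mathfrak{L}(\Omega)$ — essentially a small piece of the structure of $\Omega$, readable off $\omega=\lim\sigma^n(1)$ and the prefix-suffix automaton in Figure~\ref{fig:emmautom}. The other steps (the left-special equivalence, the existence/uniqueness of desubstitution) are routine once recognizability is in hand. I would also remark that this lemma feeds the counting of $\#(\mathscr{P}_m/\!\sim)$ and hence connects to Theorem~\ref{thm:determination}, but that is beyond what is needed for the proof itself.
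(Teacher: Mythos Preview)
Your proposal is correct and follows essentially the same approach as the paper: a case analysis on the last letter of $v$ (resp.\ $u$), using the explicit structure of $\sigma$ to track how right- and left-extensions pass through the substitution. The only difference is presentational --- the paper first enumerates the length-$2$ factors ($k1$ for $1\le k\le d$ and $k(k{+}1)$ for $1\le k\le d{-}1$) and the unique length-$3$ factor with suffix $11$ (namely $d11$), then argues concretely from these, whereas you frame the desubstitution step via recognizability; since $\sigma$ is so simple the desubstitution is explicit and no black box is actually needed.
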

		\begin{proof}
		On vérifie facilement que les mots $k1$ pour tout $1\le k\le d$ et $k(k+1)$ pour tout $1\le k\le d-1$ sont les seuls mots
		de longueur $2$ du langage. On note que toute lettre $k\ne d$ est prolongeable à droite par $1$ et $(k+1)$.

		Soit $v$ un facteur bispécial. On suppose que la dernière lettre de $v$ est $k\ne d-1$.
		\begin{figure}[h!]
		\begin{center}
		\begin{psfrags}
			\psfrag{a}{\LARGE{$\alpha$}}
			\psfrag{b}{\LARGE{$\beta$}}
			\psfrag{as}{\LARGE{$\sigma(\alpha)$}}
			\psfrag{bs}{\LARGE{$\sigma(\beta)$}}
			\psfrag{s}{\LARGE{$\sigma$}}
			\psfrag{v}{\LARGE{$v$}}
			\psfrag{vs}{\LARGE{$\sigma(v)$}}
			\psfrag{1}{\LARGE{$1$}}
			\psfrag{1s}{\LARGE{$12$}}
			\psfrag{k1}{\LARGE{$(k+1)$}}
			\psfrag{k1s}{\LARGE{$(k+2)$}}
			\scalebox{0.65}{\includegraphics{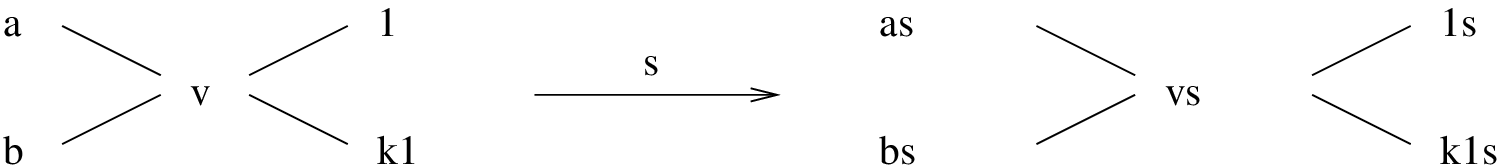}}
			\end{psfrags}
		\end{center}
		\vspace{-4mm}
		\end{figure}\\
		Les dernières lettres de $\sigma(\alpha)$ et $\sigma(\beta)$ sont distinctes si $\alpha\ne \beta$ et $k+2\ne 1$ ;
		$\sigma(v)$ est donc bispécial.

		On suppose maintenant que la dernière lettre de $v$ est $(d-1)$, et on remarque que $d1$ est le seul mot de
		longueur $2$ de préfixe $d$.
		\begin{figure}[h!]
		\begin{center}
		\begin{psfrags}
			\psfrag{a}{\LARGE{$\alpha$}}
			\psfrag{b}{\LARGE{$\beta$}}
			\psfrag{as}{\LARGE{$\sigma(\alpha)$}}
			\psfrag{bs}{\LARGE{$\sigma(\beta)$}}
			\psfrag{s}{\LARGE{$\sigma$}}
			\psfrag{v}{\LARGE{$v$}}
			\psfrag{vs}{\LARGE{$\sigma(v)$}}
			\psfrag{1}{\LARGE{$1$}}
			\psfrag{1s}{\LARGE{$12$}}
			\psfrag{k1}{\LARGE{$d1$}}
			\psfrag{k1s}{\LARGE{$112$}}
			\scalebox{0.65}{\includegraphics{eps/bisp1.eps}}
			\end{psfrags}
		\end{center}
		\vspace{-4mm}
		\end{figure}\\
		Il vient que $\sigma(v)1$ est un facteur bispécial.\\

		Soit $u$ un mot bispécial ($|u| > 1$) ; $u$ commence forcément par $1$ et on suppose que $k$ est la dernière lettre de $u$.
		En remarquant que $k\ne d$ et que $d11$
		est l'unique mot de longueur $3$ de suffixe $11$, on déduit que $u$ peut être prolongé à droite
		par $12$ et $(k+1)$. On suppose que $u$ peut être prolongé à gauche par $\alpha$ et $\beta$.
		Puisque $2$ n'est pas spécial à droite, on pourra supposer que $\alpha, \beta\in \{1, 12, 3,\dots d\}$.

		Si $k\ne 1$, il existe un mot $v$ de $\mathfrak{L}(\Omega)$ tel que $\sigma(v) = u$
		et $v$ peut être prolongé à droite par $1$, $k$ et à gauche par $\alpha^{\prime}, \beta^{\prime}$ avec
		$\sigma(\alpha^{\prime}) = \alpha$ et $\sigma(\beta^{\prime}) = \beta$.

		Si $k=1$, on note $u = u_p1$. $|u_p| > 1$ puisque $11$ n'est pas bispécial. $u_p$ peut être prolongé à droite par $112$ et $12$ et se termine forcément par $d$ (puisqu'on peut le prolonger par $11$).
		Il existe donc $v\in \mathfrak{L}(\Omega)$ tel que $\sigma(v) = u_p$ ($v$ se termine par $(d-1)$) et $v$ peut être
		prolongé à droite par $1$ et $d1$, et à gauche par $\alpha^{\prime}, \beta^{\prime}$ avec
		$\sigma(\alpha^{\prime}) = \alpha$ et $\sigma(\beta^{\prime}) = \beta$. 
		\end{proof}

		\begin{prop}\label{prop:emmabisp}
		Un mot $u\ne \epsilon$ de $\mathfrak{L}(\Omega)$ est bispécial si et seulement si $u = \sigma^{\alpha_n}(1)\dots \sigma^{\alpha_0}(1)$
		avec
		\begin{itemize}
			\item $\alpha_0 < d$,
			\item $\alpha_{k+1}-\alpha_k = d-1$ quel que soit $0\le k\le n-1$.
		\end{itemize}
		\end{prop}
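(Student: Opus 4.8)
The strategy is to iterate the preceding lemma, which describes the recursive structure of bispecial factors: $u$ ($|u|>1$) is bispecial iff $u = \sigma(v)p$ with $v$ bispecial, $p=1$ when $v$ ends in $(d-1)$ and $p=\epsilon$ otherwise. The base case of the recursion must be handled first: I would check directly that the bispecial factors of length $1$ are exactly the letters $k$ with... actually only $1$ is bispecial of length $1$ (it is left-special as a prefix of $\omega$, and right-special since $1$ extends to $12$ and to $1(k{+}1)$—wait, $1$ extends right to $12$ and to... one needs $a1$ and $b1$ prefixes too). Concretely: the length-$1$ bispecial factors are those letters that are both left- and right-special; from the list of length-$2$ factors ($k1$ for $1\le k\le d$ and $k(k{+}1)$ for $1\le k\le d-1$) one reads off that $1$ is right-special (extends to $12$ and $1\cdot 2$—rather $1$ is followed by $1$? no: the factors beginning with $1$ are $12$, so $1$ is \emph{not} right-special). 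Hence I must be careful; the correct base case is that $\sigma^0(1)=1$ \emph{is} bispecial (it is a prefix of $\omega$, hence left-special, and it is right-special because $11$ and $12$ are both factors, the former as a suffix of $\sigma(d)\sigma(1)$). So the minimal bispecial word is $1=\sigma^0(1)$, corresponding to the empty sequence of exponents ($n=0$, $\alpha_0=0<d$).

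**Forward direction.** Given a bispecial $u\ne\epsilon$, I would argue by induction on $|u|$. If $|u|=1$ then $u=1=\sigma^0(1)$ and we are done. If $|u|>1$, the lemma gives a bispecial $v$ with $u=\sigma(v)p$. By induction $v=\sigma^{\beta_m}(1)\cdots\sigma^{\beta_0}(1)$ with $\beta_0<d$ and consecutive differences $d-1$. Now I split on whether the last letter of $v$ is $(d-1)$. The last letter of $\sigma^{\beta_0}(1)$ is what governs $p$: one computes that $\sigma^{j}(1)$ ends in the letter $j+1$ for $0\le j\le d-2$ (since $\sigma^j(1) = \sigma^{j-1}(12) = \sigma^{j-1}(1)\sigma^{j-1}(2)$ and $\sigma^{j-1}(2)=\sigma^{j-2}(3)=\cdots$ eventually a single letter), and that $\sigma^{d-1}(1)$ ends in $1$. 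So the last letter of $v$ is $(d-1)$ exactly when $\beta_0=d-2$. In that case $p=1$, and the key identity is $\sigma^{d-1}(1)\cdot 1$ versus the desired form: since $\sigma^{d-1}(1)$ ends in $1$, appending $1$ and then applying... here one uses $\sigma(\sigma^{d-2}(1))\cdot 1 = \sigma^{d-1}(1)\cdot 1$, and one must recognize $\sigma^{d-1}(1)\cdot 1$ rewritten—actually the point is that the new bottom exponent becomes $\beta_0+1$ but one also spawns... Let me reconsider: $u=\sigma(v)p = \sigma(\sigma^{\beta_m}(1))\cdots\sigma(\sigma^{\beta_0}(1))\,p = \sigma^{\beta_m+1}(1)\cdots\sigma^{\beta_0+1}(1)\,p$. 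When $p=\epsilon$ (i.e. $\beta_0\ne d-2$), set $\alpha_i=\beta_i+1$; then $\alpha_0=\beta_0+1<d$ (needs $\beta_0<d-1$, which holds since $\beta_0<d$ and $\beta_0\ne d-2$... no, $\beta_0$ could be $d-1$—but then $v$ ends in $1$, and $\sigma(v)$ ends in $12$, fine, $\alpha_0=d$ is \emph{not} $<d$). The hard part will be exactly this bookkeeping: showing the normalization $\alpha_0<d$ is preserved, which forces the case $\beta_0=d-1$ to be absorbed via the $p=1$ mechanism applied one level down, or rather shows $\beta_0=d-1$ cannot occur for a bispecial $v$ whose image needs no correction. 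I expect that $\sigma^{d-1}(1)$ as the rightmost block, together with the correction letter, is precisely what reindexes $d-1\mapsto 0$ with an extra block appended, giving $\alpha_{n}=d-1$ at top... This reindexing is the crux.

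**Converse direction.** This is the easier half: given exponents $\alpha_n>\cdots>\alpha_0$ with $\alpha_0<d$ and all consecutive gaps equal to $d-1$, I would show $u=\sigma^{\alpha_n}(1)\cdots\sigma^{\alpha_0}(1)$ is bispecial by inducting downward, repeatedly factoring out $\sigma$: write $u=\sigma(\sigma^{\alpha_n-1}(1)\cdots\sigma^{\alpha_0-1}(1))$ if $\alpha_0\ge 1$, reducing to a strictly shorter instance, until one reaches the base word $1$; at the step where some $\alpha_i-$shift would hit $d-1$ at the bottom one uses the lemma's clause with $p=1$ in reverse. Each application of the lemma preserves bispeciality in the $\Leftarrow$ direction, so $u\in\mathfrak{L}(\Omega)$ is bispecial. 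The termination and the matching of the "$p=1$ iff last letter is $(d-1)$'' condition with "$\alpha_0=d-2$ at that stage'' is routine given the computation of the last letter of $\sigma^j(1)$ noted above. Throughout, I would keep a small lemma stating $\mathrm{last}(\sigma^j(1)) = j+1$ for $0\le j\le d-2$ and $\mathrm{last}(\sigma^{d-1}(1))=1$, plus $\mathrm{last}(\sigma^j(1)) = \mathrm{last}(\sigma^{j \bmod d}(1))$ by periodicity $\sigma^d(1)$ starting with $\sigma^{d-1}(1)1$; these make every "last letter'' assertion in the argument immediate.
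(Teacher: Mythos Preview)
Your approach is exactly the paper's: reduce to the preceding lemma, identify the last letter of $\sigma^k(1)$, and unwind the recursion starting from the unique length-$1$ bispecial $1=\sigma^0(1)$. The paper's proof is two sentences: it records that $1$ is the only length-$1$ bispecial factor and that $\sigma^k(1)$ ends in the letter $k+1$ for every $k<d$, then says ``conclude by the lemma.''

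There is, however, a concrete computational error in your outline that is the source of all your confusion: you assert that $\sigma^{d-1}(1)$ ends in $1$. It does not. Since $\sigma^{d-1}(1)=\sigma^{d-2}(1)\,\sigma^{d-2}(2)$ and $\sigma^{d-2}(2)=d$, the word $\sigma^{d-1}(1)$ ends in $d$. More generally, $\sigma^k(1)$ ends in $k+1$ for every $0\le k\le d-1$. Once this is corrected, your worry about the case $\beta_0=d-1$ evaporates for the forward direction: a word ending in $d$ is never right-special (the only length-$2$ factor starting with $d$ is $d1$), hence never bispecial, so the inductive hypothesis never produces $\beta_0=d-1$. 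The cycle of the bottom exponent is $0\to 1\to\cdots\to d-2\to 0$ (the last step being the $p=1$ case), exactly as you began to trace before the miscomputation derailed you.

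One remark you may want to keep: your observation that $\alpha_0=d-1$ is problematic is in fact correct for the \emph{converse} direction as literally stated. A word of the announced shape with $\alpha_0=d-1$ ends in $d$ and is therefore not bispecial; the recursion only ever produces $\alpha_0\le d-2$. So the effective range is $\alpha_0\in\{0,\dots,d-2\}$, and the paper's condition $\alpha_0<d$ is slightly lax. Your instinct here was right; it was only the last-letter computation that needed fixing.
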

		\begin{proof}
		On remarque que $1$ est le seul mot de longueur $1$ bispécial, et que $\sigma^k(1)$ se termine par la lettre $(k+1)$
		pour tout $k<d$. On conclut grâce au lemme précédent. 
		\end{proof}
		On remarque que par construction, tous les mots bispéciaux sont des préfixes de $\omega$, et ils sont tous prolongeables
		à gauche par toute lettre de $A$. De plus, tout mot spécial à gauche du langage est facteur de $\sigma^k(1)$ pour tout $k$ supérieur
		à un certain $k_0$ (par primitivité et puisque $\sigma^{n}(1)$ est un préfixe de $\sigma^{n+1}(1)$ pour tout $n\in \mathds{N}$), et on peut donc le prolonger à droite
		en un mot bispécial ; on en déduit que les préfixes de $\omega$ sont les seuls mots spéciaux à gauche de $\mathfrak{L}(\Omega)$.
		Le nombre de mots de longueur $n\ge 1$ dans $\mathfrak{L}(\Omega)$ est donc $(d-1)n+1$ et on a la proposition suivante.
		\begin{prop}\label{prop:uniqinfspegau}
		$\omega$ est l'unique mot de $\Omega^+$ spécial à gauche.
		\end{prop}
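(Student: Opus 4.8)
The plan is to obtain the statement immediately from the structure of the left-special factors already established, namely that the non-empty left-special words of $\mathfrak{L}(\Omega)$ are exactly the prefixes of $\omega$, together with Proposition \ref{prop:emmabisp}. First I would record the elementary translation between the two notions: for a letter $a\in A$ and $V\in\Omega^+$, the word $aV$ lies in $\Omega^+$ if and only if $av\in\mathfrak{L}(\Omega)$ for every prefix $v$ of $V$, since the only finite factors of $aV$ not already present in $V$ are the words $av$. Consequently $V\in\Omega^+$ is left-special if and only if there is a \emph{single} pair of distinct letters $a,b\in A$ with $av,bv\in\mathfrak{L}(\Omega)$ for every prefix $v$ of $V$; in particular every prefix of such a $V$ is itself left-special in $\mathfrak{L}(\Omega)$.

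Uniqueness is then immediate: if $V\in\Omega^+$ is left-special, each of its prefixes is left-special in $\mathfrak{L}(\Omega)$, hence a prefix of $\omega$; as this holds for all prefixes of $V$, we get $V=\omega$. For existence I would check that $1\omega,2\omega\in\Omega^+$, i.e. that $1v,2v\in\mathfrak{L}(\Omega)$ for every prefix $v$ of $\omega$. Given such a $v$, by Proposition \ref{prop:emmabisp} (taking $\alpha_k=k(d-1)$, so that $\alpha_0<d$ and $\alpha_{k+1}-\alpha_k=d-1$) there are bispecial words of arbitrarily large length; pick one, call it $w$, longer than $v$. Since $w$ is bispecial it is a prefix of $\omega$, hence $v$ is a prefix of $w$, and since every bispecial word is left-extendable by every letter of $A$ we have $1w,2w\in\mathfrak{L}(\Omega)$, whence $1v,2v\in\mathfrak{L}(\Omega)$ as factors. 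As $v$ is arbitrary, $1\omega$ and $2\omega$ belong to $\Omega^+$, so $\omega$ is left-special; combined with uniqueness this proves the proposition.

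I expect the only point needing care to be the passage from finite prefixes to the infinite word: left-specialness of $\omega$ requires exhibiting one pair of letters that works simultaneously for \emph{all} prefixes, which is exactly what the existence of arbitrarily long bispecial prefixes of $\omega$ (all left-extendable by every letter) delivers. Everything else is a direct reading of the facts proved just above the statement.
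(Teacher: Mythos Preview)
Your proof is correct and follows essentially the same route as the paper. The paper does not give a separate proof environment for this proposition; it is stated as an immediate consequence of the discussion preceding it, which establishes exactly the two facts you invoke: that the left-special factors of $\mathfrak{L}(\Omega)$ are precisely the prefixes of $\omega$, and that every bispecial (hence arbitrarily long prefixes of $\omega$) is left-extendable by every letter of $A$. Your write-up is simply more explicit than the paper about the passage from finite prefixes to the infinite word, spelling out why a single pair of letters works uniformly for all prefixes of $\omega$; the paper leaves this step to the reader.
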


		\subsection{Application du système symbolique $\Omega^+$ dans l'arbre limite $T_{\tau}$}\label{subsec:f_Q}
		On va se servir d'une étude combinatoire de la substitution d'arbre pour construire une application $f_Q$ de $\Omega^+$ dans l'arbre $T_{\tau}$ (\ref{eq:Ttau});
		celle-ci nous permettra de définir un système d'isométries partielles qui représente sur $T_{\tau}$ l'action du décalage sur $\Omega^+$.

			\subsubsection{Sur les décalés du point fixe}
			On rappelle que $\omega = \lim\limits_{n\to +\infty} \sigma^n(1)$ et on note $\Omega^+_p = \{S^n(\omega), n\in \mathds{N}\}$.
			Dans un premier temps, on définit $f_Q$ de $\Omega^+_p$ dans $T_{\tau}$.
			Si $V = S^k(\omega)$ ($k\in \mathds{N}$) est un élément de $\Omega^+_p$, alors $V = u^{-1}\omega$ où $u$ est le préfixe
			de $\omega$ de longueur $k$. On va établir une bijection $f_0$ entre l'ensemble $\mathcal{W}_n^s$ des points de branchement (points de degré $d$) des arbres $T_n^s$
			et l'ensemble $\delta(\Omega^+_p) = \{u^{-1}\ ;$ $u$ \textit{est un préfixe de} $\omega\}$ ; si $x$ est un sommet de $T_k^s$ dont l'image est
			$u^{-1}\in \delta(\Omega^+_p)$, alors on définira $f_Q(u^{-1}\omega) = \nu_k(x)$ (voir le paragraphe \ref{subsubsec:real} pour la définition de l'application $\nu_k$).\\

			Soit $f_0$ l'application définie par
			\begin{equation}\label{eq:f_0}
				\begin{tabular}{ccccl}
				$f_0$ & : & $\bigcup\limits_{n\in \mathds{N}} \mathcal{W}_n^s$ & $\to$ & $F_d$\\
				&& $x$ & $\mapsto$ & $\sigma^k(p_*(\gamma_k(x_0, x)))$
				\end{tabular}
			\end{equation}
			où $k$ est n'importe quel entier tel que $x$ est un sommet de $T_k^s$, et $\gamma_k$ est la fonction chemin de $T_k^s$ (voir section \ref{subsec:arbsimp}).
			Par définition de la substitution d'arbre, l'égalité
			$\sigma(p_*(\gamma_{k+1}(x_0, x))) = p_*(\gamma_k(x_0, x))$ est vérifiée et $f_0(x)$ ne dépend pas du choix de $k$.

			\begin{defn}
			On dit qu'un mot $u^{-1}$ \textbf{apparaît} à l'étape $k$ s'il existe un point de branchement $x$ de $T_k^s$ tel que
			$u^{-1} = \sigma^k(p_*(\gamma_k(x_0, x)))$ et si $x$ n'est pas un sommet de $T_{k-1}^s$.
			\end{defn}
			On dira par convention que le mot $\epsilon$ est apparu à l'étape $-(d-2)$ et qu'aucun autre mot
			n'est apparu aux étapes $\le 0$. On se reporte à la figure
			\ref{fig:emmaboule1} et on rappelle qu'on a défini $T_0^s = \tau^{d-1}(X_2)$. Si un mot $u^{-1}$
			apparaît à une étape $n$, alors pour tout $d-1\le k\le 2d-2$, le mot $u^{-1}\sigma^{n+k}(d^{-1})$
			apparaît à l'étape $n+k$. De plus, pour tout $k\ge 2d-2$, le mot $u^{-1}\sigma^{n+k}(1^{-1})$ apparaît en $n+k$.
			On remarque cependant que si $u^{-1}\sigma^{n+k}(1^{-1})$ apparaît à l'étape $n+k$ ($k\ge 2d-2$), alors le mot
			$v^{-1} = u^{-1}\sigma^{n+k}(1^{-1})\sigma^{n+k}(d)$ est un mot apparu à une étape $<n+k$.
			On en déduit la proposition suivante.
			\begin{prop}\label{prop:emmalite}
			Tout mot $u^{-1}$ apparu à une étape $n$, vérifie
			$u^{-1}=v^{-1}\sigma^{n}(d^{-1})$, où $v^{-1}$ est apparu à une étape
			$m$, avec $n-(2d-2)\le m\le n-(d-1)$.
			\end{prop}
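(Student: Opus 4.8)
The plan is to run the construction of new branch points in reverse. Fix a word $u^{-1}$ appearing at a step $n\ge 1$, so there is a branch point $x$ of $T_n^s$, not a vertex of $T_{n-1}^s$, with $u^{-1}=\sigma^n(p_*(\gamma_n(x_0,x)))$. A degree-$d$ vertex can only be produced by substituting a colour-$2$ edge, since $\tau(X_i)$ is a single edge for every $i\ne 2$ while $\tau(X_2)$ carries the only branch point; hence $x$ is the branch point of the copy of $\tau(X_2)$ replacing some colour-$2$ edge $e$ of $T_{n-1}^s$. Writing $e$ in the orientation matching $X_2=(\{x_1,x_2\},\{(x_1,x_2,2)\})$, let $p$ be its $x_1$-endpoint. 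First I would observe that $p$ is a branch point of $T_{n-1}^s$: in $T_0^s$ every leaf carries its unique edge oriented towards it, and $\tau$ preserves this (in $\tau(X_2)$ the trunk edge reaching $x_2$ and the edges reaching the new leaves $z_h$ are all oriented towards those vertices), so the $x_1$-endpoint of an oriented colour-$2$ edge is never a leaf, and as vertices of $T_{n-1}^s$ have degree $1$ or $d$ it has degree $d$. By Proposition \ref{prop:troncagrees}, $p_*(\gamma_{\tau(X_2)}(x_1,x_2))=\sigma^{-1}(2)=d^{-1}1$, with the $d^{-1}$ corresponding to the edge incident to $x_1$; thus $x$ is joined to $p$ by a single $d$-coloured edge and, computing inside $F_d$, $p_*(\gamma_n(x_0,x))=p_*(\gamma_n(x_0,p))\,d^{-1}$. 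Applying $\sigma^n$ and using that $f_0(p)$ is independent of the step (see (\ref{eq:f_0})), we get $u^{-1}=v^{-1}\sigma^n(d^{-1})$ with $v^{-1}:=f_0(p)$, and $v^{-1}$ appeared at the step $m$ at which the branch point $p$ was first created (with $m=-(d-2)$ if $p=x_0$, in accordance with the convention on $\epsilon$).

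It then remains to show $m+(d-1)\le n\le m+(2d-2)$, and here I would follow, step by step through the action of $\tau$, the colour of the incident edge of $p$ that ultimately becomes $e$, reading off the behaviour from figure \ref{fig:emmaboule1}. When $p$ is created (step $m$) it has exactly the $d$ incident edges of the branch point of $\tau(X_2)$, coloured $1,d,d+1,\dots,2d-2$ and all oriented away from $p$. As long as an incident edge of $p$ is not coloured $2$, $\tau$ only recolours it ($1\mapsto d$, $d+1\mapsto 1$, $d+h\mapsto d+h-1$, $i\mapsto i-1$ for $3\le i\le d$) and keeps its orientation relative to $p$; a colour-$2$ incident edge is substituted at the next step and produces a new branch point adjacent to $p$. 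The decisive point is the orientation dichotomy: the \emph{first} time a given incident edge of $p$ reaches colour $2$ it is still oriented away from $p$, so $p$ is its $x_1$-endpoint and the new branch point is attached to $p$ by a $d$-coloured edge; but that edge is then oriented towards $p$, so at every later substitution in the same slot $p$ is the $x_2$-endpoint and the new branch point is attached to $p$ by a $1$-coloured edge. Hence the branch points attached to $p$ by a $d$-coloured edge are created precisely at the $d$ steps at which the $d$ initial incident edges of $p$ first reach colour $2$ and get substituted, namely $m+(d-1)$ from the colour-$d$ edge, $m+d$ from the colour-$1$ edge, and $m+d+h$ from the colour-$(d+h)$ edge for $1\le h\le d-2$, that is the consecutive steps $m+(d-1),m+d,\dots,m+(2d-2)$. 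Since $x$ is such a branch point, created at step $n$, this yields $n-(2d-2)\le m\le n-(d-1)$.

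The part that needs the most care is the orientation bookkeeping above: it is exactly the alternative ``first substitution in a slot gives a $d$-coloured edge, all later ones give $1$-coloured edges'' that makes the window for $m$ of width $d-1$ rather than unbounded, and this is where the explicit shape of $\tau(X_2)$ and figure \ref{fig:emmaboule1} are used. The complementary count — that branch points attached to $p$ by a $1$-coloured edge arise at all steps $\ge m+(2d-2)$, producing words $v^{-1}\sigma^{m+k}(1^{-1})$ — reproduces the forward facts stated just before the proposition, and the remark there, that $v^{-1}\sigma^{m+k}(1^{-1})\sigma^{m+k}(d)$ has already appeared, says precisely that $v^{-1}\sigma^{m+k}(1^{-1})$ equals $w^{-1}\sigma^{m+k}(d^{-1})$ for that earlier word $w^{-1}$; this is a consistency check rather than something used in the proof. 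Finally, the statement is meant for $u^{-1}\ne\epsilon$ (equivalently $n\ge 1$): the word $\epsilon$ is carried by the root and, by convention, appeared at step $-(d-2)$, which is exactly the value of $m$ forced by the argument above when $p=x_0$.
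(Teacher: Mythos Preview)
Your argument is correct and follows essentially the same route as the paper, which proves the proposition via the paragraph immediately preceding it: both arguments read off, from the shape of $\tau(X_2)$ and figure~\ref{fig:emmaboule1}, how a branch point $p$ appearing at step $m$ spawns adjacent branch points at later steps. The paper states this in the forward direction (listing $u^{-1}\sigma^{n+k}(d^{-1})$ for $d-1\le k\le 2d-2$ and $u^{-1}\sigma^{n+k}(1^{-1})$ for $k\ge 2d-2$, then remarking that the latter always rewrites as $w^{-1}\sigma^{n+k}(d^{-1})$ for an earlier $w^{-1}$), whereas you run the same bookkeeping backwards from the new branch point and make the orientation dichotomy explicit; your treatment is more detailed but not a different method.
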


			La proposition précédente va nous permettre d'énumérer les mots qui apparaissent à une étape donnée.
			On va voir que l'apparition de ces mots est déterminée par les facteurs bispéciaux du langage. A cet effet,
			on définit $l_0=\epsilon$ et $l_m = \sigma^{m-1}(1^{-1})$ si $1\le m\le d-1$, et pour tout $m\ge d$,
			on définit $l_m = \sigma^{\alpha_0}(1^{-1})\dots \sigma^{\alpha_{n}}(1^{-1})$ ($n\ge 0$) avec
			\begin{itemize}
				\item $\alpha_0 = m-1 [d-1]$,
				\item pour tout $1\le i\le n$, $\alpha_i = \alpha_{i-1}+(d-1)$,
				\item $\alpha_n = m-1$.
			\end{itemize}
			\begin{prop}\label{prop:emmabispeqpldev}\ 
			\begin{itemize}
				\item Soit $u^{-1}\ne \epsilon$ un mot de $F_d$. Il existe $m\in \mathds{N}^*$ tel que $u^{-1} = l_m$
				si et seulement si $u$ est un mot bispécial de $\mathfrak{L}(\Omega)$ (cf. proposition \ref{prop:emmabisp}).
				\item Tout mot bispécial étant un préfixe de $\omega$, $l_k$ est un suffixe (propre) de $l_{k+1}$ pour tout $k\in \mathds{N}^*$.
			\end{itemize}
			\end{prop}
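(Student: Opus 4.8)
The plan is to deduce the statement directly from Proposition~\ref{prop:emmabisp}: the words $l_m$ are, by construction, the inverses of the words described there, and the second assertion then follows from the fact (recorded just before Proposition~\ref{prop:uniqinfspegau}) that every bispecial word is a prefix of $\omega$.

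For the first item, I would use that $\sigma$, seen as an endomorphism of $F_d$, satisfies $\sigma(\sigma^{\alpha}(1^{-1})) = \sigma^{\alpha+1}(1^{-1})$ and $\sigma^{\alpha}(1^{-1}) = (\sigma^{\alpha}(1))^{-1}$, together with $(ab)^{-1} = b^{-1}a^{-1}$. Thus, if $u = \sigma^{\alpha_n}(1)\cdots\sigma^{\alpha_0}(1)$ is a bispecial word written as in Proposition~\ref{prop:emmabisp}, then $u^{-1} = \sigma^{\alpha_0}(1^{-1})\cdots\sigma^{\alpha_n}(1^{-1})$, and the task is to recognize the right-hand side as $l_m$ with $m = \alpha_n + 1$. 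This is pure bookkeeping: the exponents $\alpha_0 < \alpha_1 < \cdots < \alpha_n$ form an arithmetic progression of common difference $d-1$ with $\alpha_n = m-1$, so $\alpha_0 \equiv m-1\ [d-1]$; moreover $\alpha_0 \le d-2$ (the case $\alpha_0 = d-1$ is excluded because it would force $u$ to end with the letter $d$, which is never right-special, $d$ being followed only by $1$ in $\mathfrak{L}(\Omega)$). Comparing with the definition of $l_m$ — a single factor $\sigma^{m-1}(1^{-1})$ when $1 \le m \le d-1$ (so $n = 0$), the full product of $n+1$ factors when $m \ge d$ — gives $u^{-1} = l_m$. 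Conversely, reading the definition of $l_m$ backwards exhibits $u = (l_m)^{-1} = \sigma^{\alpha_n}(1)\cdots\sigma^{\alpha_0}(1)$ whose exponents satisfy the two conditions of Proposition~\ref{prop:emmabisp} (arithmetic progression of step $d-1$, and $\alpha_0 \le d-2 < d$), hence $u$ is bispecial. This yields the equivalence.

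For the second item, set $u_k = (l_k)^{-1}$, which is bispecial by the first item. Matching the parametrizations again (equivalently, invoking the lemma preceding Proposition~\ref{prop:emmabisp}), one has $u_{k+1} = \sigma(u_k)$ when the last letter of $u_k$ is not $d-1$ and $u_{k+1} = \sigma(u_k)\,1$ otherwise; in either case $|u_{k+1}| > |u_k|$, since $u_k$ begins with $1$ and $|\sigma(1)| = 2$. As every bispecial word is a prefix of $\omega$, both $u_k$ and $u_{k+1}$ are prefixes of $\omega$ of distinct lengths, hence $u_k$ is a proper prefix of $u_{k+1}$; passing to inverses, $l_k = (u_k)^{-1}$ is a proper suffix of $l_{k+1} = (u_{k+1})^{-1}$. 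I expect the only real difficulty to be this combinatorial matching in the first item — in particular keeping track of how the residue $m-1\ [d-1]$ corresponds to the exponent of the rightmost factor and treating the range $1 \le m \le d-1$ of the definition of $l_m$ separately; the rest is formal manipulation in $F_d$ together with results already available.
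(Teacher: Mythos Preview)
Your argument is correct and is exactly the computation the paper leaves implicit: in the paper the proposition is stated without proof, as an immediate reformulation of Proposition~\ref{prop:emmabisp} together with the definition of the $l_m$. Your write-up supplies precisely the missing bookkeeping, including the useful remark that in fact $\alpha_0\le d-2$ (the case $\alpha_0=d-1$ being excluded because $\sigma^{d-1}(1)$ ends in $d$, which is not right-special); this is needed to match $\alpha_0$ with the residue $(m-1)\bmod(d-1)$ in the definition of $l_m$, and it slightly sharpens the statement of Proposition~\ref{prop:emmabisp}.
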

			La proposition suivante permettra de conclure à la bijectivité de $f_0$. On note que le raisonnement donné est tout aussi
			important que le résultat lui-même. On montre que les mots apparaissent par ordre croissant de longueur (partie $(4)$)
			et que le mot le plus long à apparaître à une étape donnée est l'inverse d'un bispécial (parties $(1)$ et $(2)$).
			Cette propriété a été constatée sur d'autres classes d'exemples, et pourrait servir de base à une théorie plus générale.
			\begin{prop}\label{prop:emmaleet}
			\ 
			\begin{itemize}
				\item $(1)$ Pour tout $1\le m\le d-1$, le mot $l_m$ est le seul mot à apparaître à l'étape $m$.
				\item $(2)$ Pour tout $m\ge d$, $l_m$ est le plus long mot apparaissant en $m$.
				\item $(3)$ L'image par $f_0$ de tout point de branchement de $T_m^s$ est un suffixe de $l_m$.
				\item $(4)$ Tout suffixe de $l_m$ est apparu à une étape $\le m$.
			\end{itemize}
			\end{prop}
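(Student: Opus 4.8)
The plan is to prove the four assertions simultaneously by induction on $m$, exploiting Proposition \ref{prop:emmalite} (each word appearing at step $n$ is obtained from a word appearing earlier by appending $\sigma^n(d^{-1})$) together with the explicit description of $l_m$ and Proposition \ref{prop:emmabispeqpldev} linking the $l_m$ to bispecial factors. The base cases $1\le m\le d-1$ are handled by direct inspection of Figure \ref{fig:emmaboule1}: at each such step $\tau$ is, on the ball of radius $1$, essentially a relabelling that creates a single new branch point, and the only new word is $l_m=\sigma^{m-1}(1^{-1})$; this gives $(1)$, and $(3)$, $(4)$ for $m\le d-1$ are immediate since $l_m$ has only itself and $\epsilon$ as suffixes.

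For the inductive step ($m\ge d$), I would first establish $(4)$: every suffix of $l_m$ has appeared by step $m$. Write $l_m=\sigma^{\alpha_0}(1^{-1})\cdots\sigma^{\alpha_n}(1^{-1})$ with $\alpha_n=m-1$. By Proposition \ref{prop:emmabispeqpldev}, $l_{m-(d-1)}$ is exactly the suffix $\sigma^{\alpha_0}(1^{-1})\cdots\sigma^{\alpha_{n-1}}(1^{-1})$ of $l_m$ (dropping the leading block corresponds to $\alpha_i\mapsto\alpha_i-(d-1)$, and $\alpha_0<d-1$ forces the first block to disappear exactly when $m-1$ is a multiple of $d-1$ — here one must treat the two cases $\alpha_0>0$ and the "carry" case carefully). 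By the induction hypothesis $(4)$ at step $m-(d-1)$, every suffix of $l_{m-(d-1)}$ has appeared by step $m-(d-1)\le m$. It remains to see that the suffixes of $l_m$ that are \emph{not} suffixes of $l_{m-(d-1)}$, i.e. those of the form $w^{-1}\sigma^{m-1}(1^{-1})$ with $w^{-1}$ a suffix of $l_{m-(d-1)}$ together with the whole block $\sigma^{m-1}(1^{-1})$, appear by step $m$. Here is where Proposition \ref{prop:emmalite} is applied in reverse: if $v^{-1}$ appeared at a step in $[m-(2d-2),\,m-(d-1)]$, then $v^{-1}\sigma^{m'}(d^{-1})$ appears at that step $m'$; and since $\sigma^{\,\cdot}(d^{-1})$ and $\sigma^{\,\cdot}(1^{-1})$ are related through the train-track structure (the concatenations $\sigma^k(1)=\sigma^{k-1}(1)\,\sigma^{k-1}(2)=\cdots$ telescope), the single block $\sigma^{m-1}(1^{-1})$ is itself $l_{d-1}$ shifted, hence appears at step $m$ by the base case combined with the self-similar re-indexing. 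Concatenating with $(4)$ at the earlier step yields all required suffixes.

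Next $(2)$ and $(1)$-for-$m\ge d$ (that $l_m$ is the longest word appearing at step $m$): by Proposition \ref{prop:emmalite} any word $u^{-1}$ appearing at step $m$ is $v^{-1}\sigma^m(d^{-1})$ with $v^{-1}$ appearing at some step $p\in[m-(2d-2),\,m-(d-1)]$; by the induction hypothesis $(2)$, $|v^{-1}|\le|l_p|\le|l_{m-(d-1)}|$, so $|u^{-1}|\le|l_{m-(d-1)}|+|\sigma^m(d^{-1})|$, and one checks via $\lambda$ (or rather $\eta$) and the defining recursion $\eta^d=\eta+1$ — equivalently the combinatorial identity $|\sigma^{m}(1)|=|\sigma^{m-1}(1)|+|\sigma^{m-d}(1)|$ and the analogous identity for the $l$'s — that $|l_{m-(d-1)}|+|\sigma^m(d^{-1})|=|l_m|$, with equality \emph{only} when $v^{-1}=l_{m-(d-1)}$ and the appended block is the full one; this forces $u^{-1}=l_m$ and simultaneously shows $l_m$ does appear at step $m$ (using $(4)$, which gives that $l_{m-(d-1)}$ has appeared by step $m-(d-1)$, and then Proposition \ref{prop:emmalite} in the forward direction manufactures $l_m=l_{m-(d-1)}\sigma^m(d^{-1})$ at step $m$). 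Finally $(3)$ follows from $(2)$ together with $(4)$: any branch point of $T_m^s$ has an image $u^{-1}$ that has appeared at some step $\le m$, and by $(4)$ applied at each intermediate step plus the fact that the sequence $(l_k)$ is nested for the suffix order (Proposition \ref{prop:emmabispeqpldev}, second item), every such $u^{-1}$ is a suffix of $l_m$; conversely new branch points at step $m$ are handled directly by Proposition \ref{prop:emmalite}.

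The main obstacle I anticipate is the bookkeeping in the "carry" case of the recursion on the $\alpha_i$: when $m-1\equiv 0\pmod{d-1}$ the leading block of $l_{m}$ is $\sigma^0(1^{-1})=1^{-1}$ (a single letter) rather than a long block, and the clean identity $l_m = l_{m-(d-1)}\,\sigma^m(d^{-1})$ must be checked to still hold at the level of reduced words in $F_d$ — one must verify no cancellation occurs, which is exactly where discernedness of $T_m^s$ (Proposition \ref{prop:emmadiscer}) and injectivity of $p_*$ on paths (Proposition \ref{prop:emmaprinj}) are invoked. Getting the length identity $|l_{m-(d-1)}|+|\sigma^{m-1}(1)| = |l_m|$ to match the geometric scaling $\eta^{-\cdot}$ exactly — so that the telescoping in step $(2)$ is tight and not merely an inequality — is the delicate computational point, but it reduces to the characteristic equation $\eta^d=\eta+1$ and should go through.
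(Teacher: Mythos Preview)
Your inductive framework and your treatment of parts $(1)$ and $(2)$ are essentially the paper's: the base cases are read off Figure~\ref{fig:emmaboule1}, and for $(2)$ one bounds the length of any new word via Proposition~\ref{prop:emmalite} and the induction hypothesis, obtaining $|u^{-1}|\le |l_{m-(d-1)}|+|\sigma^{m}(d^{-1})|=|l_m|$. However, your arguments for $(3)$ and $(4)$ contain genuine gaps.

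For $(4)$ you try to reduce to the suffixes of $l_{m-(d-1)}$ and then append the block $\sigma^{m-1}(1^{-1})$. There are two problems. First, there is a prefix/suffix confusion: from the definition one has $l_m=l_{m-(d-1)}\cdot\sigma^{m-1}(1^{-1})$, so $l_{m-(d-1)}$ is a \emph{prefix} of $l_m$ in this block decomposition (it is also a suffix, by Proposition~\ref{prop:emmabispeqpldev}, but that is a different fact and does not give the decomposition you use). Second, and more seriously, even granting the decomposition, to show that $w^{-1}\sigma^{m-1}(1^{-1})$ appears at step~$m$ for every suffix $w^{-1}$ of $l_{m-(d-1)}$ you would need $w^{-1}$ to have appeared at some step in the window $[m-(2d-2),\,m-(d-1)]$; the induction hypothesis only gives you that it appeared at \emph{some} step $\le m-(d-1)$, which may be too early. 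The paper circumvents this entirely by analysing instead the \emph{shortest} word $c_m$ appearing at step $m$ and proving the identity $|c_m|=|l_{m-1}|+1$: combined with $(3)$ at step~$m$ and the induction hypothesis, this forces the new words at step $m$ to fill exactly the lengths $|l_{m-1}|+1,\dots,|l_m|$. The computation of $|c_m|$ uses $c_m=c_{m-(2d-2)}\sigma^{m-1}(1^{-1})$ and the purely combinatorial identity $\sigma^{d}(1^{-1})=1^{-1}\sigma^{d-1}(1^{-1})$, which rewrites $l_{m-1}$ as $l_{(m-1)-(2d-2)}\sigma^{m-1}(1^{-1})$. No appeal to $\eta$ or to $\eta^d=\eta+1$ is needed: the lengths involved are integers, and the relevant recursion is $|\sigma^{k}(1)|=|\sigma^{k-1}(1)|+|\sigma^{k-d}(1)|$.

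For $(3)$ your deduction ``$(3)$ follows from $(2)$ together with $(4)$'' is not valid as stated: assertion $(4)$ says that every suffix of $l_m$ has appeared, not that every word that has appeared is a suffix of $l_m$. The paper proves $(3)$ directly: a new word at step $m$ is $u^{-1}\sigma^{m-1}(1^{-1})$ with $u^{-1}$ appearing at some step $\le m-(d-1)$ (Proposition~\ref{prop:emmalite}), hence a suffix of $l_{m-(d-1)}$ by the induction hypothesis $(3)$; then $u^{-1}\sigma^{m-1}(1^{-1})$ is a suffix of $l_{m-(d-1)}\sigma^{m-1}(1^{-1})=l_m$. Old branch points are suffixes of $l_{m-1}$, itself a suffix of $l_m$.
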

			\begin{proof}
			La partie $(1)$ peut se lire directement sur la figure \ref{fig:emmaboule1}. On va démontrer les parties
			$(2)$, $(3)$ et $(4)$ par récurrence. On suppose les propriétés vraies aux rangs $< m$ et on suppose $m\ge d$.

			$(2)$ On déduit de la proposition \ref{prop:emmalite} (et de l'hypothèse de récurrence) que le mot le plus long apparaissant
			à l'étape $m$ est le mot $l_{m-(d-1)}\sigma^{m}(d^{-1}) = l_{m-(d-1)}\sigma^{m-1}(1^{-1})$ où $l_{m-(d-1)}$ est le plus long mot apparaissant en $(m-(d-1))$.
			L'hypothèse de récurrence permet immédiatement de conclure.

			$(3)$ Par hypothèse de récurrence, tout mot $u^{-1}$ apparu à une étape $\le (m-(d-1))$ est un suffixe de $l_{m-(d-1)}$ ;
			le mot $u^{-1}\sigma^{m-1}(1^{-1})$ (cf. proposition \ref{prop:emmalite}) est alors un suffixe de $l_{m-(d-1)}\sigma^{m-1}(1^{-1}) = l_m$.

			$(4)$ On note $c_k$ le mot le plus court à apparaître à l'étape $k$. Il nous suffit de prouver que $|c_m| = |l_{m-1}|+1$.
			Si $(m=d)$, alors on a\vspace*{1mm}
			\begin{itemize}
				\item $c_m = \sigma^{d}(d^{-1}) = \sigma^{d-1}(1^{-1})$, et\vspace*{1mm}
				\item $l_{m-1}=\sigma^{d-2}(1^{-1})$ d'après $(1)$.\vspace*{1mm}
			\end{itemize}
			Si $(d+1\le m\le 2d-1)$, alors on a\vspace*{1mm}
			\begin{itemize}
				\item $c_m = 1^{-1}\sigma^m(d^{-1}) = 1^{-1}\sigma^{m-1}(1^{-1})$, et\vspace*{1mm}
				\item $l_{m-1} = \sigma^{\alpha_0}(1^{-1})\sigma^{\alpha_1}(1^{-1})$ avec
				$\alpha_0 = (m-2)-(d-1)$ et $\alpha_1 = (m-2)$.\vspace*{1mm}
			\end{itemize}
			En remarquant que $\sigma^d(1^{-1}) = 1^{-1}\sigma^{d-1}(1^{-1})$, on obtient $l_{m-1} = \sigma^{m-1}(1^{-1})$.
			Dans les deux cas, on a $|c_m| = |l_{m-1}|+1$.

			On se place finalement dans le cas où $m>2d-1$. On déduit de la proposition \ref{prop:emmalite} que\vspace*{1mm}
			\begin{itemize}
				\item $c_m = c_{m-(2d-2)}\sigma^{m-1}(1^{-1})$, et\vspace*{1mm}
				\item $l_{m-1} = l_{(m-1)-(2d-2)}\sigma^{\alpha_{p-1}}(1^{-1})\sigma^{\alpha_p}(1^{-1})$ avec $\alpha_{p-1} = (m-2)-(d-1)$ et $\alpha_p = (m-2)$,\vspace*{1mm}
			\end{itemize}
			et on obtient ainsi $l_{m-1} = l_{(m-1)-(2d-2)}\sigma^{m-1}(1^{-1})$.\\
			L'égalité $|c_{m-(2d-2)}| = |l_{(m-1)-(2d-2)}|+1$ est vérifiée par hypothèse de récurrence et on conclut que $|c_m| = |l_{m-1}|+1$. 
			\end{proof}

			On déduit de cette proposition que l'application $f_0$ est surjective de $\bigcup\limits_{n\in \mathds{N}} \mathcal{W}_n^s$
			dans $\delta(\Omega^+_p)$. L'injection est assurée par la proposition \ref{prop:emmaprinj}
			et par le fait que $\sigma$ est un automorphisme.
			\begin{prop}
			L'application $f_0$ (\ref{eq:f_0}) est une bijection de l'ensemble $\bigcup\limits_{n\in \mathds{N}} \mathcal{W}_n^s$ des points de branchement
			des arbres $T_n^s$ dans l'ensemble $\delta(\Omega^+_p)$ des mots $u^{-1}$ de $F_d$ tels que $u$ est un préfixe (possiblement vide) de $\omega$.
			\end{prop}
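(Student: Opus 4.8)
The plan is to establish surjectivity of $f_0$ onto $\delta(\Omega^+_p)$ (that is, the two inclusions $\mathrm{Im}(f_0)\subseteq\delta(\Omega^+_p)$ and $\delta(\Omega^+_p)\subseteq\mathrm{Im}(f_0)$) and then injectivity, relying essentially on Propositions \ref{prop:emmaleet}, \ref{prop:emmabispeqpldev}, \ref{prop:emmabisp} and \ref{prop:emmaprinj}. First, $f_0$ is well defined by the remark preceding the statement: for any branch point $x$ and any $k$ with $x\in\mathcal{V}_k^s$, the word $\sigma^k(p_*(\gamma_k(x_0,x)))$ is independent of $k$, because $\sigma(p_*(\gamma_{k+1}(x_0,x)))=p_*(\gamma_k(x_0,x))$; in particular $f_0(x)$ is the word $u^{-1}$ produced when $x$ appears. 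By Proposition \ref{prop:emmaleet}$(3)$ the image of a branch point of $T_m^s$ is a suffix of $l_m$; by Proposition \ref{prop:emmabispeqpldev}, $l_m=w^{-1}$ for a bispecial word $w$, and every bispecial word is a prefix of $\omega$ (Proposition \ref{prop:emmabisp} and the discussion following it). Hence a suffix of $l_m$ has the form $u^{-1}$ with $u$ a prefix of $w$, so $u$ is a prefix of $\omega$, which gives $\mathrm{Im}(f_0)\subseteq\delta(\Omega^+_p)$; note the root $x_0$ yields $f_0(x_0)=p_*(\epsilon)=\epsilon$, corresponding to the empty prefix.

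Conversely, let $u$ be a prefix of $\omega$; we look for a branch point $x$ with $f_0(x)=u^{-1}$. Since there are bispecial words of arbitrarily large length (Proposition \ref{prop:emmabisp}), all of which are prefixes of $\omega$, and since $l_k$ is a proper suffix of $l_{k+1}$ for every $k$ (Proposition \ref{prop:emmabispeqpldev}), the sequence $(l_m)_m$ is an increasing chain of suffixes, and the union over $m$ of the suffixes of $l_m$ is exactly $\{v^{-1}\ ;\ v\ \text{prefix of}\ \omega\}$. Thus there is an $m$ with $u^{-1}$ a suffix of $l_m$, and Proposition \ref{prop:emmaleet}$(4)$ then gives that $u^{-1}$ has appeared at some step $\le m$, i.e. $u^{-1}\in\mathrm{Im}(f_0)$. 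This is the only genuinely delicate point of the argument: one must check that the suffixes of the $l_m$ exhaust \emph{all} reversed prefixes of $\omega$ and not only the reversed bispecial prefixes, which is precisely where the chain property of Proposition \ref{prop:emmabispeqpldev} is combined with the unboundedness of the lengths of bispecial words coming from Proposition \ref{prop:emmabisp}.

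Finally, for injectivity, let $x\in\mathcal{W}_n^s$ and $y\in\mathcal{W}_{n'}^s$ with $f_0(x)=f_0(y)$, and set $m=\max(n,n')$. By definition of the tree substitution, $\mathcal{V}_n^s\subseteq\mathcal{V}_m^s$ and $\mathcal{V}_{n'}^s\subseteq\mathcal{V}_m^s$, so $x$ and $y$ are vertices of $T_m^s$, and by well-definedness $f_0(x)=\sigma^m(p_*(\gamma_m(x_0,x)))$ and $f_0(y)=\sigma^m(p_*(\gamma_m(x_0,y)))$. As $\sigma$ is an automorphism of $F_d$, $\sigma^m$ is injective on $F_d$, hence $p_*(\gamma_m(x_0,x))=p_*(\gamma_m(x_0,y))$, and Proposition \ref{prop:emmaprinj} forces $x=y$. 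Therefore $f_0$ is a bijection from $\bigcup_{n\in\mathds{N}}\mathcal{W}_n^s$ onto $\delta(\Omega^+_p)$.
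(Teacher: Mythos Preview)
Your proof is correct and follows essentially the same route as the paper. The paper's own argument is a two-line remark: surjectivity is deduced from Proposition~\ref{prop:emmaleet} (together with the preceding identification of the $l_m$ with inverses of bispecial prefixes of $\omega$, Propositions~\ref{prop:emmabisp} and~\ref{prop:emmabispeqpldev}), and injectivity from Proposition~\ref{prop:emmaprinj} combined with the invertibility of $\sigma$; you have simply unpacked these two sentences carefully, making explicit the inclusion $\mathrm{Im}(f_0)\subset\delta(\Omega^+_p)$ via part~(3) and the reverse inclusion via part~(4) and the unbounded lengths of bispecials.
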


			On définit finalement l'application $f_Q$ (pour l'instant restreinte à l'ensemble $\Omega^+_p$ des mots de l'orbite positive du point fixe $\omega$) par :
			\begin{equation}\label{eq:f_Q1}
				\begin{tabular}{ccccl}
				$f_Q$ & : & $\Omega^+_p$ & $\to$ & $T_{\tau}$\\
				&& $u^{-1}\omega$ & $\mapsto$ & $\nu_k(f_0^{-1} (u^{-1}))$
				\end{tabular}
			\end{equation}
			où $k$ est n'importe quel entier pour lequel $f_0^{-1} (u^{-1})$ est un sommet de $T_k^s$ et $\nu_k$ est l'application
			définie au paragraphe \ref{subsubsec:real}.

			\begin{rmk}
			On peut également définir l'application $f_0$ sur l'ensemble des points terminaux
			(en donnant la même définition que pour les points de branchement).
			Il est possible de montrer que $f_0$ est aussi une bijection
			de l'ensemble des points terminaux dans l'ensemble des mots $u$ de $\mathfrak{L}(\Omega)$
			tels que $u\omega\in \Omega^+$. On aurait ainsi pu définir directement l'image par
			$f_Q$ des mots $V$ de $\Omega^+$ tels que $S^k(V)=\omega$ pour un certain $k\in \mathds{N}^*$.
			La définition alternative donnée au paragraphe suivant est équivalente.
			\end{rmk}

			\subsubsection{Extension du domaine de définition de l'application $f_Q$}
			L'application $f_Q$ (\ref{eq:f_Q1}) associe un point de l'arbre $T_{\tau}$ (\ref{eq:Ttau}) à tout mot de l'ensemble $\Omega^+_p = \{S^n(\omega) ; n\in \mathds{N}\}$
			(où $\omega$ est le mot de $A^\mathds{N}$ fixe par $\sigma$). Le but de ce paragraphe est d'associer un point de l'arbre $T_{\tau}$
			à tout mot du système symbolique unilatère $\Omega^+$, et de montrer que la nouvelle application $f_Q$ ainsi obtenue
			est surjective de $\Omega^+$ dans $T_{\tau}$. Pour cela, on utilisera l'automate des préfixes-suffixes
			afin de produire des suites de mots dont les images par $f_Q$ sont
			convergentes dans $T_{\tau}$. On commence par un travail préliminaire sur certaines propriétés
			des développements en préfixes-suffixes et de l'application $f_0$ (\ref{eq:f_0}).\\

			Soit $u$ un préfixe de $\omega$. On appelle \textbf{écriture automatique} de $u^{-1}$ l'écriture
			\begin{itemize}
				\item $u^{-1} = \sigma^{\alpha_0}(1^{-1})\dots \sigma^{\alpha_p}(1^{-1})$, avec
				\item $\forall i, 0\le i < p$, $\alpha_{i+1}-\alpha_i \ge d$.
			\end{itemize}
			La suite $\alpha_0, \dots, \alpha_p$ détermine de manière unique un développement en préfixes-suffixes : celui de
			$S^{|u|}(\overline{\omega}.\omega)$, où $\overline{\omega}$ est n'importe quel mot infini à gauche tel que $\overline{\omega}.\omega\in \Omega$.
			De plus, l'écriture automatique est unique ; en effet, d'après la proposition \ref{prop:coolprefsuff},
			tout mot dont l'écriture automatique a une puissance maximale $\le m$ (quel que soit $m\in \mathds{N}$) est un suffixe propre de $\sigma^{m+1}(1^{-1})$.
			
			\begin{lem}\label{lem:automappar}
			Si un mot $u^{-1}$ apparaît à l'étape $n\ge 1$, alors la puissance maximale de son écriture automatique est soit $n-1$, soit $n$.
			\end{lem}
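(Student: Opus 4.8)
The plan is to combine the recursive description of appearing words (Proposition \ref{prop:emmalite}) with the characterization of the automatic writing and Proposition \ref{prop:coolprefsuff}. First I would recall that, by Proposition \ref{prop:emmalite}, a word $u^{-1}$ appearing at step $n$ factors as $u^{-1}=v^{-1}\sigma^{n}(d^{-1})$ with $v^{-1}$ appearing at step $m$, $n-(2d-2)\le m\le n-(d-1)$; and since $\sigma(d)=1$, we have $\sigma^{n}(d^{-1})=\sigma^{n-1}(1^{-1})$, so the last block contributed by step $n$ is exactly $\sigma^{n-1}(1^{-1})$. The plan is then to argue that the automatic writing of $u^{-1}$ is obtained by concatenating the automatic writing of $v^{-1}$ with this last block $\sigma^{n-1}(1^{-1})$, so that the maximal power of $u^{-1}$ is $\max(n-1,\text{max power of }v^{-1})$, and to control the latter by induction on $n$.

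Concretely, I would proceed by strong induction on $n$. For the base cases $1\le n\le d-1$, Proposition \ref{prop:emmaleet}$(1)$ says the only word appearing at step $n$ is $l_n=\sigma^{n-1}(1^{-1})$, whose automatic writing is a single block with power $n-1$; this is the case "$n-1$". For the inductive step with $n\ge d$, take $u^{-1}$ appearing at step $n$ and write $u^{-1}=v^{-1}\sigma^{n-1}(1^{-1})$ with $v^{-1}$ appearing at step $m\le n-(d-1)$. By the induction hypothesis the maximal power $\beta$ of the automatic writing of $v^{-1}$ is $m-1$ or $m$, hence $\beta\le m\le n-(d-1)$, so $\beta$ differs from $n-1$ by at least $d-1\ge d-1$; in particular $\beta<n-1$ and, crucially, the gap between the last exponent of $v^{-1}$'s automatic writing and $n-1$ is at least $d-1$. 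Here I would need the gap to be $\ge d$, not merely $\ge d-1$: this is where Proposition \ref{prop:emmalite}'s sharper bound matters, combined with the observation that if the gap were exactly $d-1$ one could merge the two last blocks using $\sigma^{d}(1^{-1})=1^{-1}\sigma^{d-1}(1^{-1})$ (the identity already used in the proof of Proposition \ref{prop:emmaleet}$(4)$), which would either contradict the reducedness/discernment of $T_{n}^s$ (Proposition \ref{prop:emmadiscer}) or show $v^{-1}$ was not the word furnished by Proposition \ref{prop:emmalite}. Granting the gap is $\ge d$, the concatenation of the two automatic writings is itself a valid automatic writing of $u^{-1}$, and by the uniqueness of the automatic writing (established just before the lemma via Proposition \ref{prop:coolprefsuff}) it \emph{is} the automatic writing of $u^{-1}$. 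Its maximal power is $\max(\beta,n-1)=n-1$, so $u^{-1}$ falls in the case "$n-1$" — with the case "$n$" occurring exactly in the degenerate situation where the last block is itself already the whole word (the base cases, shifted along the induction).

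The main obstacle I anticipate is precisely the gap estimate: showing that, after peeling off the final block $\sigma^{n-1}(1^{-1})$, the exponent of the previous block of the automatic writing of $v^{-1}$ is at most $n-d$ and not $n-d+1$. The bound $m\le n-(d-1)$ from Proposition \ref{prop:emmalite} gives gap $\ge d-1$ directly, so the real work is ruling out equality; I would handle this by the merging argument above, reading it off the picture of the iterated substitution $\tau$ (Figure \ref{fig:emmaboule1}) and invoking discernment (Proposition \ref{prop:emmadiscer}) to forbid a block of the form that would appear after an illegitimate merge. Once the gap is $\ge d$, the rest is bookkeeping: uniqueness of automatic writings turns the concatenation into \emph{the} automatic writing, and the induction on $n$ closes immediately, giving maximal power $n-1$ in general and $n$ only in the small base range.
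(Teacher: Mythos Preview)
Your approach has a genuine gap, and in fact your conclusion is too strong. You claim that after the base cases the maximal power is always $n-1$, with case ``$n$'' arising only in the small base range; but this is false. The longest word $l_n$ appearing at step $n$ has, for every $n\ge d$, an automatic writing whose maximal power is $n$ (obtained from the bispecial expression $l_n=\sigma^{\alpha_0}(1^{-1})\cdots\sigma^{\alpha_p}(1^{-1})$ with $\alpha_{i+1}-\alpha_i=d-1$ by repeatedly applying $1^{-1}\sigma^{d-1}(1^{-1})=\sigma^{d}(1^{-1})$ from the top down). So the value $n$ does occur for arbitrarily large $n$, and any argument that purports to rule it out is wrong. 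In particular your ``merging'' step, far from yielding a contradiction via discernment, is exactly what produces the case ``$n$'': when the gap between the last exponent of $v^{-1}$ and $n-1$ equals $d-1$, the two top blocks merge into a single block $\sigma^{n}(1^{-1})$. There is also an arithmetic slip: with $\beta\le m\le n-(d-1)$ you get $(n-1)-\beta\ge d-2$, not $d-1$, so the gap you need to control is even worse than you state, and the naive concatenation of automatic writings can fail in more than one way.

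The paper's argument avoids all of this by working with lengths. Proposition \ref{prop:emmaleet} gives $|l_{n-1}|<|u^{-1}|\le |l_n|$ for any $u^{-1}$ appearing at step $n$. On the other hand, the observation recorded just before the lemma (a direct consequence of Proposition \ref{prop:coolprefsuff}) says that a word whose automatic writing has maximal power $\le k$ is a proper suffix of $\sigma^{k+1}(1^{-1})$, so its length is $<|\sigma^{k+1}(1)|$. Since $l_m$ has an automatic writing of maximal power $m$ (for $m\ge d$), one gets $|\sigma^{m}(1)|\le |l_m|<|\sigma^{m+1}(1)|$, and the two length inequalities immediately squeeze the maximal power of $u^{-1}$ between $n-1$ and $n$. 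I would suggest abandoning the inductive concatenation of automatic writings and adopting this length argument instead.
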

			\begin{proof}
			Pour tout $m\in \mathds{N}^*$, le mot le plus long à apparaître à l'étape $m$ est
			$l_m = \sigma^{\alpha_0}(1^{-1})\dots \sigma^{\alpha_{p}}(1^{-1})$ ($p\ge 0$) avec
			\begin{itemize}
				\item $\alpha_0 = m-1 [d-1]$,
				\item pour tout $1\le i\le p$, $\alpha_i = \alpha_{i-1}+(d-1)$,
				\item $\alpha_p = m-1$.
			\end{itemize}
			Si $m\le d-1$, le mot $\sigma^{m-1}(1^{-1})$ est le seul à apparaître à l'étape $m$. Si $m\ge d$,
			appliquer l'égalité $\sigma^d(1^{-1})=1^{-1}\sigma^{d-1}(1^{-1})$ (dans l'ordre décroissant des puissances) permet
			d'obtenir une écriture automatique de $l_m$ de puissance maximale $m$. Si $u^{-1}$ apparaît à l'étape $n\ge 1$,
			alors $|l_{n-1}| < |u^{-1}| \le |l_n|$. On conclut en utilisant la proposition \ref{prop:coolprefsuff}.
			\end{proof}

			Si le mot $u^{-1}$ apparaît à l'étape $n\ge 1$, alors il existe deux sommets $x$ et $y$ de $T_n^s$ tels que
			l'arbre décrit ci-dessous est un sous-arbre de $T_n^s$.
			\begin{figure}[h!]
			\begin{center}
				\begin{psfrags}
				\psfrag{1}{\LARGE{$1$}}
				\psfrag{d}{\LARGE{$d$}}
				\psfrag{x}{\LARGE{$x$}}
				\psfrag{y}{\LARGE{$y$}}
				\psfrag{fu}{\LARGE{$f_0^{-1}(u^{-1})$}}
				\scalebox{0.58}{\includegraphics{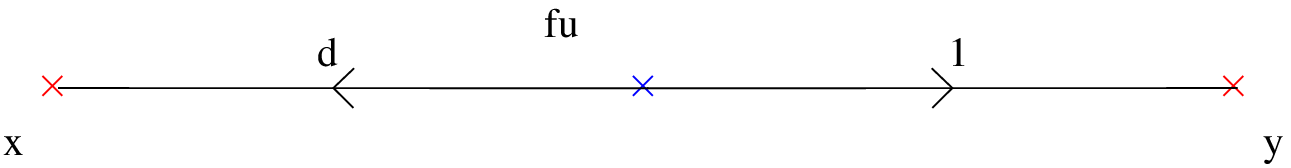}}
				\end{psfrags}
			\end{center}
			\vspace{-4mm}
			\end{figure}
			De plus, $(x, y, 2)$ est une arête de $T_{n-1}^s$. Les points de degré $1$ ne peuvent pas posséder d'arête sortante
			colorée $2$ ; on en déduit que $x$ est un point de branchement. On montre de plus la propriété suivante.
			\begin{lem}\label{lem:helpsurj}
			Soit $u^{-1}$ un mot apparaissant à l'étape $n\ge 1$ et soit $y$ le sommet de $T_n^s$ tel que $(f_0^{-1}(u^{-1}), y, 1)$
			est une arête de $T_n^s$. Si $n$ est la puissance maximale de l'écriture automatique de $u^{-1}$, alors
			$y$ est un point de branchement.
			\end{lem}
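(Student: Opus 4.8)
The plan is to relate the combinatorial structure near a freshly appeared vertex $f_0^{-1}(u^{-1})$ to the prefix-suffix automaton, using the hypothesis that the maximal power $n$ of the automatic writing of $u^{-1}$ coincides with the appearance step. By Proposition \ref{prop:emmalite}, the word $u^{-1}$ appears at step $n$ in the form $u^{-1}=v^{-1}\sigma^n(d^{-1})$, where $v^{-1}$ appeared at some step $m$ with $n-(2d-2)\le m\le n-(d-1)$; moreover $f_0^{-1}(u^{-1})$ is obtained from $f_0^{-1}(v^{-1})$ by applying the tree substitution, and the edge $(x,y,1)$ at step $n$ comes from an edge whose colour evolves under $\tau$ according to the trunk matrix $M_t$. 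First I would track, using Proposition \ref{prop:troncagrees} and the explicit description of $\tau$ on $X_1,\dots,X_d$ and $X_{d+1},\dots,X_{2d-2}$, exactly which edge of $T_{n-1}^s$ (or of an earlier $T_k^s$) is the ancestor of the edge $(f_0^{-1}(u^{-1}),y,1)$ in $T_n^s$: the key point is that an edge of colour $1$ in $T_n^s$ is the image of an edge of colour $(d+1)$ in $T_{n-1}^s$, which in turn is one of the edges created around a branch point two steps earlier by the realisation recipe of \ref{subsubsec:real}.

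Next I would translate the hypothesis on the maximal power. By Lemme \ref{lem:automappar} the maximal power of the automatic writing of $u^{-1}$ is $n-1$ or $n$; the case distinction matters because, as the discussion preceding the lemma shows, when $u^{-1}$ appears at step $n$ there are always vertices $x,y$ with $(x,y,2)$ an edge of $T_{n-1}^s$ and $x$ a branch point, but $y$ need not be. I would argue that the maximal power being exactly $n$ forces $u^{-1}$ to be, in the notation of Proposition \ref{prop:emmaleet}(1)--(2), \emph{longer} than the "short" words $c_n$ appearing at step $n$ — more precisely, it forces the last block $\sigma^n(d^{-1})=\sigma^{n-1}(1^{-1})$ to be genuinely of power $n-1$ and not further reducible, which (via $\sigma^d(1^{-1})=1^{-1}\sigma^{d-1}(1^{-1})$) means $u^{-1}$ is not of the exceptional form $1^{-1}\sigma^{m-1}(1^{-1})$ encountered in the computation of $c_m$ in the proof of Proposition \ref{prop:emmaleet}(4). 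Equivalently, $y$ is not the terminal vertex that would be attached by an edge of colour $1$ to the root side; I would make this precise by computing $f_0(y)=\sigma^n(p_*(\gamma_n(x_0,y)))$ and checking it is the inverse of a prefix of $\omega$ that is itself special to the left — hence, by Proposition \ref{prop:uniqinfspegau} and the remark after Proposition \ref{prop:emmabisp}, an interior (branch) vertex rather than a leaf.

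The main obstacle I anticipate is the bookkeeping in the second step: showing cleanly that "maximal power $=n$" is exactly the condition separating the case where the colour-$1$ neighbour $y$ is a leaf from the case where it is a branch point. Concretely, one must verify that $y$ carries an outgoing edge of some colour other than $1$ and $\overline{1}$ in $T_n^s$ — by the structure of the tree substitution, a vertex is a branch point precisely when its distance-$1$ ball in some $T_k^s$ already contains a full star, and I would prove that under the power hypothesis the vertex $y=f_0^{-1}(u^{-1}\sigma^n(1^{-1})\cdot\text{(something)})$ is the image under $\tau$ of a branch point of $T_{n-1}^s$, using Proposition \ref{prop:emmaleet}(4) to guarantee that the relevant shorter word has already appeared. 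Once that is established, Proposition \ref{prop:emmadiscer} (discernment) ensures there is no cancellation and $y$ genuinely has degree $d$, concluding the proof. A secondary, purely technical point is handling the small cases $d\le n\le 2d-2$ separately, where the ancestor edge lives in $T_0^s=\tau^{d-1}(X_2)$ itself and the claim can be read off Figure \ref{fig:emmaboule1} directly.
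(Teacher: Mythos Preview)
Your plan would eventually succeed, but it is far more elaborate than necessary, and you are overlooking a two-line finish that is already implicit in your own second step.

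The paper's argument is purely algebraic. Write the automatic writing $u^{-1}=\sigma^{\alpha_0}(1^{-1})\cdots\sigma^{\alpha_{p-1}}(1^{-1})\,\sigma^{n}(1^{-1})$ (the hypothesis says precisely that the last exponent equals $n$). Since $(f_0^{-1}(u^{-1}),y,1)$ is an edge of $T_n^s$, one has $p_*(\gamma_n(x_0,y))=p_*(\gamma_n(x_0,f_0^{-1}(u^{-1})))\cdot 1$, hence
\[
f_0(y)\;=\;u^{-1}\,\sigma^n(1)\;=\;\sigma^{\alpha_0}(1^{-1})\cdots\sigma^{\alpha_{p-1}}(1^{-1})\;\in\;\delta(\Omega^+_p).
\]
Now $f_0$ is a bijection from $\bigcup_n\mathcal{W}_n^s$ onto $\delta(\Omega^+_p)$, so some branch point $z$ satisfies $f_0(z)=f_0(y)$; by Lemme~\ref{lem:automappar} this $z$ appeared at a step $\le\alpha_{p-1}+1<n$, hence $z\in\mathcal{V}_n^s$, and Proposition~\ref{prop:emmaprinj} (together with injectivity of $\sigma^n$) forces $z=y$. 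That is the entire proof.

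You do write down the computation of $f_0(y)$, but you then try to conclude via ``special to the left'' and Proposition~\ref{prop:uniqinfspegau}; that is not the right implication (every prefix of $\omega$ is special to the left, so this distinguishes nothing). What actually closes the argument is that $f_0(y)$ lies in the range of the bijection $f_0$ restricted to branch points, combined with the injectivity statement~\ref{prop:emmaprinj}. Once you see this, all of the edge-ancestor tracking through $M_t$, the case distinction between $c_n$ and $l_n$ from Proposition~\ref{prop:emmaleet}, the verification that $y$ carries an outgoing edge of another colour, and the separate treatment of small $n$ become unnecessary.
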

			\begin{proof}
			On suppose que l'écriture automatique de $u^{-1}$ est
			\begin{center}
				$u^{-1} = \sigma^{\alpha_0}(1^{-1})\dots \sigma^{\alpha_{p-1}}(1^{-1})\sigma^{n}(1^{-1})$.
			\end{center}
			On a alors $f_0(y) = \sigma^{\alpha_0}(1^{-1})\dots \sigma^{\alpha_{p-1}}(1^{-1})$ (on note que $f_0(y)$ peut être égal à $\epsilon$).
			On sait que $f_0$ est une bijection de $\bigcup\limits_{n\in \mathds{N}}\mathcal{W}_n^s$ dans $\delta(\Omega^+_p) = \{u^{-1}\ ;$ $u$ \textit{est un préfixe de} $\omega\}$.
			Ainsi, si $y$ n'était pas un point de branchement, la proposition \ref{prop:emmaprinj} serait contredite.
			\end{proof}\ 

			On va maintenant définir les images par $f_Q$ des mots de $\Omega^+\setminus \Omega^+_p$.
			Soit $V$ un mot de $\Omega^+\setminus \Omega^+_p$. Puisque $\omega$ est le seul mot de $\Omega^+$ spécial à gauche,
			il existe un unique mot $U$ infini à gauche tel que
			$U.V\in \Omega$. On suppose de plus que $\Gamma(U.V) = (p_i, a_i, s_i)_{i\in \mathds{N}}$.
			On note $u_0^{-1} = p_0^{-1}$ et pour tout $n\in \mathds{N}$, $u_{n+1}^{-1} = u_{n}^{-1}\sigma^{n+1}(p_{n+1}^{-1})$.
			\begin{prop}
			La suite $(f_Q(u_n^{-1}\omega))_n$ est une suite de Cauchy de $T_{\tau}$.
			\end{prop}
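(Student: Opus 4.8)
The plan is to bound the distance between consecutive terms of the sequence by the general term of a convergent geometric series; a sequence whose consecutive distances are summable is automatically Cauchy. First note that, taking inverses in the recursion defining the $u_n$, one has $u_{n+1}=\sigma^{n+1}(p_{n+1})\,u_n$, so the points $f_Q(u_n^{-1}\omega)$ and $f_Q(u_{n+1}^{-1}\omega)$ coincide whenever $p_{n+1}=\epsilon$; it therefore suffices to control the indices $n$ with $p_{n+1}=1$, for which $u_{n+1}^{-1}=u_n^{-1}\sigma^{n+1}(1^{-1})$. Reading off the prefix-suffix automaton (figure \ref{fig:emmautom}) one checks that $p_i=1$ forces $a_{i+1}=1$, $a_i=2$, then $p_{i+1}=\epsilon$ followed by a descent $a_{i+2}=d,\ a_{i+3}=d-1,\dots$, so that two occurrences of the letter $1$ in the sequence $(p_i)$ are at least $d$ apart. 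Consequently the expression $u_n^{-1}=\sigma^{\alpha_0}(1^{-1})\cdots\sigma^{\alpha_p}(1^{-1})$, where $\alpha_0<\cdots<\alpha_p$ are the indices $i$ with $p_i=1$, is the automatic writing of $u_n^{-1}$; by the paragraph preceding Lemma \ref{lem:automappar} this means $u_n$ is a prefix of $\omega$, so that $x_n:=f_0^{-1}(u_n^{-1})$ and $f_Q(u_n^{-1}\omega)=\nu_k(x_n)$ are well defined.

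Next I would fix an index $n$ with $p_{n+1}=1$ and locate $x_n$ and $x_{n+1}$ inside a common tree $T_m^s$. The automatic writing of $u_n^{-1}$ has maximal power $\alpha_p\le n$, and that of $u_{n+1}^{-1}$, namely $\sigma^{\alpha_0}(1^{-1})\cdots\sigma^{\alpha_p}(1^{-1})\sigma^{n+1}(1^{-1})$ (still automatic, since $n+1-\alpha_p\ge d$), has maximal power $n+1$; Lemma \ref{lem:automappar} then gives that $u_n^{-1}$ appears at a step $\le n+1$ and $u_{n+1}^{-1}$ at a step $\le n+2$, hence $x_n,x_{n+1}\in\mathcal{V}_{n+2}^s$. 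Since $f_0(x)=\sigma^m(p_*(\gamma_m(x_0,x)))$ for every step $m$ at which $x$ is a vertex and $\sigma$ is an automorphism, one computes
\[
p_*\big(\gamma_{n+2}(x_n,x_{n+1})\big)=\sigma^{-(n+2)}\big(u_n\,u_{n+1}^{-1}\big)=\sigma^{-(n+2)}\big(\sigma^{n+1}(1^{-1})\big)=\sigma^{-1}(1^{-1})=d^{-1}.
\]
Because $T_{n+2}^s$ is discerned (Proposition \ref{prop:emmadiscer}) and $p_*$ produces no cancellation (as in the proof of Proposition \ref{prop:emmaprinj}), the geodesic from $x_n$ to $x_{n+1}$ in $T_{n+2}^s$ reduces to a single edge coloured $d$.

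Finally I would conclude: by Proposition \ref{prop:emmaletlen} this edge is realized by a segment of length $\mathbf{V_t}(d)\,\eta^{-(n+2)}=\eta\cdot\eta^{-(n+2)}=\eta^{-(n+1)}$, whence
\[
d_{T_{\tau}}\big(f_Q(u_n^{-1}\omega),f_Q(u_{n+1}^{-1}\omega)\big)=\eta^{-(n+1)}
\]
(the distance being $0$ when $p_{n+1}=\epsilon$). As $\eta>1$, the series $\sum_{n\ge 0}\eta^{-(n+1)}$ converges, so $(f_Q(u_n^{-1}\omega))_n$ is a Cauchy sequence in $T_{\tau}$. The main obstacle I expect is the combinatorial bookkeeping: rigorously establishing the $\ge d$ spacing of the $1$'s in $(p_i)$, deducing the automatic writing of the $u_n^{-1}$ together with their appearance steps, and then identifying the segment $[f_Q(u_n^{-1}\omega),f_Q(u_{n+1}^{-1}\omega)]$ with the realization of a single edge.
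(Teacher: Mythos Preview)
Your argument is correct and follows essentially the same route as the paper: split according to whether $p_{n+1}=\epsilon$ or $p_{n+1}=1$, and in the latter case show that $f_Q(u_n^{-1}\omega)$ and $f_Q(u_{n+1}^{-1}\omega)$ are joined by a single edge of length $\eta^{-(n+1)}$, whence the series of consecutive distances is geometric. The only difference is bookkeeping: the paper works at the \emph{appearance step} $m$ of $u_{n+1}^{-1}$ and, via Lemma~\ref{lem:automappar}, distinguishes $m=n+1$ (edge colour $1$, length $\mathbf{V_t}(1)\eta^{-(n+1)}$) from $m=n+2$ (edge colour $d$, length $\mathbf{V_t}(d)\eta^{-(n+2)}$), both giving $\eta^{-(n+1)}$; you instead place both vertices in $T_{n+2}^s$ uniformly and read off the single colour-$d$ edge there. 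Your route has the mild advantage of avoiding the case split, and you are more explicit than the paper about why $u_n$ is a prefix of $\omega$ (so that $f_0^{-1}(u_n^{-1})$ makes sense).

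One point to tighten: deducing ``single edge coloured $d$'' from $p_*(\gamma_{n+2}(x_n,x_{n+1}))=d^{-1}$ requires knowing that the geodesic between two branching points uses no colours from $\{d+1,\dots,2d-2\}$ (such edges always have a terminal endpoint) and that neither $\overline{k}k$ nor $k\overline{k}$ occurs; the proof of Proposition~\ref{prop:emmadiscer} in fact checks both patterns, so your citation is adequate, but it would be cleaner to say so explicitly.
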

			\begin{proof}
			Soit $n\in \mathds{N}$. Le mot $p_{n+1}$ peut être égal à $\epsilon$ ou à $1$. Si $p_{n+1}=\epsilon$, alors
			$u_n = u_{n+1}$. On se place dans le cas où $p_{n+1} = 1$.
			Soit $m$ l'étape d'apparition du mot $u_{n+1}^{-1}$. Les points
			$f_0^{-1}(u_n^{-1})=x$ et $f_0^{-1}(u_{n+1}^{-1})=y$ sont des sommets de $T_m^s$. On déduit du lemme \ref{lem:automappar}
			que $(n+1 = m)$ ou $(n+1 = m-1)$. Si $(n+1 = m)$, alors $\gamma_m(y, x) = 1$, et si $(n+1 = m-1)$, alors $\gamma_m(y, x) = d$
			(on rappelle que $\gamma_m$ est la fonction chemin de $T_m^s$). On se reporte à la proposition \ref{prop:emmaletlen} pour conclure que dans les
			deux cas, on a
			\begin{center}
				$d_{T_{\tau}}(f_Q(u_n^{-1}\omega), f_Q(u_{n+1}^{-1}\omega)) = \eta^{-(n+1)}\ \mathbf{V_t}(1)$,
			\end{center}
			où $\mathbf{V_t}$ est le vecteur propre à gauche (associé à $\eta$) de la matrice tronc défini précédemment.
			\end{proof}
			Finalement, $f_Q$ est l'application de $\Omega^+$ dans $T_{\tau}$ définie par :
			\begin{center}
				$\forall~ V=u^{-1}\omega\in \Omega^+_p,~ f_Q(V) = \nu_k(f_0^{-1} (u^{-1}))$
			\end{center}si $f_0^{-1} (u^{-1})$ est un sommet de $T_k^s$ (cf. \ref{eq:f_Q1}), et
			\begin{equation}\label{eq:f_Q2}
				\forall~ V\in \Omega^+\setminus\Omega^+_p,~ f_Q(V) = \lim\limits_{n\to +\infty} f_Q(u_n^{-1}\omega)
			\end{equation}
			si $U.V$ est un mot de $\Omega$ tel que $\Gamma(U.V) = (p_i, a_i, s_i)_{i\in \mathds{N}}$, et si on a défini
			$u_0^{-1} = p_0^{-1}$ et $u_{n+1}^{-1} = u_{n}^{-1}\sigma^{n+1}(p_{n+1}^{-1})$ pour tout $n\in \mathds{N}$.\vspace{1mm}

			On va montrer que l'application $f_Q:\Omega^+\to T_{\tau}$ ainsi définie est surjective.
			Si $z$ est un point quelconque de $T_{\tau}$, on va choisir une suite $(z_n)_n$ de points de $\bigcup\limits_{n\in \mathds{N}}\mathcal{W}_n$
			(on rappelle que $\mathcal{W}_n$ est l'ensemble des points de $T_n$ de degré $d$)
			convergente vers $z$, et montrer l'existence d'une suite extraite $(z_n^\prime)_n$ telle que, pour tout $n\in \mathds{N}$,\vspace{1mm}
			\begin{itemize}
				\item $f_Q(u_n^{-1}\omega) = z_n^\prime$,\vspace{0.1cm}
				\item si $u_n^{-1} = \sigma^{\alpha_0}(1^{-1})\dots \sigma^{\alpha_{p}}(1^{-1})$ avec $\alpha_{i+1} - \alpha_{i} \ge d$ pour tout $0\le i < p$,\\
				alors $u_{n+1}^{-1} = \sigma^{\alpha_0}(1^{-1})\dots \sigma^{\alpha_{p}}(1^{-1})\sigma^{\alpha_{p+1}}(1^{-1})$ avec $\alpha_{p+1}-\alpha_p\ge d$.\vspace{0.1cm}
			\end{itemize}
			La suite $(u_n^{-1})_n$ nous permettra ainsi de retrouver le développement en préfixes-suffixes d'un mot $U.V\in \Omega$, et on pourra
			en conclure que $z = f_Q(V)$.\\

			Le choix de la suite $(z_n)_n$ est déterminé par l'approximation de $T_{\tau}$ par la suite $(T_n)_n$.
			\begin{defn}\label{defn:arcsimple}
			Soit $e=[s, t]$ un arc de $T_n$. Si $e$ est tel que
				\begin{itemize}
				\item $s$ et $t$ sont des éléments de $\mathcal{V}_n$ (l'ensemble des points de $T_n$ de degré $1$ ou $d$),
				\item l'intérieur de $e$ ne contient aucun élément de $\mathcal{V}_n$,
				\end{itemize}
			alors on dira que $e$ est un arc \textbf{simple}.
			\end{defn}
			Pour tout arc $[s, t]$ simple dans $T_n$ pour un certain $n$, on définit
			\begin{center}
				$T_{\tau}(s, t) = \{x\in T_{\tau} ; s\notin ]x, t[ \text{ et } t\notin ]x, s[\}$.
			\end{center}
			Essentiellement, $T_{\tau}(s, t)$ contient $s, t$ et la partie de $T_{\tau}$ comprise \og entre \fg~ $s$ et $t$.

			\begin{prop}\label{prop:emmasurj}
			$f_Q$ est surjective de $\Omega^+$ dans $T_{\tau}$.
			\end{prop}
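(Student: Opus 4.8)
Le plan est de montrer que tout point $z$ de $T_{\tau}$ est atteint par $f_Q$, en distinguant selon que $z$ est un point de branchement de $T_{\tau}$ ou non. Si $z$ est un point de branchement de $T_\tau$, alors par les propri�t�s de convergence de la suite $(T_n)_n$ (tout point de $T_n$ est � distance $\le \eta^{-1-n}$ de $T_{n-1}$, et les points de $T_\tau$ sont de degr� $1$, $2$ ou $d$), $z$ est d�j� un point de branchement de $T_m$ pour un certain $m$, donc $z\in \mathcal{W}_m = \nu_m(\mathcal{W}_m^s)$. La bijectivit� de $f_0$ (�tablie pr�c�demment) fournit un pr�fixe $u$ de $\omega$ avec $f_0(f_0^{-1}(u^{-1})) = u^{-1}$ et $\nu_m(f_0^{-1}(u^{-1})) = z$, d'o� $z = f_Q(u^{-1}\omega)$ avec $u^{-1}\omega\in \Omega^+_p$. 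Il reste donc le cas o� $z$ n'est pas un point de branchement.

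Dans ce cas, on construit une suite $(z_n)_n$ de points de $\bigcup_n \mathcal{W}_n$ convergeant vers $z$, de la mani�re suivante : pour chaque $n$, on choisit dans $T_n$ un arc simple $e_n = [s_n, t_n]$ (au sens de la d�finition \ref{defn:arcsimple}) tel que $z\in T_{\tau}(s_n, t_n)$ ; comme le diam�tre des ensembles $T_\tau(s_n,t_n)$ tend vers $0$ (la longueur des arcs simples de $T_n$ est un $\mathbf{V_t}(k)\eta^{-n}$, qui tend vers $0$), et comme ces ensembles sont embo�t�s quitte � extraire, on peut prendre pour $z_n$ l'une des extr�mit�s de $e_n$ qui soit un point de branchement de $T_n$ (un arc simple de $T_n$, $n\ge 1$, a au moins une extr�mit� dans $\mathcal{W}_n$, car un arc simple joignant deux points terminaux serait un arbre $T_n$ r�duit, ce qui est exclu pour $n$ assez grand ; on traite $n$ petit � la main). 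Par le premier cas, chaque $z_n$ s'�crit $z_n = f_Q(v_n^{-1}\omega)$ pour un pr�fixe $v_n$ de $\omega$.

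L'�tape cruciale, et le principal obstacle, est de raffiner le choix de la suite $(z_n)_n$ pour obtenir une sous-suite $(z_n')_n$ avec $z_n' = f_Q(u_n^{-1}\omega)$, o� les $u_n^{-1}$ ont des �critures automatiques $u_n^{-1} = \sigma^{\alpha_0}(1^{-1})\dots\sigma^{\alpha_p}(1^{-1})$ ($\alpha_{i+1}-\alpha_i\ge d$) telles que $u_{n+1}^{-1}$ prolonge $u_n^{-1}$ par un unique facteur $\sigma^{\alpha_{p+1}}(1^{-1})$ avec $\alpha_{p+1}-\alpha_p\ge d$. C'est ici qu'interviennent les lemmes \ref{lem:automappar} et \ref{lem:helpsurj} : le lemme \ref{lem:automappar} relie l'�tape d'apparition d'un mot � la puissance maximale de son �criture automatique, et le lemme \ref{lem:helpsurj} garantit que, lorsque $n$ est exactement la puissance maximale de l'�criture automatique de $u_n^{-1}$, le point $y$ tel que $(f_0^{-1}(u_n^{-1}), y, 1)$ est une ar�te de $T_n^s$ est encore un point de branchement, ce qui permet de continuer la construction � l'�tape suivante en restant \og align� \fg~ sur un d�veloppement en pr�fixes-suffixes. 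On choisit donc la sous-suite des $n$ pour lesquels $z_n$ provient d'un mot apparaissant exactement � son �tape de puissance maximale, et on v�rifie (gr�ce � la proposition \ref{prop:emmalite} qui d�crit comment les mots apparaissent par ajout d'un bloc $\sigma^n(d^{-1}) = \sigma^{n-1}(1^{-1})$) que la suite $(u_n^{-1})_n$ ainsi obtenue cro�t par prolongement automatique.

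On conclut en observant que la suite croissante $(\alpha_i)_i$ des puissances, avec $\alpha_{i+1}-\alpha_i\ge d$, code exactement (via l'automate des pr�fixes-suffixes et la proposition \ref{prop:coolprefsuff}) le d�veloppement en pr�fixes-suffixes $\Gamma(U.V) = (p_i,a_i,s_i)_{i\in\mathds{N}}$ d'un certain mot $U.V\in \Omega$ avec $p_i\in\{\epsilon, 1\}$ ; la suite $(u_n^{-1})_n$ est alors la suite $u_0^{-1} = p_0^{-1}$, $u_{n+1}^{-1} = u_n^{-1}\sigma^{n+1}(p_{n+1}^{-1})$ de la d�finition de $f_Q$ (\ref{eq:f_Q2}), et donc $z = \lim_n z_n' = \lim_n f_Q(u_n^{-1}\omega) = f_Q(V)$. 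Ceci �tablit la surjectivit�. Le point d�licat reste l'extraction et la v�rification que l'on peut maintenir simultan�ment la convergence $z_n'\to z$ et la structure automatique compatible des $u_n^{-1}$ ; tout le travail pr�paratoire (lemmes \ref{lem:automappar}, \ref{lem:helpsurj}, propositions \ref{prop:emmalite} et \ref{prop:emmaleet}) a pr�cis�ment �t� mis en place pour rendre cette extraction possible.
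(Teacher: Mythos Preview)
Your overall strategy matches the paper's: approximate $z\notin\bigcup_n\mathcal{W}_n$ by branching endpoints of the simple arcs of $T_n$ containing $z$, invoke Lemmas \ref{lem:automappar} and \ref{lem:helpsurj}, and recover a prefix-suffix expansion. The gap is in the extraction step: your criterion (``select those $n$ where $z_n$ comes from a word appearing at its step of maximal power'') is not the right slicing, and more importantly your choice of a \emph{single} branching endpoint $z_n$ per arc $e_n$ is not enough to make the argument close.

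The paper's device is to keep \emph{both} endpoints. Orient the simple arc so that $(\nu_n^{-1}(x_n),\nu_n^{-1}(y_n'),j)$ is an edge of $T_n^s$; then $x_n$ is automatically a branching point. Set $z_{2n}=x_n$ and $z_{2n+1}=y_n$, where $y_n=y_n'$ if $y_n'$ is a branching point and $y_n=x_n$ otherwise. The paper then proves the property in the \emph{backward} direction: for any $u_k^{-1}$ in this sequence with automatic writing $\sigma^{\alpha_0}(1^{-1})\dots\sigma^{\alpha_p}(1^{-1})$, the truncation $\sigma^{\alpha_0}(1^{-1})\dots\sigma^{\alpha_{p-1}}(1^{-1})$ is again some $u_h^{-1}$ in the sequence. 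If $u_k^{-1}$ appeared at step $n$, Lemma \ref{lem:automappar} forces $\alpha_p\in\{n-1,n\}$; when $\alpha_p=n-1$ the truncation is $f_0(\nu_n^{-1}(x_{n-1}))$, and when $\alpha_p=n$ it is $f_0(\nu_n^{-1}(y_{n-1}))$, Lemma \ref{lem:helpsurj} guaranteeing in this second case that $y_{n-1}'$ is indeed a branching point (so $y_{n-1}=y_{n-1}'$ really occurs in the interleaved sequence). Keeping both endpoints is exactly what makes both cases land inside the sequence; with a single endpoint per level you lose one of the two, and the forward-extension formulation you aim for becomes awkward to state. Once the backward truncation property is established, starting from $z_0=f_Q(\omega)$ one reads off the increasing chain of automatic writings and hence a prefix-suffix expansion, as you describe.
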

			\begin{proof}
			On rappelle que pour tout $n\in \mathds{N}$, l'application $\nu_n$ est une bijection de
			$\mathcal{V}_n^s$ dans $\mathcal{V}_n$ et que sa restriction à $\mathcal{V}_{n-1}^s$ (si $n\ge 1$) est égale à $\nu_{n-1}$.\vspace{0.1cm}

			Soit $z$ un point de $T_{\tau}\setminus (\bigcup\limits_{n\in \mathds{N}} \mathcal{W}_n)$. Pour tout $n\in \mathds{N}$,
			il existe un unique arc simple $e = [x_n, y_n^\prime]$ de $T_n$ tel que $z\in T_{\tau}(x_n, y_n^\prime)$.
			On suppose par convention que $(\nu_n^{-1}(x_n), \nu_n^{-1}(y_n^\prime), j)$ avec $j\in A_\tau$
			est une arête de $T_n^s$. Sous cette condition, le point $x_n$ est forcément un point de branchement
			(les points de degré $1$ de $T_n^s$ ne peuvent pas posséder d'arêtes sortantes).
			Si $y_n^\prime$ est un point de branchement, alors on définit $y_n = y_n^\prime$ ; sinon,
			on prend $y_n = x_n$. On définit, pour tout $n\in \mathds{N}$,
			\begin{center}
				\begin{tabular}{lcc}
					$z_{2n}$ & $=$ & $x_n$,\\
					$z_{2n+1}$ & $=$ & $y_n$.
				\end{tabular}
			\end{center}
			La suite $(z_n)_n$ est une suite de $(\bigcup\limits_{n\in \mathds{N}} \mathcal{W}_n)$ qui converge vers $z$
			et on suppose que pour tout $n\in \mathds{N}$, $f_Q(u_n^{-1}\omega) = z_n$.\\

			On remarque que $z_0 = f_Q(\omega)$.
			Pour pouvoir conclure à la surjectivité de $f_Q$, il suffit de montrer que pour tout
			$k\in \mathds{N}$, si
			\begin{itemize}
				\item $u_k^{-1} = \sigma^{\alpha_0}(1^{-1})\dots \sigma^{\alpha_p}(1^{-1})$ avec $p\ge 1$, et
				\item $\forall i, 0\le i < p$, $\alpha_{i+1}-\alpha_i \ge d$,
			\end{itemize}
			alors il existe un entier $h$ tel que $u_h^{-1} = \sigma^{\alpha_0}(1^{-1})\dots \sigma^{\alpha_{p-1}}(1^{-1})$.
			On suppose que $u_k^{-1}$ est apparu à l'étape $n\ge 1$. L'exposant $\alpha_p$ vaut soit $n-1$, soit $n$ (lemme \ref{lem:automappar})
			et l'arbre décrit ci-dessous est un sous-arbre de $T_n^s$ (on rappelle que l'application $f_0$ est définie en \ref{eq:f_0}).
			\begin{figure}[h!]
			\begin{center}
				\begin{psfrags}
				\psfrag{1}{\LARGE{$1$}}
				\psfrag{d}{\LARGE{$d$}}
				\psfrag{x}{\LARGE{$\nu_n^{-1}(x_{n-1})$}}
				\psfrag{y}{\LARGE{$\nu_n^{-1}(y_{n-1}^\prime)$}}
				\psfrag{fu}{\LARGE{$f_0^{-1}(u_k^{-1})$}}
				\scalebox{0.58}{\includegraphics{eps/surjhelp.eps}}
				\end{psfrags}
			\end{center}
			\vspace{-4mm}
			\end{figure}\ \\
			Si $\alpha_p = n-1$, alors on a
			\begin{center}
				$f_0(\nu_n^{-1}(x_{n-1})) = \sigma^{\alpha_0}(1^{-1})\dots \sigma^{\alpha_{p-1}}(1^{-1})$.
			\end{center}
			Si $\alpha_p = n$, alors le point $y_{n-1}^\prime$ est un point de branchement (lemme \ref{lem:helpsurj}), et on a
			\begin{center}
				$f_0(\nu_n^{-1}(y_{n-1})) = \sigma^{\alpha_0}(1^{-1})\dots \sigma^{\alpha_{p-1}}(1^{-1})$.
			\end{center}
			\end{proof}

		\subsection{Interprétation des cylindres du système symbolique $\Omega^+$ dans l'arbre $T_{\tau}$}\label{subsec:cec}
		Dans la section \ref{sec:conjugaison}, on montre que l'application $f_Q$ (\ref{eq:f_Q2}) réalise une conjugaison entre
		le système symbolique engendré par $\sigma$ et un système d'isométries partielles sur $T_{\tau}$ (\ref{eq:Ttau}).
		En plus de fournir des informations précises sur la structure auto-similaire de $T_{\tau}$, la substitution d'arbre a ainsi produit
		un moyen efficace d'associer une trajectoire infinie de $\Omega^+$ à un point de l'arbre. Cela simplifie la compréhension
		de la représentation géométrique du système dynamique engendré par $\sigma$. On voudrait donner une méthode systématique
		pour associer une substitution d'arbre à un automorphisme donné. Dans la présente section, on étudie plus en détail l'exemple proposé
		et on met en évidence une relation fondamentale entre la substitution d'arbre et le système symbolique engendré par $\sigma$.
		Cette relation pourrait servir de base à des résultats plus généraux.

		Chaque arbre de la suite $(T_n)_n$ (convergente vers $T_{\tau}$) peut être décomposé en arcs simples (définition \ref{defn:arcsimple}) et chaque arc simple correspond à une partie de $T_{\tau}$.
		La décomposition en arcs simples induit donc une partition (modulo un ensemble fini de points) de $T_{\tau}$ ;
		notre but est de montrer que chacune de ces partitions se relève par $f_Q^{-1}$ en une partition
		(modulo un ensemble fini de points) en cylindres de $\Omega^+$.

			\subsubsection{Images par $f_Q$ des cylindres de $\Omega^+$}
			Si $[s, t]$ est un arc simple de $T_n$, il existe $k$, $(1\le k\le 2d-2)$, tel que le segment $[s, t]$
			de $T_{n+k}$ contient exactement $1$ point de branchement ; on le note $z(s, t)$.
			L'antécédent de $z(s, t)$ par $f_Q$ dans $\Omega^+_p$ est noté $u(s, t)^{-1}\omega$.

			Pour tout mot $u$ de $\mathfrak{L}(\Omega)$, on note $P_u$ l'ensemble des mots $V$ de $\Omega^+$
			tels que $uV$ est encore dans $\Omega^+$.
			Dans ce paragraphe, on montre par récurrence que pour tout $n\in \mathds{N}$ et pour tout
			arc simple $[s, t]$ de $T_n$, on a $f_Q(P_{u(s, t)}) = T_{\tau}(s, t)$.

			\begin{rmk}
			Si $u$ est un préfixe de $\omega$, alors l'élément $u^{-1}\omega$ de $\Omega^+_p$ est dans $P_u$.
			\end{rmk}
			La proposition suivante fera office d'initialisation pour la récurrence à suivre. L'arbre
			$T_0^s$ étant constitué des $d$ arêtes $(x_0, x_j, j)$ (où $x_0$ est la racine et $1\le j\le d$), on définit
			$z_j = \nu_0(x_j)$ pour tout $0\le j\le d$ (voir paragraphe \ref{subsubsec:real} pour la
			définition de $\nu_0$).
			\begin{prop}
			Pour tout $1\le j\le d$, on a $T_{\tau}(z_0, z_j) = f_Q(P_{u(z_0, z_j)})$.
			\end{prop}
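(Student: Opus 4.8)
The plan is to recast the equality as an identification of two partitions of their ambient spaces. First I would determine $u(z_0,z_j)$ explicitly. Following $\tau$ on the edge of $T_0^s$ coloured $j$: for $j=2$ it is replaced by $\tau(X_2)$ already at step $1$; for $3\le j\le d$ it runs through the colours $j-1,\dots,2$ and becomes $\tau(X_2)$ at step $j-1$; for $j=1$ it is first recoloured $d$, runs down to $2$, and becomes $\tau(X_2)$ at step $d$. In each case the unique interior branch point $z(z_0,z_j)$ of $[z_0,z_j]$ is the branch point of a copy of $\tau(X_2)$ realized at that step $k\in\{1,\dots,d\}$, joined to $x_0$ by a path of $p_*$-image $d^{-1}$ (Proposition~\ref{prop:troncagrees} and the orientation convention). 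Hence, by the definition~(\ref{eq:f_0}) of $f_0$, $u(z_0,z_j)^{-1}=\sigma^{k}(d^{-1})=(\sigma^{k-1}(1))^{-1}$, i.e. $u(z_0,z_j)=\sigma^{k-1}(1)=12\cdots k$ is the prefix of $\omega$ of length $k$; as $j$ runs over $2,3,\dots,d,1$ the integer $k$ runs over $1,2,\dots,d-1,d$.

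Next, the only length-two factors of $\mathfrak{L}(\Omega)$ being the $k1$ ($1\le k\le d$) and the $k(k+1)$ ($1\le k\le d-1$), every occurrence of a letter $m$ with $2\le m\le d$ is immediately preceded by $m-1$; so the prefix $12\cdots m$ of $\omega$ is forced letter by letter by its last letter, and $(12\cdots m)V\in\Omega^+\iff mV\in\Omega^+$ for every $V\in\Omega^+$, that is $P_{12\cdots m}=P_m$. Writing $a_j$ for the last letter of $u(z_0,z_j)$, we get $P_{u(z_0,z_j)}=P_{a_j}$, and $\{a_j\}_{j=1}^d=\{1,\dots,d\}$, so $(P_{u(z_0,z_j)})_j$ is exactly $\mathscr{P}_1$. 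Both families are partitions modulo one point: $x_0$ stays of degree $d$ and each $x_j$ of degree $1$ under $\tau$, so $z_0$ has degree $d$ and each $z_j$ is terminal in $T_{\tau}$, whence $T_{\tau}\setminus\{z_0\}$ has $d$ components $C_1,\dots,C_d$ with $z_j\in C_j$, $T_{\tau}(z_0,z_j)=\{z_0\}\cup C_j$, and the $T_{\tau}(z_0,z_j)$ cover $T_{\tau}$ with pairwise intersection $\{z_0\}$; and $\bigcup_{a\in A}P_a=\Omega^+$, distinct $P_a$ meet only in $\{\omega\}$ (Proposition~\ref{prop:uniqinfspegau}), while $\omega\in\bigcap_a P_a$ since $a\cdot\sigma^n(1)\in\mathfrak{L}(\Omega)$ for all $a$ and $n$ (pick $b$ with $\sigma(b)$ ending in $a$ and apply $\sigma$). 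Since $f_Q$ is onto $T_{\tau}$ (Proposition~\ref{prop:emmasurj}) and $f_Q(\omega)=\nu_0(x_0)=z_0$, it suffices to prove, for each $j$, the single inclusion $f_Q(P_{a_j})\subseteq T_{\tau}(z_0,z_j)$: running over $j$, surjectivity of $f_Q$ and the two partition structures then force equality for all $j$.

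For the inclusion, let $V\in P_{a_j}$, and assume $V\ne\omega$ (the case $V=\omega$ being $f_Q(\omega)=z_0$). Then $a_j$ is the unique letter with $a_jV\in\Omega^+$, so, with $\Gamma(U.V)=(p_i,a_i,s_i)_i$ for the left extension $U$ of $V$, the $p_i$ take only the values $\epsilon$ and $1$, not all are $\epsilon$ (else $U.V\in\Omega_{per}$ and $V=\omega$), and if $m$ is least with $p_m=1$ then $U$ ends in $\sigma^m(1)=12\cdots(m+1)$, giving $a_j=m+1$; hence $u_m^{-1}=\sigma^m(1^{-1})=u(z_0,z_j)^{-1}$ and $f_Q(u_m^{-1}\omega)=z(z_0,z_j)$, a point of $[z_0,z_j]\subseteq\overline{C_j}$. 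Because $u_{n+1}^{-1}=u_n^{-1}\sigma^{n+1}(p_{n+1}^{-1})$ is a reduced right prolongation of $u_n^{-1}$ and the trees $T_n^s$ are discerned (Proposition~\ref{prop:emmadiscer}), $f_0^{-1}(u_n^{-1})$ lies on the geodesic from $x_0$ to $f_0^{-1}(u_{n+1}^{-1})$; so the points $f_Q(u_n^{-1}\omega)$ recede monotonically from $z_0$ along a ray, and having entered $\overline{C_j}$ at step $m$ they, and their limit $f_Q(V)$, stay in $\overline{C_j}=T_{\tau}(z_0,z_j)$. (For $V\in\Omega^+_p$ the sequence may be taken eventually constant and the same monotonicity applies to $f_0^{-1}(u(z_0,z_j)^{-1})$ and its prolongations.)

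The hard part will be this non-backtracking statement, together with the matching of the left-extension letter $a_j$ with the first direction of the approximating ray. It is not implied by the Cauchy rate alone: since $\eta<2$, the geometric tail $\sum_{n\ge m}\eta^{-(n+1)}\mathbf{V_t}(1)$ exceeds $d_{T_{\tau}}(z_0,z(z_0,z_j))$, so a priori the limit could slip back across $z_0$; one genuinely needs the reduced-prolongation form of the $u_n$ (cf. Proposition~\ref{prop:emmalite}) and discernment of the $T_n^s$ to see that the ray leaves $z_0$ in exactly the one direction prescribed by $a_j$. The remaining ingredients — the value of $u(z_0,z_j)$, the collapse $P_{12\cdots m}=P_m$, and the partition bookkeeping — are routine.
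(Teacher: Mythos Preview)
Your partition-based strategy is sound and genuinely different from the paper's: after correctly computing $u(z_0,z_j)=12\cdots k$ and collapsing $P_{12\cdots k}=P_k$, you reduce everything to the single inclusion $f_Q(P_{a_j})\subseteq T_\tau(z_0,z_j)$. The paper instead proves both inclusions: first $T_\tau(z_0,z_j)\subseteq f_Q(P_{u(z_0,z_j)})$ by showing (via Proposition~\ref{prop:emmalite}) that every branch point in $T_\tau(z_0,z_j)\setminus\{z_0\}$ has a word with $u(z_0,z_j)^{-1}$ as reduced prefix, and then the reverse inclusion by combining this forward direction with Proposition~\ref{prop:uniqinfspegau}.

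The gap is exactly where you flag it: the non-backtracking claim. Your one-line justification --- reduced right prolongation plus discernment --- does not establish that $f_0^{-1}(u_n^{-1})$ lies on the geodesic from $x_0$ to $f_0^{-1}(u_{n+1}^{-1})$, because ``prefix in $F_d$'' does \emph{not} imply tree-ancestry in general. For $d=3$: the branch points $z_1$ (appearing at step~$1$, word $1^{-1}$) and $z_4$ (appearing at step~$3$, word $\sigma^3(1^{-1})=1^{-1}3^{-1}2^{-1}1^{-1}$) satisfy the prefix relation, yet in $T_3^s$ one has the edge $(z_4,x_0,1)$, and the geodesic from $x_0$ to $z_1$ is $x_0\,$--$\,z_4\,$--$\,z_1$; so $z_1$ is \emph{not} on the geodesic from $x_0$ to $z_4$. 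This particular pair is not a prefix-suffix step (the power gap is $2<d$), so your narrower claim may still hold; but discernment alone is not the reason, and a real argument is needed --- for instance a case split on whether $u_{n+1}^{-1}$ appears at step $n{+}1$ or $n{+}2$ (cf.\ the Cauchy-sequence proof and Lemma~\ref{lem:helpsurj}), identifying in each case which endpoint of the old colour-$2$ edge is the entry vertex from $x_0$. A shorter fix, closer to the paper: once the forward inclusion shows that branch points in $T_\tau(z_0,z_j)$ are exactly those whose word has $u(z_0,z_j)^{-1}$ as prefix (the first letters of the $u(z_0,z_j)^{-1}$ being pairwise distinct), every $u_n^{-1}$ for $n\ge m$ has that prefix, and $f_Q(u_n^{-1}\omega)\in T_\tau(z_0,z_j)$ follows without any monotonicity.
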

			\begin{proof}
			On déduit de la figure \ref{fig:emmaboule1} que $u(z_0, z_1)^{-1} = \sigma^{d-1}(1^{-1})$ et que
			pour tout $2\le j\le d$, on a $u(z_0, z_j)^{-1} = \sigma^{j-2}(1^{-1})$.
			Le point $z_0 = f_Q(\omega)$ est à l'intersection des $T_{\tau}(z_0, z_j)$ et $\omega\in P_{u(z_0, z_j)}$ quel que soit $1\le j\le d$
			(le mot bi-infini $\lim\limits_{n\to +\infty} \sigma^{dn}(j.1)$ existe quel que soit $j$).

			Si $z\in T_{\tau}(z_0, z_j)\cap (\bigcup\limits_{n\in \mathds{N}} \mathcal{W}_n)$ et $z\ne z_0$, alors $z$ est l'image par $f_Q$ d'un mot $w^{-1}\omega$ de $\Omega^+_p$
			tel que~~ $w^{-1} = u(z_0, z_j)^{-1}\sigma^{k_1}(d^{-1})\sigma^{k_2}(d^{-1})\dots \sigma^{k_m}(d^{-1})$ avec\vspace{0.1cm}
			\begin{itemize}
				\item $j^\prime+(d-1)\le k_1\le j^\prime+(2d-2)$ si $u(z_0, z_j)^{-1}$ apparaît à l'étape $j^\prime$ (proposition \ref{prop:emmalite}),\vspace{0.1cm}
				\item $k_i+(d-1)\le k_{i+1}\le k_i+(2d-2)$ pour tout $1\le i < m$ (toujours par \ref{prop:emmalite}).\vspace{0.1cm}
			\end{itemize}
			Le mot $u(z_0, z_j)^{-1}$ est un préfixe de $w^{-1}$ et on déduit que $w^{-1}\omega\in P_{u(z_0, z_j)}$.\vspace{0.1cm}

			Soit $z$ un point de $T_{\tau}(z_0, z_j)\setminus (\bigcup\limits_{n\in \mathds{N}} \mathcal{W}_n)$. En reprenant le principe de la preuve de la proposition
			\ref{prop:emmasurj}, on trouve une suite $(x_n)_n$ d'éléments de $T_{\tau}(z_0, z_j)\cap (\bigcup\limits_{n\in \mathds{N}} \mathcal{W}_n)$
			telle que, pour tout $n\in \mathds{N}$,\vspace{0.1cm}
			\begin{itemize}
				\item $f_Q(w_n^{-1}\omega) = x_n$,\vspace{0.1cm}
				\item si $w_n^{-1} = \sigma^{\alpha_0}(1^{-1})\dots \sigma^{\alpha_{p}}(1^{-1})$ avec $\alpha_{i+1} - \alpha_{i} \ge d$ pour tout $0\le i < p$,\\
				alors $w_{n+1}^{-1} = \sigma^{\alpha_0}(1^{-1})\dots \sigma^{\alpha_{p}}(1^{-1})\sigma^{\alpha_{p+1}}(1^{-1})$ avec $\alpha_{p+1}-\alpha_p\ge d$.\vspace{0.1cm}
			\end{itemize}
			La suite $(w_n)_n$ détermine un unique développement en préfixes-suffixes qui a un unique antécédent $U.V$ par $\Gamma^{-1}$.
			Il existe un rang $k$ tel que pour tout $n\ge k$, le mot $w_n^{-1}$ admet $u(z_0, z_j)^{-1}$ comme préfixe (théorème \ref{thm:prgamma}), ce qui fait
			de $u(z_0, z_j)$ un suffixe de $U$ ; on en déduit que $V\in P_{u(z_0, z_j)}$. De plus, l'égalité $f_Q(V) = z$ est vérifiée, et on a finalement $z\in f_Q(P_{u(z_0, z_j)})$.
			On conclut que $T_{\tau}(z_0, z_j)\subset f_Q(P_{u(z_0, z_j)})$.\\

			Soit $z$ un point de $f_Q(P_{u(z_0, z_j)})$ et $V$ un antécédent de $z$ dans $P_{u(z_0, z_j)}$.
			Si $V$ est un mot de $\Omega^+_p$ et $f_Q(V)\in T_{\tau}(z_0, z_k)$, avec $k\ne j$, alors on vient
			de voir que $V$ est également dans $P_{u(z_0, z_k)}$.
			Les dernières lettres des mots $u(z_0, z_j)$ sont deux à deux distinctes. Le mot $\omega$ étant le seul mot de $\Omega^+$ spécial à gauche
			(proposition \ref{prop:uniqinfspegau}), on a $V = \omega$. On en déduit $z = f_Q(V)$
			est dans $T_{\tau}(z_0, z_j)$.

			Si $V$ n'est pas dans $\Omega^+_p$, il existe un mot $U$ infini à gauche tel que $U.V\in \Omega$ et
			$u(z_0, z_j)$ est un suffixe de $U$. On suppose que $\Gamma(U.V) = (p_i, a_i, s_i)_{i\ge 0}$ et on note, pour tout
			$n\in \mathds{N}$, $w_{n}^{-1} = p_0^{-1}\sigma(p_1^{-1})\dots \sigma^n(p_n^{-1})$. D'après le théorème
			\ref{thm:prgamma}, il existe
			un entier $k$ tel que $u(z_0, z_j)^{-1}$ est un préfixe de $w_{n}^{-1}$ pour tout $n\ge k$. La suite $(f_Q(w_{n}^{-1}\omega))_{n\ge k}$
			est une suite de Cauchy de $T_{\tau}(z_0, z_j)$ (compact en tant que fermé du compact $T_{\tau}$) et converge vers $f_Q(V)$.
			Finalement, $z = f_Q(V)\in T_{\tau}(z_0, z_j)$ et $f_Q(P_{u(z_0, z_j)})\subset T_{\tau}(z_0, z_j)$. 
			\end{proof}
			On généralise maintenant cette propriété à tout arc simple de tout arbre $T_n$. La proposition suivante
			n'est qu'une étape dans la démonstration du théorème \ref{thm:emmagenarpar} à venir. Elle nous permettra
			d'associer à tout arbre $T_n$ une décomposition (pour l'instant grossière) de $\Omega^+$ ; le sens réel de cette décomposition
			est donné dans le paragraphe suivant.
			\begin{prop}\label{prop:emmarcgen}
			Pour tout $n\in \mathds{N}$ et pour tout arc simple $[s, t]$ de $T_n$, on a $f_Q(P_{u(s, t)}) = T_{\tau}(s, t)$.
			\end{prop}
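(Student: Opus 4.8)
The statement to prove is Proposition \ref{prop:emmarcgen}: for every $n\in\mathds{N}$ and every simple arc $[s,t]$ of $T_n$, one has $f_Q(P_{u(s,t)}) = T_\tau(s,t)$. The natural strategy is induction on $n$, the case $n=0$ being exactly the preceding proposition (which handles the $d$ simple arcs $[z_0,z_j]$ of $T_0$). For the inductive step, I would fix $n\geq 1$ and a simple arc $[s,t]$ of $T_n$, and express it in terms of data at level $n-1$ via the tree substitution. The key combinatorial fact is that every simple arc of $T_n$ arises from applying $\tau$ (i.e.\ $\nu_n\circ\tau\circ\nu_{n-1}^{-1}$ up to the realization) to a simple arc $[s',t']$ of $T_{n-1}$: each such arc of $T_{n-1}$ corresponds to an edge $(y_1,y_2,k)$ of $T_{n-1}^s$, and $\tau(X_k)$ is a known finite tree (the trunk of $\tau(X_k)$ plus the extra edges colored $d+1,\dots,2d-2$), so $[s',t']$ breaks into finitely many simple arcs of $T_n$ in an explicitly controlled way. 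Each simple arc of $T_n$ is therefore one piece of this decomposition of some simple arc of $T_{n-1}$.

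**Carrying out the induction.** Having written $[s,t]$ as a sub-arc in the image of $[s',t']$, I would compare $u(s,t)$ with $u(s',t')$. The definition of $u(s,t)$ goes through the branch point $z(s,t)$ that appears in $[s,t]$ after at most $2d-2$ further steps of $\tau$, and through the bijection $f_0$ and the ``apparition'' mechanism analysed in Propositions \ref{prop:emmalite}, \ref{prop:emmaleet} and Lemma \ref{lem:automappar}. The heart of the argument is to show that $u(s,t)$ is obtained from $u(s',t')$ by prepending (as a prefix of the associated left-infinite word, i.e.\ as a suffix of $u(s,t)^{-1}$) the word $\sigma(\text{something})$ coming from the edge-coloring of $T_{n-1}^s$; concretely $u(s,t) = u(s',t')\cdot w$ where $w$ is read off from the path in $\tau(X_k)$ joining the endpoints of the relevant sub-arc, using the path function $\gamma$ and the morphism $p_*$. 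This is precisely the self-similarity encoded by $f_Q$: passing from level $n-1$ to level $n$ applies $\sigma$ to prefixes-suffixes data, so $P_{u(s,t)}$ at level $n$ corresponds to the sub-cylinder of $P_{u(s',t')}$ cut out by forcing the next block of letters. On the tree side, $T_\tau(s,t)$ is exactly the part of $T_\tau(s',t')$ lying ``below'' the sub-arc $[s,t]$ of $T_n$ inside the subdivided arc, because $T_\tau(\cdot,\cdot)$ is defined by the separation condition $s\notin\,]x,t[$ and $t\notin\,]x,s[$, which is stable under refining the arc. Combining the inductive hypothesis $f_Q(P_{u(s',t')}) = T_\tau(s',t')$ with these two matching refinements yields $f_Q(P_{u(s,t)}) = T_\tau(s,t)$.

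**Where the difficulty lies.** The routine part is the tree bookkeeping: enumerating the simple arcs of $\tau(X_k)$ for each color $k\in A_\tau$ and checking that the endpoints land in $\mathcal{V}_n$ with the expected colors — this is just careful reading of Figures \ref{fig:emmasimpsub} and \ref{fig:emmaboule1}. The genuine obstacle is controlling $u(s,t)$ precisely: one must be sure that the single branch point $z(s,t)$ forced to appear inside $[s,t]$ after $k\leq 2d-2$ applications of $\tau$ is the \emph{right} one, i.e.\ that its $f_Q$-preimage $u(s,t)^{-1}\omega$ has $u(s,t)^{-1}$ with the correct automatic writing relative to $u(s',t')^{-1}$. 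This is exactly the content that Lemma \ref{lem:automappar} (the maximal power of the automatic writing of a word appearing at step $n$ is $n-1$ or $n$) and Lemma \ref{lem:helpsurj} are designed to supply, together with Proposition \ref{prop:emmalite} describing which words appear at a given step as $v^{-1}\sigma^n(d^{-1})$. I would therefore reduce, case by case according to the color $k$ and the position of the sub-arc, to an application of these lemmas showing $u(s,t)^{-1} = u(s',t')^{-1}\sigma^{m}(d^{-1})\cdots$ or $u(s,t)^{-1} = u(s',t')^{-1}\sigma^{m}(1^{-1})$ for the appropriate step $m$, and then conclude that the prefix relation $u(s',t')\preceq u(s,t)$ translates into $P_{u(s,t)}\subseteq P_{u(s',t')}$ and, via the prefix-suffix / $\Gamma$ machinery (Theorems \ref{thm:gamps} and \ref{thm:prgamma}) exactly as in the $n=0$ proof, into the desired equality of $f_Q$-images.
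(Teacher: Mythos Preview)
Your proposal is correct in outline and shares with the paper the same global architecture: induction on $n$, with the preceding proposition as base case, and an inductive step that refines a simple arc of $T_{n-1}$ into simple arcs of $T_n$ via the tree substitution, replaying the $n=0$ argument on each piece.

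The organisation differs, and the paper's is more economical. Rather than starting from a simple arc $[s,t]$ of $T_n$ and looking for its parent $[s',t']$ in $T_{n-1}$, the paper centres the step at each \emph{new} branch point $y_0=f_Q(w^{-1}\omega)$ (with $w^{-1}$ appearing at step $n$). This point subdivides a simple arc $[y_1,y_2]$ of $T_{n-1}$, for which $T_\tau(y_1,y_2)=f_Q(P_w)$ by induction; $y_0$ now has exactly $d$ adjacent simple arcs $[y_0,y_j]$ in $T_n$, so the local picture is a copy of the $T_0$ situation with $y_0$ playing the role of the root. The base-case argument then transfers almost verbatim. Simple arcs of $T_n$ \emph{not} adjacent to a new branch point come from edges of $T_{n-1}^s$ of colour $\ne 2$: they keep the same endpoints and the same $u(s,t)$, so induction applies trivially and no work is needed.

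The one genuinely new verification in the paper's inductive step is a point your proposal does not isolate: at level $0$ the endpoints $z_j$ were leaves, but here $y_1$ and $y_2$ may themselves be branch points, with preimages $w^{-1}\sigma^n(d)\omega$ and $w^{-1}\sigma^n(1)\omega$ in $\Omega^+_p$. One must check that these preimages lie in the correct cylinders $P_{u(y_0,y_1)}$ and $P_{u(y_0,y_2)}$; since $u(y_0,y_1)=\sigma^{n+d-1}(d)w$ and $u(y_0,y_2)=\sigma^{n+d}(d)w$, this amounts to $\sigma^{n+d-1}(d)\sigma^n(d)\omega\in\Omega^+$ and $\sigma^{n+d}(d)\sigma^n(1)\omega\in\Omega^+$, which follows from $\sigma^d(1)=\sigma^{d-1}(1)1$. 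This elementary boundary check replaces your appeal to the automatic-writing lemmas (\ref{lem:automappar}, \ref{lem:helpsurj}): those lemmas are not needed here, and invoking them makes the inductive step look heavier than it is.
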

			\begin{proof}
			L'initialisation de la récurrence a été faite dans la proposition précédente et on suppose la
			propriété vraie au rang $n-1$.

			Soit $w^{-1}$ un mot apparaissant à l'étape $n$. On note $y_0 = f_Q(w^{-1}\omega)$ ; il existe
			$d$ points $y_1, \dots, y_d$ de $\mathcal{V}_n$ tels que pour tout $1\le j\le d$, $[y_0, y_j]$ est
			un arc simple de $T_n$. De plus, par hypothèse de récurrence, il existe deux points, disons
			$y_1$ et $y_2$, tels que $T_{\tau}(y_1, y_2) = f_Q(P_w)$.

			On veut se ramener à un cas similaire à celui de l'initialisation. Pour cela, il faut prendre en compte
			le fait qu'au moins un des points $y_1, y_2$ est un point de branchement (alors qu'ils étaient
			tous les deux de degré $1$ dans l'initialisation).
			
			On suppose que $y_1$ et $y_2$ sont tous les deux dans $\mathcal{W}_n$. Les mots $w^{-1}\sigma^n(d)\omega$
			et $w^{-1}\sigma^n(1)\omega$ sont des antécédents par $f_Q$ de $y_1$ et $y_2$ ;
			on supposera par exemple que
			$y_1 = f_Q(w^{-1}\sigma^n(d)\omega)$ et $y_2 = f_Q(w^{-1}\sigma^n(1)\omega)$. 
			Il est évident que $y_1\in T_{\tau}(y_0, y_1)$ et $y_2\in T_{\tau}(y_0, y_2)$ et il faut maintenant s'assurer que
			les mots $w^{-1}\sigma^n(d)\omega$ et $w^{-1}\sigma^n(1)\omega$ sont des
			éléments de $P_{u(y_0, y_1)}$ et $P_{u(y_0, y_2)}$ respectivement. Sachant que
			$u(y_0, y_1) = \sigma^{n+d-1}(d)w$ et $u(y_0, y_2) = \sigma^{n+d}(d)w$, il suffit
			de montrer que les mots $\sigma^{n+d-1}(d)\sigma^n(d)\omega$
			et $\sigma^{n+d}(d)\sigma^n(1)\omega$ sont effectivement des éléments
			de $\Omega^+$ et on conclut que c'est le cas en se rappelant que $\sigma^d(1) = \sigma^{d-1}(1)1$.

			Le reste de la preuve (pour le rang $n$) se fait en reprenant le raisonnement donné dans la proposition précédente. 
			\end{proof}

			\subsubsection{Chaque arbre de la suite $(T_n)_n$ détermine une partition en cylindres de $\Omega^+$}
			Jusqu'à la fin de cet article, toute partition de $\Omega^+$ sera en fait une partition modulo un ensemble fini.
			Pour tout $m\in \mathds{N}$, on note $\mathscr{P}_m$ la partition en cylindres de $\Omega^+$ définie par
			\begin{center}
				$\mathscr{P}_m = \{P_u ; u\in \mathfrak{L}(\Omega)\ et\ |u|=m\}$.
			\end{center}
			Dans ce paragraphe, on fait correspondre chaque arbre $T_n$ à l'une de ces partitions.
			Quel que soit $n\in \mathds{N}$, on explicite un entier $m$ tel que
			tout arc simple $[s, t]$ de $T_n$ vérifie $f_Q(P_u) = T_{\tau}(s, t)$ pour un certain $P_u$ de $\mathscr{P}_m$ ;
			réciproquement, pour tout $P_u$ de $\mathscr{P}_m$, il existe un arc simple $[s, t]$ de $T_n$ tel que
			$f_Q(P_u) = T_{\tau}(s, t)$.

			\begin{lem}\label{lem:rangecyl}
			Soit $w^{-1}$ un mot apparu à l'étape $n$ ($-(d-2)$ si $w=\epsilon$) ($w^{-1}\omega\in \Omega^+_p$).
			Soit $j$ un entier tel que $d-1\le j\le 2d-2$ ; on note
			$u_j^{-1}$ la première lettre de $\sigma^{n+j}(d^{-1})$. L'égalité
			$P_{\sigma^{n+j}(d)w} = P_{u_jw}$ est vérifiée.
			\end{lem}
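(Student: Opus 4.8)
The plan is to reduce the equality of the two cylinder sets $P_{\sigma^{n+j}(d)w}$ and $P_{u_jw}$ to a statement about forced left-contexts in $\mathfrak{L}(\Omega)$, and then to prove that statement by induction on the step $n$, using Proposition~\ref{prop:emmalite} and the combinatorics of the prefixes of $\omega$. The basic observation is: if $u$ is a suffix of a word $u'$ and, in every element of $\Omega$, each occurrence of the factor $u$ is immediately preceded on the left by the word $x:=u'u^{-1}$, then $P_{u'}=P_u$. Indeed $P_{u'}\subset P_u$ is immediate (any finite prefix of $uV$ is a factor of $u'V$), while if $uV\in\Omega^+$ then each finite prefix $uv$ of $uV$ lies in $\mathfrak{L}(\Omega)$, hence so does $xuv=u'v$, whence $u'V\in\Omega^+$. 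Iterating this letter by letter: if $u=z_0$ and, for $0\le i<\ell$, $z_{i+1}=a_{i+1}z_i$ with $a_{i+1}$ the \emph{unique} letter such that $a_{i+1}z_i\in\mathfrak{L}(\Omega)$, then $P_{z_\ell}=P_u$.

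Since $\sigma(d)=1$ we have $\sigma^{n+j}(d)=\sigma^{n+j-1}(1)$; write $\sigma^{n+j-1}(1)=\xi u_j$ with $u_j$ its last letter, so $\sigma^{n+j}(d)w=\xi\,(u_jw)$. Because $w^{-1}$ appeared at step $n$ and $d-1\le j\le 2d-2$, the word $w^{-1}\sigma^{n+j}(d^{-1})=(\sigma^{n+j}(d)w)^{-1}$ appears at step $n+j$ (as recalled just before Proposition~\ref{prop:emmalite}); by bijectivity of $f_0$ onto $\{u^{-1}:u\text{ prefix of }\omega\}$ this means $\sigma^{n+j}(d)w=\sigma^{n+j-1}(1)w$ is a prefix of $\omega$. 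Hence $\xi(u_jw)$ is one left-extension of $u_jw$ in $\mathfrak{L}(\Omega)$, and by the preceding principle it is enough to show it is the \emph{only} one of length $|\xi|$. Running the chain $z_0=u_jw,\dots,z_{|\xi|}=\sigma^{n+j-1}(1)w$ — the $z_i$ being the suffixes of $\sigma^{n+j-1}(1)w$ of lengths $|w|+1,\dots,|w|+|\xi|$, each a factor of $\omega$ — uniqueness holds as soon as none of $z_0,\dots,z_{|\xi|-1}$ is left-special; and the left-special factors of $\mathfrak{L}(\Omega)$ are exactly the prefixes of $\omega$ (remarks preceding Proposition~\ref{prop:uniqinfspegau}). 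We are thus reduced to proving:
\[
(\star)\qquad \text{no word } sw,\ s \text{ a non-empty proper suffix of } \sigma^{n+j-1}(1),\ \text{is a prefix of }\omega ;
\]
equivalently, $\sigma^{n+j-1}(1)w$ has no border of length $>|w|$.

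I would prove $(\star)$ by induction on the step $n$. For the base case $n=-(d-2)$, $w=\epsilon$: there $\sigma^{n+j-1}(1)=\sigma^{j-d+1}(1)$ equals $12\cdots(j-d+2)$ (the single letter $1$ if $j=d-1$), a word with pairwise distinct letters; it has no proper border, and each of its non-empty proper suffixes begins with a letter $\ge2$, hence is not a prefix of $\omega$. For $n\ge1$, Proposition~\ref{prop:emmalite} gives $w=\sigma^{n}(d)v=\sigma^{n-1}(1)v$ with $v^{-1}$ appearing at a step $m$ satisfying $n-(2d-2)\le m\le n-(d-1)$; in particular $v$ is a prefix of $\omega$, $n-m\in[d-1,2d-2]$, and (Lemma~\ref{lem:automappar} together with the description of automatic writings) $|v|<|\sigma^{m+1}(1)|\le|\sigma^{n-1}(1)|$ since $d\ge3$. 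The inductive hypothesis applied to $v$ at step $m$ with exponent $n-m$ says that $\sigma^{n-1}(1)v$ has no border of length $>|v|$. Writing $\sigma^{n+j-1}(1)w=\sigma^{n-1}\!\big(\sigma^{j}(1)\cdot1\big)\,v$ and setting $g:=\sigma^{j}(1)\cdot1$, the key structural input is that, using $\sigma^{k+1}(1)=\sigma^{k}(1)\,\sigma^{k-(d-1)}(1)$ for $k\ge d-1$ (valid because $\sigma^{d-1}(2)=1$), $g$ is a prefix of $\omega$ whose \emph{only} border is the single letter $1$: a border $s'1$ with $s'\neq\epsilon$ would force $s'$ to be a border of $\sigma^{j}(1)$, hence — an easy induction shows every border of $\sigma^{k}(1)$ is of the form $\sigma^{k-id}(1)$, $i\ge1$ — $s'=\sigma^{j-id}(1)$ for some $i\ge1$; since $j\le 2d-2$ this forces $j-id\le d-2$, so $s'=12\cdots(j-id+1)$ and the letter of $\omega$ just after $s'$ is $j-id+2\ge 2\neq 1$, contradicting that $s'1$ is a prefix of $\omega$. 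This is exactly where $j\le 2d-2$ is used.

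It then remains to propagate the absence of long borders from $g$ and $v$ to $\sigma^{n-1}(g)v$, i.e.\ to rule out a border of $\sigma^{n+j-1}(1)w=\sigma^{n-1}(g)v$ of length $>|w|=|\sigma^{n-1}(1)|+|v|$. Such a border is a word $sw$ with $s$ a non-empty suffix of $\sigma^{n+j-1}(1)$ which is also a border of $\sigma^{n+j-1}(1)$, hence $s=\sigma^{b}(1)$ with $b=(n+j-1)-id$, $i\ge1$, so $0\le b\le n+d-3$; comparing $\sigma^{n-1}(1)v$ with the start of $\sigma^{b}(S\omega)=\sigma^{b}(2)\sigma^{b}(3)\cdots$ one reaches a contradiction — immediately if $b\le d-2$ (then $\sigma^{b}(2)=b+2$ is a single letter $\neq1$ while $\sigma^{n-1}(1)$ begins with $1$), and, when $d-1\le b\le n+d-3$, after peeling off the first $\sigma^{n-1}$-blocks, using that $|v|<|\sigma^{n-1}(1)|$, the border structure of the words $\sigma^{k}(1)$, and the inductive hypothesis on $v$. \textbf{This last comparison of block decompositions — equivalently, a usable form of recognizability of $\sigma^{n-1}$ along $\omega$, valid for every $n\ge1$ — is the main obstacle}; concretely one wants the fact that $\sigma^{n-1}(1)$ recurs in $\omega$ only after a gap that is a concatenation of $\sigma^{n-1}$-images of the two return words $1$ and $\sigma^{d-1}(1)$ of the letter $1$, obtainable from Proposition~\ref{prop:emmalite} (or from Mossé-type recognizability of the aperiodic fixed point $\omega$). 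Once $(\star)$ is established, the reduction principle of the first paragraph yields $P_{\sigma^{n+j}(d)w}=P_{u_jw}$.
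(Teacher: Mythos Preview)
Your reduction is clean and correct: since the left-special factors of $\mathfrak{L}(\Omega)$ are exactly the prefixes of $\omega$, the equality $P_{\sigma^{n+j}(d)w}=P_{u_jw}$ is indeed equivalent to the border statement $(\star)$. The base case and the set-up of the induction (via Proposition~\ref{prop:emmalite}, the recurrence $\sigma^{k+1}(1)=\sigma^{k}(1)\sigma^{k-d+1}(1)$, and the description of the borders of $\sigma^{k}(1)$) are also fine. But the proof is genuinely incomplete: the step you yourself flag as ``the main obstacle'' --- ruling out $\sigma^{b}(1)w$ as a prefix of $\omega$ when $d-1\le b\le n+d-3$ --- is only gestured at. Appealing to ``recognizability of $\sigma^{n-1}$ along $\omega$'' or to a return-word decomposition is not free: you would have to prove a synchronisation statement of the form ``two occurrences of $\sigma^{n-1}(1)$ in $\omega$ are separated by a concatenation of $\sigma^{n-1}$-images of the return words of $1$'', uniformly in $n$, and then feed it back into the border comparison. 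That is a second, independent lemma, and nothing earlier in the paper supplies it. As written, $(\star)$ is not established for $n\ge 1$.

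The paper's proof takes a completely different, much shorter route that bypasses $(\star)$ entirely. It uses the geometric machinery already built: for $V\in P_{u_jw}\cap\Omega^+_p$ one has $V\in P_w$, and the tree-substitution structure around the branching point $f_0^{-1}(w^{-1})$ (Proposition~\ref{prop:emmalite} and the discussion preceding Proposition~\ref{prop:emmarcgen}) forces $V\in P_{\sigma^{n+k}(d)w}$ for some $k\in\{d-1,\dots,2d-2\}$. Since the last letters $u_k$ of $\sigma^{n+k}(d)=\sigma^{n+k-1}(1)$ are pairwise distinct (they run through all of $A$ as $k$ ranges over $d$ consecutive integers), membership in $P_{u_jw}$ pins down $k=j$ unless $wV=\omega$, and in that exceptional case $V=w^{-1}\omega$ belongs to every $P_{\sigma^{n+k}(d)w}$ anyway. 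The passage from $\Omega^+_p$ to all of $\Omega^+$ is then a density argument using minimality and the fact that $P_{\sigma^{n+j}(d)w}$ is clopen. Thus the paper trades your hard combinatorial lemma for the already-proved correspondence between $P$-cylinders and pieces $T_\tau(s,t)$ of the limit tree; your approach, if completed, would have the merit of being self-contained and independent of $f_Q$ and $T_\tau$, but as it stands the gap is real.
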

			\begin{proof}
			L'inclusion $P_{\sigma^{n+j}(d)w}\subset P_{u_jw}$ est évidente puisque $u_jw$ est un suffixe de
			$\sigma^{n+j}(d)w$. Soit $V$ un élément de $P_{u_jw}\cap \Omega^+_p$. Le mot $V$ est dans $P_w$ et $f_Q(V)$ est dans
			$f_Q(P_w)\cap (\bigcup\limits_{n\in \mathds{N}} \mathcal{W}_n)$. Le mot $V$ est donc un mot de $P_{\sigma^{n+k}(d)w}$
			pour un certain $k$, $d-1\le k\le 2d-2$. Les dernières lettres des mots $\sigma^{n+k}(d)$, $d-1\le k\le 2d-2$
			sont deux à deux distinctes. Si $k\ne j$, alors $wV = \omega$ ($\omega$ est le seul mot de $\Omega^+$ spécial à gauche),
			et $V$ appartient également à $P_{\sigma^{n+j}(d)w}$. Si $V\ne w^{-1}\omega$, on a forcément $k=j$.
			Finalement, tout élément de $P_{u_jw}\cap \Omega^+_p$ est dans $P_{\sigma^{n+j}(d)w}$.

			Soit $V$ un élément de $P_{u_jw}$ et $U$ un mot infini à gauche tel que $U.V\in \Omega$ et $u_jw$ est un suffixe de $U$.
			Soit de plus $\overline{\omega}$ un mot infini à gauche tel que $\overline{\omega}.\omega\in \Omega$. Par
			minimalité, on peut trouver une suite $(\alpha_n)_n$ d'éléments de $\mathds{N}$ telle que la suite
			$(S^{\alpha_n}(\overline{\omega}.\omega))_n$ converge vers $U.V$. On a ainsi mis en évidence une suite
			d'éléments de $P_{u_jw}\cap \Omega^+_p$ ($\subset P_{\sigma^{n+j}(d)w}$) convergente vers $V$.
			L'ensemble $P_{\sigma^{n+j}(d)w}$ étant ouvert-fermé, on a $V\in P_{\sigma^{n+j}(d)w}$. 
			\end{proof}
			Il vient que tout suffixe $w^\prime$ de $\sigma^{n+j}(d)w$ de longueur $\ge |u_jw|$ vérifie $P_{\sigma^{n+j}(d)w} = P_{w^\prime}$
			(les notations sont celles du lemme précédent). La longueur du suffixe $w^\prime$ à choisir sera donné dans le théorème \ref{thm:emmagenarpar}.

			Il faut encore s'assurer que l'on retrouve effectivement une partition de $\Omega^+$ ; puisqu'aucun
			détail n'a été donné concernant la non-injectivité de $f_Q$, on pourrait imaginer l'existence
			d'un cylindre $P_w$ tel que $f_Q(\Omega^+\setminus P_w) = T_{\tau}$.
			Le problème aurait pu être réglé en comptant le nombre d'arêtes des arbres $T_n$ d'une part et le cardinal
			des partitions $\mathscr{P}_n$ (donné par le nombre de mots de longueur $n$ de $\mathfrak{L}(\Omega)$) de l'autre. On préfère
			cependant faire appel aux propriétés de l'application $f_Q$.
			\begin{lem}\label{lem:intercyl}
			Soient $w_1$ et $w_2$ deux mots de $\mathfrak{L}(\Omega)$ tels que $w_1\ne w_2$ et $|w_1| = |w_2|$.
			Alors $f_Q(P_{w_1})\cap f_Q(P_{w_2})$ comporte au plus $1$ point.
			\end{lem}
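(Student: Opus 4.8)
Le plan consiste � exprimer chacune des images $f_Q(P_{w_1})$ et $f_Q(P_{w_2})$ sous la forme $T_{\tau}(s_i,t_i)$ pour des arcs simples $[s_i,t_i]$ \emph{d'un m�me} arbre $T_n$, puis � conclure par un fait �l�mentaire d'arbre. Ce fait est le suivant : pour $n$ fix�, si $[s_1,t_1]$ et $[s_2,t_2]$ sont deux arcs simples \emph{distincts} de $T_n$, alors $T_{\tau}(s_1,t_1)\cap T_{\tau}(s_2,t_2)$ contient au plus un point. Pour le prouver, je remarquerais que $T_n$ est un sous-arbre compact, donc ferm� et convexe, de l'arbre r�el $T_{\tau}$ ; il admet donc une r�traction par plus proche point $\pi:T_{\tau}\to T_n$, et une v�rification directe � partir de la d�finition donne $T_{\tau}(s,t)=\pi^{-1}(\,]s,t[\,)\cup\{s,t\}$ pour tout arc simple $[s,t]$ de $T_n$ (le point crucial �tant que tout chemin de $T_n$ joignant un point ext�rieur � $[s,t]$ � un point int�rieur de $[s,t]$ passe par $s$ ou par $t$, puisque les points int�rieurs de $[s,t]$ sont de degr� $2$ dans $T_n$). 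Comme deux arcs simples distincts de $T_n$ ont des int�rieurs disjoints et au plus une extr�mit� commune, on obtient $\pi^{-1}(\,]s_1,t_1[\,)\cap\pi^{-1}(\,]s_2,t_2[\,)=\emptyset$, d'o� $T_{\tau}(s_1,t_1)\cap T_{\tau}(s_2,t_2)\subset\{s_1,t_1\}\cap\{s_2,t_2\}$, ensemble d'au plus un point.

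Je v�rifierais ensuite que $P_{w_1}\ne P_{w_2}$. Soit $w$ le plus long suffixe commun � $w_1$ et $w_2$ : on a $w_1=u\,a\,w$ et $w_2=v\,b\,w$ avec $a\ne b$ dans $A$. Si $V\in P_{w_1}\cap P_{w_2}$, alors $awV$ et $bwV$ sont dans $\Omega^+$, donc le mot infini � droite $wV$ est sp�cial � gauche ; la proposition~\ref{prop:uniqinfspegau} force $wV=\omega$, donc $V=S^{|w|}(\omega)$. Ainsi $P_{w_1}\cap P_{w_2}$ a au plus un point, alors que $P_{w_1}$ est un ouvert-ferm� non vide de $\Omega^+$, donc infini ($\Omega^+$ �tant minimal infini) ; donc $P_{w_1}\ne P_{w_2}$.

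Le c\oe{}ur de la preuve, et l� sera la difficult� principale, sera de produire un \emph{unique} entier $n$ et deux arcs simples $[s_1,t_1],[s_2,t_2]$ de $T_n$ v�rifiant $P_{u(s_i,t_i)}=P_{w_i}$ pour $i=1,2$. J'exploiterais le lemme~\ref{lem:rangecyl} et la remarque qui le suit (le cylindre $P_{u(s,t)}$ attach� � un arc simple co�ncide avec $P_{w'}$ pour tout suffixe $w'$ de $u(s,t)$ de longueur suffisamment grande), conjointement � la minimalit� de $(\Omega^+,S)$ et � la description des mots apparaissant � une �tape donn�e : ces mots sont, � suffixe pr�s, les suffixes des mots bisp�ciaux $l_m$ (propositions~\ref{prop:emmaleet} et~\ref{prop:emmabispeqpldev}) et proviennent des mots de la forme $\sigma^{N}(d)$ via la proposition~\ref{prop:emmalite}. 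De l� on obtient pour chaque $i$ un arc simple (dans quelque $T_{n_i}$) r�alisant $P_{w_i}$, et le passage � un indice commun $n\ge\max(n_1,n_2)$ se fait en utilisant les inclusions $T_{n-1}\subset T_n$ et � nouveau le lemme~\ref{lem:rangecyl} pour remplacer un arc par un sous-arc ad�quat sans changer le cylindre associ�. Une fois ce point acquis, la proposition~\ref{prop:emmarcgen} donne $f_Q(P_{w_i})=f_Q(P_{u(s_i,t_i)})=T_{\tau}(s_i,t_i)$ ; puisque $P_{w_1}\ne P_{w_2}$ les deux arcs sont distincts, et le fait �l�mentaire du premier paragraphe fournit la conclusion. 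L'articulation d�licate � soigner sera donc la traduction de la combinatoire de $\mathfrak{L}(\Omega)$ (suffixes des bisp�ciaux) en la d�composition g�om�trique en arcs simples d'un m�me arbre.
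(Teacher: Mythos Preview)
Your first two paragraphs are fine: the retraction argument giving
$T_{\tau}(s,t)=\pi^{-1}(\,]s,t[\,)\cup\{s,t\}$ is correct, and your verification that $P_{w_1}\cap P_{w_2}$ is at most a point (hence $P_{w_1}\ne P_{w_2}$) is clean.

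The gap is in your third paragraph, precisely at the ``passage \`a un indice commun''. When you go from $T_{n_1}$ to $T_n$ with $n>n_1$, a simple arc $[s_1,t_1]$ of $T_{n_1}$ is \emph{subdivided} into several simple arcs of $T_n$, and each of the resulting pieces carries a strictly smaller cylinder: $P_{u(s_1^{(j)},t_1^{(j)})}\subsetneq P_{u(s_1,t_1)}$. Le lemme~\ref{lem:rangecyl} lets you \emph{shorten} the word $u(s,t)$ to a suffix without changing the cylinder, but it does not let you replace a simple arc by a proper sub-arc while keeping the same $P$. In fact a single $T_n$ realises exactly the partition $\mathscr{P}_m$ for one specific $m$ (this is what th\'eor\`eme~\ref{thm:emmagenarpar} will say), and not every length occurs; so if, say, $w_1$ is itself a prefix of $\omega$ of a non-determined length, there is no $T_n$ whose simple arcs realise $P_{w_1}$. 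Your plan, as written, cannot succeed.

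The paper avoids this by \emph{not} insisting on a common $n$. It first shows the following dichotomy for simple arcs from possibly different levels: if $[s_1,t_1]\subset T_{n_1}$ and $[s_2,t_2]\subset T_{n_2}$ (say $n_1\le n_2$), then either $T_{\tau}(s_2,t_2)\subset T_{\tau}(s_1,t_1)$ --- and this happens exactly when $u(s_1,t_1)$ is a suffix of $u(s_2,t_2)$ --- or the intersection has at most one point. It then replaces each $w_i$ by the unique prefix $v_i$ of $\omega$ with $P_{v_i}=P_{w_i}$ (these are exactly the $u(s_i,t_i)$ for suitable arcs), and checks that $v_2$ cannot be a suffix of $v_1$: since $w_1,w_2$ are suffixes of $v_1,v_2$ of the same length, $v_2\mid v_1$ would force $w_1=w_2$. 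You can also patch your own approach: keep $[s_2,t_2]$ in $T_{n_2}$, decompose $[s_1,t_1]$ into simple arcs of $T_{n_2}$, apply your retraction fact at level $n_2$, and observe that $[s_2,t_2]$ cannot be one of those sub-arcs (otherwise $P_{w_2}\subset P_{w_1}$, contradicting $|P_{w_1}\cap P_{w_2}|\le 1$); the intersection is then contained in $\{s_2,t_2\}\cap[s_1,t_1]$, which is at most one point.
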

			\begin{proof}
			Si le mot $w^{-1}$ apparaît à l'étape $n$, alors les mots $w^{-1}\sigma^{n+k}(d^{-1})\omega$,
			pour $(d-1\le k\le 2d-2)$ (cf. proposition \ref{prop:emmalite}) sont des éléments de $P_w$.
			On en déduit que si $[s_1, t_1]$ et $[s_2, t_2]$ sont des arcs simples de $T_{n_1}$ et $T_{n_2}$ respectivement,
			avec $n_1 \le n_2$, alors $T_{\tau}(s_2, t_2)\subset T_{\tau}(s_1, t_1)$ si et seulement si $u(s_1, t_1)$ est un suffixe de
			$u(s_2, t_2)$ ; l'ensemble $f_Q(P_{u(s_1, t_1)})\cap f_Q(P_{u(s_2, t_2)})$ comporte au plus $1$ point ($s_1$ ou $t_1$) sinon.

			Les préfixes de $\omega$ sont les seuls mots spéciaux à gauche (cf. proposition \ref{prop:emmabisp}) ;
			si $w_1$ et $w_2$ sont deux mots de $\mathfrak{L}(\Omega)$ tels que $w_1\ne w_2$ et $|w_1| = |w_2|$,
			alors il existe un unique $v_1$ (resp. $v_2$) tel que
			\begin{itemize}
				\item $v_1$ (resp. $v_2$) est un préfixe de $\omega$,
				\item $w_1$ (resp. $w_2$) est un suffixe de $v_1$ (resp. $v_2$),
				\item $P_{v_1} = P_{w_1}$ (resp. $P_{v_2} = P_{w_2}$).
			\end{itemize}
			On peut supposer que $|v_1|\ge |v_2|$. Puisque $w_1\ne w_2$ et $|w_1| = |w_2|$, alors $v_2$ n'est pas un suffixe
			de $v_1$ et on conclut que  $f_Q(P_{w_1})\cap f_Q(P_{w_2})$ comporte au plus $1$ point. 
			\end{proof}

			On peut maintenant associer une partition $\mathscr{P}_m$ à chaque arbre $T_n$.
			\begin{thm}\label{thm:emmagenarpar}
			Si $n=0$, on prend $m=1$, et si $n\in\mathds{N}^*$, on suppose que 
			$(m-1)$ est la longueur du plus long mot apparaissant à l'étape $n$. On a alors :
				\begin{itemize}
				\item pour tout arc simple $[s, t]$ de $T_n$, il existe un unique élément $P_u$ de $\mathscr{P}_m$ tel que $f_Q(P_u) = T_{\tau}(s, t)$,
				\item réciproquement, pour tout $P_u\in \mathscr{P}_m$, il existe un arc simple $[s, t]$ de $T_n$ pour lequel $f_Q(P_u) = T_{\tau}(s, t)$,
				\end{itemize}
			On dit que $T_n$ \textbf{détermine} la partition $\mathscr{P}_m$.
			\end{thm}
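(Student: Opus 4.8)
The starting point is Proposition~\ref{prop:emmarcgen}: for every simple arc $[s,t]$ of $T_n$ one already has $f_Q(P_{u(s,t)})=T_{\tau}(s,t)$. Two things then remain: (i) to identify $P_{u(s,t)}$ with one of the cylinders $P_u$ for which $|u|$ is \emph{exactly} $m$, and (ii) to show that the resulting map $[s,t]\mapsto P_{u(s,t)}$ is a bijection from the (finite) set of simple arcs of $T_n$ onto $\mathscr{P}_m$. The case $n=0$, $m=1$ is precisely the proposition on the arcs $[z_0,z_j]$ proved above, so I assume $n\ge 1$ and write $m-1=|l_n|$ for the length of the longest word $l_n$ appearing at step~$n$.

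\emph{Step (i).} Fix a simple arc $[s,t]$ of $T_n$ and let $i$ be the colour of the corresponding edge of $T_n^s$. Following the orbit of a single $i$-coloured edge under $\tau$ (it reaches a copy of $\tau(X_2)$ after $k=k_i$ steps, with $1\le k_i\le 2d-2$ determined by $i$), one sees that $u(s,t)^{-1}$ appears at step $n+k_i$, and that, orienting $[s,t]$ so that $s$ is a branch point, $z(s,t)$ is reached from $s$ across the trunk of that $\tau(X_2)$. Applying Proposition~\ref{prop:emmalite}, iterated as often as needed so that the descent lands at a step $\le n$, one writes $u(s,t)=\sigma^{n+k_i}(d)\,w$ where $w$ is a prefix of $\omega$ whose inverse appears at some step $m_1$ with $n+k_i-(2d-2)\le m_1\le n+k_i-(d-1)$. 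Since $l_{m_1}$ is the longest, and $l_{m_1-1}\sigma^{m_1-1}(1^{-1})$ (essentially) the shortest, word appearing at step $m_1$, the explicit form of the $l_m$ (Proposition~\ref{prop:emmabispeqpldev}) yields both inequalities $|w|+1\le m\le|u(s,t)|$. Lemma~\ref{lem:rangecyl} and the remark following it then give $P_{u(s,t)}=P_{w'}$ for every suffix $w'$ of $u(s,t)$ of length $\ge|w|+1$; taking $w'=:u_{[s,t]}$ to be the suffix of length exactly $m$ produces a word $u_{[s,t]}\in\mathfrak{L}(\Omega)$, $|u_{[s,t]}|=m$, with $f_Q(P_{u_{[s,t]}})=f_Q(P_{u(s,t)})=T_{\tau}(s,t)$. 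I expect this bookkeeping — locating the step at which $u(s,t)^{-1}$ appears for each colour, running the iterated descent of Proposition~\ref{prop:emmalite}, and verifying the two length inequalities uniformly in $i$ (the colours $1$ and $d+1,\dots,2d-2$, for which the descent genuinely has to be iterated, being the delicate ones) — to be the main obstacle.

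\emph{Step (ii).} First, two distinct words $u\ne u'$ of $\mathfrak{L}(\Omega)$ of equal length $m$ satisfy $P_u\ne P_{u'}$: otherwise $f_Q(P_u)=f_Q(P_{u'})$ would contain at most one point by Lemma~\ref{lem:intercyl}, whereas $P_u\cap\Omega^+_p$ is infinite (a nonempty clopen set meets the dense orbit $\Omega^+_p$ infinitely often) and its $f_Q$-image is infinite, since $f_Q$ restricted to $\Omega^+_p$ is injective (it factors through the bijection $f_0^{-1}$, injective by Proposition~\ref{prop:emmaprinj}, composed with the injections $\nu_k$). Hence $u_{[s,t]}$ is well defined; moreover if $[s,t]\ne[s',t']$ are distinct simple arcs of $T_n$ then $T_{\tau}(s,t)$ and $T_{\tau}(s',t')$ meet in at most one point, so $P_{u_{[s,t]}}\ne P_{u_{[s',t']}}$ by the same argument, and $[s,t]\mapsto P_{u_{[s,t]}}$ is injective. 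For surjectivity, let $P_u\in\mathscr{P}_m$ and suppose $u\ne u_{[s,t]}$ for every simple arc $[s,t]$ of $T_n$. By Lemma~\ref{lem:intercyl}, $f_Q(P_u)\cap T_{\tau}(s,t)=f_Q(P_u)\cap f_Q(P_{u_{[s,t]}})$ has at most one point for each of the finitely many such arcs; but every vertex of $T_n^s$ keeps its degree under all further applications of $\tau$, hence in $T_{\tau}$, so $T_{\tau}=f_Q(\Omega^+)=\bigcup_{[s,t]}T_{\tau}(s,t)$, which would force $f_Q(P_u)$ to be finite — impossible, as $f_Q(P_u)$ is infinite by the argument above. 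Therefore $P_u=P_{u_{[s,t]}}$ for some simple arc $[s,t]$, and then $f_Q(P_u)=T_{\tau}(s,t)$ by Proposition~\ref{prop:emmarcgen}. This establishes both assertions, i.e.\ that $T_n$ determines $\mathscr{P}_m$.
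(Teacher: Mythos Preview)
Your overall strategy matches the paper's: both arguments rest on Proposition~\ref{prop:emmarcgen}, Lemma~\ref{lem:rangecyl}, and Lemma~\ref{lem:intercyl}, and your Step~(ii) is essentially identical to the paper's surjectivity argument. The difference, and the place where you make extra work for yourself, is Step~(i). You correctly orient $[s,t]$ so that $s$ is a branch point and observe that $z(s,t)$ is reached from $s$ across the trunk of a copy of $\tau(X_2)$; but then, instead of using this directly, you invoke Proposition~\ref{prop:emmalite} and speak of iterating it until the descent lands at a step $\le n$ --- and, as you yourself flag, the bookkeeping for the colours $1$ and $d+1,\dots,2d-2$ (where a single application of that proposition yields only $m_1\le n+k_i-(d-1)$, which can exceed $n$) is not actually carried out, nor is it clear how iteration would remain compatible with the single-factor form $u(s,t)=\sigma^{n+k_i}(d)\,w$ that you write down and feed into Lemma~\ref{lem:rangecyl}.

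The paper bypasses all of this. Since the source vertex $s$ of the simple arc is already a branch point of $T_n$, the word $w^{-1}:=f_0(\nu_n^{-1}(s))$ has appeared at some step $r\le n$, and the edge joining $z(s,t)$ to $s$ in $T_{n+k_i}^s$ gives $u(s,t)^{-1}=w^{-1}\sigma^{r+j}(d^{-1})$ with $d-1\le j\le 2d-2$ directly from the substitution rule. No descent is needed: the inequality $|w|+1\le m$ follows at once from $r\le n$ (the paper writes $r<n$, but $r\le n$ is what is used and what holds), and $m\le|u(s,t)|$ from the fact that $u(s,t)^{-1}$ has not yet appeared at step $n$. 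In short, your geometric observation about $s$ already hands you the $w$ you want; once you use it, the ``main obstacle'' you anticipate disappears.
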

			\begin{proof}
			Soit $[s, t]$ un arc simple de $T_n$. Il existe un mot $w^{-1}$ (apparu à l'étape $r<n$) tel que
			$u(s, t)^{-1} = w^{-1}\sigma^{r+j}(d^{-1})$ pour un certain $j$, $(d-1\le j\le 2d-2)$.
			D'après la proposition \ref{prop:emmarcgen}, $f_Q(P_{u(s, t)}) = T_{\tau}(s, t)$ et par le lemme \ref{lem:rangecyl},
			$P_{u(s, t)} = P_{u_jw}$ si $u_j$ est la dernière lettre de $\sigma^{r+j}(d)$. Le mot $w^{-1}$ est apparu l'étape $r<n$
			et $u(s, t)^{-1}$ n'est pas encore apparu ; on a donc $|u_jw|\le m\le |u(s, t)|$, et on en déduit l'existence d'un mot
			$u$ de longueur $m$ tel que
			\begin{itemize}
				\item $u$ est un suffixe de $u(s, t)$,
				\item $u_jw$ est un suffixe de $u$,
				\item $P_{u(s, t)} = P_u$.
			\end{itemize}
			L'unicité découle directement du lemme \ref{lem:intercyl}.\\

			Soit $u$ un mot de longueur $m$. Il existe un arc simple $[s, t]$ de $T_n$ tel que $f_Q(P_u)\cap T_{\tau}(s, t)$ comporte
			une infinité de points et on vient de montrer qu'il existe un mot $w$ de longueur $m$ tel que $f_Q(P_w) = T_{\tau}(s, t)$.
			On conclut que $u=w$ par le lemme \ref{lem:intercyl}. 
			\end{proof}

			\subsubsection{Propriétés des partitions déterminées}
			Ce paragraphe étudie les partitions de $\Omega^+$ qui peuvent être déterminées par les arbres $T_n$.
			On note $\mu$ l'unique mesure de probabilité sur $\Omega^+$ invariante par décalage. Pour tout $u\in \mathfrak{L}(\Omega)$,
			la mesure de $P_u$ est définie comme la fréquence du mot $u$ dans $\omega$ :
			\begin{center}
				$\mu (P_u) = \lim\limits_{n\to +\infty} \frac{1}{n} \#\{0\le k\ < n ; \omega_k\dots \omega_{k+|u|-1} = u\} > 0$
			\end{center}
			où $\#$ désigne le cardinal et $\omega_j$ ($j\in \mathds{N}$) est la $j$-ième lettre de $\omega$.
			Si $P_u$ et $P_v$ sont les éléments d'une partition $\mathscr{P}_n$ quelconque, on dit que $P_u\sim P_v$ si et seulement si
			$\mu(P_u) = \mu(P_v)$. On va mettre en évidence une relation entre les partitions déterminées par les arbres $T_n$ et
			les cardinaux des ensembles $\mathscr{P}_n / \sim$.

			Pour tout mot $u$ de $\mathfrak{L}(\Omega)$, on note $C_u$ l'ensemble des mots de $\Omega^+$ dont $u$ est préfixe.
			La mesure $\mu$ étant invariante par le décalage, l'égalité $\mu(P_u)=\mu(C_u)$ est
			vérifiée quel que soit $u\in \mathfrak{L}(\Omega)$. On travaillera sur les ensembles $C_u$ dans la suite de ce paragraphe.\\

			On rappelle que les préfixes de $\omega$ sont les seuls mots spéciaux à gauche.
			Le réel $\lambda$ est la valeur propre dominante de la matrice d'incidence de $\sigma$.
			La mesure définie est telle que pour tout $u$ dans le langage, $\mu (\sigma(C_u)) = \lambda^{-1}\mu (C_u)$. Le lemme suivant permettra
			d'utiliser cette propriété par la suite.
			\begin{lem}\label{lem:emmacyl15}
			Si $u$ est un mot de $\mathfrak{L}(\Omega)$ dont la dernière lettre est différente de $d$, alors $\sigma (C_u) = C_{\sigma(u)}$.
			\end{lem}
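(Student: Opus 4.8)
Le plan consiste à établir les deux inclusions séparément. Celle de gauche à droite, $\sigma(C_u)\subseteq C_{\sigma(u)}$, est élémentaire et n'utilise pas l'hypothèse sur $u$ : si $V=uV'\in C_u$, alors $\sigma(V)=\sigma(u)\sigma(V')$ admet $\sigma(u)$ pour préfixe, et $\sigma(V)\in\Omega^+$ puisque $\sigma(\omega)=\omega$ entraîne $\sigma(\mathfrak{L}(\Omega))\subseteq\mathfrak{L}(\Omega)$, donc $\mathfrak{L}(\sigma(V))\subseteq\mathfrak{L}(\Omega)$. On commencerait par ce point, puis on consacrerait l'essentiel du travail à l'inclusion réciproque.

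Pour celle-ci, on part d'un mot $W=\sigma(u)W''$ de $C_{\sigma(u)}$ et on écrit $u=u_1\cdots u_m$. On observe d'abord que $W$ ne commence pas par la lettre $2$ : sa première lettre est celle de $\sigma(u_1)$, qui vaut $1$ ou $u_1+1\ge 3$. On en déduit que $W\in\sigma(\Omega^+)$ en utilisant que $\omega=\sigma(\omega)$ : comme $W=\lim_k S^{n_k}(\omega)$, on écrit $n_k=|\sigma(\omega_1\cdots\omega_{l_k})|+j_k$ avec $0\le j_k<|\sigma(\omega_{l_k+1})|\le 2$, de sorte que $S^{n_k}(\omega)=S^{j_k}(\sigma(S^{l_k}(\omega)))$ ; si $j_k=0$ ce mot est dans $\sigma(\Omega^+)$, et si $j_k=1$ (ce qui force $\omega_{l_k+1}=1$ et $\sigma(\omega_{l_k+1})=12$) il commence par $2$. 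Puisque $W$ ne commence pas par $2$, il existe une sous-suite le long de laquelle $j_k=0$, et comme $\sigma(\Omega^+)$ est fermé (compacité de $\Omega^+$ et continuité de $\sigma$), on obtient $W=\sigma(V)$ pour un certain $V\in\Omega^+$.

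La dernière étape, où intervient l'hypothèse $u_m\ne d$, est de montrer par récurrence sur $i$ que $V_i=u_i$ pour $1\le i\le m$. Si $V_1\cdots V_{i-1}=u_1\cdots u_{i-1}$, alors (les deux membres étant égaux à $S^{|\sigma(u_1\cdots u_{i-1})|}(W)$) on a $\sigma(V_i)\sigma(V_{i+1})\cdots=\sigma(u_i)\cdots\sigma(u_m)W''$, et on compare les préfixes $\sigma(V_i)$ et $\sigma(u_i)$ de ce mot. Comme aucune image $\sigma(a)$ ne commence par $2$, et en s'appuyant sur la liste des facteurs de longueur $2$ de $\mathfrak{L}(\Omega)$ (les mots $k1$ et $k(k+1)$), on vérifie que $V_i=u_i$ ; le seul cas délicat est $u_i=d$, où $\sigma(u_i)=1$ et où $\sigma(V_i)$ pourrait a priori valoir $12=\sigma(1)$, ce qui est exclu dès que la lettre suivant $\sigma(u_i)$ dans $W$ n'est pas $2$. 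Or c'est automatique lorsque $i<m$ (car alors $u_{i+1}=1$, seul prolongement de $d$ à droite, et $\sigma(u_{i+1})$ commence par $1$), et c'est précisément l'hypothèse qui élimine le cas $i=m$. On conclut que $V=uV'$ avec $V'\in\Omega^+$, d'où $V\in C_u$ et $W=\sigma(V)\in\sigma(C_u)$. L'obstacle principal est cette troisième étape : il faut vérifier soigneusement que l'ambiguïté entre le bloc $\sigma(d)=1$ et le début du bloc $\sigma(1)=12$ ne peut se produire qu'en fin de mot — l'énoncé est d'ailleurs faux sans l'hypothèse, comme le montre l'exemple $u=d$, pour lequel $\sigma(C_u)\subsetneq C_{\sigma(u)}$.
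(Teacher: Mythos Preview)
Your proof is correct but takes a longer route than the paper's. Instead of desubstituting the whole word $W$ and then running an induction to check that its $\sigma$-preimage begins with $u$, the paper observes directly that since the last letter of $u$ is not $d$, the last letter of $\sigma(u)$ is not $1$, and hence the tail $V_1$ (your $W''$) cannot begin with $2$ (the only length-$2$ factor ending in $2$ being $12$). One then desubstitutes $V_1$ alone to some $V_2\in\Omega^+$ and concludes $W=\sigma(u)\sigma(V_2)=\sigma(uV_2)\in\sigma(C_u)$. This uses the hypothesis exactly once, at the junction between $\sigma(u)$ and $V_1$, and bypasses your inductive third step entirely. Your approach has the merit of spelling out the desubstitution step (via shifts of $\omega$ and compactness of $\sigma(\Omega^+)$) more explicitly than the paper, which only alludes to the prefix--suffix development; and your case analysis in the induction is essentially a hands-on proof of recognizability for this particular $\sigma$ on finite prefixes, which is correct but more than is needed here.
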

			\begin{proof}
			L'inclusion $\sigma (C_u)\subset C_{\sigma(u)}$ est évidente. Soit $V\in C_{\sigma(u)}$, $V=\sigma(u)V_1$, $V_1\in \Omega^+$, et puisque $d$ n'est pas la dernière
			lettre de $u$, alors $1$ n'est pas la dernière lettre de $\sigma(u)$ et $V_1$ ne commence pas par $2$. Or, tout mot (infini) commençant par
			$l\ne 2$ a un antécédent par $\sigma$ dans $\Omega^+$, c'est-à-dire qu'il existe $V_2\in \Omega^+$ tel que $\sigma(V_2)=V_1$. Une manière simple de s'en assurer est d'écrire 
			le développement en préfixes-suffixes d'un mot $U.\alpha V_1\in \Omega$, $\alpha\in A$. Dans ce cas, $V=\sigma(uV_2)$, assurant
			que $V$ est dans $\sigma (C_u)$. 
			\end{proof}

			$\lambda$ vérifie $\lambda^d = \lambda^{d-1}+1$ et on en déduit les égalités
				\begin{center}
					\begin{tabular}{ccc}
					$\lambda^{2d-2} = \sum\limits_{0\le k\le d-1} \lambda^k$ & et & $1 = \sum\limits_{0\le k\le d-1} \lambda^{-(d-1+k)}$.
					\end{tabular}
				\end{center}
			On va calculer toutes les valeurs possibles des mesures des cylindres. Il est évident que si $u\in \mathfrak{L}(\Omega)$ et $\alpha\in A$,
			on a $\mu(C_u) = \mu(C_{\alpha u})$ si et seulement si $u$ n'est pas spécial à gauche. On s'intéresse donc au cas où $u$ est spécial à gauche.
			\begin{prop}\label{prop:emmamespos}
			Soit $u$ un préfixe de $\omega$ (possiblement $\epsilon$) et $\mu(C_u)=x$. Alors $\mu(C_{1u})$, $\mu(C_{2u})$, \dots, $\mu(C_{du})$
			prennent les valeurs (pas forcément respectives) $\lambda^{-(d-1+k)}x$, $0\le k\le d-1$.
			\end{prop}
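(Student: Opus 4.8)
The plan is to argue by induction on $|u|$, the only ingredients being the self-similarity relation $\mu(\sigma(C_v))=\lambda^{-1}\mu(C_v)$ (valid for every $v\in\mathfrak{L}(\Omega)$), Lemma~\ref{lem:emmacyl15}, the $S$-invariance of $\mu$, and the explicit list of short factors of $\mathfrak{L}(\Omega)$: the length-$2$ factors are exactly the words $k1$ ($1\le k\le d$) and $k(k+1)$ ($1\le k\le d-1$), so that $2$ is preceded only by $1$, $d$ is preceded only by $d-1$ and followed only by $1$, and words such as $211$ or $111$ do not occur. The base case $u=\epsilon$ (so $x=1$) is a direct computation: $\mu(C_j)$ is the frequency of the letter $j$ in $\omega$, i.e. the $j$-th coordinate of the normalised right Perron eigenvector of $M_\sigma$; solving $\lambda v_2=v_1$, $\lambda v_i=v_{i-1}$ ($3\le i\le d$) and $\lambda v_1=v_1+v_d$ shows $v$ is proportional to $(1,\lambda^{-1},\dots,\lambda^{-(d-1)})$, and normalising with $\sum_{k=0}^{d-1}\lambda^{-k}=\lambda^{d-1}$ (a consequence of $\lambda^{2d-2}=\sum_{k=0}^{d-1}\lambda^k$) gives $\mu(C_1)=\lambda^{-(d-1)}$ and $\mu(C_j)=\lambda^{-(d-2+j)}$ for $2\le j\le d$, i.e. exactly the values $\lambda^{-(d-1+k)}$, $0\le k\le d-1$.

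For the inductive step, let $u$ be a prefix of $\omega$ with $|u|\ge 1$ and $\mu(C_u)=x$. As $u$ is left-special it is preceded by every letter, so $S$-invariance gives $x=\sum_{j=1}^{d}\mu(C_{ju})$; moreover $2$ is preceded only by $1$, hence $\mu(C_{2u})=\mu(C_{12u})=\mu(C_{\sigma(1)u})$, and since $1u=\sigma(d)u$ and $ku=\sigma(k-1)u$ for $3\le k\le d$, the multiset $\{\mu(C_{ju}):1\le j\le d\}$ coincides with $\{\mu(C_{\sigma(j)u}):1\le j\le d\}$. Now $u$, being a prefix of $\sigma(\omega)=\omega$, has a well-defined desubstitution: either $u=\sigma(w)$, or $u=\sigma(w)\cdot 1$ with $w1$ a prefix of $\omega$. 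The key claim is that in each of the resulting cases there is a prefix $\tilde w$ of $\omega$ with $|\tilde w|<|u|$ such that $C_{\sigma(j)u}=\sigma(C_{j\tilde w})$ for every $j\in A$. When $u=\sigma(w)$ and $w$ does not end in $d$ this is immediate from Lemma~\ref{lem:emmacyl15} with $\tilde w=w$, since $\sigma(C_{jw})=C_{\sigma(jw)}=C_{\sigma(j)u}$. In the remaining cases $\sigma(C_{j\tilde w})$ is a priori only a proper subcylinder of $C_{\sigma(j)u}$, and equality is restored by the forcing rules: a trailing $d$ in $w$ (write $w=w'd$, so $w'$ ends in $d-1$) is absorbed by the fact that $d$ is followed only by $1$, which gives $C_{\sigma(w')}=C_{\sigma(w')1}$ and hence $\tilde w=w'$; and the half-block $\sigma(1)=12$ arising when $u=\sigma(w)\cdot 1$ is absorbed because $211$ and $111$ do not occur, so the extra letter $1$ is forced to continue as $12$, and $\tilde w=w$ or $\tilde w=w1$ according to the last letter of $w$.

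Granting the claim, $\mu(C_{\sigma(j)u})=\lambda^{-1}\mu(C_{j\tilde w})$ for all $j$, so $\mu(C_{\tilde w})=\sum_{j}\mu(C_{j\tilde w})=\lambda\sum_{j}\mu(C_{\sigma(j)u})=\lambda x$ (the last sum equals $\sum_j\mu(C_{ju})=x$); the induction hypothesis applied to $\tilde w$ then gives $\{\mu(C_{j\tilde w}):j\}=\{\lambda^{-(d-1+k)}\lambda x:0\le k\le d-1\}=\{\lambda^{-(d-2+k)}x:0\le k\le d-1\}$, whence $\{\mu(C_{ju}):j\}=\{\mu(C_{\sigma(j)u}):j\}=\lambda^{-1}\{\lambda^{-(d-2+k)}x:0\le k\le d-1\}=\{\lambda^{-(d-1+k)}x:0\le k\le d-1\}$, as required. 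The main obstacle is the verification of the key claim: the book-keeping needed to pair each left-extension of $\tilde w$ with the correct $C_{\sigma(j)u}$ once the desubstitution of $u$ straddles a $\sigma(1)=12$ block or $u$ ends with one or several $d$'s, and --- most delicately --- the degenerate case $u=1$ (where the desubstitution has $w=\epsilon$ and no strictly shorter $\tilde w$ works), which one settles directly: the same forcing rules yield $\mu(C_{d\cdot1})=\mu(C_d)$, and then $\mu(C_{j\cdot1})$ runs through the successive $\lambda^{-1}$-multiples as $j$ runs along the cycle $d\mapsto 1\mapsto 2\mapsto\cdots\mapsto d-1$.
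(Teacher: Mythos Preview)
Your inductive approach (desubstitute $u$ to a shorter prefix $\tilde w$ and invoke the hypothesis) can be made to work, but it is genuinely different from the paper's argument, which is direct and sidesteps precisely the case analysis you flag as the main obstacle. The paper does not induct on $|u|$ at all. Instead it first replaces $u$ by the shortest bispecial prefix of $\omega$ extending it (so that $u$ is right-special and hence does not end in $d$), and then applies Lemma~\ref{lem:emmacyl15} to $lu$ rather than to $u$, yielding $\sigma(C_{lu})=C_{\sigma(l)\sigma(u)}$ for every $l$. The crucial observation, read off from the explicit list of bispecials (Proposition~\ref{prop:emmabisp}), is that there is no bispecial strictly between $u$ and $\sigma(u)$; consequently, for all but a single letter $l_0$ (the one for which $\sigma(l_0)u$ is still right-special) the right-extension from $\sigma(l)u$ to $\sigma(l)\sigma(u)$ is forced, so $C_{\sigma(l)u}=\sigma(C_{lu})$ and $\mu(C_{lu})=\lambda\,\mu(C_{(l+1)u})$ (indices taken cyclically) for every $l\ne l_0$. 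This broken geometric cycle, together with $\sum_j\mu(C_{ju})=x$ and the identity $\sum_{k=0}^{d-1}\lambda^{-k}=\lambda^{d-1}$, determines all $d$ values at once --- no separate base case, no desubstitution of $u$. In short, the paper substitutes on the left-extension $l$ while keeping $u$ fixed, whereas you desubstitute $u$ while keeping $l$ fixed; the former avoids the trailing-$d$, half-block and $u=1$ book-keeping entirely, while your route makes the self-similar cylinder tower $\epsilon,\tilde w,u,\dots$ more explicit at the cost of that book-keeping.
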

			\begin{proof}
			Si $u$ est un préfixe de $\omega$, alors $u$ est préfixe de $\sigma(u)$. On pourra supposer $u$ spécial à droite, en remarquant
			que si ce n'est pas le cas, il existe un mot $u_0$ spécial à droite (dont $u$ est préfixe) tel que $C_u = C_{u_0}$.
			
			Il n'existe qu'une seule lettre $l_0$ de $\{1, \dots, d\}$ pour laquelle $\sigma(l_0)u$ est encore spécial à droite ;
			on choisit $l\in \{1, \dots, d\} ; l\ne l_0$.
			On déduit de l'expression des préfixes de $\omega$ spéciaux à droite (les bispéciaux)
			qu'il n'existe aucun préfixe $v$ de $\sigma(u)$ tel que $u$ est préfixe de $v$, $|u| < |v| < |\sigma(u)|$ et $v$ est spécial à droite. On en conclut
			que $C_{\sigma(l)\sigma(u)} = C_{\sigma(l)u}$. Le mot $u$ étant spécial à droite, sa dernière lettre est différente de $d$, et par \ref{lem:emmacyl15},
			$\sigma(C_{lu}) = C_{\sigma(l)\sigma(u)}$, ce qui donne $\sigma(C_{lu}) = C_{\sigma(l)u}$.
                        On rappelle que $2$ est non spécial à gauche. On a ainsi, pour tout $l\ne l_0$ :
			\begin{center}
				\begin{tabular}{rclc}
				$\mu (C_{lu})$ & $=$ & $\lambda\mu (C_{(l+1)u})$ & si $l\ne d$,\\
				$\mu (C_{lu})$ & $=$ & $\lambda\mu (C_{1u})$ & si $l = d$.
				\end{tabular}
			\end{center}
			On note $y = \mu (C_{(l_0+1)u})$ si $l_0\ne d$ et $y = \mu (C_{1u})$ si $l_0=d$ ;
			on a alors $\sum\limits_{0\le k\le d-1} \lambda^{-k}y = x$, ce qui donne $y=\lambda^{-(d-1)}x$. 
			\end{proof}

			On va pouvoir préciser le cardinal des ensembles $\mathscr{P}_n / \sim$ après un dernier lemme.
			Pour tout $n\in \mathds{N}$, on note $E_n = \{\mu(C_u) ; u\in \mathfrak{L}(\Omega), |u|=n\}$.
			\begin{lem}\label{lem:emmamaxsim}
			Si $u$ est un préfixe du point fixe $\omega$, alors $\mu (C_u)$ est un élément maximal de $E_{|u|}$. De plus,
			$u$ est spécial à droite si et seulement si pour tout $v\in \mathfrak{L}(\Omega)$ tel que $|v|=|u|, v\ne u$, on a $\mu(C_u) > \mu(C_v)$.
			\end{lem}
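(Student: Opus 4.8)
The plan is to exploit the combinatorial description of the prefixes of $\omega$ together with Proposition \ref{prop:emmamespos} and the fact (already established) that the only left-special words are the prefixes of $\omega$. I would argue by induction on $|u|$. For the base case $u=\epsilon$ there is nothing to prove, and for $|u|=1$ the statement reads that $\mu(C_1)$ is maximal among $\mu(C_1),\dots,\mu(C_d)$ and strictly so if $1$ is right-special; this follows at once from Proposition \ref{prop:emmamespos} applied with $u=\epsilon$, since the values are $\lambda^{-(d-1+k)}x$ for $0\le k\le d-1$, the largest being $\lambda^{-(d-1)}x$, attained exactly once, and one checks on the language that $1$ is right-special (it extends by $12$ and by $11$, i.e. by $2$ and by $1$).

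For the inductive step, let $u$ be a prefix of $\omega$ with $|u|=n$, assume the statement for all shorter prefixes, and take any $v\in\mathfrak{L}(\Omega)$ with $|v|=n$, $v\ne u$. Write $v=\alpha v'$ and $u=\beta u'$ with $\alpha,\beta\in A$ and $|u'|=|v'|=n-1$; then $u'$ is a prefix of $\omega$ (a prefix of a prefix), and $u'$ is left-special since $\beta u'=u$ is a prefix of $\omega$ hence left-special — wait, more precisely I use that $u'$ being a prefix of $\omega$ is left-special, so $\mu(C_{u'})$ is maximal in $E_{n-1}$ by the induction hypothesis, and $\mu(C_{\beta u'})=\mu(C_u)$ is the \emph{largest} among the $\mu(C_{a u'})$, $a\in A$, by Proposition \ref{prop:emmamespos} (the largest value $\lambda^{-(d-1)}\mu(C_{u'})$). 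Two cases: if $v'=u'$, then $v=\alpha u'$ with $\alpha\ne\beta$, and $\mu(C_v)\le\lambda^{-(d-1)}\mu(C_{u'})=\mu(C_u)$, with equality only if $u'$ is \emph{not} right-special (so that $C_{au'}=C_{u'}$ for the relevant $a$) — but a prefix of $\omega$ that is left-special can always be extended on the right to a bispecial word (as noted after Proposition \ref{prop:emmabisp}), so $u'$ is contained in arbitrarily long prefixes of $\omega$; one must identify when $u'$ itself is already right-special, and conclude. If instead $v'\ne u'$, then by the induction hypothesis $\mu(C_{v'})<\mu(C_{u'})$ (strict, since $u'$ is a left-special prefix of $\omega$ and $v'\ne u'$ has the same length), hence $\mu(C_v)\le\mu(C_{v'})<\mu(C_{u'})$; combined with $\mu(C_u)=\lambda^{-(d-1)}\mu(C_{u'})$ one must still compare $\mu(C_v)$ with $\lambda^{-(d-1)}\mu(C_{u'})$, which forces a quantitative estimate: $\mu(C_v)\le\mu(C_{v'})\le\lambda^{-1}\cdot(\text{something})$ is not immediately enough, so here I would instead bound $\mu(C_v)$ directly using that $v$ is not left-special, writing $v$ as $\sigma$ applied to a shorter word when possible and descending until a left-special or short word is reached, then invoking maximality of prefixes of $\omega$ at that shorter length.

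The main obstacle is the strict inequality in the second assertion and the exact characterisation of when a prefix of $\omega$ is right-special: the clean statement "$u$ right-special $\iff$ $\mu(C_u)>\mu(C_v)$ for all $v\ne u$ of the same length" requires showing both that right-speciality of $u$ forces $\mu(C_{au'})$ to take $d$ genuinely distinct values as $a$ ranges over the letters prolonging $u'$ (so the maximum is attained only once, at $a$ such that $au'=u$), and conversely that if $u$ is not right-special then some $v\ne u$ of length $|u|$ has $C_v=C_u$ (take $v$ a suffix-adjusted copy, e.g. $u$ itself has a non-left-special sibling $\alpha u'$ with $C_{\alpha u'}=C_{u'}\supsetneq C_u$ — no, that gives a larger cylinder). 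The delicate point is bookkeeping the relation between $\mu(C_u)$, the values $\mu(C_{au'})$, and the induction; I expect this to require carefully tracking which prefixes of $\omega$ are bispecial (Proposition \ref{prop:emmabisp}) so as to know exactly at which lengths the maximal value "splits off" a strictly dominant cylinder. Once that is pinned down, the inductive combination of Proposition \ref{prop:emmamespos} with the induction hypothesis closes the argument.
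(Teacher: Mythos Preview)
Your inductive scheme has two real gaps. First, in Case~1 you assert that $\mu(C_{\beta u'})$ (with $\beta u'=u$ the prefix of $\omega$) is the largest among the $\mu(C_{au'})$; but Proposition~\ref{prop:emmamespos} only gives the \emph{set} of values $\{\lambda^{-(d-1+k)}\mu(C_{u'}):0\le k\le d-1\}$ without saying which letter realises which, so identifying the maximiser with $\beta$ is precisely (a subcase of) the conclusion you are after. Second, in Case~2 you reach $\mu(C_v)\le\mu(C_{v'})\le\mu(C_{u'})$ and $\mu(C_u)\le\mu(C_{u'})$, and these two chains do not compare $\mu(C_v)$ with $\mu(C_u)$; you acknowledge this and propose to descend via $\sigma$, but that is the wrong mechanism here and leads exactly to the bispecial-tracking you flag as the ``main obstacle''.

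The clean argument avoids both induction and Proposition~\ref{prop:emmamespos}. Given $v\ne u$ with $|v|=|u|$, the word $v$ is not a prefix of $\omega$, hence not left-special; it therefore has a unique left extension in $\mathfrak{L}(\Omega)$, whose cylinder has the same measure. Iterating, you reach a prefix $w$ of $\omega$ with $v$ a proper suffix of $w$ and $\mu(C_w)=\mu(C_v)$. Now $|w|>|u|$, so $u$ is a prefix of $w$, whence $C_u\supset C_w$ and $\mu(C_u)\ge\mu(C_w)=\mu(C_v)$: this is the maximality. If moreover $u$ is right-special, write $w=uw_0$ and choose $w_1\ne w_0$ of the same length with $uw_1\in\mathfrak{L}(\Omega)$; then $C_u\supset C_{uw_0}\cup C_{uw_1}$ with both pieces of positive measure, giving the strict inequality. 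Conversely, if $\mu(C_u)>\mu(C_v)$ for every $v\ne u$ of length $|u|$, pick any $\alpha$ with $u\alpha\in\mathfrak{L}(\Omega)$ and let $v$ be the length-$|u|$ suffix of $u\alpha$; then $v$ is not left-special, so $\mu(C_v)=\mu(C_{u\alpha})$, hence $\mu(C_u)>\mu(C_{u\alpha})$, forcing $u$ to be right-special.
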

			\begin{proof}
			Soit $v\in \mathfrak{L}(\Omega)$ un mot de longueur $|u|$. Si $v$ n'est pas un préfixe du point fixe, alors il existe un préfixe $w$ du point fixe,
			tel que $|w|>|v|$, $v$ est un suffixe (propre) de $w$ et $\mu (C_{w})=\mu (C_v)$ ; en effet si $v$ n'est pas spécial à
			gauche et $\alpha v\in \mathfrak{L}(\Omega)$ ($\alpha\in \{1, \dots, d\}$), alors $\mu (C_{\alpha v})=\mu (C_v)$. Or, puisque $u$ est un préfixe
			de $w$, alors $C_u\supset C_w$ et $\mu (C_u)\ge \mu (C_w) = \mu (C_v)$.\\

			Le mot $v$ est toujours un mot de longueur $|u|$ non préfixe de $\omega$, le mot $u$ est préfixe de $\omega$ et le mot
			$w$ est un préfixe de $\omega$ tel que $v$ est un suffixe propre de $w$ et $\mu (C_{w})=\mu (C_v)$.
			On note $w=uw_0$ ; si $u$ est spécial à droite, alors il existe $w_1$ tel que $|w_1|=|w_0|$, $w_1\ne w_0$ et $uw_1$ est dans le langage.
			Dans ce cas, $C_u\supset C_{uw_0}\cup C_{uw_1}$ et $\mu (C_u) > \mu (C_w) = \mu (C_v)$ (car $C_{uw_0}$ et $C_{uw_1}$ sont de mesures
			strictement positives).
			
			Si pour tout $v$ tel que $|v|=|u|, v\ne u$, on a $\mu(C_u) > \mu(C_v)$, on note $\alpha$ une lettre de $\{1, \dots, d\}$ telle que
			$u\alpha$ est dans le langage. Si $v$ est le suffixe de $u\alpha$ de longueur $|u|$, alors $v$ n'est pas spécial à gauche
			et $\mu (C_v) = \mu (C_{u\alpha})$. On a alors $\mu(C_u) > \mu(C_{u\alpha})$, ce qui implique que $u$ est spécial à droite. 
			\end{proof}
			\begin{prop}
			Pour tout $n\in \mathds{N}$, on note $u_n$ le préfixe de longueur $n$ de $\omega$.
				\begin{itemize}
				\item $E_1 = \bigcup\limits_{0\le k\le d-1} \{\lambda^{-(d-1+k)}\}$,\vspace*{1mm}
				\item $\forall n\ge 2$,
					\begin{itemize}
					\item si le préfixe $u_{n-1}$ de $\omega$ de longueur $n-1$ est spécial à droite, alors il existe $j\in \mathds{N}$
					tel que $E_n = \bigcup\limits_{0\le k\le 2d-3} \{\lambda^{-(j+k)}\}$,
					\item sinon, il existe $j\in \mathds{N}$ tel que
					$E_n = \bigcup\limits_{0\le k\le 2d-2} \{\lambda^{-(j+k)}\}$.
					\end{itemize}
				\end{itemize}
			\end{prop}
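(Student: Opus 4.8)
The plan is to get the first bullet directly from Proposition~\ref{prop:emmamespos} and the remaining bullets by one induction on $n$, carrying a slightly stronger invariant. For $n=1$, apply Proposition~\ref{prop:emmamespos} to the empty prefix $u=\epsilon$ (so $\mu(C_\epsilon)=1$); since $\sigma$ is primitive every letter is a factor, hence $E_1=\{\mu(C_1),\dots,\mu(C_d)\}$, which Proposition~\ref{prop:emmamespos} identifies with $\{\lambda^{-(d-1+k)};0\le k\le d-1\}$. This proves the first bullet, and Lemma~\ref{lem:emmamaxsim} applied to $u_1$ gives $\mu(C_{u_1})=\max E_1=\lambda^{-(d-1)}$. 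The invariant $(\mathcal{H}_n)$ I propagate is: $E_n=\{\lambda^{-(j_n+k)};0\le k\le K_n\}$ for a non-negative integer $j_n$ and an integer $K_n$ with $d-1\le K_n\le 2d-2$, and $\mu(C_{u_n})=\lambda^{-j_n}=\max E_n$; it holds for $n=1$ with $j_1=K_1=d-1$.

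For the step from $n-1$ to $n$ (so $n\ge2$), the first task is the exact identity
\[
E_n \;=\; \{\mu(C_{v});\ v\in\mathfrak{L}(\Omega),\ |v|=n-1,\ v\ne u_{n-1}\}\ \cup\ \{\lambda^{-(d-1+k)}\,\mu(C_{u_{n-1}});\ 0\le k\le d-1\},
\]
obtained by sorting the factors of length $n$ by their suffix of length $n-1$: if a factor $v=iv'$ of length $n$ has $v'\ne u_{n-1}$, then $v'$ is not left-special (the prefixes of $\omega$ being the only left-special factors, by the remark following Proposition~\ref{prop:emmabisp}), so $i$ is forced and $\mu(C_v)=\mu(C_{v'})$ by the equivalence recalled just before Proposition~\ref{prop:emmamespos}; whereas the factors of length $n$ with suffix $u_{n-1}$ are exactly the $d$ words $1u_{n-1},\dots,du_{n-1}$, of cylinder measures the $d$ values furnished by Proposition~\ref{prop:emmamespos} for $u=u_{n-1}$. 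The reverse inclusions are routine. Next, compare $\{\mu(C_v);v\ne u_{n-1}\}$ with $E_{n-1}$ by the second half of Lemma~\ref{lem:emmamaxsim}: if $u_{n-1}$ is right-special, $\mu(C_{u_{n-1}})=\lambda^{-j_{n-1}}$ is strictly above every other length-$(n-1)$ cylinder measure, so that set is $E_{n-1}\setminus\{\lambda^{-j_{n-1}}\}$; if not, another factor attains the maximum and the set is all of $E_{n-1}$. Either way, $E_n$ is the union of an initial block of consecutive negative powers of $\lambda$ — $E_{n-1}$ with or without its top element — with the block of exponents $j_{n-1}+d-1,\dots,j_{n-1}+2d-2$; since $d-1\le K_{n-1}\le 2d-2$ these two blocks meet or abut, so their union is again a contiguous block: exponents $j_{n-1}+1,\dots,j_{n-1}+2d-2$ (so $j_n=j_{n-1}+1$, $K_n=2d-3$) in the right-special case, and exponents $j_{n-1},\dots,j_{n-1}+2d-2$ (so $j_n=j_{n-1}$, $K_n=2d-2$) otherwise. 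As $d\ge3$ both $2d-3$ and $2d-2$ lie in $[d-1,2d-2]$, and $\mu(C_{u_n})=\max E_n=\lambda^{-j_n}$ by Lemma~\ref{lem:emmamaxsim}, so $(\mathcal{H}_n)$ holds; reading off $K_n$ gives exactly the two claimed forms of $E_n$, with $j:=j_n\in\mathds{N}$.

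I expect the contiguity bookkeeping and the base case to be mechanical; the one delicate point, and the actual content of the proof, is the displayed identity for $E_n$ — that the only length-$n$ factors whose cylinder measure is genuinely new (not inherited from a strictly shorter cylinder) are the $d$ left extensions of $u_{n-1}$, and that all $d$ of them occur. This rests entirely on $u_{n-1}$ being the unique left-special factor of its length together with Proposition~\ref{prop:emmamespos}, and it is also what makes the dichotomy hinge on the right-speciality of $u_{n-1}$ (equivalently, since $u_{n-1}$ is always left-special, on its bispeciality).
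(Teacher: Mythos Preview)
Your proof is correct and follows essentially the same strategy as the paper's: induct on $n$, use that the only left-special factor of each length is a prefix of $\omega$ to reduce $E_n$ to $E_{n-1}$ plus the $d$ new values from Proposition~\ref{prop:emmamespos}, and invoke Lemma~\ref{lem:emmamaxsim} to decide whether the top value of $E_{n-1}$ survives. The paper splits into four cases (right-speciality of both $u_{n-1}$ and $u_n$) where you carry the stronger invariant $d-1\le K_n\le 2d-2$ and need only two, but the mathematical content is the same.
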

			\begin{proof}
			Par récurrence.\\
			On obtient directement de \ref{prop:emmamespos} que
			$E_1 = \bigcup\limits_{0\le k\le d-1} \{\lambda^{-(d-1+k)}\}$. Le mot $1$ est le seul mot de longueur $1$ spécial à gauche,
			$\mu (C_{1}) = \lambda^{-(d-1)}$ d'après \ref{lem:emmamaxsim},
			et on obtient par \ref{prop:emmamespos} que les $\mu (C_{h1})$, $1\le h\le d$ prennent les valeurs $\lambda^{-(2d-2+k)}$, $0\le k\le d-1$.
			Si $l\ne 1$, $l$ n'est pas spécial à gauche et $\mu (C_{(l-1)l}) = \mu (C_l)$.
			On en déduit que $E_2 = \bigcup\limits_{0\le k\le 2d-3} \{\lambda^{-(d+k)}\}$.
			On suppose que les propriétés annoncées sont vraies à tous les rangs $\le n$.

			Si $u_n$ et $u_{n-1}$ sont spéciaux à droite, il existe $j\in \mathds{N}$ tel que $E_n = \bigcup\limits_{0\le k\le 2d-3} \{\lambda^{-(j+k)}\}$
			par hypothèse de récurrence, et par \ref{lem:emmamaxsim}, $C_{u_n}$ est le seul cylindre de mesure $\lambda^{-j}$. On peut déduire de
			la proposition \ref{prop:emmamespos} que $E_{n+1} = \bigcup\limits_{1\le k\le 2d-2} \{\lambda^{-(j+k)}\}$.

			Si $u_n$ est spécial à droite, et $u_{n-1}$ ne l'est pas, alors il existe $j\in \mathds{N}$ tel que $E_n = \bigcup\limits_{0\le k\le 2d-2} \{\lambda^{-(j+k)}\}$ et
			$C_{u_n}$ est le seul cylindre de mesure $\lambda^{-j}$. On en déduit que $E_{n+1} = \bigcup\limits_{1\le k\le 2d-2} \{\lambda^{-(j+k)}\}$.

			Si $u_n$ n'est pas spécial à droite, et $u_{n-1}$ est spécial à droite, alors il existe $j\in \mathds{N}$ tel que $E_n = \bigcup\limits_{0\le k\le 2d-3} \{\lambda^{-(j+k)}\}$ et
			$C_{u_n}$ n'est pas le seul cylindre de mesure $\lambda^{-j}$ ; $E_{n+1} = \bigcup\limits_{0\le k\le 2d-2} \{\lambda^{-(j+k)}\}$.

			Si $u_n$ et $u_{n-1}$ ne sont spéciaux à droite ni l'un ni l'autre, il existe $j\in \mathds{N}$ tel que $E_n = \bigcup\limits_{0\le k\le 2d-2} \{\lambda^{-(j+k)}\}$ et
			$C_{u_n}$ n'est pas le seul cylindre de mesure $\lambda^{-j}$. On a alors $E_{n+1}=E_n$.

			Les propriétés passent donc aux successeurs. 
			\end{proof}

			On peut finalement conclure que
			\begin{itemize}
				\item $\#(\mathscr{P}_1 / \sim) = d$, et pour $n\ge 2$,
				\item $\#(\mathscr{P}_n / \sim) = 2d-2$~~ s'il existe un bispécial de longueur $n-1$, et
				\item $\#(\mathscr{P}_n / \sim) = 2d-1$~~ sinon.
			\end{itemize}
			Ce résultat est à rapprocher du théorème \ref{thm:emmagenarpar} et de la proposition \ref{prop:emmabispeqpldev}.
			\begin{thm}\label{thm:determination}
			$T_0$ détermine $\mathscr{P}_1$. Pour tout $n\in \mathds{N}^*$, l'arbre $T_n$ détermine une partition
			$\mathscr{P}_{m}$ de $\Omega^+$ avec $\#(\mathscr{P}_{m} / \sim) = 2d-2$. Toute
			partition $\mathscr{P}_{m}$ telle que $\#(\mathscr{P}_{m} / \sim) = 2d-2$ est déterminée
			par un arbre $T_n$.
			\end{thm}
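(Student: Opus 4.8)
Le plan est d'assembler l'\'enonc\'e \`a partir du th\'eor\`eme~\ref{thm:emmagenarpar}, de la proposition~\ref{prop:emmaleet}, de la proposition~\ref{prop:emmabispeqpldev} et de la classification de $\#(\mathscr{P}_m/\sim)$ \'etablie juste au-dessus. La premi\`ere assertion, \`a savoir que $T_0$ d\'etermine $\mathscr{P}_1$, est exactement le cas $n=0$ du th\'eor\`eme~\ref{thm:emmagenarpar}, et ne demande donc rien de neuf. Pour la deuxi\`eme assertion, on fixe $n\in\mathds{N}^*$ ; d'apr\`es le th\'eor\`eme~\ref{thm:emmagenarpar}, $T_n$ d\'etermine la partition $\mathscr{P}_m$ o\`u $m-1$ est la longueur du plus long mot apparaissant \`a l'\'etape $n$. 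D'apr\`es les parties $(1)$ et $(2)$ de la proposition~\ref{prop:emmaleet}, ce plus long mot est pr\'ecis\'ement $l_n$ (les cas $1\le n\le d-1$ et $n\ge d$ \'etant trait\'es s\'epar\'ement), de sorte que $m-1=|l_n|\ge 1$, donc $m\ge 2$.

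On injecte ensuite ceci dans la classification des mesures. D'apr\`es le premier point de la proposition~\ref{prop:emmabispeqpldev}, $l_n=u^{-1}$ avec $u$ un facteur bisp\'ecial non vide de $\mathfrak{L}(\Omega)$ ; en particulier $\mathfrak{L}(\Omega)$ contient un mot bisp\'ecial de longueur $m-1$, \`a savoir $l_n^{-1}$. Comme $m\ge 2$, la trichotomie rappel\'ee avant le th\'eor\`eme s'applique et force $\#(\mathscr{P}_m/\sim)=2d-2$. Ceci prouve que tout arbre $T_n$, $n\ge 1$, d\'etermine une partition $\mathscr{P}_m$ avec $\#(\mathscr{P}_m/\sim)=2d-2$.

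Pour la r\'eciproque, on part d'une partition $\mathscr{P}_m$ telle que $\#(\mathscr{P}_m/\sim)=2d-2$. Puisque $d\ge 3$, on a $2d-2\ne d$, donc $m\ne 1$ ; la classification fournit alors un mot bisp\'ecial non vide $u\in\mathfrak{L}(\Omega)$ avec $|u|=m-1$. D'apr\`es la proposition~\ref{prop:emmabispeqpldev}, $u^{-1}=l_k$ pour un certain $k\in\mathds{N}^*$, et cet entier $k$ est unique car $l_k$ est un suffixe propre de $l_{k+1}$ pour tout $k$, de sorte que les longueurs $|l_k|$ sont strictement croissantes. Ainsi $|l_k|=m-1$, et d'apr\`es la proposition~\ref{prop:emmaleet} le mot $l_k$ est le plus long mot apparaissant \`a l'\'etape $k$ ; en appliquant le th\'eor\`eme~\ref{thm:emmagenarpar} avec $n=k$, on obtient que $T_k$ d\'etermine $\mathscr{P}_m$, ce qui ach\`eve la preuve.

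On n'attend pas de v\'eritable obstacle ici : le contenu analytique et combinatoire d\'elicat a d\'ej\`a \'et\'e isol\'e dans les r\'esultats pr\'ec\'edents. Les seuls points demandant un peu de soin sont l'identification du plus long mot \`a l'\'etape $n$ avec $l_n$ dans l'intervalle initial $n\le d-1$ (o\`u $l_n=\sigma^{n-1}(1^{-1})$, l'appartenance \`a l'ensemble des inverses de bisp\'eciaux \'etant couverte par les propositions~\ref{prop:emmabisp} et~\ref{prop:emmabispeqpldev}), et l'emploi de la stricte croissance de $(|l_k|)_k$ pour rendre l'entier $k$ de la r\'eciproque non ambigu.
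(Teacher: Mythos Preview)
Your proposal is correct and follows exactly the approach the paper intends: the paper does not write out a separate proof for this theorem but presents it as the combination of th\'eor\`eme~\ref{thm:emmagenarpar}, proposition~\ref{prop:emmabispeqpldev} and the trichotomy for $\#(\mathscr{P}_m/\sim)$ established just before the statement. You have simply made the assembly explicit, including the small verifications ($m\ge 2$ via $|l_n|\ge 1$, $2d-2\ne d$ for $d\ge 3$, and the strict monotonicity of $|l_k|$) that the paper leaves implicit.
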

			Il est à noter que ce théorème donne un sens au cardinal de l'alphabet $A_\tau$ considéré pour la substitution d'arbre.

		\subsection{Système d'isométries partielles et conjugaison en mesure}\label{sec:conjugaison}
		On définit finalement un système d'isométries partielles sur l'arbre limite $T_{\tau}$ (\ref{eq:Ttau}),
		et on montre que ce système d'isométries traduit géométriquement l'action du décalage sur le système symbolique substitutif.

		On rappelle que $f_Q$ est l'application définie en \ref{eq:f_Q2}. Pour toute lettre $a$ de $A$, on définit l'application
		\begin{center}
		\begin{tabular}{ccccc}
			$\varphi_a$ & : & $f_Q(C_a)$ & $\to$ & $f_Q(P_a)$\\
			&& $x = f_Q(V)$, $V=aV^\prime$ & $\mapsto$ & $\varphi_a(x) = f_Q(V^\prime)$.
		\end{tabular}
		\end{center}
		\begin{prop}
		Pour tout $a\in A$, $\varphi_a$ est une isométrie.
		\end{prop}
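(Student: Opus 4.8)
The plan is to recognise $\varphi_a$ as the shift $S$ transported through $f_Q$, and to prove the single metric identity
\[
d_{T_\tau}\bigl(f_Q(SV),f_Q(SW)\bigr)=d_{T_\tau}\bigl(f_Q(V),f_Q(W)\bigr)\qquad\text{for all }V,W\in C_a .
\]
Indeed, if $V=aV'$ then $V'=SV\in P_a$ and $\varphi_a(f_Q(V))=f_Q(SV)$, so the above is exactly the assertion that $\varphi_a$ preserves distances; moreover, specialising to $f_Q(V)=f_Q(W)$ gives $d_{T_\tau}(\varphi_a(f_Q(V)),\varphi_a(f_Q(W)))=0$, i.e.\ well-definedness of $\varphi_a$. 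To prove the identity I would first reduce to the orbit of the fixed point: since $\sigma$ is primitive, $\Omega^+_p=\{S^k(\omega)\}$ is dense in $\Omega^+$, hence $\Omega^+_p\cap C_a$ is dense in the clopen set $C_a$; the map $(V,W)\mapsto d_{T_\tau}(f_Q(SV),f_Q(SW))-d_{T_\tau}(f_Q(V),f_Q(W))$ is continuous on $C_a\times C_a$ (continuity of $S$ and of $f_Q$, the latter being immediate from the definition of $f_Q$ on $\Omega^+\setminus\Omega^+_p$ via $\Gamma$ and the Cauchy sequences $f_Q(u_n^{-1}\omega)$), so it is enough to treat $V=S^i\omega$, $W=S^j\omega$ with $\omega_i=\omega_j=a$.

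Next I would realise the four points inside an arbre $T_n$. Writing $p_k$ for the prefix of $\omega$ of length $k$ and $b_k:=f_0^{-1}(p_k^{-1})$, one has $f_Q(S^k\omega)=\nu_n(b_k)$ for every $n$ with $b_k\in\mathcal V_n^s$; choose $n$ large enough for $k=i,j,i+1,j+1$ at once. Because $T_n\subset T_\tau$ is a subtree, $d_{T_\tau}$ between any two of these points is the $T_n$-distance, i.e.\ the realised length of the combinatorial path between them, which by Proposition \ref{prop:emmaletlen} is $\eta^{-n}$ times the sum of $\mathbf V_t(\mathrm{colour})$ over the edges crossed. So the claim reduces to: $\gamma_n(b_i,b_j)$ and $\gamma_n(b_{i+1},b_{j+1})$ have the same realised length. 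Now $f_0(b_k)=p_k^{-1}=\sigma^n\!\bigl(p_*(\gamma_n(x_0,b_k))\bigr)$, hence $p_*(\gamma_n(b_i,b_j))=p_*(\gamma_n(x_0,b_i))^{-1}p_*(\gamma_n(x_0,b_j))=\sigma^{-n}(p_ip_j^{-1})$, and likewise $p_*(\gamma_n(b_{i+1},b_{j+1}))=\sigma^{-n}(p_{i+1}p_{j+1}^{-1})$. Since $\omega_i=\omega_j=a$, we get $p_{i+1}p_{j+1}^{-1}=p_i\,a\,(p_j\,a)^{-1}=p_i\,aa^{-1}\,p_j^{-1}=p_ip_j^{-1}$: the two paths have the same image under $p_*$.

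The remaining — and main — point is that this common $p_*$-image determines the realised length. The difficulty is that $p_*$ expands the auxiliary colours $d{+}1,\dots,2d{-}2$ into the words $\sigma^k(1)$ and may therefore create reductions when applied to a path crossing such an edge, so $p_*$ is not injective on arbitrary path-words of $T_n^s$. What has to be checked, using the explicit local pictures (Figures \ref{fig:emmasimpsub} and \ref{fig:emmaboule1}), the discernement of $T_n^s$ (Proposition \ref{prop:emmadiscer}) and the injectivity of $x\mapsto p_*(\gamma_n(x_0,x))$ (Proposition \ref{prop:emmaprinj}), is that along the paths joining points of branchement of $T_n^s$ — i.e.\ points $b_k$ that are $f_0$-images of prefixes of $\omega$ — no such cancellation occurs, so that $\gamma_n(b_i,b_j)$ and $\gamma_n(b_{i+1},b_{j+1})$ have the same multiset of coloured edges and hence the same realised length (the consistency of the length across levels being guaranteed by $\mathbf V_t$ being the common left eigenvector, $\mathbf V_t M_t=\eta\mathbf V_t$). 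This finishes the reduced statement, and the proposition follows. Conceptually this step says that the realisation $\nu$ intertwines the left $F_d$-translation $g\mapsto a^{-1}g$ on $\bigcup_n f_0(\mathcal W_n^s)$ with an isometry of $\overline{\mathscr R^d}$, of which $\varphi_a$ is the restriction to $f_Q(C_a)$; the same argument, applied on a dense set and extended by continuity, is what will also give the isometry property of the surjective maps $\varphi_a:f_Q(C_a)\to f_Q(P_a)$ and, later, the comparison with the $F_d$-action on $T_{\Phi^{-1}}$.
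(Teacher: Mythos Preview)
Your approach coincides with the paper's: reduce to points of $\Omega^+_p$ by density, place the four relevant branch points $b_i,b_j,b_{i+1},b_{j+1}$ in a common $T_n^s$, and compare the two path-words via $p_*$ using $p_{i+1}p_{j+1}^{-1}=p_i\,aa^{-1}\,p_j^{-1}=p_ip_j^{-1}$. This is exactly what the paper does.

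The point you flag as ``remaining --- and main'' and leave as ``what has to be checked'' is, in the paper, a one-line observation you have not made. The auxiliary edges coloured $d{+}1,\dots,2d{-}2$ always join a branch point to a \emph{terminal} vertex (the vertices $z_h$ in the description of $\tau(X_2)$ and of the realisation have degree~$1$). Hence a geodesic between two branch points of $T_n^s$ never crosses an auxiliary edge at all: the words $w=\gamma_n(b_i,b_j)$ and $w_a=\gamma_n(b_{i+1},b_{j+1})$ lie in $(A\cup\overline A)^*$. On such words $p_*$ is just the passage to $F_d$, and discernement (Proposition~\ref{prop:emmadiscer}) rules out any reduction, so $p_*(w)=p_*(w_a)$ forces $w=w_a$ letter by letter; equality of realised lengths is then immediate from Proposition~\ref{prop:emmaletlen}. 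Your appeal to the eigen-relation $\mathbf V_tM_t=\eta\mathbf V_t$ is unnecessary here --- you are comparing two paths at the \emph{same} level $n$, not across levels.

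A minor remark: your reduction by density uses continuity of $f_Q$. The paper is equally terse on this (``il suffit de v\'erifier la propri\'et\'e pour les points de branchement''), but continuity at points of $\Omega^+_p$ is not entirely for free, since $f_Q$ is defined there directly rather than as a limit; the Cauchy estimate $d_{T_\tau}(f_Q(u_n^{-1}\omega),f_Q(u_{n+1}^{-1}\omega))\le\eta^{-(n+1)}\mathbf V_t(1)$ does the job, but it is worth a sentence.
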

		\begin{proof}
		Il suffit de vérifier la propriété pour les points de branchement.
		On rappelle que $\Omega^+_p = \{S^n(\omega), n\in \mathds{N}\}$. Soient $u^{-1}\omega$ et $v^{-1}\omega$
		deux mots de $C_a\cap \Omega^+_p$. On suppose que $f_Q(u^{-1}\omega), f_Q(v^{-1}\omega)$ et leurs images
		$f_Q(a^{-1}u^{-1}\omega), f_Q(a^{-1}v^{-1}\omega)$ sont des éléments de $\mathcal{W}_n$ (l'ensemble des points de degré $d$ de $T_n$).
		On rappelle que $\gamma_n$ est la fonction chemin (voir section \ref{subsec:arbsimp}) de $T_n^s$ et que l'application $f_0$ est définie en \ref{eq:f_0} ; on note\vspace*{1mm}
		\begin{itemize}
			\item $w = \gamma_n (f_0^{-1}(u^{-1}), f_0^{-1}(v^{-1}))$, et\vspace*{1mm}
			\item $w_a = \gamma_n (f_0^{-1}(a^{-1}u^{-1}), f_0^{-1}(a^{-1}v^{-1}))$.\vspace*{1mm}
		\end{itemize}
		Les mots $w$ et $w_a$ vérifient $\sigma^n(p_*(w)) = uv^{-1}$ et $\sigma^n(p_*(w_a)) = uaa^{-1}v^{-1}$.
		Puisque $f_0^{-1}(u^{-1})$, $f_0^{-1}(v^{-1})$, $f_0^{-1}(a^{-1}u^{-1})$ et $f_0^{-1}(a^{-1}v^{-1})$ sont des points
		de branchement, $w$ et $w_a$ sont des mots de $(A\cup \overline{A})^*$ (ils ne contiennent aucune lettre $\overline{k}, k$
		avec $(d+1)\le k\le (2d-2)$). On en déduit que $w = w_a$ et on conclut immédiatement que
		$d_{T_{\tau}} (f_Q(u^{-1}\omega), f_Q(v^{-1}\omega)) = d_{T_{\tau}} (f_Q(a^{-1}u^{-1}\omega), f_Q(a^{-1}v^{-1}\omega))$.
		\end{proof}

		Ainsi, pour tout $a\in A$, l'application $\varphi_a$ est une bijection isométrique qui vérifie
		\begin{center}
			$(f_Q\circ S)(V) = (\varphi_a\circ f_Q)(V)$
		\end{center}
		pour tout élément $V$ de $C_a$.\\

		On montre en plus que l'intersection $f_Q(C_a)\cap f_Q(C_b)$ entre deux domaines distincts ($a, b\in A$ et $a\ne b$)
		est constituée d'au plus un point. Pour cela, on précise la non-injectivité de l'application $f_Q$.
		Le lemme suivant est légèrement plus fort que ce dont nous avons besoin, mais il explicite une
		propriété intéressante de $f_Q$.
		\begin{lem}\label{lem:memeimage}
		Si $V_1$ et $V_2$ sont deux mots de $\Omega^+$ tels que $f_Q(V_1)=f_Q(V_2)$, alors il existe
		un mot infini à gauche $U$ tel que $U.V_1$ et $U.V_2$ sont des mots de $\Omega$.
		\end{lem}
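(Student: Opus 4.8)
The plan is to trace the non-injectivity of $f_Q$ back through its definition, distinguishing the two ways a fibre of $f_Q$ can be non-trivial: collisions among the branch-point images $\nu_k(f_0^{-1}(u^{-1}))$ coming from the (fixed-point orbit) part $\Omega^+_p$, and collisions obtained as limits of such points via the prefix-suffix construction (\ref{eq:f_Q2}). Since $f_0$ is a bijection from $\bigcup_n \mathcal{W}_n^s$ onto $\delta(\Omega^+_p)$ and each $\nu_k$ is injective on $\mathcal{V}_k^s$, two \emph{distinct} elements $u^{-1}\omega, v^{-1}\omega$ of $\Omega^+_p$ can only have the same image if $u^{-1}$ and $v^{-1}$ are not simultaneously branch points at a common level, which by Proposition \ref{prop:emmaprinj} cannot happen unless one is an ancestor of the other on the segment realized by $\nu_n$; in fact the only genuine identification among points of $\Omega^+_p$ is forced when a prefix $u$ of $\omega$ satisfies $u^{-1}\omega = \omega$ after a shift, i.e. the periodicity coming from $\lim \sigma^{dn}(k.1)$. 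First I would make this precise: if $f_Q(V_1)=f_Q(V_2)$ with $V_1, V_2\in\Omega^+_p$, the common point is some $\nu_k(x)$ with $x$ of degree $d$; the two words $V_i=u_i^{-1}\omega$ then have $f_0(x)=u_i^{-1}$ forced, hence $u_1=u_2$ and $V_1=V_2$ — so on $\Omega^+_p$ the map is actually injective on branch-point preimages, and the lemma's conclusion is vacuous there (take $U=\overline\omega$).

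The substantive case is $V_1, V_2\in\Omega^+\setminus\Omega^+_p$ (or one of each). Here I would use that each such $V$ comes with a \emph{unique} left-infinite $U$ with $U.V\in\Omega$ (Proposition \ref{prop:uniqinfspegau}: $\omega$ is the only left-special word, so the left tail is determined by $V$), and with a prefix-suffix development $\Gamma(U.V)=(p_i,a_i,s_i)_i$, and $f_Q(V)=\lim_n f_Q(u_n^{-1}\omega)$ where $u_n^{-1}=p_0^{-1}\sigma(p_1^{-1})\cdots\sigma^n(p_n^{-1})$. The key point is that the sequence $(u_n^{-1})_n$ — equivalently the sequence of branch points $f_0^{-1}(u_n^{-1})$ converging to $f_Q(V)$ in $T_\tau$ — is essentially \emph{read off} from the point $f_Q(V)$ together with the approximating tower $(T_n)_n$: a point $z$ of $T_\tau$ lies in a nested sequence of sets $T_\tau(s_n,t_n)$ for simple arcs of $T_n$, and the endpoints that are branch points are exactly the $f_0^{-1}(u_n^{-1})$ (this is the content of the surjectivity argument, Proposition \ref{prop:emmasurj}, and Lemmas \ref{lem:automappar}, \ref{lem:helpsurj}). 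Consequently $z=f_Q(V_1)=f_Q(V_2)$ forces the two prefix-suffix-derived sequences $(u_n^{(1)})$ and $(u_n^{(2)})$ to have, from some rank on, a common element $u_n^{(1)}=u_n^{(2)}$ — because the branch points of $T_\tau$ accumulating at $z$ "from the side of $z$" are intrinsic to $z$. From $u_N^{(1)}=u_N^{(2)}=:u^{-1}$ one gets that $u$ is a common suffix of $U^{(1)}$ and of $U^{(2)}$ (by Theorem \ref{thm:prgamma}, the prefix-suffix data eventually stabilizes on $u^{-1}$ being a prefix of each $w_n^{-1}$), and then by minimality of $(\Omega,S)$ one produces a common left-infinite extension: taking any $U$ that is a limit under $S^{\alpha_n}(\overline\omega.\omega)$ adapted to both, or more directly observing that both $U^{(1)}.V_1$ and $U^{(2)}.V_2$ contain arbitrarily long common suffixes ending the prefix $u$, one extracts (compactness of $\Omega$) a single $U$ with $U.V_1, U.V_2\in\Omega$.

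I would organize the write-up as: (i) the easy case $V_1,V_2\in\Omega^+_p$, dispatched by Proposition \ref{prop:emmaprinj}; (ii) a lemma isolating that the approximating branch-point sequence $(f_0^{-1}(u_n^{-1}))_n$ attached to $V$ via $\Gamma$ depends only on $f_Q(V)$ and the tower $(T_n)_n$ — this repackages the bookkeeping already done in the proof of Proposition \ref{prop:emmasurj} (which arc simple of $T_n$ contains $z$, and which of its two endpoints is a branch point, determined by Lemma \ref{lem:helpsurj}); (iii) deduce $u_N^{(1)}=u_N^{(2)}$ for large $N$, hence a common suffix of the two left tails; (iv) conclude via compactness/minimality of $\Omega$. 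The main obstacle I anticipate is step (ii): one must argue carefully that distinct prefix-suffix developments $\Gamma(U_1.V_1)$ and $\Gamma(U_2.V_2)$ nonetheless generate \emph{the same} nested sequence of branch points once we know $f_Q(V_1)=f_Q(V_2)$ — i.e. that the choice "$y_n=x_n$ when $y_n'$ is not a branch point" made in Proposition \ref{prop:emmasurj} is exactly the one dictated by the automaton ($p_{n+1}\in\{\epsilon,1\}$), so that the geometric reconstruction of the sequence from $z$ coincides with the arithmetic one from $\Gamma$. Once that matching is established, the rest is a routine suffix/compactness argument.
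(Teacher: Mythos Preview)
Your plan has a real gap, and it is precisely the case you label ``or one of each'' and then do not treat. Your step (ii) --- reading the prefix--suffix sequence $(u_n^{-1})$ off the nested simple arcs $T_\tau(s_n,t_n)$ containing $z=f_Q(V)$ --- works only when $z\notin\bigcup_n\mathcal{W}_n$: that is exactly the hypothesis under which the simple arc of $T_n$ containing $z$ is unique (see the opening line of the proof of Proposition~\ref{prop:emmasurj}). If $z$ is a branch point it lies in $d$ simple arcs of every $T_n$ past the stage where it appears, so the geometric reconstruction of $(u_n^{-1})$ is not well defined, and two prefix--suffix sequences could in principle approach $z$ along different directions without ever coinciding. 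But the mixed case $V_1\in\Omega^+_p$, $V_2\notin\Omega^+_p$ forces $z=f_Q(V_1)\in\bigcup_n\mathcal{W}_n$. So your organization (i)--(iv) never covers the one case that actually carries content, and step (iii) (``$u_N^{(1)}=u_N^{(2)}$ for large $N$'') is unjustified there.

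The paper's argument is both shorter and avoids this circularity. It does not try to reconstruct the left tail from the geometry. Instead it argues by contraposition: if no common $U$ exists, then for some length $m$ there are distinct $w_1,w_2$ with $V_i\in P_{w_i}$; Lemma~\ref{lem:intercyl} (already available) then forces the common image $f_Q(V_1)=f_Q(V_2)$ to be the single point of $f_Q(P_{w_1})\cap f_Q(P_{w_2})$, namely some $f_Q(S^k(\omega))$. The whole lemma thus reduces to showing that each branch point $f_Q(S^k(\omega))$ has a \emph{unique} preimage. For $k=0$ this is a two-line combinatorial check: any $V\notin\Omega^+_p$ satisfies $V\in P_{\sigma^{k_2}(1)\sigma^{k_1}(1)}$ (two nontrivial prefixes in its automatic writing), whereas an inspection of the ball $B_1(T_n^s)$ (Figure~\ref{fig:emmaboule1}) shows $f_Q(\omega)\in f_Q(P_w)$ only for $w=\sigma^k(1)$; hence $f_Q(V)\ne f_Q(\omega)$. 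The shifted case is analogous. If you want to salvage your approach, you must first prove this same fact --- that $f_Q(\Omega^+\setminus\Omega^+_p)$ avoids all branch points --- after which your steps (ii)--(iv) become a correct but redundant way to finish.
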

		\begin{proof}
		On suppose qu'il existe deux mots distincts $w_1, w_2\in A^*$ avec $|w_1|=|w_2|$
		et tels que $V_1\in P_{w_1}, V_2\in P_{w_2}$. Si $f_Q(V_1)=f_Q(V_2)$, alors
		d'après le lemme \ref{lem:intercyl}, $f_Q(V_1)=f_Q(V_2) = f_Q(S^k(\omega))$ pour un certain
		$k\in \mathds{N}$.

		Il reste à montrer que pour tout $k\in \mathds{N}$, le point $f_Q(S^k(\omega))$
		n'a qu'un seul antécédent. On montre la propriété pour $f_Q(\omega)$ ; un raisonnement similaire
		pourra être utilisé pour ses décalés. On a déjà établi que $f_Q$ est injective sur
		$\Omega^+_p = \{S^n(\omega) ; n\in \mathds{N}\}$. Si $V\in \Omega^+\setminus \Omega^+_p$,
		alors il existe un mot $U$ infini à gauche tel que $U.V\in \Omega$ et on obtient de la décomposition
		en préfixes-suffixes que $V\in P_{\sigma^{k_2}(1)\sigma^{k_1}(1)}$, pour $k_1, k_2\in \mathds{N}$. Or, une simple
		observation de la figure \ref{fig:emmaboule1} montre l'équivalence :
		\begin{center}
		$f_Q(\omega)\in f_Q(P_w)\Leftrightarrow w = \sigma^k(1)$ pour un certain $k\in \mathds{N}$.
		\end{center}
		\end{proof}

		Le théorème \ref{thm:determination} assure que pour tout $a\in A$, l'ensemble $f_Q(P_a)$ est connexe et fermé ;
		on en déduit qu'il en est de même pour $f_Q(C_a)$. L'intersection $f_Q(C_a)\cap f_Q(C_b)$ avec $a\ne b$ est
		également une partie connexe fermée de $T_{\tau}$ et peut donc posséder $0$, $1$, ou une infinité d'éléments.
		Si l'ensemble $f_Q(C_a)\cap f_Q(C_b)$ possède une infinité d'éléments, alors il contient un point de branchement,
		et on a montré dans le lemme \ref{lem:memeimage} qu'un point de branchement possède un unique antécédent par $f_Q$.
		On en conclut que la proposition suivante est vérifiée.
		\begin{prop}\label{prop:intercyl1}
		Si $a$ et $b$ sont deux éléments distincts de $A$, alors l'intersection $f_Q(C_a)\cap f_Q(C_b)$ possède au plus $1$ élément.
		\end{prop}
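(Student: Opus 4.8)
Le plan est de montrer que si $f_Q(C_a)\cap f_Q(C_b)$ contenait au moins deux points, il contiendrait un point de branchement de $T_{\tau}$ ayant au moins deux antécédents distincts par $f_Q$, ce qui contredirait l'unicité de l'antécédent d'un point de branchement.

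Je commencerais par établir que, pour tout $a\in A$, l'ensemble $f_Q(C_a)$ est un sous-arbre compact (donc fermé) de $T_{\tau}$. En effet, le théorème \ref{thm:emmagenarpar} appliqué à $n=0$ (avec le théorème \ref{thm:determination}) donne $f_Q(P_a)=T_{\tau}(s,t)$ pour un arc simple $[s,t]$ de $T_0$ ; or $T_{\tau}(s,t)$ est un sous-arbre fermé du compact $T_{\tau}$, donc est compact, et comme $\varphi_a\colon f_Q(C_a)\to f_Q(P_a)$ est une bijection isométrique, $f_Q(C_a)$ est à son tour compact et connexe. Les ensembles $f_Q(C_a)$ et $f_Q(C_b)$ étant des sous-arbres fermés (donc convexes au sens géodésique) de l'arbre réel $T_{\tau}$, leur intersection $I:=f_Q(C_a)\cap f_Q(C_b)$ est encore fermée et connexe : si $x\neq y$ appartiennent à $I$, l'arc $[x,y]$ est contenu dans $f_Q(C_a)$ et dans $f_Q(C_b)$, donc dans $I$.

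Supposons, pour aboutir à une contradiction, que $I$ possède au moins deux points ; alors $I$ contient un arc non dégénéré $[x,y]$. J'affirmerais que cet arc rencontre l'ensemble $\bigcup_{n\in\mathds{N}}\mathcal{W}_n$ des points de branchement de $T_{\tau}$. Cet ensemble est dense dans $T_{\tau}$ : les longueurs des arêtes de $T_n$ valent $\mathbf{V_t}(k)\eta^{-n}$ et tendent vers $0$ (proposition \ref{prop:emmaletlen}), tout point de $T_n$ est à distance $\le\eta^{-1-n}$ de $T_{n-1}$ (paragraphe \ref{subsubsec:real}), et toute couleur d'arête est envoyée par une puissance de $\tau$ sur la couleur $2$, dont l'image fait apparaître un point de branchement ; ainsi, au voisinage de tout point de $T_{\tau}$ se trouve un point de branchement. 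On peut donc choisir un point de branchement $w$ assez proche du milieu de $[x,y]$ ; sa projection $z'$ sur $[x,y]$ appartient alors à l'intérieur relatif de $[x,y]$, et $z'$ est de degré $\geq 3$ dans $T_{\tau}$ (il « voit » les directions de $x$, de $y$ et de $w$), donc $z'$ est un point de branchement contenu dans $[x,y]\subset I$ (si $w=z'$, c'est $w$ lui-même qui convient).

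Enfin, ce point de branchement $z'$ est l'image par $f_Q$ d'un mot $V_1\in C_a$ et d'un mot $V_2\in C_b$ ; comme $C_a\cap C_b=\emptyset$, on a $V_1\neq V_2$. Or la démonstration du lemme \ref{lem:memeimage} montre que tout point de branchement de $T_{\tau}$, étant de la forme $f_Q(S^k(\omega))$, admet un unique antécédent par $f_Q$ dans $\Omega^+$ : contradiction. Donc $I$ possède au plus un point. L'étape qui demandera le plus de soin est la justification de la densité de $\bigcup_{n}\mathcal{W}_n$ dans $T_{\tau}$ — et donc du fait qu'un arc non dégénéré de $T_{\tau}$ en contienne nécessairement un point ; tout le reste est formel une fois acquises la compacité et la connexité des $f_Q(C_a)$ (théorèmes \ref{thm:emmagenarpar} et \ref{thm:determination}) et l'unicité des antécédents des points de branchement (lemme \ref{lem:memeimage}).
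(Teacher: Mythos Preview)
Ta démonstration est correcte et suit exactement la même stratégie que celle de l'article : montrer que $f_Q(C_a)$ est un sous-arbre fermé (via $\varphi_a$ et le théorème~\ref{thm:determination}), en déduire que l'intersection est un sous-arbre, puis obtenir une contradiction avec l'unicité de l'antécédent d'un point de branchement (démontrée dans la preuve du lemme~\ref{lem:memeimage}). L'article est simplement plus laconique sur le point que tu identifies toi-même comme délicat --- le fait qu'un arc non dégénéré de $T_\tau$ contienne un point de branchement --- et se contente de l'affirmer, tandis que tu donnes un argument de densité avec projection ; les deux reposent in fine sur le fait (implicite dans la construction) que les points de branchement de $T_\tau$ sont exactement les éléments de $\bigcup_n\mathcal{W}_n$.
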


		On définit l'ensemble des singularités par
		\begin{eqnarray*}
			\mathfrak{S} & = & \bigcup\limits_{\substack{a, b\in A\\ a\ne b}} (f_Q(C_a)\cap f_Q(C_b))\cup \bigcup\limits_{\substack{a, b\in A\\ a\ne b}}(f_Q(P_a)\cap f_Q(P_b))
		\end{eqnarray*}
		et on note $\mathfrak{S}^*$ l'ensemble des points appartenant aux orbites des points de $\mathfrak{S}$ sous l'action du système d'isométries. L'ensemble $\mathfrak{S}$ est fini d'après
		le lemme \ref{lem:intercyl} et la proposition \ref{prop:intercyl1}, ce qui fait de $\mathfrak{S}^*$ un ensemble dénombrable.

		On note $\beta$ la dimension de Hausdorff de $T_{\tau}$ et $\mathcal{H}^\beta$ sa mesure de Hausdorff associée (voir \cite{Fal}
		pour une introduction à ces concepts et \cite{Cou} pour le calcul de cette dimension). Le réel $\beta$ est forcément $\ge 1$ puisque $T_{\tau}$ contient un intervalle, et on en déduit
		que $\mathcal{H}^\beta (\mathfrak{S}^*) = 0$.

		Tout point de $T_{\tau}^* = T_{\tau}\setminus \mathfrak{S}^*$ est dans le domaine d'une unique isométrie $\varphi_a$ et on note $\varphi$ l'application bijective définie par
		\begin{center}
		\begin{tabular}{ccccl}
			$\varphi$ & : & $T_{\tau}^*$ & $\to$ & $T_{\tau}^*$\\
			&& $x$ & $\mapsto$ & $\varphi_a(x)$ si $x$ est dans le domaine de définition de $\varphi_a$.
		\end{tabular}
		\end{center}
		\begin{thm}
		Le système symbolique $(\Omega^+, S, \mu)$ (où $\mu$ est l'unique mesure de probabilité invariante par $S$)
		engendré par la substitution $\sigma$ est conjugué en mesure au système dynamique $(T_{\tau}^*, \varphi, \mathcal{H}^\beta)$
		engendré par le système d'isométries et on obtient le diagramme commutatif suivant.
		\begin{figure}[h!]
		\begin{center}
			\begin{psfrags}
			\psfrag{O+}{\Large{$\Omega^+$}}
			\psfrag{S}{\Large{$S$}}
			\psfrag{fQ}{\Large{$f_Q$}}
			\psfrag{L}{\Large{$T_{\tau}$}}
			\psfrag{phi}{\Large{$\varphi$}}
			\psfrag{pp}{\hspace{-6mm}$(presque~ partout)$}
			\scalebox{0.7}{\includegraphics{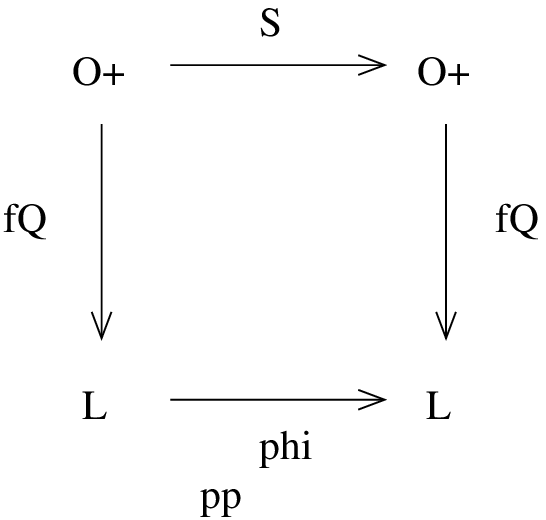}}
			\end{psfrags}
		\end{center}
		\vspace{-4mm}
		\label{fig:conjugaison}
		\end{figure}
		\end{thm}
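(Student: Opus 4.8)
The plan is to verify that $f_Q$ induces a measure-theoretic isomorphism between $(\Omega^+, S, \mu)$ and $(T_\tau^*, \varphi, \mathcal{H}^\beta)$ by checking the three standard ingredients: (i) $f_Q$ restricted to $\Omega^+ \setminus f_Q^{-1}(\mathfrak{S}^*)$ is a bijection onto $T_\tau^*$; (ii) the conjugation relation $f_Q \circ S = \varphi \circ f_Q$ holds off this exceptional set; (iii) $f_Q$ transports $\mu$ to a measure proportional to $\mathcal{H}^\beta$. The first two are essentially already in hand: surjectivity of $f_Q$ is Proposition~\ref{prop:emmasurj}, injectivity modulo the branch points follows from Lemma~\ref{lem:memeimage} and Lemma~\ref{lem:intercyl} (two words with distinct cylinders can only collapse at a decal\'e of $\omega$, hence at a branch point, hence in $\mathfrak{S}^*$), and the intertwining $f_Q \circ S = \varphi_a \circ f_Q$ on $C_a$ is the displayed identity just before Proposition~\ref{prop:intercyl1}; removing the countable set $\mathfrak{S}^*$ makes $\varphi$ well-defined and bijective, so the diagram commutes everywhere on $T_\tau^*$.

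\textbf{First I would} make the bijectivity precise. Set $X = \Omega^+ \setminus f_Q^{-1}(\mathfrak{S}^*)$. Since $\mathfrak{S}^*$ is countable and $f_Q$ is finite-to-one (again by Lemmas~\ref{lem:intercyl} and~\ref{lem:memeimage}, the only fibers with more than one point sit over branch points, i.e.\ over $\mathfrak{S}^*$), $f_Q^{-1}(\mathfrak{S}^*)$ is countable; as $\mu$ is non-atomic (the system is minimal and infinite), $\mu(f_Q^{-1}(\mathfrak{S}^*)) = 0$. On $X$ the map $f_Q$ is injective by construction and its image is exactly $T_\tau^* = T_\tau \setminus \mathfrak{S}^*$ by surjectivity. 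One checks that $S$ maps $X$ into $X$ up to a further $\mu$-null (countable) set, namely the forward and backward $S$-orbit of $f_Q^{-1}(\mathfrak{S})$, so after removing this countable saturated set from both sides the conjugacy $f_Q \circ S = \varphi \circ f_Q$ is an honest commuting square of bijections.

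\textbf{The main obstacle} is the measure-theoretic statement (iii): that the push-forward $(f_Q)_*\mu$ equals (a multiple of) $\mathcal{H}^\beta$ on $T_\tau^*$. The strategy is to exploit the self-similar structure of $T_\tau$ coming from the tree substitution: $T_\tau$ is a graph-directed self-similar set whose pieces are the sets $T_\tau(s,t)$ attached to simple arcs, with contraction ratios governed by the eigenvalue $\eta$ and the vector $\mathbf{V_t}$, so by the Mauldin--Williams formalism (cf.\ \cite{MauWil}, \cite{Cou}) $\mathcal{H}^\beta$ is, up to normalization, the unique self-similar measure giving each first-level piece the weight dictated by $\beta$. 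On the symbolic side, by Theorem~\ref{thm:emmagenarpar} each $T_n$ determines a partition $\mathscr{P}_m$ with $f_Q(P_u) = T_\tau(s,t)$, and $\mu(P_u) = \mu(C_u)$ is computed in Proposition~\ref{prop:emmamespos} in terms of powers of $\lambda$. The crux is to identify $\beta$ via the relation $\eta^{-\beta} = \lambda^{-1}$ (equivalently $\beta = \log\lambda / \log\eta$), coming from matching the metric contraction $\eta^{-1}$ of an edge under $\tau$ against the measure contraction $\lambda^{-1}$ of a cylinder under $\sigma$, and then to verify that $(f_Q)_*\mu$ satisfies the same self-similarity equations as $\mathcal{H}^\beta$ at every level $n$ --- using that the $\mathscr{P}_m$ form a refining sequence generating the Borel $\sigma$-algebra of $\Omega^+$, and that the images $T_\tau(s,t)$ form a refining sequence generating the Borel structure of $T_\tau$ with diameters tending to $0$. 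Once the two measures agree on this generating semiring they agree everywhere, and since $\mathcal{H}^\beta(\mathfrak{S}^*) = 0$ (established just above in the text, $\beta \ge 1$), the conjugacy is measure-preserving on $T_\tau^*$, which completes the proof.
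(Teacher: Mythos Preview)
The paper does not give a separate proof of this theorem: it is stated as the culmination of Section~\ref{sec:conjugaison}, and the reader is meant to assemble it from the preceding results (surjectivity of $f_Q$, the isometry property of each $\varphi_a$, the intertwining $f_Q\circ S=\varphi_a\circ f_Q$ on $C_a$, Lemma~\ref{lem:memeimage}, Proposition~\ref{prop:intercyl1}, and the countability of $\mathfrak{S}^*$). Your parts (i) and (ii) correctly identify and organize exactly these ingredients, so on the set-theoretic and almost-everywhere-commutativity side your proposal coincides with what the paper intends.

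Where you go beyond the paper is in part (iii), the identification of $(f_Q)_*\mu$ with (a multiple of) $\mathcal{H}^\beta$. The paper simply asserts the measured conjugacy and does not justify this step. Your strategy --- computing $\beta=\log\lambda/\log\eta$ from the graph-directed self-similar structure of $T_\tau$ (contraction $\eta^{-1}$, growth governed by $M_\tau$ whose dominant eigenvalue is $\lambda$), invoking uniqueness of the self-similar measure \`a la Mauldin--Williams, and matching it against the recursion satisfied by $\mu(P_u)$ on the generating semiring $\{T_\tau(s,t)\}$ via Theorem~\ref{thm:emmagenarpar} --- is the natural and correct route. Two points would need care to make it complete: first, this is a genuinely graph-directed IFS with $2d-2$ types (the edge colors of $A_\tau$), so the open set condition and the dimension formula must be checked in that setting, with the size vector $\mathbf{V_t}$ rather than a single ratio; second, one must verify that the symbolic cylinder measures $\mu(P_u)=\mu(C_u)$ computed in Proposition~\ref{prop:emmamespos} really coincide, color by color, with the Perron weights on the graph-directed side. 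Both are routine once set up, but they are not in the paper.
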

		On explique dans la section \ref{sectionrepulsif} comment
		l'arbre et le système d'isométries peuvent être construit en utilisant les résultats de \cite{GJLL} et \cite{LL03}. Il était cependant important
		de montrer que l'on pouvait retrouver ces résultats sur cette classe d'exemples de manière indépendante, en exploitant la substitution d'arbre,
		et en insistant sur le fait que la substitution d'arbre explique clairement l'organisation des points de l'arbre suivant leurs orbites
		(ce qui fait défaut aux autres constructions).

		\subsection{L'arbre $T_{\tau}$ vu dans le fractal de Rauzy}

		Pour $d=3$, la substitution est de type Pisot ; sa matrice d'incidence a une valeur propre réelle $>1$ et deux valeurs
		propres complexes conjuguées de modules $<1$. En remarquant que pour toute lettre $i$, le mot $\sigma^{d}(i)$
		commence par le mot $12$, on déduit que la condition de forte coïncidence est vérifiée :
		pour tout $i,j\in A=\{1, 2, 3\}$, il existe $k,n$ (ici $2, d$) tels que la $k$-ième lettre de $\sigma^n(i)$ et la $k$-ième lettre de $\sigma^n(j)$
		sont égales et les abélianisés des préfixes de longueur $(k-1)$ de $\sigma^n(i)$ et $\sigma^n(j)$ sont égaux.
		Ces propriétés permettent d'assurer (voir \cite{AI}, \cite[chapitre 3]{Sie}, \cite{CanSie2}) que le système dynamique symbolique
		engendré par $\sigma$ est conjugué en mesure à un échange de domaines sur une partie compact de $\mathds{R}^2$ : le fractal de Rauzy associé à $\sigma$.

		Pour tout mot $u$ de $F(A)=F_3$, on note $|u|_i$ (pour $i\in A\cup A^{-1}$) le nombre d'occurrences de la lettre $i$
		dans $u$ et on définit $[u]$ comme le vecteur (de $\mathds{Z}^3$) dont la $j$-ème coordonnée ($j\in A$) est $|u|_j - |u|_{j^{-1}}$.
		On note $\Pi$ le plan contractant associé aux deux valeurs propres de modules $<1$, et $\pi_*$
		l'application de $F_3$ dans $\Pi$ qui à un mot $u$ associe la projection de $[u]$ dans $\Pi$ parallèlement à
		la direction dilatante (associée à la valeur propre dominante).

		On définit l'application $\pi$ de $\Omega^+_p = \{S^n(\omega) ; n\in \mathds{N}\}$ dans $\Pi$ par :
		\begin{itemize}
			\item $\pi(\omega) = (0, 0, 0)$,
			\item pour tout mot $V = u^{-1}\omega$ de $\Omega^+_p$, $\pi(V) = \pi_*(u^{-1})$.
		\end{itemize}
		Le \textbf{fractal de Rauzy} $R$ associé à $\sigma$ est l'adhérence dans $\Pi$ de $\pi(\Omega^+_p)$.\\

		Pour toute partie $X$ de $\Omega^+$, on définit $\overline{\pi}(X)$ comme l'adhérence dans $\Pi$ de $\pi(X\cap \Omega^+_p)$.
		On donne finalement un moyen de représenter les arbres $T_n$ dans le fractal de Rauzy en définissant l'application suivante
		($\mathcal{P}$ désigne l'ensemble des parties).
		\begin{center}
			\begin{tabular}{ccccc}
			$\zeta$ & $:$ & $\mathcal{P}(T_{\tau})$ & $\to$ & $\mathcal{P}(R)$\\
			&& $X$ & $\mapsto$ & $\overline{\pi}(f_Q^{-1}(X))$.
			\end{tabular}
		\end{center}

\captionsetup[figure]{justification=centering}

		On représente sur la figure \ref{fig:emmafinalwin123} les images par $\zeta$ des premiers arbres de la substitution d'arbre. Les points marqués d'une croix
		(sur $\zeta (T_5)$, $\zeta (T_6)$ et $\zeta (T_8)$) indiquent des défauts de la représentation planaire ;
		ce sont des couples de points de $\Omega^+_p$ dont les images par $f_Q$ sont distinctes, mais qui ont même image par $\pi$.

		\begin{figure}[h!]
		\begin{center}
			\begin{psfrags}
			\scalebox{0.31}{\includegraphics{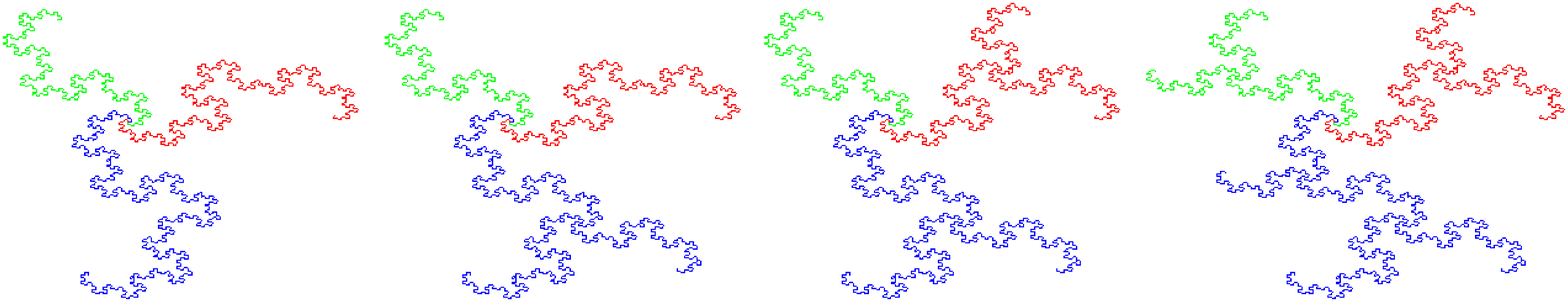}}\\ \ \\
			\scalebox{0.38}{\includegraphics{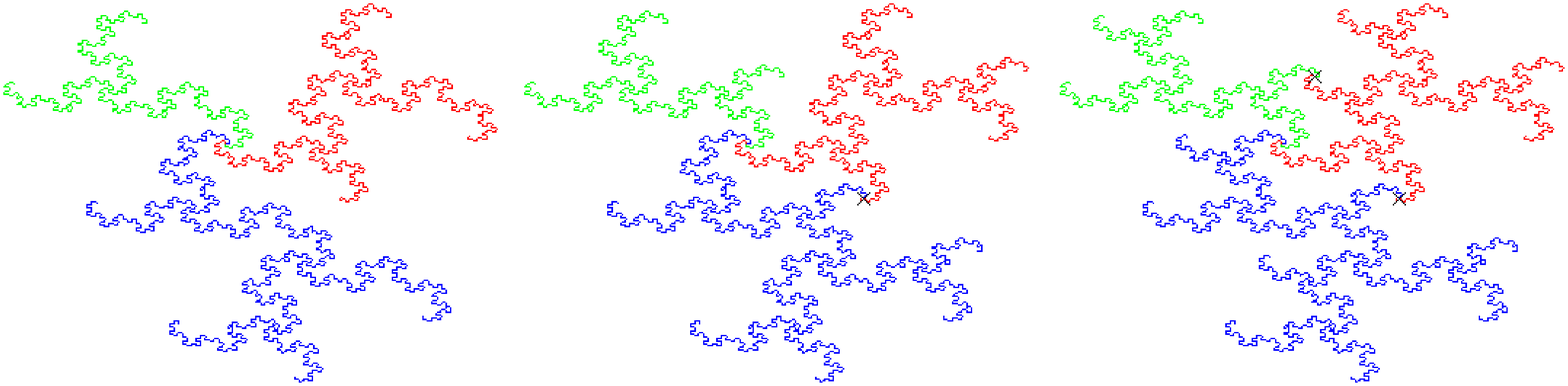}}\\ \ \\
			\scalebox{0.54}{\includegraphics{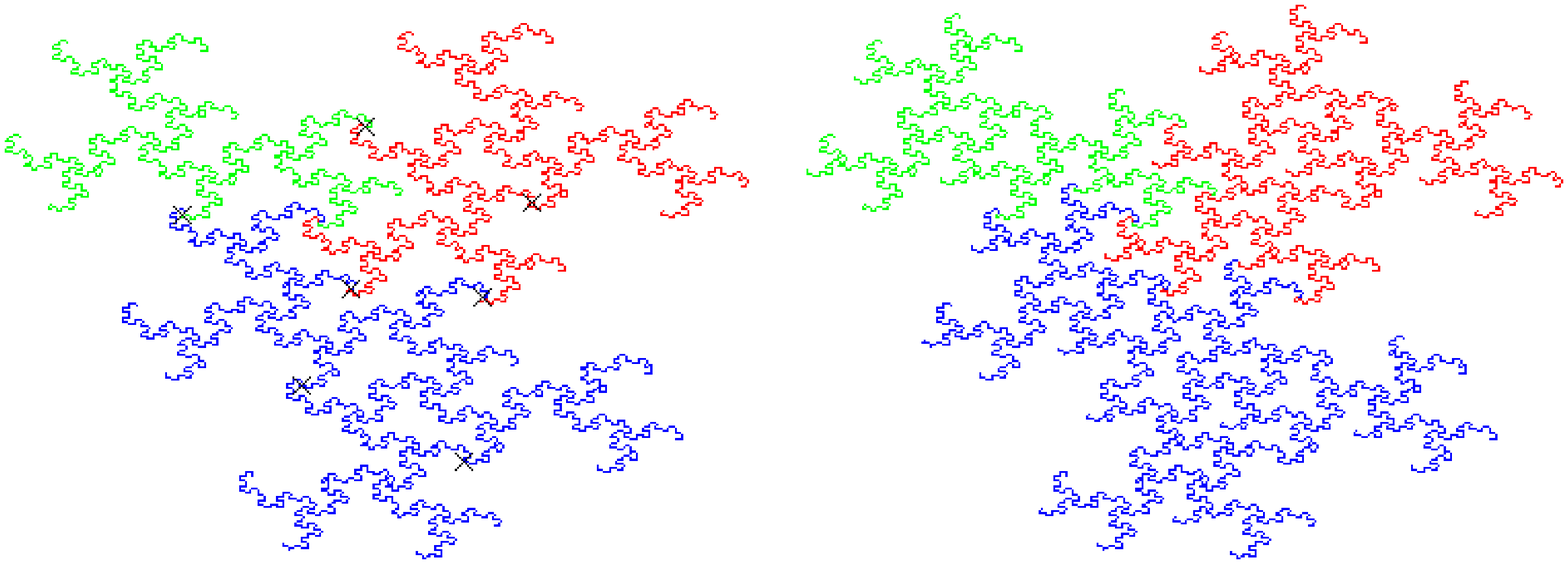}}
			\end{psfrags}
		\end{center}
		\vspace{-4mm}
		\caption{Représentation de $\zeta (T_0)$, $\zeta (T_1)$, $\zeta (T_2)$, $\zeta (T_3)$, $\zeta (T_4)$,\hspace{\linewidth} $\zeta (T_5)$, $\zeta (T_6)$, $\zeta (T_8)$ et $\zeta (T_{10})$.}
		\label{fig:emmafinalwin123}
		\end{figure}

		La figure \ref{fig:emmawin1} illustre le théorème \ref{thm:emmagenarpar} ; les arcs simples bleu, rouge et vert de $T_0$
		déterminent respectivement $P_a$, $P_b$ et $P_c$.
		\begin{figure}[h!]
		\begin{center}
			\begin{psfrags}
			\scalebox{0.6}{\includegraphics{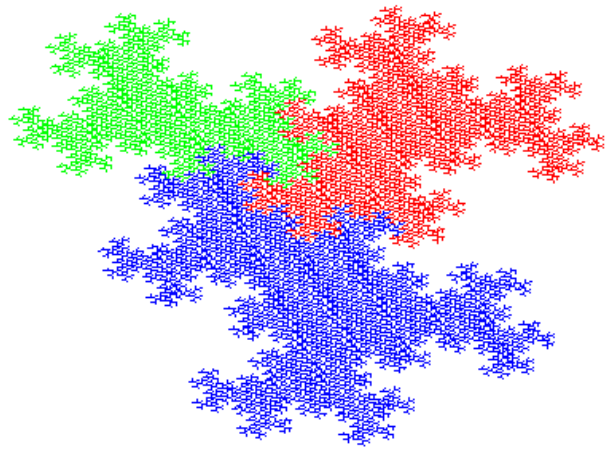}}
			\end{psfrags}
		\end{center}
		\vspace{-4mm}
		\caption{Représentation de $\zeta (T_{22})$.}
		\label{fig:emmawin1}
		\end{figure}

		La figure \ref{fig:emmawin2} illustre le fait que $T_4$ détermine $\mathscr{P}_7$.
		On représente le fractal de Rauzy en attribuant une même couleur à $\overline{\pi}(P_u)$ et $\overline{\pi}(P_v)$ (où $P_u$ et $P_v$ sont dans $\mathscr{P}_7$)
		si $\mu(P_u)=\mu(P_v)$ ; les domaines $\overline{\pi}(P_u)$ et $\overline{\pi}(P_v)$ sont égaux à translation près. L'image par $\zeta$
		de $T_4$ est également représentée ; on note la correspondance entre les arcs simples et les cylindres.

		\begin{figure}[h!]
		\begin{center}
			\begin{psfrags}
			\scalebox{0.37}{\includegraphics{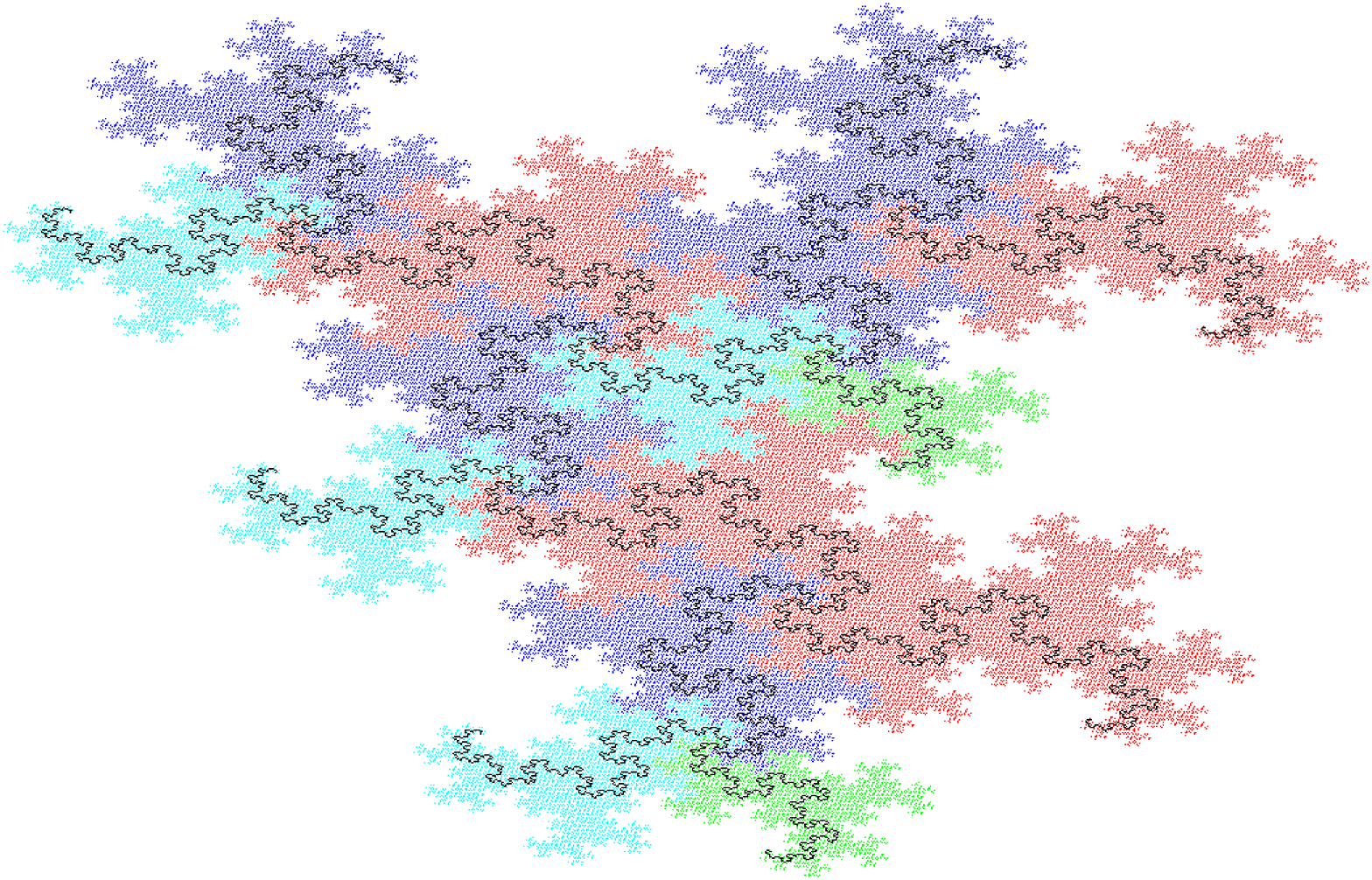}}
			\end{psfrags}
		\end{center}
		\vspace{-4mm}
		\caption{Représentation de $R$ et $\zeta (T_{4})$.}
		\label{fig:emmawin2}
		\end{figure}

\section{Arbre répulsif et c{\oe}ur compact}\label{sectionrepulsif}

	Dans \cite{GJLL}, les auteurs associent un arbre réel muni d'une action du groupe libre par isométries à tout automorphisme de groupe libre.
	Dans cette section, on construit l'arbre $T_{\Phi^{-1}}$ associé à l'automorphisme $\sigma^{-1}$ (de l'exemple proposé), et on montre
	que le compact $T_{\tau}$ (\ref{eq:Ttau}) construit précédemment peut être vu comme une partie de $\overline{T}_{\Phi^{-1}}$, le complété métrique de $T_{\Phi^{-1}}$.

		\subsection{Représentants topologiques}
		On note $\text{Aut}(F_n)$ le groupe des automorphismes du groupe libre $F_n$. Si $w$ est un élément de $F_n$, on note $i_w$ l'automorphisme défini pour tout élément
		$g$ de $F_n$ par $i_w(g) = w^{-1}gw$ ; $i_w$ est un \textbf{automorphisme intérieur} ou \textbf{conjugaison}. On note $\text{Inn}(F_n)$ l'ensemble des conjugaisons
		de $\text{Aut}(F_n)$ et $\text{Out}(F_n) = \text{Aut}(F_n) / \text{Inn}(F_n)$. On dit que $\text{Inn}(F_n)$ est l'ensemble des \textbf{automorphismes intérieurs}, et $\text{Out}(F_n)$ est l'ensemble des
		\textbf{automorphismes extérieurs}.

		L'étude des représentants topologiques est faite dans \cite{BH}. Une très jolie introduction peut également être trouvée dans \cite{ABHS}.
		On se restreint ici au cas des représentants topologiques sur la rose à $n$ pétales $R_n$ ;
		c'est le graphe topologique constitué d'un sommet $*$ et de $n$ arcs orientés
		(espaces topologiques homéomorphes à un segment de $\mathds{R}$) de $*$ vers $*$.
		Un \textbf{chemin} (fini) de $R_n$ est une immersion (application continue et localement injective) d'un segment $[a, b]\subset \mathds{R}$
		dans $R_n$. Cette définition implique qu'un chemin est automatiquement réduit.

		On identifie le groupe fondamental $\pi_1(R_n, *)$ de $R_n$ au groupe libre $F_n$.
		Une \textbf{équivalence d'homotopie} $f:R_n\to R_n$ est une application continue qui induit un 
		automorphisme de $\pi_1(R_n, *)$.
		Cet automorphisme est défini à composition par conjugaison près, puisque l'identification entre $\pi_1 (R_n, *)$ et $\pi_1 (R_n, f(*))$ dépend
		du choix d'un chemin de $*$ à $f(*)$. L'application $f$ induit donc un automorphisme extérieur.

		Un \textbf{représentant topologique} d'un automorphisme extérieur $\Phi\in \text{Out}(F_n)$ sur la rose $R_n$ est
		une application $f:R_n\to R_n$ telle que :
		\begin{itemize}
			\item l'image d'un sommet est un sommet,
			\item l'image d'un arc est un chemin (réduit) de $R_n$,
			\item $f$ induit $\Phi$ sur $F_n\simeq \pi_1 (R_n, *)$ (en particulier, $f$ est une équivalence d'homotopie).
		\end{itemize}

		Un chemin $\chi$ de $R_n$ est dit \textbf{légal} pour un représentant topologique $f$ si pour tout
		$n\in \mathds{N}$, $f^n(\chi)$ est un chemin.

		\subsection{L'arbre invariant de $\sigma^{-1}$}\label{subsec:arbinv}
		\begin{thm}[\cite{GJLL}, \cite{LL08}]\label{thm:arbreinvariant}
		Pour tout automorphisme $\alpha$ du groupe libre $F$, il existe un arbre réel $T$ (appelé arbre invariant de $\alpha$) tel que
			\begin{itemize}
			\item $F$ agit sur $T$ par isométries de manière non-triviale ($F$ ne fixe aucun point de $T$), minimale
			(il n'y pas de sous-arbre propre invariant par $F$), les stabilisateurs d'arcs sont triviaux,
			\item il existe $\eta_\alpha$ et une homothétie $H:T\to T$ de facteur $\eta_\alpha$ telle que, pour tout point
			$P$ de $T$, et pour tout $w\in F$,
			\vspace{1mm}
			\begin{center}
				$\alpha(w)H(P) = H(wP)$.
			\end{center}
			\vspace{1mm}
			\item si $\eta_\alpha > 1$, l'action est à orbites denses (voir \cite[proposition 3.10]{Pau}).
			\end{itemize}
		\end{thm}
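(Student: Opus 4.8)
Cet énoncé est cité de \cite{GJLL} (voir aussi \cite{LL08}) ; il s'agit donc de rappeler comment un tel arbre se construit. Le plan est de partir d'un représentant topologique train-track $f:R_n\to R_n$ d'une puissance positive de $\alpha$ (ou d'un représentant train-track relatif au sens de \cite{BH} dans le cas général), de matrice de transition $M$. On munit $R_n$ de la métrique où la longueur de l'arête $e$ est la coordonnée correspondante d'un vecteur propre de Perron--Frobenius de $M$, de sorte que $f$ multiplie la longueur de tout chemin légal par la valeur propre dominante $\lambda$. En passant au revêtement universel $\widetilde{R_n}$ on obtient un arbre réel simplicial métrique $T_0$, muni d'une action libre isométrique de $F_n\simeq \pi_1(R_n,*)$ par transformations de revêtement, ainsi qu'un relevé $\widetilde f:T_0\to T_0$ équivariant pour un automorphisme $\phi$ représentant $\alpha$, au sens $\widetilde f(w\cdot x)=\phi(w)\cdot\widetilde f(x)$.

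L'étape centrale est la construction de l'arbre limite. Je poserais, pour $x,y\in T_0$,
\[
d_\infty(x,y)\;=\;\lim_{k\to +\infty}\,\lambda^{-k}\,d_0\bigl(\widetilde f^k(x),\,\widetilde f^k(y)\bigr),
\]
et montrerais la convergence à l'aide du lemme d'annulation bornée de Cooper : à $x,y$ fixés il existe une constante $C$ telle que $\bigl|\,d_0(\widetilde f^{k+1}(x),\widetilde f^{k+1}(y))-\lambda\,d_0(\widetilde f^{k}(x),\widetilde f^{k}(y))\,\bigr|\le C$ pour tout $k$, car $f$ étant train-track un tournant légal reste légal et le nombre de tournants illégaux de $[\widetilde f^{k}(x),\widetilde f^{k}(y)]$ est décroissant en $k$ ; la série associée est alors géométriquement sommable. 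L'application $d_\infty$ est une pseudo-distance $F_n$-invariante sur $T_0$ ; on note $T$ le complété métrique de $T_0/\{d_\infty=0\}$. On vérifie que $T$ est encore un arbre réel, que $F_n$ y agit par isométries, et que $\widetilde f$ passe au quotient en une application $H:T\to T$ vérifiant $d_\infty(H(P),H(Q))=\lambda\,d_\infty(P,Q)$ et $\alpha(w)H(P)=H(wP)$, c'est-à-dire l'homothétie cherchée, de facteur $\eta_\alpha:=\lambda$. On remplace enfin $T$ par son sous-arbre minimal (réunion des axes des éléments hyperboliques) : il est $H$-invariant, puisque $H$ entrelace l'action de $F_n$ avec sa composée par $\alpha$ et préserve donc l'ensemble des axes. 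La non-trivialité est alors immédiate : $d_\infty$ n'est pas identiquement nulle, car un élément dont l'axe dans $R_n$ est légal a une longueur de translation strictement positive sur $T$ et y agit hyperboliquement. Si $f$ ne représente qu'une puissance $\alpha^m$, on passe de la $\alpha^m$-invariance à la $\alpha$-invariance comme dans \cite{GJLL}, grâce à l'unicité de l'arbre à homothétie équivariante près.

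Restent les deux points où interviennent réellement l'hypothèse \og $\alpha$ automorphisme \fg{} (et non simple endomorphisme) et la structure fine des train-tracks. Pour la trivialité des stabilisateurs d'arcs : si $w\in F_n$ fixait un arc non dégénéré de $T$, on en déduirait, en relevant dans $T_0$, une identification que $f$ devrait reproduire, forçant la création d'un pliage illégal par itération et contredisant la propriété train-track ; c'est le contenu des lemmes adéquats de \cite{BH}, que j'invoquerais directement. Pour les orbites denses lorsque $\eta_\alpha>1$ : je renverrais à \cite[proposition 3.10]{Pau}. L'idée est que la métrique de $T$ est autosimilaire via l'homothétie $H$ de facteur $\eta_\alpha>1$ : l'action ne peut donc être simpliciale (la projection de $H$ sur $T/F_n$, qui est un graphe fini de volume fini non nul par minimalité, en multiplierait le volume par $\eta_\alpha$), et une action minimale, à stabilisateurs d'arcs triviaux et non simpliciale, est nécessairement à orbites denses.

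J'attends que le véritable obstacle soit la convergence dans la définition de $d_\infty$, couplée à la vérification de la trivialité des stabilisateurs d'arcs : les deux reposent sur l'annulation bornée et sur le contrôle (décroissance) du nombre de tournants illégaux sous itération d'une application train-track, c'est-à-dire précisément sur l'apport technique de la théorie de Bestvina--Handel. Le reste --- action isométrique, équivariance de $H$, passage au sous-arbre minimal, densité des orbites --- en découle de manière relativement formelle.
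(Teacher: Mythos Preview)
Your proposal is correct and follows essentially the same route as the paper, namely the construction of \cite{GJLL}: take a train-track representative on the rose with the Perron--Frobenius metric, lift to the universal cover, define $d_\infty$ as the limit of the rescaled pseudo-distances $\eta^{-k}d_0(h^k(x),h^k(y))$, and pass to the quotient. Note, however, that the paper does not actually prove the theorem in general --- it is stated as a citation from \cite{GJLL} and \cite{LL08} --- and the subsequent subsection only carries out this construction for the specific automorphism $\sigma^{-1}$ under study, deferring minimality, trivial arc stabilisers, and dense orbits to the literature; your write-up is thus more detailed (bounded cancellation, minimal subtree, arc stabilisers) than what the paper itself supplies. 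One small discrepancy: you pass to the metric completion before taking the minimal subtree, whereas the paper (and \cite{GJLL}) defines $T_{\Phi^{-1}}$ directly as the quotient $\widetilde{R_d}/\!\sim$ without completing; this does not affect the argument but is worth aligning.
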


		On construit ici l'arbre invariant de $\sigma^{-1}$ (où $\sigma$ est l'automorphisme défini dans la section précédente) ; on pourra se reporter à \cite[partie E]{GJLL} pour la construction de cet arbre dans un cadre plus général.
		On note $\Phi^{-1}$ la classe extérieure de $\sigma^{-1}$. On rappelle que $\eta$ est la valeur propre dominante de la matrice d'incidence de $\sigma^{-1}$ et que le vecteur
		\begin{center}
			$\mathbf{V_{\sigma^{-1}}}=[1\ \eta^{d-1}\ \eta^{d-2}\ \dots\ \eta^2\ \eta]$
		\end{center}
		est un vecteur propre à gauche associé à $\eta$.

		$R_d$ désigne la rose à $d$ pétales. Un chemin $\chi : [0, 1]\to R_d$ où $\chi(0)=\chi(1)=*$ détermine un élément $<\chi>$ du groupe fondamental $\pi_1(R_d, *)$ de $R_d$.
		On identifie $\pi_1(R_d, *)$ avec le groupe libre $F_d$ ; si $e$ est un arc de $R_d$, $<e>$ est un élément de $A\cup A^{-1}$.
		On munit $R_d$ de la métrique telle que si $e_1, e_2, \dots, e_d$ sont les arcs de $R_d$, alors $e_i$ est de longueur $\mathbf{V_{\sigma^{-1}}}(i)$.

		On note $h_0$ l'équivalence d'homotopie de $R_d$ dans $R_d$ vérifiant, pour tout arc
		$e$ de $R_d$, $<h_0(e)> = \sigma^{-1}(<e>)$. L'application $h_0$ multiplie la longueur de tout chemin légal par $\eta$ et
		c'est un représentant topologique de $\Phi^{-1}$. On remarque de plus que tout arc de $R_d$ est un chemin légal.

		On note $p : \widetilde{R_d}\to R_d$ une projection du revêtement universel
		$\widetilde{R_d}$ de $R_d$. La métrique sur $R_d$ induit une distance $d_0$
		sur $\widetilde{R_d}$ et $F_d$ agit sur $\widetilde{R_d}$ par isométries.
		On définit une application (continue) $h: \widetilde{R_d}\to \widetilde{R_d}$ qui vérifie $p\circ h(\widetilde{e}) = h_0(e)$
		pour tout arc $e$ de $R_d$ et pour tout relevé $\widetilde{e}$ de $e$, et telle que
		\begin{center}
			$\sigma^{-1}(w)h = hw$
		\end{center}
		pour tout élément $w$ de $F_d$. On note $\mathcal{P}$ le relevé de $*$ fixe par $h$.

		On définit pour tout $k\in \mathds{N}^*$ la pseudo-distance
		\begin{center}
			$d_k (x, y) = \eta^{-k} d_0 (h^k(x), h^k(y))$
		\end{center}
		sur $\widetilde{R_d}$ et on note $d_\infty = \lim\limits_{k\to +\infty} d_k$.

		Un chemin de $\widetilde{R_d}$ est légal s'il est le relevé d'un chemin légal de $R_d$. L'application $h$
		étend également tout chemin légal de $\eta$, et on a la proposition suivante.
		\begin{prop}\label{prop:distlegalpath}
		Si $\chi : [0, 1]\to \widetilde{R_d}$ est un chemin légal de $\widetilde{R_d}$ avec $\chi([0, 1])=[x, y]$, alors la suite $(d_k(x, y))_{k\in \mathds{N}}$ est constante
		et on a $d_\infty(x, y)=d_0(x, y)$.
		\end{prop}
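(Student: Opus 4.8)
Proposition \ref{prop:distlegalpath} asserts that for a legal path $\chi:[0,1]\to\widetilde{R_d}$ with image $[x,y]$, the sequence $(d_k(x,y))_k$ is constant, hence $d_\infty(x,y)=d_0(x,y)$.

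The plan is to prove the stronger assertion that the sequence $(d_k(x,y))_{k\in\mathds{N}}$ is already \emph{constant}, equal to $d_0(x,y)$; the claimed equality $d_\infty(x,y)=d_0(x,y)$ then follows at once by passing to the limit. The whole argument rests on two ingredients that are available at this point: first, $\widetilde{R_d}$, being the universal cover of a graph, is a tree, so that any immersion of an interval into it is, up to reparametrization, the geodesic segment joining its endpoints — in particular its length equals the distance (for $d_0$) between those endpoints; second, $h$ stretches every legal path of $\widetilde{R_d}$ by the factor $\eta$, as recorded just before the statement.

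First I would observe that, $\chi$ being legal and hence an immersion of $[0,1]$ into the tree $\widetilde{R_d}$, the $d_0$-length of $\chi$ equals $d_0(x,y)$. Next I would check that $h^k\circ\chi$ is again a legal path of $\widetilde{R_d}$ for every $k\in\mathds{N}$: since $p\circ h=h_0\circ p$ one has $p\circ(h^k\circ\chi)=h_0^k\circ(p\circ\chi)$, and $p\circ\chi$ being legal for $h_0$, so is $h_0^k\circ(p\circ\chi)$ (legality of $p\circ\chi$ says precisely that $h_0^n(p\circ\chi)$ is a path for all $n$, hence the same holds for each $h_0^k\circ(p\circ\chi)$); thus $h^k\circ\chi$ is a lift of a legal path of $R_d$, i.e.\ a legal path of $\widetilde{R_d}$ by definition. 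Its endpoints are $h^k(x)$ and $h^k(y)$, and being an immersion into the tree $\widetilde{R_d}$ its $d_0$-length equals $d_0(h^k(x),h^k(y))$.

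Then I would iterate the expansion property: since $h$ multiplies the length of any legal path of $\widetilde{R_d}$ by $\eta$, an easy induction on $k$ gives $\operatorname{length}(h^k\circ\chi)=\eta^k\operatorname{length}(\chi)=\eta^k d_0(x,y)$. Comparing the two length computations yields $d_0(h^k(x),h^k(y))=\eta^k d_0(x,y)$, whence $d_k(x,y)=\eta^{-k}d_0(h^k(x),h^k(y))=d_0(x,y)$ for every $k$. The sequence $(d_k(x,y))_k$ is therefore constant, and $d_\infty(x,y)=\lim_{k\to+\infty}d_k(x,y)=d_0(x,y)$.

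There is no genuine obstacle here; the only points needing a little care are the two ``transfer'' steps — namely that legality of paths and the $\eta$-expansion of $h_0$ descend to and lift from the covering $p$ (which is both a local isometry and a local homeomorphism, so that lengths and the immersion property pass freely between $R_d$ and $\widetilde{R_d}$), and that an immersion into the tree $\widetilde{R_d}$ realizes the $d_0$-distance between its endpoints. Both become routine once the tree structure of $\widetilde{R_d}$ and the defining property of legal paths are invoked.
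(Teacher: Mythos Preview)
Your argument is correct and is precisely the reasoning the paper has in mind: the paper does not spell out a proof but simply records that $h$ expands every legal path by the factor $\eta$ and states the proposition as an immediate consequence. Your write-up is the natural unpacking of that one-line observation --- iterating the $\eta$-expansion and using that an immersed path in the tree $\widetilde{R_d}$ realizes the $d_0$-distance between its endpoints --- so there is no difference in approach, only in the level of detail.
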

		La fonction $d_\infty$ est une pseudo-distance sur $\widetilde{R_d}$ et une distance sur $T_{\Phi^{-1}}=\widetilde{R_d} / \sim$ où $x\sim y$ si et seulement si $d_\infty (x, y)=0$.
		Le groupe libre $F_d$ agit encore sur $T_{\Phi^{-1}}$ par isométries et $h$ induit $H$ sur $T_{\Phi^{-1}}$. L'application $H$ est une homothétie de rapport $\eta$ qui vérifie
		\begin{center}
			$\sigma^{-1}(w)H = Hw$
		\end{center}
		pour tout élément $w$ de $F_d$. On note encore $\mathcal{P}$ le point de $T_{\Phi^{-1}}$ fixe par $H$. Finalement, $T_{\Phi^{-1}}$ vérifie les propriétés du théorème précédent.
		C'est l'arbre invariant de $\sigma^{-1}$ ; il est également appelé arbre répulsif de $\sigma$.

			\subsubsection*{L'application $Q$}\label{subsec:appQ}
			Le groupe libre $F_d$ agit sur l'arbre réel $T_{\Phi^{-1}}$ de manière non-triviale, minimale, avec stabilisateurs d'arcs triviaux.
			L'action est de plus à orbites denses puisque $\eta > 1$.
			Sous ces conditions, il est possible de construire l'application $Q$ décrite dans \cite{LL03} par G. Levitt et M. Lustig.
			L'application $Q$ est équivariante et surjective de $\partial F_d$ dans $\overline{T}_{\Phi^{-1}}\cup \partial T_{\Phi^{-1}}$
			(où $\overline{T}_{\Phi^{-1}}$ est le complété métrique de $T_{\Phi^{-1}}$ et $\partial T_{\Phi^{-1}}$ son bord de Gromov). Elle vérifie la proposition suivante.
			\begin{prop}[\cite{LL03}]\label{prop:niceQ}
			Soit $V\in \partial F$ ; pour tout $Z$ de $\overline{T}_{\Phi^{-1}}$, si la suite $(v_n)_n$ de $F$ tend vers $V$
			(lorsque $n\to +\infty$) et si la suite $(v_nZ)_n$ converge (lorsque $n\to +\infty$) vers un point $R$
			de $\overline{T}_{\Phi^{-1}}$, alors $R=Q(V)$.
			\end{prop}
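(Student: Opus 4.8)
The plan is simply to recall the construction of the map $Q$ from \cite{LL03}, of which the statement is the defining property; I indicate the mechanism rather than the details. The map $Q$ is built from the topological representative $h_0$ of $\Phi^{-1}$, and the key input is the \emph{bounded backtracking} property: there is a constant $C=C(h_0)$ such that, for every reduced factorisation $w=w'w''$ in $F_d$, the point $w'\mathcal{P}$ lies at $d_\infty$-distance at most $C$ from the geodesic $[\mathcal{P},w\mathcal{P}]$ of $\overline{T}_{\Phi^{-1}}$. This holds because $h_0$ multiplies the length of every legal path by $\eta>1$ and every edge of $R_d$ is legal, so that each edge embeds isometrically in $T_{\Phi^{-1}}$ (cf. Proposition~\ref{prop:distlegalpath}) and the broken orbit path $\mathcal{P},w_1\mathcal{P},\dots,w\mathcal{P}$ of $w=w_1\cdots w_k$, a concatenation of legal edges, backtracks only by a bounded amount, uniformly in $w$ (see \cite{GJLL}).

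First I would reduce to the case $Z=\mathcal{P}$. As $F_d$ acts by isometries, $d_\infty(v_nZ,v_n\mathcal{P})=d_\infty(Z,\mathcal{P})$ for all $n$; since $v_nZ\to R$ with $R\in\overline{T}_{\Phi^{-1}}$, the sequence $(v_n\mathcal{P})_n$ then stays in a bounded part of $\overline{T}_{\Phi^{-1}}$, and replacing $Z$ by $\mathcal{P}$ moves everything only by the fixed quantity $d_\infty(Z,\mathcal{P})$; the equality $R=Q(V)$ is recovered from the fact that the construction of $Q$ in \cite{LL03} does not depend on the choice of basepoint, a property which itself follows from bounded backtracking together with the density of the $F_d$-orbits in $T_{\Phi^{-1}}$ (guaranteed here because $\eta>1$).

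It then remains to prove: if $v_n\to V$ and $(v_n\mathcal{P})_n$ converges in $\overline{T}_{\Phi^{-1}}\cup\partial T_{\Phi^{-1}}$, then its limit is $Q(V)$. I would write $v_n=u_{p_n}s_n$, where $u_{p_n}$ is the longest common prefix of $v_n$ and $V$ (so $p_n\to+\infty$) and $s_n$ is the remaining reduced word, so that the broken orbit path of $v_n$ begins with that of $u_{p_n}$. By bounded backtracking, the geodesic $[\mathcal{P},v_n\mathcal{P}]$ stays within $C$ of this initial segment and in particular passes within $C$ of $u_{p_n}\mathcal{P}$; comparing with any sequence $(w_m)$ tending to $V$ along which the orbit points realise $Q(V)$ in the sense of \cite{LL03}, and using that $v_n$ and $w_m$ share arbitrarily long prefixes of $V$, one obtains in the $0$-hyperbolic space $\overline{T}_{\Phi^{-1}}$ that the geodesics $[\mathcal{P},v_n\mathcal{P}]$ and $[\mathcal{P},w_m\mathcal{P}]$ branch within $O(C)$ of a point that tends to $Q(V)$; passing to the limit forces $\lim_n v_n\mathcal{P}=Q(V)$.

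The main obstacle is the basepoint-independence used in the first step, together with making precise the topology on $\overline{T}_{\Phi^{-1}}\cup\partial T_{\Phi^{-1}}$ in which these limits are taken; this is the genuine technical core, and is exactly what \cite{LL03} supplies. Granting it, the remaining ingredients — the reduction to $\mathcal{P}$, the bounded-backtracking estimate, and $0$-hyperbolicity — are routine.
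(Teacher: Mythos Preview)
The paper does not prove this proposition: it is stated with the citation \cite{LL03} and used as a black box thereafter. Your sketch correctly identifies the mechanism behind the result in \cite{LL03} --- bounded backtracking of the orbit map together with $0$-hyperbolicity and basepoint-independence --- so there is nothing to compare against in the present paper; your outline is an appropriate summary of the cited source.
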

			La propriété d'équivariance se traduit, pour tout élément $u$ de $F_d$ et tout élément $V$ de $\partial F_d$, par $Q(uV) = uQ(V)$.

	\subsection{Le compact $T_{\tau}$ est une partie de $\overline{T}_{\Phi^{-1}}$}
		La distance $d_\infty$ et l'homothétie $H$ s'étendent naturellement à $\overline{T}_{\Phi^{-1}}$ ;
		on les note encore $d_\infty$ et $H$.
		On va établir une bijection isométrique de $T_{\tau}$ (\ref{eq:Ttau}) dans $Q(\Omega^+)$.
		On rappelle que $f_Q$ est l'application définie en \ref{eq:f_Q2} et qu'elle est surjective
		du système symbolique $\Omega^+$ dans l'arbre $T_{\tau}$ (proposition \ref{prop:emmasurj}).
		On montre d'abord que pour tout $u^{-1}\omega,v^{-1}\omega$ de $\Omega^+_p$, on a
		\begin{center}
			$d_\infty(Q(u^{-1}\omega), Q(v^{-1}\omega)) = d_{T_{\tau}}(f_Q(u^{-1}\omega), f_Q(v^{-1}\omega))$
		\end{center}
		On rappelle que $\mathcal{P}$ est le point de $T_{\Phi^{-1}}$ fixe par $H$.
		\begin{lem}\label{lem:emmafixT}
		Pour tout élément $w$ de $F_d$, $Q(w\omega) = w\mathcal{P}$.
		\end{lem}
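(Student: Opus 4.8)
The plan is to first establish the special case $w=\epsilon$, namely $Q(\omega)=\mathcal{P}$, and then to deduce the general statement from the equivariance $Q(uV)=uQ(V)$ of $Q$.

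For $Q(\omega)=\mathcal{P}$ I would apply Proposition~\ref{prop:niceQ} with the approximating sequence $v_n=\sigma^n(1)\in F_d$ and the base point $Z=\mathcal{P}$. Since $\sigma(1)=12$ begins with the letter $1$, each $\sigma^n(1)$ is a prefix of $\sigma^{n+1}(1)$, and by primitivity $|\sigma^n(1)|\to+\infty$; as $\omega=\lim_n\sigma^n(1)$ is an infinite reduced word over $A$ (it contains no inverse letters), the sequence $(\sigma^n(1))_n$ of elements of $F_d$ converges to $\omega$ in $\partial F_d$. It then suffices to prove that $(\sigma^n(1)\mathcal{P})_n$ converges to $\mathcal{P}$ in $\overline{T}_{\Phi^{-1}}$, for Proposition~\ref{prop:niceQ} will then force $Q(\omega)=\mathcal{P}$.

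The heart of the argument is the estimate of $d_\infty(\sigma^n(1)\mathcal{P},\mathcal{P})$. Replacing $w$ by $\sigma(w)$ in the intertwining relation $\sigma^{-1}(w)H=Hw$ yields $H(\sigma(w)P)=w\,H(P)$ for every point $P$ of $T_{\Phi^{-1}}$; taking $P=\mathcal{P}$ and using $H\mathcal{P}=\mathcal{P}$ gives $H(\sigma(w)\mathcal{P})=w\mathcal{P}$, and the choice $w=\sigma^{n-1}(1)$ (so that $\sigma(w)=\sigma^n(1)$) produces $H(\sigma^n(1)\mathcal{P})=\sigma^{n-1}(1)\mathcal{P}$. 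Since $H$ is a homothety of ratio $\eta$ fixing $\mathcal{P}$, this gives
\[
d_\infty(\sigma^{n-1}(1)\mathcal{P},\mathcal{P})=d_\infty\big(H(\sigma^n(1)\mathcal{P}),H(\mathcal{P})\big)=\eta\,d_\infty(\sigma^n(1)\mathcal{P},\mathcal{P}),
\]
hence by induction $d_\infty(\sigma^n(1)\mathcal{P},\mathcal{P})=\eta^{-n}\,d_\infty(\sigma^0(1)\mathcal{P},\mathcal{P})$, which tends to $0$ since $\eta>1$. Thus $\sigma^n(1)\mathcal{P}\to\mathcal{P}$, and Proposition~\ref{prop:niceQ} yields $Q(\omega)=\mathcal{P}$.

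Finally, for arbitrary $w\in F_d$ the word $w\omega$ belongs to $\partial F_d$ and the equivariance of $Q$ gives $Q(w\omega)=w\,Q(\omega)=w\mathcal{P}$, which is the asserted equality. The only point requiring care is to use the intertwining relation in the direction that makes the contraction factor $\eta^{-n}$ (rather than $\eta^{n}$) appear, and to note that $(\sigma^n(1))_n$ genuinely converges to $\omega$ in the Gromov compactification $F_d\cup\partial F_d$; once these are set up correctly there is no real obstacle.
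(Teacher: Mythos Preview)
Your proof is correct and follows essentially the same approach as the paper: both use the sequence $v_n=\sigma^n(1)$, derive the estimate $d_\infty(\mathcal{P},\sigma^n(1)\mathcal{P})=\eta^{-n}d_\infty(\mathcal{P},1\mathcal{P})$ from the intertwining relation $\sigma^{-1}(w)H=Hw$ and the fixed point $H\mathcal{P}=\mathcal{P}$, apply Proposition~\ref{prop:niceQ} to obtain $Q(\omega)=\mathcal{P}$, and then conclude by equivariance. The only difference is that you spell out the inductive derivation of the distance estimate in more detail than the paper does.
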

		\begin{proof}
		Par la propriété $\sigma^{-1}(w)H = Hw$ pour tout $w$ de $F_d$, on déduit
		\begin{center}
			$d_\infty (\mathcal{P}, \sigma^n(1)\mathcal{P}) = \eta^{-n}d_\infty(\mathcal{P}, 1\mathcal{P})$.
		\end{center}
		Ainsi, la suite $(\sigma^n(1))_n$ converge vers $\omega$, et la suite
		$(\sigma^n(1)\mathcal{P})_n$ converge vers $\mathcal{P}$ ; d'après la proposition \ref{prop:niceQ}, $Q(\omega) = \mathcal{P}$.
		On conclut grâce à l'équivariance de $Q$. 
		\end{proof}
		\begin{prop}\label{prop:emmapreconsdist}
		Pour tout $u^{-1}\omega,v^{-1}\omega$ de $\Omega^+_p$,
		\begin{center}
			$d_\infty(Q(u^{-1}\omega), Q(v^{-1}\omega)) = d_{T_{\tau}}(f_Q(u^{-1}\omega), f_Q(v^{-1}\omega))$.
		\end{center}
		\end{prop}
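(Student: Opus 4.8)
The strategy is to reduce the claimed isometry on the dense set $\Omega^+_p$ to a single, explicit comparison of edge lengths. Both sides of the equality are computed by following the geodesic path joining $f_Q(u^{-1}\omega)$ to $f_Q(v^{-1}\omega)$ in $T_{\tau}$, respectively the path joining $Q(u^{-1}\omega)=u^{-1}\mathcal P$ to $Q(v^{-1}\omega)=v^{-1}\mathcal P$ in $\overline{T}_{\Phi^{-1}}$ (using Lemma \ref{lem:emmafixT}). The plan is first to fix an integer $k$ large enough that both $f_0^{-1}(u^{-1})$ and $f_0^{-1}(v^{-1})$ are branch points of $T_k^s$, so that $f_Q(u^{-1}\omega)=\nu_k(f_0^{-1}(u^{-1}))$ and similarly for $v$. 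By Proposition \ref{prop:emmaprinj} the path function $\gamma_k$ gives a reduced word $w=\gamma_k(f_0^{-1}(u^{-1}),f_0^{-1}(v^{-1}))$ over $(A_\tau\cup\overline{A_\tau})^*$; since both endpoints are branch points (degree $d$), this word involves no letter in $\{d+1,\dots,2d-2\}$, hence lies in $(A\cup\overline A)^*$ and $p_*(w)=w$ in $F_d$. The defining relation $\sigma(p_*(\gamma_{k+1}))=p_*(\gamma_k)$ for $f_0$ then gives $\sigma^k(w)=u v^{-1}$ in $F_d$.

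The second step is to evaluate $d_{T_{\tau}}(f_Q(u^{-1}\omega),f_Q(v^{-1}\omega))$ as the sum of edge lengths along the path in $T_k^s$ coded by $w$: by Proposition \ref{prop:emmaletlen}, each edge of color $j$ contributes $\mathbf{V_t}(j)\eta^{-k}$, so the total is $\eta^{-k}$ times the sum of $\mathbf{V_t}$-weights of the letters of $w$. Since $w$ uses only colors $1,\dots,d$, where $\mathbf{V_t}$ agrees with $\mathbf{V_{\sigma^{-1}}}$, and by Proposition \ref{prop:troncagrees} the length of each realized edge matches the length assigned in $R_d$ to the corresponding generator, this sum equals the length $\ell(w)$ of the path $w$ in $\widetilde{R_d}$ measured with $d_0$. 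Thus $d_{T_{\tau}}(f_Q(u^{-1}\omega),f_Q(v^{-1}\omega))=\eta^{-k}\ell(w)=\eta^{-k}d_0(\widetilde x,\widetilde y)$ where $\widetilde x,\widetilde y$ are lifts with $\widetilde x^{-1}\widetilde y$ projecting to $w$.

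The third step handles the left-hand side. Write $A=u^{-1}\mathcal P$ and $B=v^{-1}\mathcal P$; then $d_\infty(A,B)=d_\infty(\mathcal P,\,uv^{-1}\mathcal P)$ by $F_d$-invariance of $d_\infty$, and $uv^{-1}=\sigma^k(w)$. Using the homothety relation $\sigma^{-1}(g)H=Hg$, which iterates to $\sigma^{-k}(g)H^k=H^k g$, and that $H$ scales $d_\infty$ by $\eta$, one gets $d_\infty(\mathcal P,\sigma^k(w)\mathcal P)=\eta^{-k}d_\infty(\mathcal P,\,w\mathcal P)$. It remains to check $d_\infty(\mathcal P,w\mathcal P)=d_0(\widetilde x,\widetilde y)$, i.e. that the path in $\widetilde{R_d}$ spelled by $w$ from $\mathcal P$ is legal, so Proposition \ref{prop:distlegalpath} applies. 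This is the crux: I expect the main obstacle to be showing that the word $w=\gamma_k(f_0^{-1}(u^{-1}),f_0^{-1}(v^{-1}))$ spells a legal path for $h_0$. I would argue this by unwinding the recursive structure of the substitution of trees: the arcs of $T_k^s$ between branch points come, via $\tau$, from troncs of $\tau(X_i)$, $1\le i\le d$, which by Proposition \ref{prop:troncagrees} are exactly the words $\sigma^{-1}(i)$; the fourth axiom of Definition \ref{defn:subarb} (no cancellation at degree-one vertices in the cyclic orbit) together with the discernment of $T_n^s$ (Proposition \ref{prop:emmadiscer}) forces that no cancellation occurs under iteration of $h_0=\sigma^{-1}$ along these paths, which is precisely legality. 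Once legality is established the two computations agree term by term and the proposition follows; the extension to all of $\Omega^+$ (not needed here) will later come by density and continuity of both $Q$ and $f_Q$.
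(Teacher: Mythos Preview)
Your proposal is correct and follows essentially the same route as the paper: pick $n$ so that both preimages are branch points of $T_n^s$, observe that the path word $w=\gamma_n(f_0^{-1}(u^{-1}),f_0^{-1}(v^{-1}))$ uses only colors $1,\dots,d$, compute $d_{T_\tau}$ via Proposition~\ref{prop:emmaletlen} as $\eta^{-n}\sum_i\mathbf{V_{\sigma^{-1}}}(w_i)$, and on the other side use Lemma~\ref{lem:emmafixT}, equivariance, and the homothety relation to reduce $d_\infty(Q(u^{-1}\omega),Q(v^{-1}\omega))$ to $\eta^{-n}d_\infty(\mathcal P,w\mathcal P)$, with legality of $[\mathcal P,w\mathcal P]$ coming from discernment (Proposition~\ref{prop:emmadiscer}) together with Proposition~\ref{prop:troncagrees}. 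The only cosmetic differences are that the paper sets $w=p_*(\gamma_n(x,y))$ directly (you conflate $w$ with $p_*(w)$), and it justifies legality in one line from discernment and Proposition~\ref{prop:troncagrees} rather than invoking the fourth axiom of Definition~\ref{defn:subarb}.
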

		\begin{proof}
		On rappelle que $\mathbf{V_{\sigma^{-1}}}$ désigne le vecteur $[1\ \ \eta^{d-1}\ \ \eta^{d-2}\ \ \dots\ \ \eta^2\ \ \eta]$.

		On suppose que $f_0^{-1}(u^{-1})=x$, $f_0^{-1}(v^{-1})=y$ (voir \ref{eq:f_0}) et que $x$ et $y$ sont des points de branchement de $T_n^s$ pour un certain
		$n\in \mathds{N}$. On suppose de plus que $p_*(\gamma_n(x, y)) = w$ ($w\in F_d$) ; les points $x$ et $y$ étant des points de branchement,
		le mot $\gamma_n(x, y)$ ne contient aucune lettre $k, \overline{k}$ avec $d+1\le k\le 2d-2$.
		On peut ainsi déduire de la proposition \ref{prop:emmaletlen} que
		\begin{center}
			$d_{T_{\tau}}(f_Q(u^{-1}\omega), f_Q(v^{-1}\omega)) = \eta^{-n}\sum\limits_{i=0}^{|w|-1} \mathbf{V_{\sigma^{-1}}}(w_i)$,
		\end{center}
		où $|w|$ est la longueur de $w$, $w_i$ est la $i$-ème lettre de $w$ et $\mathbf{V_{\sigma^{-1}}}(w_i)$ désigne la $k$-ième coordonnée de $\mathbf{V_{\sigma^{-1}}}$ si $w_i=k$ ou $w_i=k^{-1}$.

		Utilisant la proposition précédente, l'équivariance de $Q$ et la propriété $\sigma^{-1}(w)H = Hw$, on obtient :
		\begin{center}
			$d_\infty(Q(u^{-1}\omega), Q(v^{-1}\omega)) = d_\infty(u^{-1}\mathcal{P}, u^{-1}\sigma^n(w)\mathcal{P}) = d_\infty(\mathcal{P}, \sigma^{n}(w)\mathcal{P}) = \eta^{-n}d_\infty(\mathcal{P}, w\mathcal{P})$.
		\end{center}
		L'arbre $T_n^s$ est discerné (voir section \ref{subsec:arbsimp}) quel que soit $n\in \mathds{N}$ ; on en déduit que pour tout $k\in \mathds{N}$,
		appliquer $\sigma^{-k}$ à $w$ ne produira aucune annulation (cf. proposition \ref{prop:troncagrees}). Le chemin $[\mathcal{P}, w\mathcal{P}]$ est donc un chemin légal de $\widetilde{R_d}$
		et la proposition \ref{prop:distlegalpath} assure que
		$d_\infty(\mathcal{P}, w\mathcal{P}) = \sum\limits_{i=0}^{|w|-1} \mathbf{V_{\sigma^{-1}}}(w_i)$. 
		\end{proof}

		Il s'agit d'étendre la propriété à l'ensemble $\Omega^+$. Soit $V\in \Omega^+\setminus \Omega^+_p$ et soit $U$ un mot infini à gauche tel que
		$U.V\in \Omega$ et $\Gamma(U.V) = (p_i, a_i, s_i)_{i\in \mathds{N}}$. Soit $(u_n^{-1})_n$ la suite définie par $u_0^{-1} = p_0^{-1}$
		et $u_{n+1}^{-1} = u_n^{-1}\sigma^{n+1}(p_{n+1}^{-1})$ pour tout $n\in \mathds{N}$. On a défini $f_Q(V)$ comme la limite dans $T_{\tau}$ de la suite
		$(f_Q(u_n^{-1}\omega))_n$, et on veut montrer que la suite $(Q(u_n^{-1}\omega))_n$ de $Q(\Omega^+)$ converge vers $Q(V)$.

		Pour tout mot $W = (W_i)_{i\in -\mathds{N}^*}$ infini à gauche, on définit le mot $W^{-1}$ de $\partial F_d$
		par $W^{-1} = (W_{j})_{j\in \mathds{N}}$ avec $W_j = W_{-j-1}^{-1}$.
		La suite $(u_n^{-1})$ est une suite de $F_d$ convergente vers $U^{-1}\in \partial F_d$ (d'après le théorème \ref{thm:prgamma}).
		De plus, pour tout $n\in \mathds{N}$, on a
		\begin{center}
			$d_\infty(Q(u_n^{-1}\omega), Q(u_{n+1}^{-1}\omega)) = \eta^{-(n+1)}d_\infty(\mathcal{P}, p_{n+1}^{-1}\mathcal{P})$.
		\end{center}
		Quel que soit $n\in \mathds{N}$, $p_n$ est égal soit à $1$, soit à $\epsilon$ ; la suite $(Q(u_n^{-1}\omega))_n$
		est donc une suite de Cauchy de $Q(\Omega^+)$ et converge vers $Q(U^{-1})$ d'après la proposition \ref{prop:niceQ}.
		\begin{prop}\label{prop:emmatrick}
		Si $V\in \Omega^+$ et $U.V\in \Omega$ ($U$ étant un mot infini à gauche), alors $Q(V) = Q(U^{-1})$.
		\end{prop}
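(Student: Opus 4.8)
The plan is to reduce the statement, by means of Proposition \ref{prop:niceQ}, to a single computation involving the homothety $H$ and the $\sigma$-fixed point $\mathcal P$. Recall that the paragraph preceding the statement has already shown that $(u_n^{-1})_n$ converges to $U^{-1}$ in $\partial F_d$ and that, by Lemma \ref{lem:emmafixT}, $Q(u_n^{-1}\omega) = u_n^{-1}\mathcal P$, a sequence which converges in $\overline{T}_{\Phi^{-1}}$ to $Q(U^{-1})$. So it is enough to exhibit a sequence $(v_k)_k$ in $F_d$ with $v_k\to V$ in $\partial F_d$ and $v_k\mathcal P\to Q(U^{-1})$ in $\overline{T}_{\Phi^{-1}}$; applying Proposition \ref{prop:niceQ} with $Z=\mathcal P$ then gives $Q(V)=Q(U^{-1})$.

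For the generic case, where the suffix sequence $(s_i)$ of $\Gamma(U.V)$ is not ultimately $\epsilon$, I would take $v_k:=q_k$ with $q_k:=a_0 s_0\sigma(s_1)\cdots\sigma^{k-1}(s_{k-1})$. By Proposition \ref{prop:coolprefsuff} (and the Canterini--Siegel description) the $q_k$ form an increasing sequence of prefixes of $V$ with $|q_k|\to+\infty$, hence $q_k\to V$; moreover that same proposition gives $\sigma^k(a_k)=u_{k-1}\,q_k$ in $F_d$, i.e. $q_k=u_{k-1}^{-1}\sigma^k(a_k)$, once one matches the prefix block $\sigma^{k-1}(p_{k-1})\cdots\sigma(p_1)p_0$ with $u_{k-1}$. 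Next I would use the intertwining $\sigma^{-1}(w)H=Hw$, which rewrites as $H\sigma(g)=gH$ for all $g\in F_d$; combined with $H(\mathcal P)=\mathcal P$ this yields $\sigma^k(g)\mathcal P=H^{-k}(g\mathcal P)$, so that $d_\infty(\sigma^k(a_k)\mathcal P,\mathcal P)=\eta^{-k}d_\infty(a_k\mathcal P,\mathcal P)\le C\eta^{-k}$, where $C:=\max_{a\in A}d_\infty(a\mathcal P,\mathcal P)<+\infty$, using that $H$ is a homothety of ratio $\eta>1$ and that $a_k$ ranges over the finite set $A$. Since $u_{k-1}^{-1}$ acts by isometry, $d_\infty(q_k\mathcal P,u_{k-1}^{-1}\mathcal P)\le C\eta^{-k}\to0$, and as $u_{k-1}^{-1}\mathcal P=Q(u_{k-1}^{-1}\omega)\to Q(U^{-1})$ we obtain $q_k\mathcal P\to Q(U^{-1})$; Proposition \ref{prop:niceQ} then closes this case.

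It remains to treat the case where $(s_i)$ is ultimately $\epsilon$: by Theorem \ref{thm:prgamma} this forces $U.V$ to lie in an $S$-orbit of a $\sigma$-periodic word, and since $V\notin\Omega^+_p$ one gets $V=w\omega$ with $w\in F_d$ a finite word and $U^{-1}=w\cdot\overline\omega^{-1}$ (after reduction in $F_d$), where $\overline\omega$ is one of the $\sigma$-periodic left-infinite words with $\overline\omega.\omega\in\Omega$. Here Lemma \ref{lem:emmafixT} gives $Q(V)=w\mathcal P$, equivariance of $Q$ gives $Q(U^{-1})=w\,Q(\overline\omega^{-1})$, and the identity $Q(\overline\omega^{-1})=\mathcal P$ follows from the same contraction estimate (write $\overline\omega^{-1}=\lim_n(\sigma^{dn}(k))^{-1}$ and observe $(\sigma^{dn}(k))^{-1}\mathcal P=\sigma^{dn}(k^{-1})\mathcal P=H^{-dn}(k^{-1}\mathcal P)\to\mathcal P$); the case $V\in\Omega^+_p$ is handled identically, writing $\omega=u\cdot(u^{-1}\omega)$ and using $Q(\omega)=\mathcal P$. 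The hard part will not be any single estimate but the bookkeeping: checking carefully that $\sigma^k(a_k)=u_{k-1}q_k$ holds as a reduced identity, and disentangling the degenerate $D_\epsilon$-cases so that the dichotomy above is exhaustive.
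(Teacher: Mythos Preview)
Your proof is correct and follows essentially the same approach as the paper: the key identity $\sigma^{k}(a_k)=u_{k-1}q_k$ from Proposition~\ref{prop:coolprefsuff}, combined with the homothety contraction $d_\infty(\mathcal{P},\sigma^{k}(a_k)\mathcal{P})=\eta^{-k}d_\infty(\mathcal{P},a_k\mathcal{P})$ and Proposition~\ref{prop:niceQ}, is exactly how the paper proceeds, with the periodic-orbit case handled separately via equivariance. The only differences are organizational: the paper verifies independently that both $(v_n\mathcal{P})_n$ and $(u_n^{-1}\mathcal{P})_n$ converge before comparing them, and treats the whole $S$-orbit of $\Omega_{per}$ as a single degenerate case rather than splitting on ``$(s_i)$ ultimately $\epsilon$'' versus ``$V\in\Omega^+_p$''.
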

		\begin{proof}
		On commence par montrer la proposition sur les mots de $\Omega_{per}$. La suite $(\sigma^{nd}(1))_n$
		converge vers $\omega$ et  pour tout $1\le k\le d$, la suite $(\sigma^{nd}(k^{-1}))_n$ converge
		vers un élément $\omega_k$ de $\partial F_d$. On déduit de la relation $\sigma^{-1}(w)H = Hw$ ($w\in F_d$) que
		\begin{center}
			$d_\infty (\mathcal{P}, \sigma^{nd}(1)\mathcal{P}) = \eta^{-nd} d_\infty (\mathcal{P}, 1\mathcal{P})$\\
			$d_\infty (\mathcal{P}, \sigma^{nd}(k^{-1})\mathcal{P}) = \eta^{-nd} d_\infty (\mathcal{P}, k^{-1}\mathcal{P})$.
		\end{center}
		Les suites $(\sigma^{nd}(1)\mathcal{P})_n$ et $(\sigma^{nd}(k^{-1})\mathcal{P})_n$ sont donc des suites de Cauchy convergentes
		vers $\mathcal{P}$. On déduit de la proposition \ref{prop:niceQ} que $Q(\omega) = Q(\omega_k) = \mathcal{P}$ quel que soit $1\le k\le d$.

		Si $U.V\in (\bigcup\limits_{n\in \mathds{Z}}S^n (\Omega_{per}))$, alors il existe un élément $w$ de $F_d$
		tel que $V = w\omega$ et $U^{-1} = w\omega_k$ pour un certain $1\le k\le d$. La proposition reste vraie par
		équivariance de $Q$.\\

		Soit $U.V\in \Omega\setminus (\bigcup\limits_{n\in \mathds{Z}}S^n (\Omega_{per}))$ et $\Gamma (U.V) = (p_i, a_i, s_i)_{i\in \mathds{N}}$.
		On note $v_0 = a_0s_0$, $u_0^{-1} = p_0^{-1}$ et pour tout $n\in \mathds{N}$, $v_{n+1} = v_{n}\sigma^{n+1}(s_{n+1})$ et $u_{n+1}^{-1} = u_{n}^{-1}\sigma^{n+1}(p_{n+1}^{-1})$.
		Par le théorème \ref{thm:prgamma}, les suites $(v_n)_n$ et $(u_n^{-1})_n$ sont convergentes vers $V$ et $U^{-1}$
		respectivement. De plus, les égalités
		\begin{center}
			$d_\infty(v_n\mathcal{P}, v_{n+1}\mathcal{P}) = d_\infty(\mathcal{P}, \sigma^{n+1}(s_{n+1})\mathcal{P}) = \eta^{-(n+1)} d_\infty(\mathcal{P}, s_{n+1}\mathcal{P})$\\
			$d_\infty(u_n^{-1}\mathcal{P}, u_{n+1}^{-1}\mathcal{P}) = d_\infty(\mathcal{P}, \sigma^{n+1}(p_{n+1}^{-1})\mathcal{P}) = \eta^{-(n+1)} d_\infty(\mathcal{P}, p_{n+1}^{-1}\mathcal{P})$
		\end{center}
		sont vérifiées ; les suites $(v_n\mathcal{P})_n$ et $(u_n^{-1}\mathcal{P})_n$ sont des suites de Cauchy de $T_{\Phi^{-1}}$ et convergent dans $\overline{T}_{\Phi^{-1}}$
		vers $Q(V)$ et $Q(U^{-1})$ respectivement. On déduit de la proposition \ref{prop:coolprefsuff}
		que $u_nv_n = \sigma^{n+1}(k)$ pour une certaine lettre $k$ de $A$ ; ainsi,
		\begin{center}
			$d_\infty (u_n^{-1}\mathcal{P}, v_n\mathcal{P}) = d_\infty (\mathcal{P}, \sigma^{n+1}(k)\mathcal{P}) = \eta^{-(n+1)} d_\infty(\mathcal{P}, k\mathcal{P})$,
		\end{center}
		et on peut conclure que $Q(V) = Q(U^{-1})$. 
		\end{proof}
		On obtient ainsi la propriété suivante.
		\begin{prop}\label{prop:emmapostconsdist}
		Pour tout $V_1,V_2$ de $\Omega^+$, $d_\infty(Q(V_1), Q(V_2)) = d_{T_{\tau}}(f_Q(V_1), f_Q(V_2))$.
		\end{prop}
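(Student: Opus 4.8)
The plan is to deduce the equality on all of $\Omega^+$ from Proposition~\ref{prop:emmapreconsdist}, which already gives it on $\Omega^+_p$, by a density-and-continuity argument: every word of $\Omega^+$ is approximated, \emph{simultaneously} for $f_Q$ and for $Q$, by words of $\Omega^+_p$ of the form $u_n^{-1}\omega$, and both $d_{T_{\tau}}$ and $d_\infty$ are genuine metrics, hence continuous in their two arguments. So it suffices to produce, for each of $V_1,V_2\in\Omega^+$, an approximating sequence in $\Omega^+_p$ along which both applications converge, apply Proposition~\ref{prop:emmapreconsdist} term by term, and let the index go to infinity.

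First I would fix $V_1,V_2\in\Omega^+$ and attach to each $V_i$ a sequence $(w_n^{(i)})_n$ in $\Omega^+_p$ with $f_Q(w_n^{(i)})\to f_Q(V_i)$ in $T_{\tau}$ and $Q(w_n^{(i)})\to Q(V_i)$ in $\overline{T}_{\Phi^{-1}}$. If $V_i\in\Omega^+_p$, the constant sequence $w_n^{(i)}=V_i$ works. If $V_i\in\Omega^+\setminus\Omega^+_p$, then by Proposition~\ref{prop:uniqinfspegau} there is a unique left-infinite word $U^{(i)}$ with $U^{(i)}.V_i\in\Omega$; writing $\Gamma(U^{(i)}.V_i)=(p_j,a_j,s_j)_j$ and $u_0^{-1}=p_0^{-1}$, $u_{n+1}^{-1}=u_n^{-1}\sigma^{n+1}(p_{n+1}^{-1})$, I take $w_n^{(i)}=u_n^{-1}\omega\in\Omega^+_p$. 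Then $f_Q(w_n^{(i)})\to f_Q(V_i)$ is exactly the definition of $f_Q$ on $\Omega^+\setminus\Omega^+_p$ (see (\ref{eq:f_Q2})). For the $Q$-side, Lemma~\ref{lem:emmafixT} gives $Q(w_n^{(i)})=u_n^{-1}\mathcal{P}$; since each $p_{n+1}$ is $\epsilon$ or $1$, the homothety relation $\sigma^{-1}(w)H=Hw$ yields $d_\infty(u_n^{-1}\mathcal{P},u_{n+1}^{-1}\mathcal{P})=\eta^{-(n+1)}d_\infty(\mathcal{P},p_{n+1}^{-1}\mathcal{P})$, so $(u_n^{-1}\mathcal{P})_n$ is Cauchy; as $(u_n^{-1})_n$ converges in $\partial F_d$ to $(U^{(i)})^{-1}$ (Theorem~\ref{thm:prgamma}), Proposition~\ref{prop:niceQ} applied with $Z=\mathcal{P}$ identifies the limit as $Q((U^{(i)})^{-1})$, and Proposition~\ref{prop:emmatrick} gives $Q((U^{(i)})^{-1})=Q(V_i)$. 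This is precisely the reasoning already carried out in the paragraph preceding the statement, which I would simply invoke.

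Then I would apply Proposition~\ref{prop:emmapreconsdist} to each pair $(w_n^{(1)},w_n^{(2)})$ of elements of $\Omega^+_p$, obtaining
\[
 d_\infty\big(Q(w_n^{(1)}),Q(w_n^{(2)})\big)=d_{T_{\tau}}\big(f_Q(w_n^{(1)}),f_Q(w_n^{(2)})\big)
\]
for every $n$. Letting $n\to+\infty$ and using that $d_\infty$ on $\overline{T}_{\Phi^{-1}}$ and $d_{T_{\tau}}$ on $T_{\tau}$ are continuous, together with the two convergences established above, the left-hand side tends to $d_\infty(Q(V_1),Q(V_2))$ and the right-hand side to $d_{T_{\tau}}(f_Q(V_1),f_Q(V_2))$, which is the asserted equality. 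The only point that really requires care is the joint approximation in the second step — that the chosen $(u_n^{-1}\omega)_n$ has $Q(u_n^{-1}\omega)\to Q(V)$ and $f_Q(u_n^{-1}\omega)\to f_Q(V)$ at once — but both halves are already in place (the $f_Q$-half by construction of $f_Q$, the $Q$-half via Lemma~\ref{lem:emmafixT}, Proposition~\ref{prop:niceQ} and Proposition~\ref{prop:emmatrick}), so once that is granted the conclusion is a routine passage to the limit.
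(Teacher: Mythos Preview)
Your proposal is correct and follows essentially the same approach as the paper: the paper establishes, in the paragraphs immediately preceding the statement together with Proposition~\ref{prop:emmatrick}, that for $V\in\Omega^+\setminus\Omega^+_p$ the sequence $(u_n^{-1}\omega)_n$ satisfies both $f_Q(u_n^{-1}\omega)\to f_Q(V)$ and $Q(u_n^{-1}\omega)\to Q(U^{-1})=Q(V)$, and then simply asserts the proposition as an immediate consequence (``On obtient ainsi''). Your write-up makes the final passage-to-the-limit step explicit, but the ingredients and the logic are identical.
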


		Les propriétés combinatoires de la substitution d'arbre, couplées avec celles de l'automate des préfixes-suffixes, ont ainsi permis
		de définir une application $f_Q:\Omega^+\to T_{\tau}$ qui réplique, au sens de la propriété \ref{prop:emmapostconsdist}, l'application
		$Q:\Omega^+\to Q(\Omega^+)$. On peut finalement définir une bijection isométrique de $T_{\tau}$ dans $Q(\Omega^+)$.
		\begin{thm}\label{thm:bijiso}
		L'application $\xi$ définie pour tout $V$ de $\Omega^+$ par
		\begin{center}
			\begin{tabular}{ccccc}
			$\xi$ & : & $T_{\tau}$ & $\to$ & $Q(\Omega^+)$\\
			& & $f_Q(V)$ & $\mapsto$ & $Q(V)$
			\end{tabular}
		\end{center}
		est une bijection isométrique.
		\end{thm}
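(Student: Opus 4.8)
The plan is to obtain $\xi$ essentially for free from the distance identity of Proposition~\ref{prop:emmapostconsdist}, using that $f_Q$ is onto $T_{\tau}$ (Proposition~\ref{prop:emmasurj}) to guarantee that $\xi$ is defined on all of $T_{\tau}$ and maps onto $Q(\Omega^+)$. The three points to settle are: $\xi$ is well defined, $\xi$ is surjective, and $\xi$ preserves distances (which in turn yields injectivity).

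First I would check that $\xi$ is well defined. Since $f_Q\colon\Omega^+\to T_{\tau}$ is surjective, every point of $T_{\tau}$ has the form $f_Q(V)$ for some $V\in\Omega^+$, so the assignment $f_Q(V)\mapsto Q(V)$ does cover all of $T_{\tau}$. If $f_Q(V_1)=f_Q(V_2)$ then $d_{T_{\tau}}(f_Q(V_1),f_Q(V_2))=0$, and Proposition~\ref{prop:emmapostconsdist} forces $d_\infty(Q(V_1),Q(V_2))=0$, hence $Q(V_1)=Q(V_2)$; so $\xi$ does not depend on the chosen preimage. Surjectivity is then immediate: for $Z\in Q(\Omega^+)$, write $Z=Q(V)$ with $V\in\Omega^+$ and note $\xi(f_Q(V))=Z$. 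For the isometry property, take $x_i=f_Q(V_i)\in T_{\tau}$, $i=1,2$; then Proposition~\ref{prop:emmapostconsdist} gives
\[
d_\infty\bigl(\xi(x_1),\xi(x_2)\bigr)=d_\infty\bigl(Q(V_1),Q(V_2)\bigr)=d_{T_{\tau}}\bigl(f_Q(V_1),f_Q(V_2)\bigr)=d_{T_{\tau}}(x_1,x_2).
\]
In particular $\xi$ is injective, so it is an isometric bijection of $(T_{\tau},d_{T_{\tau}})$ onto $(Q(\Omega^+),d_\infty)$.

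The real content — and hence the only genuine difficulty — is not in the argument above but in Proposition~\ref{prop:emmapostconsdist}, which is where the tree-substitution construction is matched to $Q$. Its proof runs through Propositions~\ref{prop:emmapreconsdist} and~\ref{prop:emmatrick}: on $\Omega^+_p$ one identifies, for two branch points $x,y$ of some $T_n^s$, the word $\gamma_n(x,y)$ with an element $w\in F_d$ and checks, using the discernment of $T_n^s$ (Proposition~\ref{prop:emmadiscer}) together with the fact that the trunk of $\tau$ realizes $\sigma^{-1}$ (Proposition~\ref{prop:troncagrees}), that $[\mathcal{P},w\mathcal{P}]$ is a legal path of $\widetilde{R_d}$; Proposition~\ref{prop:distlegalpath} then ensures $h$ does not contract it, so $d_\infty(\mathcal{P},w\mathcal{P})$ equals the sum of edge lengths prescribed by $\mathbf{V_{\sigma^{-1}}}$, which is exactly $d_{T_{\tau}}(f_Q(u^{-1}\omega),f_Q(v^{-1}\omega))$ by Proposition~\ref{prop:emmaletlen}. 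Passing from $\Omega^+_p$ to all of $\Omega^+$ is a limiting argument driven by the prefix–suffix expansions (Theorem~\ref{thm:prgamma} and Lemma~\ref{lem:emmafixT}), the relevant sequences being Cauchy in $\overline{T}_{\Phi^{-1}}$. Once this is in hand, Theorem~\ref{thm:bijiso} is exactly the short bookkeeping carried out above.
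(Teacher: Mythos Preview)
Your proposal is correct and follows exactly the paper's approach: the theorem is an immediate corollary of Proposition~\ref{prop:emmapostconsdist} together with the surjectivity of $f_Q$ (Proposition~\ref{prop:emmasurj}), and the paper does not spell out more than the sentence preceding the statement. Your explicit verification of well-definedness, surjectivity, and the isometry property is precisely the routine bookkeeping the paper leaves implicit, and your summary of where the real work lies (Propositions~\ref{prop:emmapreconsdist} and~\ref{prop:emmatrick}) is accurate.
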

		\begin{figure}[h!]
		\begin{center}
			\begin{psfrags}
			\psfrag{O+}{\Large{$\Omega^+$}}
			\psfrag{QO+}{\Large{$Q(\Omega^+)$}}
			\psfrag{Q}{\Large{$Q$}}
			\psfrag{fQ}{\Large{$f_Q$}}
			\psfrag{L}{\Large{$T_{\tau}$}}
			\psfrag{xi}{\Large{$\xi$}}
			\scalebox{0.7}{\includegraphics{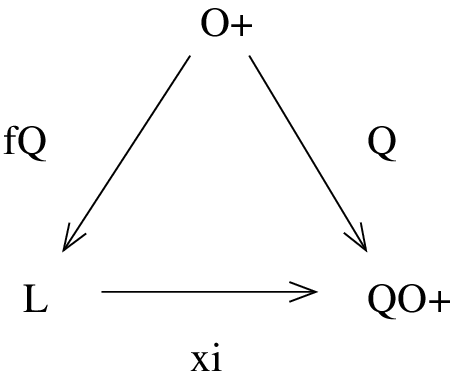}}
			\end{psfrags}
		\end{center}
		\vspace{-4mm}
		\label{fig:xidiagram}
		\end{figure}

	\section{Généralisation}\label{sec:generalisation}
	On discute ici des difficultés liées à la généralisation des constructions et des résultats énoncés précédemment pour la classe d'exemples
	étudiée. On parlera de substitutions inversibles afin de pouvoir réutiliser les définitions et concepts de la dynamique symbolique, mais
	la discussion concerne tous les automorphismes de groupe libre.\\

	Soit $\alpha$ une substitution primitive inversible sur un alphabet $A$ ; on voit $\alpha$ comme un automorphisme de $F(A)$, le groupe libre de base $A$.
	On note $(\Omega^+_\alpha, S)$ le système dynamique engendré par $\alpha$.
	On suppose que l'automorphisme $\alpha^{-1}$ admet un représentant train-track, c'est-à-dire un représentant topologique $f:G\to G$
	(cette fois $G$ peut être différent de la rose) tel que tout arc de $G$ est un chemin légal. On peut alors construire l'arbre
	$T_{\alpha^{-1}}$ invariant de $\alpha^{-1}$ (\cite{GJLL}) et l'application $Q_{\alpha^{-1}} : \partial F(A)\to \overline{T}_{\alpha^{-1}}\cup \partial T_{\alpha^{-1}}$
	associée (\cite{LL03}). L'action par isométries du groupe libre sur $T_{\alpha^{-1}}$ permet de représenter l'action du décalage sur $\Omega^+_\alpha$ par un système
	d'isométries partielles sur $Q_{\alpha^{-1}}(\Omega^+_\alpha)$. Notre but est de construire le compact $Q_{\alpha^{-1}}(\Omega^+_\alpha)$ par substitution d'arbre.

	Les propriétés géométriques de ce compact sont étudiées dans \cite{CH} : suivant l'automorphisme $\alpha$ considéré, il peut être soit une forêt finie (une union finie
	d'arbres réels disjoints), soit un Cantor dont l'enveloppe convexe est un arbre réel. La non-connexité dans le cas d'une forêt finie n'est évidemment pas un obstacle.
	Pour ce qui est du cas Cantor, le problème peut être abordé de deux manières. La première consiste simplement à essayer de construire l'enveloppe convexe par substitution d'arbre,
	alors que la seconde consiste à définir des substitutions de graphe, qui permettraient d'obtenir directement l'ensemble de Cantor. Il semblerait que la meilleure solution
	soit en fait de coupler les deux approches. Dans \cite{BK}, M. Boshernitzan et I. Kornfeld étudie l'un de ces cas ; ils représentent la dynamique d'un automorphisme
	par une translation d'intervalles. Il n'est évidemment pas très avantageux de décrire un intervalle par substitution d'arbre (par contre il serait intéressant
	de retrouver le Cantor), mais le phénomène de translation de domaines peut également se produire sur des arbres plus compliqués. Dans ce cas, obtenir l'enveloppe
	convexe par substitution d'arbre permettrait de représenter explicitement ces translations.\\

	La représentation par substitution d'arbre repose sur un principe simple qui nécessite d'être adressé : un point (en se rappelant que
	ce point correspond à une orbite) placé à une étape finie est définitivement placé. C'est pour cette raison qu'il est essentiel que
	les arbres (simpliciaux) générés par la substitution d'arbre soient des arbres discernés. Le phénomène peut se résumer
	simplement grâce à la proposition \ref{prop:distlegalpath} (la notion de discernement est équivalente à celle de chemin légal).
	Essentiellement, si un arbre de la suite n'est pas discerné, alors il contient deux points dont le chemin est codé par $\overline{k}k$
	(ou $k\overline{k}$) pour une certaine lettre $k$ ; la substitution d'arbre a alors produit deux points distincts de même orbite.

	On se reporte à la proposition \ref{prop:troncagrees} qui constate que, sur l'exemple proposé, le choix du tronc est complètement
	déterminé par l'automorphisme inverse. Ce choix a été rendu possible par le fait que l'automorphisme inverse était train-track
	et sans chemin de Nielsen. Combinatoirement, on dit qu'un automorphisme $\alpha^{-1}$ de $F(A)$ est train-track si pour tout
	$a\in A$ et pour tout $n\in \mathds{N}$, $\alpha^{-n}(a)$ ne produit pas d'annulation ; il est sans chemin de Nielsen si
	pour tout mot $w$ de $F(A)$, il existe $N\in \mathds{N}$ tel que pour tout $n\ge N$, si $w_r$ est l'expression réduite de $\alpha^{-N}(w)$,
	alors $\alpha^{N-n}(w_r)$ ne produit pas d'annulation.

	Voici un exemple d'automorphisme qui ne vérifie pas la propriété train-track. Il s'agit de l'inverse de la substitution
	$a\mapsto ab, b\mapsto ac, c\mapsto a$ (dite de Tribonacci).
	\begin{center}
		\begin{tabular}{ccccl}
		$\phi^{-1}$ & : & $a$ & $\mapsto$ & $c$\\
		&& $b$ & $\mapsto$ & $c^{-1}a$\\
		&& $c$ & $\mapsto$ & $c^{-1}b$.\\
		\end{tabular}
	\end{center}
	Appliquer $\phi^{-2}$ à la lettre $c$ produit une annulation : $\phi^{-2}(c) = \phi^{-1}(c^{-1}b) = b^{-1}cc^{-1}a$.

	On donne également un automorphisme qui a un chemin de Nielsen. C'est l'inverse de la substitution $a\mapsto ba, b\mapsto babac, c\mapsto b$.
	\begin{center}
		\begin{tabular}{ccccl}
		$\phi^{-1}$ & : & $a$ & $\mapsto$ & $c^{-1}a$\\
		&& $b$ & $\mapsto$ & $c$\\
		&& $c$ & $\mapsto$ & $a^{-1}a^{-1}b$.\\
		\end{tabular}
	\end{center}
	On peut vérifier facilement que $\phi^{-1}$ est train-track. Cependant, appliquer $\phi^{-1}$ au mot $ac$ produit une annulation :
	$\phi^{-1}(ac) = c^{-1}aa^{-1}a^{-1}b = c^{-1}a^{-1}b$. Le mot obtenu contient
	$(ac)^{-1}$, et on en déduit qu'appliquer $\phi^{-1}$ successivement produira toujours une annulation.

	Il existe des méthodes pour contourner ces difficultés. En fait, il est montré dans \cite{BH} que tout automorphisme
	dit \textit{iwip} (irreducible with irreducible power) admet un représentant train-track (et les auteurs donnent un algorithme pour le trouver). Par des procédés comparables,
	on peut parfois se débarasser de certains chemins de Nielsen (c'est le cas pour l'exemple précédent). Cependant,
	même dans le cas iwip, il est parfois impossible d'effacer tous les chemins de Nielsen, et cela semble être une des limites
	de la représentation par substitution d'arbre.\\

	Enfin, même si on a pu déterminer le tronc de la substitution d'arbre (c'est-à-dire qu'on a trouvé un représentant train-track sans chemin de Nielsen
	de l'automorphisme $\alpha^{-1}$), il reste à décider les branchements à effectuer. Dans la section \ref{subsec:cec}, on a expliqué
	les branchements de l'exemple proposé en montrant qu'ils respectaient les propriétés combinatoires du système dynamique. Il semble
	que ces propriétés soient un bon point de départ à la généralisation.

\newpage


\addcontentsline{toc}{section}{Références}



\bibliographystyle{alpha}
\bibliography{bibli}{}

\end{document}